\documentclass[a4paper,reqno]{amsart}

%algebroidsalgebroids\usepackage[notref,notcite]{showkeys}
\usepackage{amsmath, amssymb, amsthm,mathtools,stmaryrd}
\usepackage{thmtools}
\usepackage{cancel}
\usepackage[normalem]{ulem}
\usepackage{hyperref}
\usepackage{cleveref}
\usepackage[english]{babel}
\textwidth13.5cm
\usepackage{enumitem}

\usepackage[all]{xy}
\usepackage{graphicx}
\usepackage{mathtools}
\usepackage{mathrsfs}

\usepackage{verbatim}

\usepackage{color}
\usepackage{tikz-cd}
\tikzcdset{arrow style=tikz, diagrams={>=stealth}}

\usepackage[most]{tcolorbox}
  {
\begin{tcolorbox}[breakable,
 enhanced jigsaw,
 opacityback=0,
 sharp corners,
 parbox=false,
 boxrule=0mm,
 top=0mm,bottom=0pt,left=0pt,right=0pt,
 boxsep=0pt,
 frame hidden,
 parbox=false,
 before upper=\indent\strut,
 before=\par,after=\par,
 finish={\draw[thick,red] ([xshift=-0.4\textwidth]frame.north)--([xshift=0.4\textwidth]frame.north)--([xshift=-0.4\textwidth]frame.south)--([xshift=0.4\textwidth]frame.south);}
]}
{\end{tcolorbox}}

\usepackage{tikz-cd}
\tikzcdset{arrow style=tikz, diagrams={>=stealth}}

\newcommand\leer{\text{---}}

\allowdisplaybreaks

\DeclareMathOperator{\Aut}{Aut}
\DeclareMathOperator{\Ad}{Ad}  %
\DeclareMathOperator{\Hom}{Hom}
\newcommand*{\id}{\textup{id}}      %

\numberwithin{equation}{section}

\theoremstyle{plain}

\newtheorem{thm}{Theorem}[section]
\newtheorem{lem}[thm]{Lemma}
\newtheorem{prop}[thm]{Proposition}
 \newtheorem{cor}[thm]{Corollary}
\newtheorem{defi}[thm]{Definition}
\newtheorem{conj}[thm]{Conjecture}
\theoremstyle{remark}

\newtheorem{rem}[thm]{Remark}

\newcommand{\ltau}{\overset\leftharpoonup\tau}
\newcommand{\rtau}{\overset\rightharpoonup\tau}
\newcommand{\lcan}{\overset\leftharpoonup\can}
\newcommand{\rcan}{\overset\rightharpoonup\can}
\numberwithin{equation}{section}
\newcommand\inv{^{-1}}
\newcommand{\nn}{\nonumber}
\newcommand{\ot}{\otimes}

\newcommand{\beq}{\begin{equation}}
\newcommand{\eeq}{\end{equation}}

\newcommand{\Bi}{\textup{BiGal}}

\newcommand{\cL}{\mathcal{L}}

\newcommand{\BB}{\overline{B}}
\newcommand{\BG}{{B^{\Gamma}}}

\newcommand{\M}{\mathcal{M}}

\newcommand{\CG}{{\cdot_{\Gamma}}}
\newcommand{\GH}{{\Gamma^{\#}}}

\newcommand{\one}[1]{{#1}{}_{\scriptscriptstyle{(1)}}}
\newcommand{\two}[1]{{#1}{}_{\scriptscriptstyle{(2)}}}

\newcommand{\tuno}[1]{{#1}{}{}^{\scriptscriptstyle{<1>}}}
\newcommand{\tdue}[1]{{#1}{}{}^{\scriptscriptstyle{<2>}}}

\newcommand{\yi}[1]{{#1}{}{}^{\scriptscriptstyle{[1]}}}
\newcommand{\er}[1]{{#1}{}{}^{\scriptscriptstyle{[2]}}}

\newcommand{\teins}[1]{{#1}{}{}^{\scriptscriptstyle{(1)}}}
\newcommand{\tzwei}[1]{{#1}{}{}^{\scriptscriptstyle{(2)}}}

\newcommand{\sw}[1]{{}_{(#1)}}
\newcommand{\swm}[1]{{}_{(-#1)}}

\newcommand\co{{\operatorname{co}}}

\newcommand{\lbiprod}{{>\!\!\!\triangleleft\kern-.33em\cdot}}
\newcommand{\rbiprod}{{\cdot\kern-.33em\triangleright\!\!\!<}}

\newcommand{\z}{{}_{\scriptscriptstyle{(0)}}}
\newcommand{\rz}{{}_{\scriptscriptstyle{[0]}}}
\newcommand{\tz}{{}_{\scriptscriptstyle{\{0\}}}}
\renewcommand{\o}{{}_{\scriptscriptstyle{(1)}}}
\newcommand{\ro}{{}_{\scriptscriptstyle{[1]}}}
\newcommand{\mo}{{}_{\scriptscriptstyle{(-1)}}}
\newcommand{\rmo}
{{}_{\scriptscriptstyle{[-1]}}}
\newcommand{\tone}{{}_{\scriptscriptstyle{\{1\}}}}
\newcommand{\tmo}{{}_{\scriptscriptstyle{\{-1\}}}}
\renewcommand{\t}{{}_{\scriptscriptstyle{(2)}}}

\newcommand{\mt}{{}_{\scriptscriptstyle{(-2)}}}

\newcommand{\ttwo}{{}_{\scriptscriptstyle{\{2\}}}}
\renewcommand{\th}{{}_{\scriptscriptstyle{(3)}}}

\newcommand{\mth}{{}_{\scriptscriptstyle{(-3)}}}

\newcommand{\mfo}{{}_{\scriptscriptstyle{(-4)}}}

\newcommand{\di}{{\diamond_{B}}}
\newcommand{\la}{{\triangleright}}
\newcommand{\ra}{{\triangleleft}}
\newcommand{\bla}{{\blacktriangleright}}

\DeclareMathOperator{\tens}{\otimes}

\newcommand{\CH}{{\mathcal{H}}}
\newcommand{\CM}{\mathcal{M}}
\newcommand{\CL}{\mathcal{L}}

\newcommand\HMod[4]{{^{#1}_{#3}\mathcal M^{#2}_{#4}}}
\newcommand\LMod[1]{{_{#1}\mathcal M}}
\newcommand\LComod[1]{{^{#1}\mathcal M}}
\newcommand\RComod[1]{\HMod{}{#1}{}{}}
\newcommand\RMod[1]{\mathcal M_{#1}}

\newcommand\BiMod[1]{{_{#1}\mathcal M_{#1}}}
\newcommand\Bimod[2]{{_{#1}\mathcal M_{#2}}}
\newcommand\vect{\underline{\operatorname{Vect}}}
\newcommand{\ou}[1]{\underset{{#1}}{\otimes}}
\newcommand\ol{\overline}

\newcommand{\can}{{\rm can}}

\newcommand\cop{{\operatorname{cop}}}
\newcommand{\op}{{\operatorname{op}}}
\newcommand\sym{{\operatorname{sym}}}

\begin{document}
\thanks{For the purpose of Open Access, a CC-BY public copyright licence has been applied by the author to the present document and will be applied to all subsequent versions up to the Author Accepted Manuscript arising from this submission}
\author{Xiao Han}
\address{Queen Mary University of London}
\email{x.h.han@qmul.ac.uk}

\author{Peter Schauenburg}
\address{Université Bourgogne Europe, CNRS, IMB UMR 5584, 21000 Dijon, France}
\email{peter.schauenburg@ube.fr}

\keywords{Hopf algebroid, bialgebroid, quantum group, Hopf Galois extensions}

\title{Hopf BiGalois Theory for Hopf Algebroids}

\begin{abstract}We develop a theory of Hopf BiGalois extensions for Hopf algebroids. We understand these to be left bialgebroids (whose left module categories are monoidal categories) fulfilling a condition that is equivalent to being Hopf in the case of ordinary bialgebras, but does not entail the existence of an antipode map.  

The immediate obstacle to developing a full biGalois theory for such Hopf algebroids is simple: The condition to be a left Hopf Galois extension can be defined in complete analogy to the Hopf case, but the Galois map for a right comodule algebra is not a well defined map. We find that this obstacle can be circumvented using bialgebroids fulfilling a condition that still does not entail the existence of an antipode, but is equivalent, for ordinary bialgebras, to being Hopf with bijective antipode. 

The key technical tool is a result of Chemla \cite{C20} allowing to switch left and right comodule structures under flatness conditions much like one would do using an antipode. Using this, we arrive at a left-right symmetric theory of biGalois extensions, including the construction of an Ehresmann Hopf algebroid making a one-sided Hopf-Galois extension into a biGalois one.

Moreover, we apply a more general 2-cocycle twist theory\cite{HM25} to Ehresmann Hopf algebroids. As the 2-cocycle is only left $\BB$-linear the base algebra is also twisted. We also study the Ehresmann Hopf algebroids of quantum Hopf fibrations and quantum homogeneous space as examples.

 \end{abstract}
\subjclass{16T05,16T15,18M05,18M15}
\maketitle

\section{Introduction}

Similarly to the relation between Hopf algebras and groups, Hopf algebroids can be viewed as a quantization of groupoids. There are several different definitions of Hopf algebroids, in this paper, we will consider the Hopf algebroids introduced in \cite{schau1} and \cite{schau3}. %
In particular, our Hopf algebroids do not necessarily have antipodes, but we require the bijectivity of certain maps that would be equivalent to the existence of a bijective antipode in the case of bialgebras over a commutative ring. Generalizations of these maps also define the notion of Hopf-Galois extension.

For a left bialgebroid $\mathcal{L}$ over a noncommutative ring $B$, a left $\mathcal{L}$-Galois extension can be  defined in a natural way: The definition requires bijectivity of the "Galois map" which can be written down by the same formula as in the ordinary Hopf case, although now not only the source but also the target is a tensor product over a possibly noncommutative algebra rather than the base field. However, there is no obvious definition of a  Hopf Galois extension on the right-hand side; the direct analog of the canonical map in the definition of "Galois" for the ordinary Hopf case is not well defined in the Hopf algebroid case. This makes it impossible to arrive at a naive direct generalization of the notion of Hopf biGalois extension as in \cite{schau5} to the Hopf algebroid setting.

To deal with this problem we study anti-right $\mathcal{L}$-Galois extensions. A short definition of this notion is that $P$ is an anti-right $\mathcal{L}$-Galois extension iff it is a left $\mathcal L^\cop$-Galois extension over the co-opposite bialgebroid. If $\mathcal L$ is an ordinary Hopf algebra with bijective antipode, it was already observed in \cite{schneider} that the Galois and anti-Galois conditions are equivalent; in the Hopf algebroid case however, only one of them is defined for left comodule algebras, and only the other one is defined for right comodule algebras.

A $B$-bialgeroid $\CL$ is a left Hopf algebroid iff the obvious left $\CL$-comodule algebra $\CL$ is a left $\CL$-Galois extension of $\BB$. In the case of ordinary bialgebras this condition is equivalent to the existence of an antipode. Similarly, $\CL$ is an anti-left Hopf algebroid  if and only if  $B\subseteq \mathcal{L}$ is an anti-right $\cL$-Galois extension. In the ordinary bialgebra case this is equivalent to the existence of a skew antipode. In particular the condition on a bialgebroid to be both a left Hopf algebroid and an anti-left Hopf algebroid is equivalent, in the ordinary bialgebra case, to having a bijective antipode. We will also see that under some faithful flatness conditions, a bialgebroid admitting a left Galois (resp.\ anti-right Galois) extension is necessarily (anti-)Hopf, similarly to the classical case.

For a classical Hopf algebra $H$ with bijective antipode and a left $H$-comodule $P$, one can always make $P$ into a right $H$ comodule, by defining the right coaction $\delta:p\to p\z\ot S^{-1}(p\mo)$. In fact this construction can be generalized to Hopf algebroids in absence of an antipode (and under some mild flatness conditions). This was shown in \cite{CGK16}; the general result best suited to our purposes is due to Chemla \cite{C20}.  

 A consistent use of this side-switching trick will allow us to establish a full and left-right symmetric theory of Hopf biGalois extensions, always assuming our Hopf algebroids fulfill the analogous condition to having a bijective antipode (while not having an antipode), and suitable (faithful) flatness conditions over the base. 
 
 For the rest of this introduction we will not mention these important conditions, nor the (faithful) flatness conditions needed frequently on our extensions and Hopf algebroids to make things work.

The facts on one-sided Hopf Galois extensions that we need will be established in \cref{sec:onesided}, including a study of the analogs of Schneider's structure theorems on Hopf modules, and an extensive discussion of the technical details of switching sides as alluded to above. 

\Cref{sec:biGalois} contains the main general results on biGalois theory. In particular, following a result on the structure of Hopf bimodules based on the structure theorems obtained in \cref{sec:onesided}, we proceed to construct an Ehresmann Hopf algebroid for any one-sided Galois extension over a Hopf algebroid. The Ehresmann Hopf algebroid makes the one-sided Galois extension into a biGalois extension (which we define) in the essentially unique way. We prove that biGalois extensions form a groupoid under cotensor product, with biGalois extensions giving rise to monoidal equivalences between comodule categories over Hopf algebroids. We conjecture that all monoidal equivalences between comodule categories are given by biGalois extensions, but can only prove a partial result in this direction. 

In \cref{sec:structhms} we prove several structure theorems on Hopf bimodules based on the generalizations of Schneider's structure theorem. In particular, Hopf modules with two module structures over a Galois extension are classified (as a monoidal category) as modules over the Ehresmann Hopf algebroid, and Hopf modules with two module structures over a biGalois extension and two comodule structures over the left and right Hopf algebroids correspond to Yetter-Drinfeld modules. This generalizes the structure theorems for Hopf modules over a Hopf algebroid that appeared in \cite{CK25} while the authors were finalizing the present paper. The paper \cite{CK25} also discusses the braidings we discuss here.

In \Cref{Applications and examples}, we apply a new 2-cocycle twist theory\cite{HM25} to the Ehresmann Hopf algebroid. More precisely, compared with the twist theory in \cite{Boehm} and \cite{HM22}, a 2-cocycle in the new theory is only left $\BB$-linear (the 2-cocycle in \cite{Boehm} is left $B^e$-linear). This results in a deformed Hopf algebroid whose base is also deformed. Moreover, the coproduct and product are simultaneously deformed. Here we show Hopf Galois extensions can be twisted by a 2-cocycle with a deformed coaction and product. In particular, we show for any Hopf algebra $H$, if a $H$-Galois extension $B\subseteq P$ has an extra symmetry (left coaction which cocommutes with the right $H$-coaction) associated to a Hopf algebra $K$, then a 2-cocycle $\gamma$ on $K$ induces a 2-cocycle $\Gamma$ on the corresponding Ehresmann Hopf algebroid $L(P, H)$. Moreover $L(P, H)^{\Gamma}\cong L({}_{\gamma}P, H)$, where ${}_{\gamma}B\subseteq{}_{\gamma}P$ is another $H$-Galois extension invented in \cite{ppca} with a deformed base. Finally, we present two examples of this setup. The first one is a deformation of Hopf fibration $S^7\to S^4$, the second one is quantum homogeneous spaces.

\subsection*{Acknowledgements} Xiao Han was supported by the European Union's Horizon 2020 research and innovation program under the Marie Sklodowska-Curie grant agreement No 101027463 and Leverhulme Trust project grant RPG-2024-177. This article is also based upon work from COST Action CaLISTA CA21109 supported by COST (European Cooperation in Science and Technology). Xiao Han is grateful to the Institut de Mathematiques de Bourgogne
(Dijon) for hospitality.

\section{Preliminaries} \label{sec2}

In this section, we will recall some definitions and notation. Let $B$ be an unital algebra over a field $k$. We denote the opposite algebra by $\BB$ and let $B\to \BB$, $b\mapsto\Bar{b}$ for any $b\in B$ be the obvious $k$-algebra antiisomorphism. Define $B^{e}:=B\ot \BB$, so $B$ and $\BB$ are obvious subalgebras of $B^{e}$. Let $M, N$ be $B^{e}$-bimodules. We define
\begin{align*}
    M\di N:=\int_{b} {}_{\Bar{b}}M\ot {}_{b}N:=&M\ot N/\langle \Bar{b}m\ot n-m\ot bn|b\in B, m\in M, n\in N\rangle\\
    M\ot_{B} N:=\int_{b} M_{b}\ot {}_{b}N:=&M\ot N/\langle mb\ot n-m\ot bn|b\in B, m\in M, n\in N\rangle\\
    M\ot_{\BB} N:=\int_{b} M_{\Bar{b}}\ot {}_{\Bar{b}}N:=&M\ot N/\langle m\Bar{b}\ot n-m\ot \Bar{b}n|b\in B, m\in M, n\in N\rangle\\
\end{align*}
For convenience, we also define $N\ot^{B}M=\int_{b} {}_{b}N\ot M_{b}$ and  $N\ot^{\BB}M=\int_{b} {}_{\Bar{b}}N\ot M_{\Bar{b}}$. Moreover, we define 
\begin{align*}
    \int^{b}M_{\Bar{b}}\ot N_{b}:=\left\{\left.\sum_{i}m_{i}\ot n_{i}\in M\ot N\right|\sum m_{i}\Bar{b}\ot n_{i}=\sum m_{i}\ot n_{i}b, \forall b\in B\right\}.
\end{align*}
Although a subspace in a tensor product like the one above is usually not spanned by simple tensors (so that the element in question was written as a sum above), we will usually abuse notations and write such an element in the form $m\ot n$ just the same, although it is not the simple tensor coming from single elements $m$ and $n$.

The symbols $\int^{b}$ and $\int^{c}$ commute, also $\int_{b}$ and $\int_{c}$ commute. However, in general, the symbols $\int^{b}$ and $\int_{c}$ do not commute. For any $\BB$-bimodule $M$ and any $B$-bimodule $N$, we also define
\begin{align*}
    M\times_{B}N:=\int^{a}\int_{b} {_{\Bar{b}}M_{\Bar{a}}}\ot {_bN_a}.
\end{align*}
$M\times_{B}N$ is called Takeuchi product of $M$ and $N$. If $P$ is a $B^{e}$-bimodule, then $P\times_{B}N$ is a $B$-bimodule with $B$ acting on $P$. Similarly, $M\times_{B}P$ is a $\BB$-bimodule with $\BB$ acting on $P$. If both $M$ and $N$ are $B^{e}$-bimodule, then $M\times N$ is also a $B^{e}$-bimodule. However, the product $\times_{B}$ is neither associative nor unital on the category of $B^{e}$-bimodules. For any $M,N, P\in {}_{B^{e}}\M_{B^{e}}$, we can define
\begin{align*}
    M\times_{B}P\times_{B}N:=\int^{a,b}\int_{c,d} {}_{\Bar{c}}M_{\Bar{a}}\ot {}_{c,\Bar{d}}P_{a, \Bar{b}}\ot {}_{d}N_{b},
\end{align*}
where $\int^{a,b}:=\int^{a}\int^{b}$ and $\int_{c,d}:=\int_{c}\int_{d}$. There are maps
\begin{align*}
    &\alpha:(M\times_{B}P)\times_{B} N\to M\times_{B}P\times_{B}N,\quad m\ot p\ot n\mapsto m\ot p\ot n,\\
    &\alpha':M\times_{B}(P\times_B N)\to M\times_{B}P\times_{B}N,\quad m\ot p\ot n\mapsto m\ot p\ot n.\\
\end{align*}
Notice that neither $\alpha$ nor $\alpha'$ are isomorphisms  in general. We note that $\alpha$ is an isomorphism if $N$ is flat as a left $B$-module, and $\alpha'$ is an isomorphism if $M$ is flat as a left $\ol B$-module. In many instances in the paper, some of the spaces we use in Takeuchi products will be faithfully flat as left modules over $B$ and $\ol B$ (either by assumption or because we can prove that this is the case), which makes iterated Takeuchi products somewhat well behaved.

\subsection{Hopf algebroids}

Here we recall the definitions (cf. \cite{BW}, \cite{Boehm}). Let $B$ be a unital algebra over a field $k$.
A {\em $B$-ring}   means a unital algebra in the monoidal category ${}_B\CM_B$ of $B$-bimodules. Likewise,  a  {\em $B$-coring} is a coalgebra in ${}_B\CM_B$. Morphisms of $B$-(co)rings are defined to be morphisms of (co)algebras, but in the category ${}_B\CM_B$.

Specifying a unital $B$-ring $\cL$ is equivalent to specifying a unital algebra $\cL$ (over $k$) and an algebra map $\eta:B\to \cL$. Left and right multiplication in $\cL$ pull back to left and right $B$-actions as a bimodule (so $bXc=\eta(b)X\eta(c)$ for all $b,c\in B$ and $X\in \cL$) and the product descends to the product $\mu_B:\cL\tens_B\cL\to \cL$ with $\eta$ the unit map. Conversely, given
$\mu_B$ we can pull back to an associative product on $\cL$ with unit $\eta(1)$.

Now suppose that $s:B\to \cL$ and $t:\BB\to \cL$ are algebra maps with images that commute. Then $\eta(b\tens c)=s(b)t(c)$ is an algebra map $\eta: B^e\to \cL$, where $B^e=B\tens \BB$, and is equivalent to making $\cL$ a $B^e$-ring. The left $B^e$-action part of this is equivalent to a $B$-bimodule structure
\begin{equation}\label{eq:rbgd.bimod}
b.X.c=b\Bar{c}X:= s(b) t(c)X
\end{equation}
for all $b,c\in B$ and $X\in \cL$. Similarly, the right $B^{e}$-action part of this is equivalent to another $B$-bimodule structure
\begin{equation}\label{eq:rbgd.bimod1}
c^{.}X^{.}b=Xb\Bar{c}:= Xs(b) t(c).
\end{equation}

If $\cL$ and $\mathcal R$ are two $B^e$-rings, then the Takeuchi product $\cL\times_B\mathcal R$ is an algebra with multiplication given by $(\ell\otimes r)(\ell'\otimes r')=\ell\ell' \otimes rr'$ for $\ell\ot r,\ell'\ot r'\in \cL\times_B\mathcal R$.

\begin{defi}\label{def:left.bgd} Let $B$ be a unital algebra. A left $B$-bialgebroid (or left bialgebroid over $B$) is an algebra $\cL$ and (`source' and `target') commuting algebra maps $s:B\to \cL$ and $t:\BB\to \cL$ (thus making $\cL$ a $B^e$-ring), such that
\begin{itemize}
\item[(i)]  There are two left $B^{e}$-module maps, the coproduct $\Delta:\cL\to \cL\times_{B} \cL$ and counit $\varepsilon:\cL\to B$ satisfying
\begin{align*}
&\alpha\circ(\Delta\times_{B}\id)\circ\Delta=\alpha'\circ(\id\times_{B}\Delta)\circ\Delta:\cL\to \cL\times_{B}\cL\times_{B}\cL\\    &\varepsilon(X\o)X\t=\overline{\varepsilon(X\t)}X\o=X,
\end{align*}
for any $X\in\cL$, where we use the sumless Sweedler index to denote the coproduct, namely, $\Delta(X)=X\o\ot X\t$.
\item[(ii)] The coproduct is an algebra map. The counit $\varepsilon$ is a left character in the following sense:
\begin{equation*}\varepsilon(1_{\cL})=1_{B}, \quad \varepsilon(X\varepsilon(Y))=\varepsilon(XY)=\varepsilon(X\overline{\varepsilon(Y)})\end{equation*}
for all $X,Y\in \cL$ and $a\in B$.
\end{itemize}
\end{defi}
\begin{rem}
   Condition (i) implies $\cL$ is a $B$-coring with the $B$-bimodule structure given by \eqref{eq:rbgd.bimod}. By condition (ii), we can define an algebra map $\hat{\varepsilon}: \cL\to \operatorname{End}(B)$ by $\hat{\varepsilon}(X)(b):=\varepsilon(X b)$. Moreover, the image of $\alpha\circ(\Delta\times_{B}\id)\circ\Delta=\alpha'\circ(\id\times_{B}\Delta)\circ\Delta$ could be also denoted by the Sweedler index, namely, $\alpha\circ(\Delta\times_{B}\id)\circ\Delta(X)=\alpha'\circ(\id\times_{B}\Delta)\circ\Delta(X)=X\o\ot X\t\ot X\th\in \cL\times_{B}\cL\times_{B}\cL$. Unlike the bialgebra case, the elements $X\o\o\ot X\o\t\ot X\t$, $X\o\ot X\t\o\ot X\t\t$ and $X\o\ot X\t\ot X\th$ are not the same, since they belong to three different vector spaces, even if in many interesting cases both $\alpha$ and $\alpha'$ are isomorphisms. 
   Morphisms between left $B$-bialgebroids are $B$-coring maps which are also $B^e$-ring map maps. 
\end{rem}

\begin{defi}\label{defHopf}
A left bialgebroid $\cL$ over $B$ is a left Hopf algebroid (\cite{schau1}, Thm and Def 3.5.) if
\[\lambda: \cL\ot_{\BB}\cL\to \cL\di\cL,\quad
    \lambda(X\ot Y)=\one{X}\ot \two{X}Y\]
is invertible.\\
Similarly, A left bialgebroid $\cL$ over $B$ is a anti-left Hopf algebroid if 
\[\mu: \cL\ot_{B}\cL\to \cL\di \cL,\quad
    \mu(X\ot Y)=\one{X}Y\ot \two{X}\]
is invertible. A left $B$-bialgebroid is a $B$-Hopf algebroid if it is a left Hopf algebroid and anti-left Hopf algebroid.
\end{defi}
In the following, we will always use the balanced tensor product explained as above. If $B=k$ then this reduces to the map $\cL\tens\cL\to \cL\tens\cL$ given by $h\tens g\mapsto h\o\tens h\t g$ which for a usual bialgebra has an inverse, namely $h\tens g\mapsto h\o\tens (Sh\t)g$ if and only if there is an antipode. We adopt the shorthand
\begin{equation}\label{X+-} X_{+}\ot_{\BB}X_{-}:=\lambda^{-1}(X\di 1)\end{equation}
\begin{equation}\label{X[+][-]} X_{[+]}\ot_{B}X_{[-]}:=\mu^{-1}(1\di X).\end{equation}
So for a Hopf algebra $H$, $h_{+}\ot h_{-}=h\o\ot S(h\t)$ and $h_{[-]}\ot h_{[+]}= S^{-1}(h\o)\ot h\t$ for any $h\in H$.
We recall from \cite[Prop.~3.7]{schau1} that for a left Hopf algebroid, and any $X, Y\in \cL$ and $a,a',b,b'\in B$,
\begin{align}
    \one{X_{+}}\di{}\two{X_{+}}X_{-}&=X\di{}1\label{equ. inverse lambda 1};\\
    \one{X}{}_{+}\ot_{\BB}\one{X}{}_{-}\two{X}&=X\ot_{\BB}1\label{equ. inverse lambda 2};\\
    (XY)_{+}\ot_{\BB}(XY)_{-}&=X_{+}Y_{+}\ot_{\BB}Y_{-}X_{-}\label{equ. inverse lambda 3};\\
    1_{+}\ot_{\BB}1_{-}&=1\ot_{\BB}1\label{equ. inverse lambda 4};\\
    \one{X_{+}}\di{}\two{X_{+}}\ot_{\BB}X_{-}&=\one{X}\di{}\two{X}{}_{+}\ot_{\BB}\two{X}{}_{-}\label{equ. inverse lambda 5};\\
    X_{+}\ot\one{X_{-}}\ot{}\two{X_{-}}&=X_{++}\ot X_{-}\ot{}X_{+-}\in\int^{a,b}\int_{c,d}{}_{\Bar{a}}\cL_{\Bar{c}}\ot {}_{\Bar{d}}\cL_{\Bar{b}}\ot {}_{\Bar{c},d}\cL_{b,\Bar{a}}
    \label{equ. inverse lambda 6};\\
    X&=X_{+}\overline{\varepsilon(X_{-})}\label{equ. inverse lambda 7};\\
    X_{+}X_{-}&=\varepsilon(X)\label{equ. inverse lambda 8};\\
    (a\Bar{a'}Xb\Bar{b'})_{+}\ot_{\BB}(a\Bar{a'}Xb\Bar{b'})_{-}&=aX_{+}b\ot_{\BB}b'X_{-}a'\label{equ. inverse lambda 9};\\
    \Bar{b}X_{+}\ot_{\BB}X_{-}&=X_{+}\ot_{\BB}X_{-}\Bar{b}\label{equ. inverse lambda 10}.
\end{align}
Similarly, for an anti-left Hopf algebroid, from \cite{BS} we have
\begin{align}
    \one{X_{[+]}}X_{[-]}\di{}\two{X_{[+]}}&=1\di{}X\label{equ. inverse mu 1};\\
   \two{X}{}_{[+]}\ot_{B}\two{X}{}_{[-]}\one{X}&=X\ot_{B}1\label{equ. inverse mu 2};\\
    (XY)_{[+]}\ot_{B}(XY)_{[-]}&=X_{[+]}Y_{[+]}\ot_{B}Y_{[-]}X_{[-]}\label{equ. inverse mu 3};\\
    1_{[+]}\ot_{B}1_{[-]}&=1\ot_{B}1\label{equ. inverse mu 4};\\
    (\one{X_{[+]}}\di{}\two{X_{[+]}})\ot_{B}X_{[-]}&=(\one{X}{}_{[+]}\di{}\two{X})\ot_{B}\one{X}{}_{[-]}\label{equ. inverse mu 5};\\
    \one{X_{[-]}}\ot{}\two{X_{[-]}}\ot X_{[+]}&=X_{[+][-]}\ot{}X_{[-]}\ot X_{[+][+]}\in \int^{a,b}\int_{c,d}{}_{c,\Bar{d}}\cL_{a,\Bar{b}}\ot {}_{d}\cL_{b}\ot {}_{a}\cL_{c}\label{equ. inverse mu 6};\\
    X&=X_{[+]}\varepsilon(X_{[-]})\label{equ. inverse mu 7};\\
    X_{[+]}X_{[-]}&=\overline{\varepsilon(X)}\label{equ. inverse mu 8};\\
    (a\Bar{a'}Xb\Bar{b'})_{[+]}\ot_{B}(a\Bar{a'}Xb\Bar{b'})_{[-]}&=\Bar{a'}X_{[+]}\Bar{b'}\ot_{B}\Bar{b}X_{[-]}\Bar{a}\label{equ. inverse mu 9};\\
    X_{[+]}\ot_{B}X_{[-]}b&=bX_{[+]}\ot_{B}X_{[-]}\label{equ. inverse mu 10}.
\end{align}

\begin{prop}\label{prop. anti left and left Galois maps}
    If $\cL$ is a left and anti-left Hopf algebroid, then we have
    \begin{align*}
        X_{[+]}\ot X_{[-]+}\ot X_{[-]-}&=X\t{}_{[+]}\ot X\t{}_{[-]}\ot X\o\in \cL\ot_{B}\cL\ot_{\BB}\cL;\\
        X_{+}\ot X_{-[+]}\ot X_{-[-]}&=X\o{}_{+}\ot X\o{}_{-}\ot X\t\in \cL\ot_{\BB}\cL\ot_{B}\cL;\\
        X_{[+]+}\ot X_{[-]}\ot X_{[+]-}&=X_{+[+]}\ot X_{+[-]}\ot X_{-}\in \int^{c,d}\int_{a,b} {}_{c, \Bar{d}}\cL_{\Bar{a}, b}\ot {}_{b}\cL_{c}\ot {}_{\Bar{a}}\cL_{\Bar{d}},
    \end{align*}
   for any $X\in\cL$. 
\end{prop}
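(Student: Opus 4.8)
\emph{General strategy.} The three displayed identities are compatibility (``commutation'') relations between the left Hopf translation map $X\mapsto X_{+}\ot_{\BB}X_{-}=\lambda^{-1}(X\di 1)$ and the anti-left Hopf translation map $X\mapsto X_{[-]}\ot^{B}X_{[+]}=\mu^{-1}(1\di X)$. Since $\cL$ is both left and anti-left Hopf, $\lambda$ and $\mu$ are invertible, and hence so is the auxiliary map $\hat{\mu}(X\ot Y)=X\o Y\di X\t$, because it factors as $\mu$ precomposed with the (bijective) flip $\cL\ot_{B}\cL\to\cL\ot^{B}\cL$, $X\ot Y\mapsto Y\ot X$; indeed $\mu(Y\ot X)=X\o Y\di X\t=\hat\mu(X\ot Y)$. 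The plan is, in each case, to apply to both sides a bijective map built from these, chosen so that one side collapses at once by the defining relations \eqref{equ. inverse lambda 1} and \eqref{equ. inverse mu 1}, and then to reduce the other side to the same element using the coproduct relations and the remaining structure identities. Throughout I use that the coends defining $\di$, $\ot_{B}$ and $\ot_{\BB}$ all involve the lower integral $\int_{b}$ and therefore commute, so the composite maps below are well defined; their bijectivity is inherited because applying a functor such as $\cL\ot_{B}(-)$ or $(-)\di\cL$ to an isomorphism preserves isomorphisms, with the ambient flatness hypotheses ensuring the iterated constructions behave.

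\emph{The first identity.} Both sides lie in $\cL\ot_{B}\cL\ot_{\BB}\cL$, and I would apply the composite $(\hat{\mu}\ot\id)\circ(\id\ot\lambda)$ into $\cL\di\cL\di\cL$: first $\id\ot\lambda$ acts on the last two legs, and then $\hat{\mu}$ acts on the first leg together with the left-hand leg of the resulting $\di$-factor (legitimate since $\ot_{B}$ and $\di$ commute). On the left-hand side this is immediate: $\id\ot\lambda$ collapses $X_{[-]+}\ot_{\BB}X_{[-]-}$ to $X_{[-]}\di 1$ by \eqref{equ. inverse lambda 1}, and $\hat{\mu}$ then turns $X_{[+]}\ot_{B}X_{[-]}$ into $1\di X$ by \eqref{equ. inverse mu 1}, giving $1\di X\di 1$. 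The work is on the right-hand side: after $\id\ot\lambda$ one gets $X\t{}_{[+]}\ot_{B}\big((X\t{}_{[-]})\o\di(X\t{}_{[-]})\t X\o\big)$, and I would simplify the image under $\hat\mu$ using the coproduct formulas \eqref{equ. inverse mu 5} and \eqref{equ. inverse mu 6} for $X\t{}_{[+]}$ and $X\t{}_{[-]}$, followed by the two contractions \eqref{equ. inverse mu 8} and \eqref{equ. inverse mu 2}; these again produce $1\di X\di 1$. Injectivity of the composite then yields the identity.

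\emph{The second and third identities.} The second identity is the mirror image of the first, with the roles of $+/-$ and $[+]/[-]$ (and of $\lambda$ and $\mu$) interchanged and the tensor order reversed. I would prove it either by the symmetric computation, applying $(\id\ot\hat{\mu})\circ(\lambda\ot\id)$ and collapsing with \eqref{equ. inverse lambda 1}, \eqref{equ. inverse mu 1} on one side and \eqref{equ. inverse lambda 2}, \eqref{equ. inverse lambda 5}, \eqref{equ. inverse lambda 6} on the other; or, more economically, by applying the first identity to the co-opposite bialgebroid $\cL^{\cop}$, under which the left and anti-left Hopf structures, and hence the two translation maps, interchange. The third identity is the genuine commutation of the two translation maps and lives in the iterated Takeuchi-type space $\int^{c,d}\int_{a,b}{}_{c,\Bar{d}}\cL_{\Bar{a},b}\ot{}_{b}\cL_{c}\ot{}_{\Bar{a}}\cL_{\Bar{d}}$; here I would again apply $\lambda$ to the $+/-$ pair and $\hat{\mu}$ (resp.\ $\mu$) to the $[+]/[-]$ pair on each side, collapsing both to a common fully reduced element via \eqref{equ. inverse lambda 1} and \eqref{equ. inverse mu 1}, and moving between the two nestings of translation maps by means of \eqref{equ. inverse lambda 6} and \eqref{equ. inverse mu 6}.

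\emph{Main obstacle.} The principal difficulty is the bookkeeping forced by the noncommutative base: the balanced products $\di$, $\ot_{B}$, $\ot_{\BB}$ and $\ot^{B}$ are genuinely distinct, so a translation map may only be applied to a pair of adjacent legs carrying the matching balancing, and in the first and third identities the relevant pairs are interleaved, so one must fix the order of the two collapses and invoke commutativity of the lower coends to rearrange. For the third identity there is the additional, and I expect hardest, task of verifying that both sides actually lie in—and agree inside—the stated Takeuchi subspace; this requires tracking the $B$- and $\BB$-module structures through the linearity relations \eqref{equ. inverse lambda 9}, \eqref{equ. inverse lambda 10}, \eqref{equ. inverse mu 9}, \eqref{equ. inverse mu 10} together with \eqref{equ. inverse lambda 6} and \eqref{equ. inverse mu 6}. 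Finally, each reduction relies on the (faithful) flatness hypotheses so that the completion maps remain injective after being tensored with $\cL$ over the base.
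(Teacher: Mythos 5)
Your core strategy coincides with the paper's. The paper proves each identity by applying one injective map to both sides and comparing: $\id_{\cL}\ot\lambda$ for the first, $\id_{\cL}\ot\mu$ for the second, and $\mu\ot\id_{\cL}$ for the third; one side collapses at once by \eqref{equ. inverse lambda 1} resp.\ \eqref{equ. inverse mu 1}, and the other is reduced to the same element via the mixed relations (\eqref{equ. inverse mu 6} and \eqref{equ. inverse mu 2} for the first, \eqref{equ. inverse lambda 6} and \eqref{equ. inverse lambda 2} for the second, \eqref{equ. inverse lambda 5} and \eqref{equ. inverse mu 1} for the third). Your composites contain exactly these maps as their first factors; the additional contraction you append ($\hat\mu\ot\id$, etc.) is not needed for injectivity and only multiplies the bookkeeping. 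Your factorization $\hat\mu=\mu\circ\mathrm{flip}$ is correct, and your alternative derivation of the second identity from the first applied to $\cL^{\cop}$ is a genuine economy the paper does not exploit.

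Three points need repair, however. First, your appeals to flatness are both unavailable and unnecessary: the proposition assumes only that $\cL$ is left and anti-left Hopf, with no flatness hypotheses, so a proof that "relies on the (faithful) flatness hypotheses" would establish less than the stated result. None is needed: $\lambda$, $\mu$, $\hat\mu$ and their inverses are maps of the relevant balanced bimodule structures (by \eqref{equ. inverse lambda 9}, \eqref{equ. inverse lambda 10}, \eqref{equ. inverse mu 9}, \eqref{equ. inverse mu 10} and the $B^{e}$-linearity of $\Delta$), so tensoring them with $\id_{\cL}$ yields mutually inverse, hence bijective, maps. Second, well-definedness of your extra contraction is not a consequence of "commuting coends": the real issue is compatibility of $\hat\mu\ot\id$ with the $\di$-balancing between legs $2$ and $3$, which holds only because of the Takeuchi condition $X\o\overline{a}\di X\t=X\o\di X\t a$; carrying this out also shows the target is $\cL\di\cL\ot_{B}\cL$ rather than the $\cL\di\cL\di\cL$ you write, so the verification you skip is precisely where the content lies. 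Third, and most seriously, your plan for the third identity — apply $\lambda$ to the $+/-$ pair and $\mu$ to the $[\pm]$ pair "on each side" and observe both collapse to $1\di X\di 1$ — is not a proof as stated: the two sides have their translation pairs nested differently (on the left-hand side the $+/-$ pair occupies the non-adjacent legs $1$ and $3$), so you would be applying two different maps to the two sides, and equality of the two images then proves nothing. The repair is the paper's route: fix the single injective map $\mu\ot\id_{\cL}$ (with $\mu$ eating legs $2,1$); the right-hand side collapses to $1\di X_{+}\ot X_{-}$ by \eqref{equ. inverse mu 1}, and the left-hand side reduces to the same element by \eqref{equ. inverse lambda 5} followed by \eqref{equ. inverse mu 1}.
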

\begin{proof}
    The first equality can be proved by comparing the result of $\id_{\cL}\ot \lambda $ on both sides. The second 
 equality can be proved by comparing the result of $\id_{\cL}\ot \mu$ on both sides. The third 
 equality can be proved by comparing the result of $\mu\ot \id_{\cL}$ on both sides. 
\end{proof}

\begin{prop}\label{rightff implies leftff}
    Let $\CH$ be a $B$-Hopf algebroid. If $\CH$ is faithfully flat as a right $B$-module and as a right $\ol B$-module, then $\CH$ is faithfully flat as a left $B$-module and left $\ol B$-module.
\end{prop}
\begin{proof}
    By tensoring $\lambda\colon\CH\ou{\ol B}\CH\to\CH\di\CH$ with a left $\CH$-module $M$ we obtain a similar isomorphism 
    \[\lambda_M\colon\CH\ou{\ol B}M\to\CH\di M;\quad X\ot m\mapsto X\o\ot X\t m.\]
    It follows that the functor $\CH\di \leer$ is faithfully exact on the category of left $\CH$-modules if $\CH$ is faithfully flat as a right $\ol B$-module. Similarly, by
    \[\mu_M\colon\CH\ou{B}M\to M\di \CH;\quad X\ot m\mapsto X\o m\ot X\t ,\]
    $\leer\di\CH$ is faithfully exact on the category of left $\CH$-modules if $\CH$ is faithfully flat as a right $B$-module.

    Note that $\lambda\colon\CH\ou{\ol B}\CH\to\CH\di\CH$ is a $B$-module map between the structures given by left multiplication by $b\in B$ on the right tensor factor on the source, and right multiplication by $\ol b$ on the left tensor factor on the target, namely $\lambda(X\ot bY)=X\o \ot X\t bY=X\o\ol b\ot X\t Y.$
    Consider
    \begin{align*}
        F\colon\RMod B&\to\LMod{\ol B}&M&\mapsto M\ou B\CH\\
        G\colon\LMod{\ol B}&\to\vect&N&\mapsto \CH\ou{\ol B}N\\
        H\colon\RMod B=\LMod{\ol B}&\to 
\LMod\CH& M&\mapsto \CH \ou{\ol B}M \\
        J\colon\LMod\CH&\to \vect&\Gamma&\mapsto \Gamma\di\CH    .
\end{align*}
Then we have $GF=JH.$  Since $G,H,$ and $J$ are faithfully exact, so is $F$. This means that $\CH$ is left $B$-faithfully flat. Similarly $\CH$ is left $\ol B$-faithfully flat.
\end{proof}

\section{Hopf Galois extensions of Hopf algebroids}\label{sec:onesided}

In this section we will collect some results on (one-sided) Hopf-Galois extensions over Hopf algebroids, mostly in preparation for our work on biGalois theory. An important topic will be the question as to how one can switch between left and right Galois extensions. In the classical (Hopf algebra) theory this would simply be done by using the antipode. Here, this is not available, and also, the notion of a right Galois extension is not immediately well-defined.

We circumvent the latter problem by working with what we call "anti-right" Galois extensions. As to the problem of "switching sides" we lean heavily on an observation originating in \cite{CGK16}; the result we will use is in \cite{C20}. Namely, there is a way to switch between left and right comodule structures that are "regular" which is analogous to switching using a bijective antipode in the Hopf case. Moreover, under mild conditions, if our Hopf algebroid fulfills a condition analogous to having bijective antipode (namely, the bialgebroid and its coopposite need to be Hopf), then comodules are always regular.

\subsection{(Co)modules of left bialgebroids and regularity}

The following observation was made in \cite{schau1}; with some technical modification it in fact characterizes bialgebroids: 
\begin{rem}
     Let $\cL$ be a left bialgebroid over $B$. A left $\cL$-module $M$ is a $B$-bimodule $M$ by 
     \[bm:=s_{L}(b)\la m,\qquad mb:=t_{L}(b)\la m\]
     for $b\in B$  and $m\in M$. 

     Thus, we have a functor $\LMod \cL\to\Bimod{B}{B}$. The category $\LMod\cL$ is monoidal with tensor product of $M,N\in\LMod\cL$ given by $M\ou BN$ with module structure 
     \[ X\la\,(m\,n)=(X\o\la m)\ot (X\t\la n)\]
      for any $m\in M,n\in N$.
\end{rem}
Now the following definition of module algebra is natural.
\begin{defi}
     Let $\cL$ be a left bialgebroid over $B$. A left $\cL$-module algebra is a is an algebra in the monoidal category $\LMod\cL$. Thus, it is a $B$-ring with a $\cL$-module structure such that the $B$-bimodule structure of th $B$-ring coincides with that of the $\cL$-module, and 
     \[X\la b=\varepsilon(X\,b),\qquad X\la\,(p\,q)=(X\o\la p)(X\t\la q)\]
      for any $p, q\in P$.
\end{defi}
\emph{One} (see remark below) suitable definition of comodules over a bialgebroid is given in \cite{schau1}.
\begin{defi}
    Let $\cL$ be a left bialgebroid over $B$. A left $\cL$ comodule is a $B$-bimodule $\Gamma$, together with a $B$-bimodule map ${}_{L}\delta: \Gamma\to \cL\times_{B}\Gamma\subseteq\cL\di \Gamma$, written ${}_{L}\delta(p)=p\mo\ot p\z$ (${}_{L}\delta$ is a $B$-bimodule map in the sense that
${}_{L}\delta(bpb')=bp\mo b'\ot p\z$), such that
\begin{align*}
    &\alpha\circ (\Delta\times_{B}\id_{\Gamma})\circ {}_{L}\delta=\alpha'\circ(\id_{\cL}\times_{B}{}_{L}\delta)\circ {}_{L}\delta:\Gamma\to \cL\times_{B} \cL \times_{B} \Gamma, p\mapsto p\mt\ot p\mo\ot p\z\\ 
    &\varepsilon(p\mo)p\z=p,
\end{align*}
The category $^\cL\M$ of left comodules is a monoidal category under the tensor product over $B$ with the codiagonal comodule structure
\[\Gamma\ot_B\Lambda \ni p\ot q\mapsto p\mo q\mo\ot p\z \ot q\z\in \cL\diamond_B(\Gamma\ou B\Lambda)\]
A left comodule algebra over $\CL$ is by definition an algebra in the category of left $\CL$-comodules; that is, an algebra $P$ with a left $\CL$-comodule structure such that 
\[\delta(pq)=p\swm 1q\swm 1\ot p\sw 0q\sw 0\in \CL\di P.\]
As in the case of ordinary Hopf algebras this can also be read as $\delta$ being an algebra map $P\to \CL\times_BP$.

A right $\cL$-comodule is a $\BB$-bimodule $\Gamma$, together with a $\BB$-bimodule map $\delta_{L}: \Gamma\to \Gamma\times_{B}\cL\subseteq\Gamma\di \cL$, written $\delta_{L}(p)=p \z\di p\o$ ($\delta_{L}$ is a $\BB$-bimodule map in the sense that $\delta_{L}(\Bar{b}p\Bar{b'})=p\z\ot \Bar{b}p\o\Bar{b'}$), such that
    \begin{align*}
        &\alpha\circ (\delta_{L}\times_{B}\id_{\cL})\circ \delta_{L}=\alpha'\circ (\id_{\Gamma}\times_{B} \Delta)\circ \delta_{L}:\Gamma\to \Gamma\times_{B} \cL \times_{B} \cL, p\mapsto p\z\ot p\o\ot p\t\\ 
    &\overline{\varepsilon(p\o)}p\z=p,
    \end{align*}
for any $p\in \Gamma$.
\end{defi}
\begin{rem}\label{simpler comodule def}
    Since a left bialgebroid $\cL$ is in particular a coring over $B$, there is already a notion of comodule over $\cL$. By definition, such a comodule over the coring $\cL$ has just one $B$-module structure. As proved in [Section 1.4 of \cite{HHP}], the two notions coincide: A left comodule over $\cL$ in the sense of the definition above is quite obviously a left comodule over the coring $\cL$, but also given a comodule over the coring $\cL$, there is a unique second $B$-module structure which makes it a comodule in the bialgebroid sense. More precisely, given a left comodule $M$ of a coring $\cL$, the right $B$-module structure on $M$ is given by $mb:=\varepsilon(m\mo\,b)m\z$ for any $m\in M$. By direct computation one can check this is a right $B$-module. Moreover, we can also check the image of the coproduct belongs to Takeuchi product with respect to the right $B$-module structure. Indeed,
    \begin{align*}
        m\mo\ot m\z\,b=&m\mo\ot \varepsilon(m\z\mo\,b)m\z=m\mt\ot \varepsilon(m\mo\,b)m\z\\
        =&m\mt\overline{b}\ot \varepsilon(m\mo)m\z=m\o\overline{b}\ot m\z.
    \end{align*}
Also, the coaction is  $B$-linear by the calculation
\begin{align*}
    (mb)\mo\ot (mb)\z=&\varepsilon(m\mt b)m\mo\ot m\z=\varepsilon(m\mt )m\mo\, b\ot m\z\\
    =&m\mo\,b\ot m\z
\end{align*}
As a result, given a left $B$-bialgebroid $\cL$, left comodules of the coring $\cL$ are  the same as  left comodules of the left bialgebroid $\cL$.

Note that the category of left $\CL$-comodules is abelian if $\CL$ is a flat left $\ol B$-module (the condition is needed to define the comodule structure on a kernel). Similarly the category of right $\CL$-comodules is abelian if $\CL$ is a flat left $B$-module.
\end{rem}

\begin{rem}
For $\Gamma\in{^\cL\M}$ we have \[^{co\cL}\Gamma:=\{p\in\Gamma\,|\,\delta(p)=1\ot p\}\subseteq\{p\in\Gamma\,|\,bp=pb\,\forall b\in B\}\]. 
\end{rem}

\begin{rem}\label{rem:codiagonal_as_composition}
    For an ordinary Hopf algebra $H$, the codiagonal comodule structure on the tensor product of two (right) comodules $V,W$ can easily be written as a composition 
    $$V\ot W\xrightarrow{\delta\ot\delta} V\ot H\ot W\ot H\xrightarrow{\tau_{23}}V\ot W\ot H\ot H\xrightarrow{1\ot\nabla}V\ot W\ot H.$$
    We need a suitable analog for later use but due to the various relative tensor products the definitions are a bit more involved.

    For $\Gamma,\Lambda\in\RComod\CL$ we note that the map
    $\tau_{23}\colon (\Gamma\times_B\CL)\ou{\ol B}(\Lambda\diamond\CL)\to (\Gamma\ou{\ol B}\Lambda)\diamond (\CL\ou{B^e}\CL)$ induced by flipping the second and third tensor factors is indeed well-defined (for this both the Takeuchi product instead of the diamond product and the tensor product over $B^e$ instead of $B$ are essential). With this, we can write the comodule structure on the tensor product again as
    $$\Gamma\ou{\ol B}\Lambda\xrightarrow{\delta\ot\delta}(\Gamma\times_B\CL)\ou{\ol B}(\Lambda\diamond\CL)\xrightarrow{\tau_{23}}(\Gamma\ou{\ol B}\Lambda)\diamond (\CL\ou{B^e}\CL)\xrightarrow{\id\ot\nabla}(\Gamma\ou{\ol B}\Lambda)\diamond \CL .$$
    This can be done for left comodules in a similar fashion.
\end{rem}

\begin{rem}\label{antiremark} For a left $B$-bialgebroid $\cL$, the coopposite $\cL^\cop$ is a left $\ol B$-bialgebroid (while the opposite algebra of $\cL$ does not have a left bialgebroid structure in general). The monoidal category of right (left) comodules over $\cL$ is naturally isomorphic to the category of left (right) comodules over $\cL^\cop$. In this way any notion or fact on left coactions of a left bialgebroid gives rise to a corresponding notion with equivalent properties for right coactions, and vice versa. For example the category $\M^\cL$ of right comodules is monoidal with the tensor product of comodules taken over $\BB$. Note also that the category of left $\cL^\cop$-modules is monoidally  equivalent to the category of $\cL$-modules with opposite tensor product: $_{(\cL^\cop)}\M\cong(_\cL\M)^\sym$.
\end{rem}

\begin{defi}
    Let $\cL$ be a left bialgebroid over $B$,  A right $\cL$-comodule $\Gamma$ is a \emph{regular} right $\cL$-comodule [\cite{C20} Definition 3.0.1], if 
\[\psi: \Gamma\ot_{\BB} \cL\to \Gamma\di \cL, \quad \psi(p\ot_{\BB} X)=p\z \di p\o X\]
is invertible. We adopt the shorthand
\begin{align}
    p\rz\ot_{\BB} p\rmo=\psi^{-1}(p \di 1). 
\end{align}
so that $\psi\inv(p\di X)=p\rz\ot p\rmo X.$

We will call a left $\cL$-comodule regular if it is regular as a right $\cL^\cop$-comodule. 
\end{defi}
Thus a left comodule $\Gamma$ is regular if 
\begin{align}\label{equ. definition of phi}
   \phi: \cL\ot^{B} \Gamma\to \cL\di \Gamma, \quad \phi(X\ot p)=p\mo X\ot p\z 
\end{align}
is bijective. We write
\begin{align}
    p\ro\ot^{B} p\rz=\phi^{-1}(1\di p). 
\end{align}

\begin{rem}
   The definition of a regular right comodule is such that $\cL$ is  regular as a right comodule over itself if and only if it is a left Hopf algebroid. If $B=k$ and $\cL$ is a Hopf algebra, then  any right $\cL$-comodule $\Gamma$ is  regular and $\forall p\in \Gamma$ we have
   $p\rz\ot p\rmo=p\z\ot S(p\o)$. By results of Chemla this can be maintained in absence of an antipode in the Hopf algebroid case under mild assumptions on the module structures of $\cL$.
\end{rem}
In the following proposition, we will see the properties of the inverse of the map $\phi$ generalize the properties of the inverse of the anti-Galois map. 

\begin{prop}[\cite{C20} Proposition 3.0.6]
    Let $\Gamma$ be a regular right comodule of a left $B$-bialgebroid $\cL$, then we have
    \begin{align}
        \label{equ. regular map 1}
        p\rz\di{} p\rz\o p\rmo=&1\di{}p,\\
        \label{equ. regular map 2}
        p\z\rz\ot_{\BB} p\z\rmo p\o=&p\ot_{\BB}1,\\
        \label{equ. regular map 4}
        (\Bar{b'}p\Bar{b})\rz\ot_{\BB} (\Bar{b'}p\Bar{b})\rmo=&p\rz\ot_{\BB}bp\rmo b',\\
        \label{equ. regular map 5}
        \Bar{b}p\rz\ot_{\BB}p\rmo=&p\rz\ot_{\BB}p\rmo \Bar{b},
    \end{align}
    Moreover, $\Gamma$ is a left comodule of $\cL$, with $B$-bimodule structure  $b'pb:=\Bar{b}p\Bar{b'}$ and  coaction ${}_{L}\delta(p):=p\rmo\ot{}p\rz$. 
We will refer to this comodule structure as the \emph{reverse} of the given one. 
In addition, if $\Gamma$ is a regular right comodule algebra then
\begin{align}\label{equ. skew regular map 3}
    (pq)\rmo\ot (pq)\rz=q\rmo\,p\rmo\ot p\rz\,q\rz \in \cL\di \Gamma.
\end{align}
That is to say, $\Gamma^\op$ with the $B$-bimodule structure as above is a left $\CL$-comodule algebra.

More generally, taking reverse comodule structures defines a functor $\mathcal M^\CL\to{}^\CL\mathcal M$; tensor product in the source is over $\ol B$ and in the target over $B$, and the functor is anti-monoidal. In this way we have a functor $\mathcal M^\CL\to{}^\CL\mathcal M$ which is monoidal. That is to say, the reverse comodule structure of $\Gamma\ot\Lambda$ for $\Gamma,\Lambda\in\mathcal M^\CL$ is given by
 \[(p\ot q)\rmo\ot (p\ot q)\rz=q\rmo p\rmo\ot p\rz\ot q\rz.\]
    In particular if $\CL$ is a Hopf algebroid, its reverse left comodule structure is $X\rmo\ot X\rz = X_-\ot X_+$.
\end{prop}
\begin{rem}\label{bij between left and right regular com}
If we identify, as indicated, the $\BB$-bimodule structure of a right $\cL$-comodule with a $B$-bimodule structure, then \eqref{equ. regular map 2} can be written
\begin{equation}\label{symmetric reverse formula}
    p\z\rmo p\o\di p\z\rz =1\di p\in \cL\di\Gamma.
\end{equation}

 As usual, there is a version of the above for regular left comodules. In the form \eqref{equ. regular map 1}. \eqref{symmetric reverse formula} of the condition that defines a regular right comodule and its reverse, it is obvious %
that the same condition describes regularity of the left comodule, and its reverse would be the structure we started from. Thus we have a bijection between right regular comodule structures and left regular comodule structures on the same space, given by taking the respective reverse. 
\end{rem}

We only note the rules for passing ``from left to right'' for later use:

\begin{prop}\label{prop. properties of skew regular comodules}[\cite{C20} Proposition 3.0.6]
    Let $\Gamma$ be a regular left comodule of a left $B$-bialgebroid $\cL$. Then we have
    \begin{align}
        \label{equ. skew regular map 1}
        p\rz\mo p\ro\di p\rz\z=&1\di p,\\
        \label{equ. skew regular map 2}
        p\z\ro p\mo\ot^{B} p\z\rz=&1\ot^{B}p,\\
        \label{equ. skew regular map 4}
        (bpb')\ro\ot^{B} (bpb')\rz=&\Bar{b'}p\ro \Bar{b}\ot^{B} p\rz,\\
        \label{equ. skew regular map 5}
        p\ro b \ot^{B} p\rz=&p\ro\ot^{B} bp\rz 
    \end{align}
    Moreover, $\Gamma$ is a right comodule of $\cL$, with coaction $\delta_{L}(p):=p\rz\ot p\ro$,  for any $p\in \Gamma$ and $b, b'\in B$; we will call this the reverse of the given coaction.
\end{prop}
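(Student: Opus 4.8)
The plan is to obtain the whole statement from the corresponding result for regular right comodules, \eqref{equ. regular map 1}--\eqref{equ. regular map 5}, by applying it to the coopposite bialgebroid $\cL^\cop$ rather than to $\cL$, and then translating back through the dictionary of \cref{antiremark}. This is legitimate because, by definition, a left $\cL$-comodule $\Gamma$ is regular exactly when it is regular as a right $\cL^\cop$-comodule; so the hypothesis is precisely what is needed to invoke the right-handed proposition over $\cL^\cop$.

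First I would pin down the translation explicitly. Under the isomorphism between left $\cL$-comodules and right $\cL^\cop$-comodules, the left coaction ${}_L\delta(p)=p\mo\ot p\z$ becomes the right $\cL^\cop$-coaction $p\mapsto p\z\ot p\mo$ (the legs are interchanged, and $\cL^\cop=\cL$ as an algebra). Since $\cL^\cop$ is a bialgebroid over $\BB$, its right-comodule regularity map $\psi$ uses $\Gamma\ot_{B}\cL$, and a direct comparison shows that swapping the two tensor legs identifies this $\psi$ with $\phi$; consequently the reverse shorthand for the right $\cL^\cop$-comodule $\Gamma$ matches $\phi^{-1}(1\di p)=p\ro\ot^B p\rz$, with $p\rz$ corresponding to the $[0]$-leg and $p\ro$ to the $[-1]$-leg. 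The barred base actions of the right $\cL^\cop$-comodule become, over $\overline{\BB}=B$, the plain $B$-actions with each bar toggled. This step carries all the content and is also where the real care is needed: one must track how $\di$, $\ot_B$ and $\ot^B$ and their barred variants are relabelled under $\cop$, and how bars on base elements flip, in order to reach the exact placement of $\Bar b$ versus $b$ in \eqref{equ. skew regular map 4} and \eqref{equ. skew regular map 5}.

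With the dictionary fixed, each identity is a single substitution followed by a leg-swap: \eqref{equ. regular map 1} over $\cL^\cop$ gives \eqref{equ. skew regular map 1}; \eqref{equ. regular map 2}, after substituting $p\z$ for the $0$-leg and $p\mo$ for the $1$-leg and interchanging the tensor factors, gives \eqref{equ. skew regular map 2}; and the $\BB$-linearity relations \eqref{equ. regular map 4}, \eqref{equ. regular map 5} become \eqref{equ. skew regular map 4}, \eqref{equ. skew regular map 5} after toggling bars. I would record these four as a short table of substitutions rather than expanding each. Finally, the claim that $\Gamma$ is a right $\cL$-comodule with $\delta_L(p)=p\rz\ot p\ro$ is the translate of the reverse-comodule statement of the right-handed proposition (cf.\ \cref{bij between left and right regular com}): the reverse of the regular right $\cL^\cop$-comodule $\Gamma$ is a left $\cL^\cop$-comodule with coaction $p\mapsto p\ro\ot p\rz$, and a left $\cL^\cop$-comodule is the same thing as a right $\cL$-comodule, whose coaction is the leg-swap $p\rz\ot p\ro$. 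No extra work is required: the coassociativity and counitality of $\delta_L$ are exactly those already established for the reverse over $\cL^\cop$.
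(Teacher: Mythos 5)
Your proposal is correct and takes essentially the same route as the paper: the paper gives no independent proof of this proposition, presenting it as the left--right mirrored version (via $\cL^\cop$) of the preceding right-comodule proposition cited from \cite{C20}, which is legitimate precisely because the paper \emph{defines} regularity of a left $\cL$-comodule as regularity of the corresponding right $\cL^\cop$-comodule. Your explicit tracking of the leg swap, the identification of $\psi$ over $\cL^\cop$ with $\phi$, and the bar-toggling on base elements is exactly the translation the paper leaves implicit, and you carry it out correctly.
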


\begin{lem}\label{lem. anti-left Hopf gives skew regular}[\cite[Thm.3.0.4,Prop.3.0.6]{C20} ]
    If $\cL$ is a left $B$-Hopf algebroid such that $\cL$ is flat as a left $B$-module and left $\BB$-module, then any right $\cL$-comodule $\Gamma$ is  regular. 
    Moreover, we have
    \begin{align}\label{equ. regular map 7}
        p\rz\z\ot{}p\rz\o\ot p\rmo=p\z\ot{} p\o{}_{+}\ot p\o{}_{-}\in \int^{a,d}\int_{b,c}{}_{\Bar{b}}\Gamma_{\Bar{a}}\ot{}_{b,\Bar{d}}\cL_{a,\Bar{c}}\ot {}_{\Bar{c}}\cL_{\Bar{d}}, 
    \end{align}

    Similarly, if $\cL$ is left anti-Hopf with the same flatness conditions, then every left $\cL$-comodule is regular, and
    \begin{align}\label{equ. skew regular map 7}
        p\ro\ot^{B} p\rz\mo\di{} p\rz\z=p\mo{}_{[-]}\ot^{B} p\mo{}_{[+]}\di{} p\z\in \int^{a,d}\int_{b,c}{}_{b}\cL_{a}\ot{}_{a,\Bar{c}}\cL_{b,\Bar{d}}\ot {}_{c}\Gamma_{d},
    \end{align}
    for any $p\in\Gamma$.
\end{lem}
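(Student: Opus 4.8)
The plan is to prove regularity by exhibiting an explicit two-sided inverse of $\psi$, built from the coaction of $\Gamma$ and the translation map $X\mapsto X_{+}\ot_{\BB}X_{-}=\lambda^{-1}(X\di 1)$ of the left Hopf algebroid $\cL$. Concretely, I would set
\[\chi\colon\Gamma\di\cL\to\Gamma\ot_{\BB}\cL,\qquad \chi(p\di X)=\overline{\varepsilon(p\o{}_{+})}\,p\z\ot_{\BB}p\o{}_{-}X,\]
so that in particular $p\rz\ot_{\BB}p\rmo=\chi(p\di 1)=\overline{\varepsilon(p\o{}_{+})}\,p\z\ot_{\BB}p\o{}_{-}$. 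When $B=k$ and $\cL$ is an ordinary Hopf algebra this reduces to $p\z\ot S(p\o)X$, the familiar side-switching formula, so $\chi$ is the natural antipode-free substitute. A first routine check, using that $\delta_{L}$ is a $\BB$-bimodule map together with \eqref{equ. inverse lambda 9}, shows that $\chi$ respects the defining relation of $\Gamma\di\cL$, so it descends to a well-defined $k$-linear map.

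Next I would verify that $\chi$ is a two-sided inverse of $\psi$. In the composite $\chi\circ\psi$ one applies $\delta_{L}$ a second time and uses coassociativity of the coaction to relabel the three resulting legs; the translation identity \eqref{equ. inverse lambda 2} then collapses the two $\cL$-legs back to a single coaction leg tensored with $1$, and the right counit axiom $\overline{\varepsilon(p\o)}p\z=p$ returns $p\ot_{\BB}X$. In the composite $\psi\circ\chi$ one similarly arrives at an expression of the form $p\z\di\overline{\varepsilon(p\t{}_{+})}\,p\o\,p\t{}_{-}X$; here \eqref{equ. inverse lambda 5}, the counit axiom $\overline{\varepsilon(X\t)}X\o=X$ of $\cL$, and \eqref{equ. inverse lambda 8} together reduce $\overline{\varepsilon(p\t{}_{+})}\,p\o\,p\t{}_{-}$ to the image $s(\varepsilon(p\o))$ of the counit, after which the same right counit axiom gives back $p\di X$. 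This already establishes that every right $\cL$-comodule is regular.

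For the explicit rule \eqref{equ. regular map 7} I would compute $\delta_{L}(p\rz)\ot p\rmo$ from $p\rz\ot_{\BB}p\rmo=\chi(p\di 1)$ and compare it with the candidate right-hand side $p\z\ot p\o{}_{+}\ot p\o{}_{-}$, both living in $\int^{a,d}\int_{b,c}{}_{\Bar{b}}\Gamma_{\Bar{a}}\ot{}_{b,\Bar{d}}\cL_{a,\Bar{c}}\ot{}_{\Bar{c}}\cL_{\Bar{d}}$. As in the proof of \cref{prop. anti left and left Galois maps}, the efficient route is to apply $\id_{\Gamma}\ot\lambda$ to the last two legs: on the right, \eqref{equ. inverse lambda 1} turns $p\o{}_{+}\ot_{\BB}p\o{}_{-}$ into $p\o\di 1$ and hence the whole expression into $\delta_{L}(p)\di 1$, while on the left the same value is produced by coassociativity of the reverse coaction together with the identity $\psi(p\rz\ot_{\BB}p\rmo)=p\di 1$ established above. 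Because $\cL$ is flat as a left $B$- and left $\BB$-module, $\id_{\Gamma}\ot\lambda$ is injective on the space in question, so agreement after applying it forces \eqref{equ. regular map 7}.

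The anti-left Hopf half follows by coopposite duality rather than a second computation. A comparison of the Galois maps $\mu$ and $\lambda^{\cop}$ shows that $\cL$ is anti-left Hopf exactly when $\cL^{\cop}$ is left Hopf, and left $\cL$-comodules are precisely right $\cL^{\cop}$-comodules (\cref{antiremark}); transporting the left $B$- and $\BB$-flatness of $\cL$ to $\cL^{\cop}$ and invoking the first half therefore makes every left $\cL$-comodule regular, and reading \eqref{equ. regular map 7} through the dictionary $\lambda\leftrightarrow\mu$, $X_{\pm}\leftrightarrow X_{[\pm]}$, $\Delta\leftrightarrow\Delta^{\cop}$ yields \eqref{equ. skew regular map 7}. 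Throughout, the genuine difficulty is not any individual identity but the bookkeeping over iterated Takeuchi and balanced tensor products: applying the translation map inside a single leg of a $\di$- or $\ot_{\BB}$-product and reordering the resulting $\int^{a}$ and $\int_{c}$ factors is legitimate only because the flatness hypotheses make the comparison maps $\alpha,\alpha'$ of Section~\ref{sec2} isomorphisms. Keeping every expression in its correct ambient integral space, and justifying each such rearrangement by flatness, is where the real care lies.
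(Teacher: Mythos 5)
First, a remark on the comparison itself: the paper does not prove this lemma at all --- it is imported verbatim from Chemla \cite[Thm.\ 3.0.4, Prop.\ 3.0.6]{C20} --- so your attempt is measured against that cited argument rather than against anything written in the text.

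There is a genuine gap at the central step. Your map $\chi(p\di X)=\overline{\varepsilon(p\o{}_{+})}\,p\z\ot_{\BB}p\o{}_{-}X$ is not well defined, and the ``routine check'' you invoke (compatibility with the defining relation of the domain $\Gamma\di\cL$, via $\BB$-bilinearity of $\delta_{L}$ and \eqref{equ. inverse lambda 9}) does not touch the real problem: the formula applies $\overline{\varepsilon(-)}$ to one leg of the class $p\o{}_{+}\ot_{\BB}p\o{}_{-}\in\cL\ot_{\BB}\cL$ and moves the result onto $\Gamma$, and the assignment $A\ot B\mapsto\overline{\varepsilon(A)}\,q\ot_{\BB}BX$ does not factor through $\cL\ot_{\BB}\cL$. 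Compatibility with the balancing relation $At(b)\ot B\sim A\ot t(b)B$ would force $\overline{\varepsilon(At(b))}\,q=\overline{\varepsilon(A)}\,q\,\Bar{b}$ for all $A,b,q$; but $\varepsilon(At(b))=\varepsilon(As(b))=\hat{\varepsilon}(A)(b)$ is the \emph{action} of $A$ on $b$ (the counit of a bialgebroid is an anchor, not an algebra map), so this fails already for $A=1$, where it reads $\Bar{b}q=q\Bar{b}$ for all $q\in\Gamma$. Since two representatives in $\Gamma\ot\cL\ot\cL$ of the same class differ by an arbitrary element of the relevant kernel, the value of your recipe genuinely depends on the chosen representative; in the Hopf-algebra case $B=k$ the issue is invisible (there $\varepsilon(Y_{+})Y_{-}=S(Y)$ is a bona fide map), which is presumably why it slipped through, and placing $\varepsilon(p\o{}_{+})$ on the other side of $p\z$ fails for the symmetric reason.

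This also inverts your reading of the hypotheses: flatness is not ``bookkeeping'' needed only for \eqref{equ. regular map 7}; it is what makes the inverse constructible at all. The workable route is to observe that $p\z\ot p\o{}_{+}\ot_{\BB}p\o{}_{-}X$ lies in the cotensor product $\Gamma\Box^{\cL}(\cL\ot_{\BB}\cL)$ (coassociativity plus \eqref{equ. inverse lambda 5}), and that left $\BB$-flatness of $\cL$ yields $(\Gamma\Box^{\cL}\cL)\ot_{\BB}\cL\cong\Gamma\Box^{\cL}(\cL\ot_{\BB}\cL)$; on representatives of the normal form $\sum_j\delta_{L}(p_j)\ot Z_j$ supplied by this identification, the counit axiom together with the $\BB$-bimodule property of $\delta_{L}$ does make ``apply $\varepsilon$ to the middle leg'' well defined, so $\psi$ is exhibited as the composite $\Gamma\ot_{\BB}\cL\cong\Gamma\Box^{\cL}(\cL\ot_{\BB}\cL)\xrightarrow{\;\Gamma\Box\lambda\;}\Gamma\Box^{\cL}(\cL\di\cL)\cong\Gamma\di\cL$ of well-defined maps and inverted leg by leg. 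Had your direct formula been legitimate, regularity would follow with no flatness hypothesis whatsoever, which is stronger than the cited theorem and should have been a warning sign. The remainder of your argument --- the two composite computations, the derivation of \eqref{equ. regular map 7} by applying $\id\ot\lambda$ and using injectivity, and the reduction of the anti-Hopf half to the coopposite --- is sound in outline once $\psi^{-1}$ has been properly constructed, but as written the proof has a hole exactly where the Hopf-algebroid setting differs from the Hopf-algebra one.
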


 We recall the definition of cotensor product:
Let $P$ be a right comodule of a left $B$-bialgebroid $\cL$ and $Q$ be a left comodule of $\cL$, then the cotensor product $P\Box^{\cL} Q$ is defined to be the 
\[P\Box^{\cL} Q:=\{p\ot q\in P\di{}Q\quad|\quad p\z\ot p\o\ot q=p\ot q\mo\ot q\z\}\subset P\times_BQ,\]
where the balanced tensor product $P\di{} Q$ above is induced by the $\BB$-bimodule of $P$ and the $B$-bimodule of $Q$, i.e. $\overline{b}p\di{}q=p\di{}bq$. One can check that $P\Box^\cL Q\subset P\times_BQ.$ Indeed, if $p\ot q\in P\Box^{\cL}Q$, then $p\Bar{b}\di{}q=p\di{}qb$, since 
\begin{align*}
\delta_{L}\di{}\id_{Q}(p\Bar{b}\di{}q)=&p\z\di{} p\o t_{L}(b)\di{} q=p\di{}q\mo t_{L}(b)\di{}q\z\\
=&p\di{}q\mo\di{}q\z b=\delta_{L}\di{}\id_{Q}(p\di{}qb).
\end{align*}

\begin{rem}
    If $U$ is a left $\cL$-comodule then $U\cong \cL\Box^{\cL} U$. Similarly,  if $V$ is a right $\cL$-comodule then $V\cong V\Box^{\cL}\cL$.
\end{rem}

The following identity of cotensor product and coinvariants in a tensor product is already in \cite{schneider}; the use of the antipode there is replaced by passing to reverse comodule structures here.
\begin{lem}\label{coinvariants vs cotensor product}
    Let $\cL$ be a left $B$-bialgebroid and $\Gamma,\Lambda\in{^\cL\M}$. If $\Gamma$ is regular, then
    \[^{co\cL}(\Gamma\ou B\Lambda)=\Gamma\Box^{\cL}\Lambda\]
    if we endow $\Gamma$ with the reverse right $\cL$-comodule structure as in \cref{prop. properties of skew regular comodules}.

   Similarly, for $P, Q\in \CM^{\cL}$, if $P$ is regular, we have
    \[(P\ot_{\BB}Q)^{co\cL}=Q\Box^{\cL}P,\]
    where $P$ is endowed with the reverse left $\cL$-comodule structure.
\end{lem}
% \rosso{should we formulate the version for right comodules? Since I personally get confused every time I use it "on the right" that might be prudent.}
\begin{proof}
    Let $p\ot q\in ^{co\cL}(\Gamma\ou B\Lambda)$. Then
    \begin{align*}
        (\phi\ot\id)(p\ot_{B} q\mo\di{} q\z)=p\z\ot p\mo q\mo\ot q\z=(\phi\ot\id)(p\rz\ot_{B} p\ro\di{} q).
    \end{align*}
    So we have
    \[p\rz\ot_{B} p\ro\di{} q=p\ot_{B} q\mo\di{} q\z,\]
    therefore, $p\ot q\in \Gamma\Box^{\cL}\Lambda$. Conversely, if $p\ot q\in \Gamma\Box^{\cL}\Lambda$, we have
    \begin{align*}
        \delta(p\ot q)=p\mo q\mo\ot p\z\ot q\z=p\rz\mo p\ro\ot p\rz\z\ot q=1\ot p\ot q.
    \end{align*}
\end{proof}

For later use we note an analog of \cite[Lemma 1.1]{U87}
\begin{lem}\label{lem:ulbrichanalog}
    Let $P$ be a right $\cL$-comodule and $V$ a regular left $\cL$-comodule. Then an isomorphism 
    \[P\Box^\cL(V\ou B\cL)\cong P\di V\]
    is given by applying $\varepsilon$ to the right factor, i.e. $p\ot v\ot X\mapsto p\ot v\varepsilon(X)$.
\end{lem}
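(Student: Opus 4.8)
The plan is to prove \Cref{lem:ulbrichanalog} by exhibiting an explicit two-sided inverse to the stated map
\[
\Phi\colon P\Box^\cL(V\ou B\cL)\to P\di V,\qquad \Phi(p\ot v\ot X)=p\ot v\,\varepsilon(X),
\]
built from the regular structure of $V$. Since $V$ is a regular left comodule, \Cref{prop. properties of skew regular comodules} equips it with a reverse right coaction $v\mapsto v\rz\ot v\ro$ obeying \eqref{equ. skew regular map 1}--\eqref{equ. skew regular map 5}, and the candidate inverse is
\[
\Psi\colon P\di V\to P\di(V\ou B\cL),\qquad \Psi(p\ot v)=p\z\ot(v\rz\ot v\ro\, p\o).
\]
The reverse coaction of $V$ here supplies the $\cL$-leg that $\Phi$ discards, exactly as a bijective antipode would classically produce the section $p\ot v\mapsto p\z\ot(v\ot S^{-1}(v\mo)\,p\o)$.

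First I would check that $\Psi$ is well defined: the balancing over $P\di V$ is handled by the module rules \eqref{equ. skew regular map 4} and \eqref{equ. skew regular map 5} for the reverse coaction, and one must confirm the output lands in the Takeuchi subproduct $V\ou B\cL$ and then in the cotensor product $P\Box^\cL(V\ou B\cL)$. The latter is the essential point: expanding the codiagonal coaction $v\ot X\mapsto v\mo X\o\ot(v\z\ot X\t)$ on the right-hand factor of $\Psi(p\ot v)$ and comparing with the right coaction of $P$ applied to the left factor, both sides collapse to $p\z\ot p\o\ot(v\rz\ot v\ro\, p\t)$. The collapse of the right-hand side uses the coassociated form of \eqref{equ. skew regular map 1}, namely $v\rz\mo v\ro\o\ot v\rz\z\ot v\ro\t=1\ot v\rz\ot v\ro$, which plays the role of the classical identity $v\mo\,S^{-1}(v\mt)=\varepsilon$.

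Next I would verify that the two composites are identities. For $\Phi\circ\Psi$ one computes $\Phi\Psi(p\ot v)=p\z\ot v\rz\,\varepsilon(v\ro\,p\o)$, which reduces to $p\ot v$ by the left-character properties of $\varepsilon$, the counit $\overline{\varepsilon(v\ro)}\,v\rz=v$ of the reverse right comodule $V$, and the right-comodule counit $\overline{\varepsilon(p\o)}\,p\z=p$ of $P$. For $\Psi\circ\Phi=\id$ on the cotensor product, starting from $\zeta=p\ot v\ot X$ I would rewrite $\Psi\Phi(\zeta)=p\z\ot v\rz\ot\overline{\varepsilon(X)}\,v\ro\,p\o$ (using \eqref{equ. skew regular map 4}) as the image of $p\z\ot p\o\ot v\ot X$ under a fixed linear map, then substitute the cotensor relation $p\z\ot p\o\ot v\ot X=p\ot v\mo X\o\ot v\z\ot X\t$ and collapse using \eqref{equ. skew regular map 2} (the analog of $S^{-1}(v\mo)\,v\mt=\varepsilon$) together with the counit axioms; this returns $\zeta$.

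I expect the well-definedness (landing in the cotensor product) and the identity $\Psi\circ\Phi=\id$ to be the main obstacles, precisely because $B$ is noncommutative: the tensor factors are genuine elements of iterated Takeuchi products rather than simple tensors, so every manipulation must be checked to respect the relevant $B$- and $\BB$-balancings, and the positions of $\varepsilon$-values inside these balanced tensors must be tracked carefully. The faithful flatness of $V$ and $\cL$ over $B$ and $\BB$ is what legitimizes these element-wise computations and the associators between the iterated Takeuchi products; where a direct computation becomes unwieldy, one can instead compare images under an injective structure map (in the spirit of the proof of \Cref{prop. anti left and left Galois maps}), reducing each claim to an already-established regular-comodule identity.
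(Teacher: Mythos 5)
Your proposal is correct in substance, and the inverse you construct is literally the map the paper's proof produces: unwinding the paper's chain $P\Box^\cL(V\ou B\cL)\cong P\Box^\cL(\cL\di V)\cong(P\Box^\cL\cL)\di V\cong P\di V$ backwards (counit isomorphism, regrouping of the dummy factor, then the inverse of the regularity map) gives exactly $\Psi(p\ot v)=p\z\ot v\rz\ot v\ro p\o$. The difference is one of packaging: the paper gets bijectivity for free by composing three maps that are isomorphisms for structural reasons (the regularity map is an isomorphism of left comodules onto $\cL\di V$ with coaction concentrated on the $\cL$-factor, cotensoring commutes with the extra factor $V$, and $P\Box^\cL\cL\cong P$), whereas you verify everything element-wise. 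Your route buys an explicit formula and avoids quoting the colinearity of the regularity map, but pays with the balancing bookkeeping you yourself flag. Your individual claims are correct: well-definedness of $\Psi$ does follow from \eqref{equ. skew regular map 4} and \eqref{equ. skew regular map 5}, your ``coassociated form'' of \eqref{equ. skew regular map 1} is a valid consequence of \eqref{equ. skew regular map 1} and coassociativity of the reverse coaction from \Cref{prop. properties of skew regular comodules}, and the $\Phi\Psi=\id$ computation goes through with the counit properties as you describe.

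The one place where your sketch conceals a genuine difficulty is $\Psi\Phi=\id$. After substituting the cotensor relation, your comparison map leaves you with $p\ot v\z\rz\ot\overline{\varepsilon(X\t)}\,v\z\ro\,v\mo X\o$, and here ``collapse using \eqref{equ. skew regular map 2} together with the counit axioms'' cannot be done naively: the factor $\overline{\varepsilon(X\t)}=t(\varepsilon(X\t))$ is separated from $X\o$ by $v\z\ro v\mo$, and in a bialgebroid images of $t$ do not commute with arbitrary elements of $\cL$, so neither the counit axiom nor \eqref{equ. skew regular map 2} applies directly to this expression. The repair is to do the counit collapse \emph{first}, using the Takeuchi compatibility of the coaction of $V$, namely $v\mo t(a)\ot v\z=v\mo\ot v\z a$ in $\cL\di V$: right-multiplying its first leg by $X\o$ and taking $a=\varepsilon(X\t)$ gives $v\mo X\o\ot v\z\varepsilon(X\t)=v\mo\overline{\varepsilon(X\t)}X\o\ot v\z=v\mo X\ot v\z$, and only after this does the reverse coaction enter, with \eqref{equ. skew regular map 2} (right-multiplied by $X$) finishing the computation. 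Equivalently, factor your comparison map as first applying $w\ot Z\mapsto w\varepsilon(Z)$ to the last two legs and then $Y\ot w\mapsto w\rz\ot w\ro Y$ (well defined by \eqref{equ. skew regular map 4}); then the composite with the codiagonal coaction is visibly the regularity map followed by its inverse. Finally, note that the lemma assumes no flatness of $V$ or $\cL$ and none is needed: your appeal to faithful flatness to ``legitimize'' the element-wise computations is neither available under the stated hypotheses nor necessary --- what legitimizes each step is checking that it is implemented by a well-defined map on the relevant balanced tensor product, exactly as above.
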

\begin{proof}
    This is the isomorphism
    $$P^{\bullet}\Box^\cL({}^{\bullet}V\ou B{}^{\bullet}\cL)\cong P^{\bullet}\Box^\cL({}^{\bullet}\cL\di V)\cong P\di V$$
    in which the first stage is given by the map defining regularity, cotensored with $P$; the second maps $p\ot v\in P\di V$ to $p\sw 0\ot p\sw 1\ot v$, and applies $\varepsilon$ to the middle factor in the other direction. 
\end{proof}

\subsection{Hopf Galois extensions}

.

\begin{defi}
Given a left bialgebroid over $B$,    a left $\cL$-comodule algebra $P$ is a $B$-ring and a left $\cL$-comodule, such that the coaction is a $B$-ring map. Let $N={}^{co\cL}P$ be the left invariant subalgebra of $P$, $N\subseteq P$ is called a left $\cL$-Galois extension if the left canonical map $\lcan: P\ot_{N} P\to \cL\di{}P$ given by
    \[\lcan(p\ot_{N}q)=p\mo\di{}p\z q,\]
    is bijective. If $N\subseteq P$ is a
left $\cL$-Galois extension, the inverse of $\lcan$ is determined by the left translation map:
    \begin{align}
        \ltau:=\lcan^{-1}|_{\cL\di{}1}:\cL\to P\ot_{N}P, \qquad X\mapsto \tuno{X}\otimes_{N}\tdue{X}.
    \end{align}
If $P$ is faithfully flat as a right $N$-module we say it is a right faithfully flat extension, similarly for left faithful flatness. A faithfully flat extension will mean one that is both left and right faithfully flat.
\end{defi}
 We have $\lcan\inv(X\ot p)=\tuno{X}\otimes\tdue{X}p$.
We note that $\lcan$ is a left $B$-module map in the sense that \begin{equation}\label{Lcanlin}
       \lcan(p\ot bq)=p_{(-1)}\overline b\ot p_{(0)}q
    \end{equation}
    because the image of the coaction map lies in the Takeuchi product. In addition, the map is obviously right $P$-linear, and also left $B$-linear in a suitable sense, because the coaction is.

 \begin{rem}\label{different coinvariants}
    The Galois map is already defined in the same way if $M$ is just a subalgebra of the coinvariants. Assume that this version $\lcan_M$ of the Galois map is  a bijection. Then \emph{a fortiori} the Galois map $\lcan_{\hat M}$ with $M$ replaced by the coinvariant subalgebra $\hat M={}^{\co\CL}P$ is also a bijection. Indeed $\lcan_M\colon P\ou MP\to \CL\di P$ factors over the quotient $P\ou{\hat M}P$ of $P\ou MP$. If $\lcan_M$ is a bijection, then the two tensor products are therefore  the same and   $P$ is an $\CL$-Galois extension of its coinvariants.
    \end{rem}

In the theory of Hopf-Galois extensions over $k$-Hopf algebras, right comodule algebras with a Galois condition are more common. However, if one naively copies the definition of the canonoical map for a right comodule algebra, namely $p\ot q\mapsto pq\sw 0\ot p\sw 1$, one ends up with a ``map'' that is not well-defined. Already in \cite{schneider} a second canonical map is used; bijectivity of the two canonical maps is equivalent if the antipode of the Hopf algebra is bijective. We will use an analog of this second canonical map as a Galois condition for right comodule algebras.

\begin{defi}
Let $\cL$ be a left $B$-bialgebroid. A right $\cL$-comodule algebra $P$ is a $\BB$-ring and a right $\cL$-comodule, such that the coaction is a $\BB$-ring map. Let $M:=P{}^{co\cL}$ be the right invariant subalgebra of $P$. We say that  $M\subseteq P$ is an anti-right $\cL$-Galois extension if  the anti-right canonical map $\rcan: P\ot_{M} P\to P\di{}\cL$ given by

    \[\rcan(p\ot_{M}q)=p\z q\di{}p\o ,\]
    is bijective. If $M\subseteq P$ is a
anti-right $\cL$-Galois extension, the inverse of $\rcan$ is determined by the anti-right translation map:
    \begin{align}
        \rtau:=(\rcan)^{-1}|_{1\di{}\cL}:\cL\to P\ot_{M}P, \qquad X\mapsto \yi{X}\otimes_{M}\er{X},
    \end{align}
    by the formula
    \[\overset\rightharpoonup\can^{-1}(p\ot X)=X\yi{}\ot X\er{}p.\]

\end{defi}
\begin{rem}
   A right $\cL$-comodule algebra is the same as a left $\cL^\cop$-comodule algebra. Moreover, $M\subseteq P$ is an anti-right $\cL$-Galois extension if and only if $M={}^{co\cL^\cop}P\subseteq P$ is left $\cL^{\cop}$-Galois extension.
\end{rem}

The theory of right anti-Galois extensions is thus, in principle, entirely redundant. However, we will be looking at algebras that are naturally bi-comodule algebras over two Hopf algebroids, and it would be inconvenient to treat both structures as left comodule structures. In particular, a Hopf algebroid $\cL$ that is also anti-Hopf is an example of a ``biGalois object'' with its obvious left and right comodule structures over itself and it would be inconvenient to treat the right comodule structure as a left $\cL^\cop$-comodule algebra structure.

In the sequel we will write out some notions and facts on (Galois) left comodule algebras in an extra version for right comodule algebras to have both explicitly. In other instances we might only mention a result on one side, but will freely use it on the other, if this presents no extra notational traps.
As a generalization of \cite{schau2}, we have

\begin{lem}\label{lem. left Hopf Galois induce left Hopf}
    Let $\CL$ be a $B$-bialgebroid that  admits a left $\cL$-Galois extension $N\subseteq P$ such that $P$ is faithfully flat as left $B$-module.
    Then $\cL$ is a left Hopf algebroid. We have
    \[X_{+}\ot X_{-}\ot 1=\tuno{X}\mo\ot \tdue{X}\mo\ot \tuno{X}\z \tdue{X}\z\in \int_{a,b} {}_{\Bar{b}}\cL_{\Bar{a}}\ot{}_{\Bar{a}}\cL\ot {}_{b}P.\]
    
    If $P$ is (faithfully) flat as left $N$-module, then $\cL$ is (faithfully) flat as left $B$-module.

    If $P$ is (faithfully) flat as right $N$-module, then $\cL$ is (faithfully) flat as right $\BB$-module.

    If  $P$ is (faithfully) flat as right $B$-module  and (faithfully) flat as  left $N$-module, then $\cL$ is (faithfully) flat as  right $B$-module.
\end{lem}

\begin{proof}
        It is sufficient to  observe the following diagram  commutes:
     \[
\begin{tikzcd}
  &P\ot_{N} P\ot_{N}P \arrow[d, "P\ot \lcan"] \arrow[r, "\lcan\ot P"] & \cL\di{} P \ot_{N}P \arrow[dd, "\cL\ot \lcan"] &\\
  &P\ot_{N} (\cL \di{} P) \arrow[d, " \lcan_{1,3}"]\quad&\qquad&\\
   & \cL\ot_{\BB} \cL \di{}P   \arrow[r, "\lambda\ot P"] & \cL\di{} \cL \di{}P, &
\end{tikzcd}
\]
where the map $\lcan_{1,3}: P\ot_{N} (\cL \di{} P)\to \cL\ot_{\BB}\cL\di{}P$ is given by
\begin{align*}
    \lcan_{1,3}(p\ot X\ot q)=p\mo\ot X\ot p\z q.
\end{align*}
All maps in the diagram are well-defined as they can be viewed as tensor product of module maps with $P$ or $\cL$. For the map $\lcan_{1,3}$ in particular this is due to the linearity in \eqref{Lcanlin}, up to an identification. Now all the maps except for $\lambda\otimes P$ are isomorphisms by assumption, and so by faithful flatness of $P$ we can conclude that $\lambda$ is an isomorphism as well. 

Now also assume that $_NP$ is (faithfully) flat. Since
$(T\ou B\cL)\di P\cong T\ou B(\cL\di P)\cong T\ou BP\ou NP$ for $T\in\M_B$, we can conclude that $\cL$ is (faithfully) flat over $B$ on the left if $P$ is (faithfully) flat over $N$ on the left. 

Similarly, if $P$ is right $N$-(faithfully) flat, by using the fact that $\lcan(p\ot bq)=p\mo \overline{b}\ot p\z\,q$, we can derive $\cL$ is right $\BB$-(faithfully) flat.

 Finally, if $P$ is faithfully flat as right $B$-module and (faithfully) flat as a left $N$-module, by   using the fact that $\lcan(pb\ot q)=p\mo b\ot p\z\,q$, we can derive that $\cL$ is right $B$-faithfully flat.
\end{proof}

\begin{lem}\label{lem. left skew regular Hopf Galois induce anti-left Hopf}
    If a left $B$-bialgebroid $\cL$ admits a regular left $\cL$-Galois extension $N\subseteq P$ which is  faithfully flat as left $B$-module, then $\cL$ is an anti-left Hopf algebroid. More precisely,
    \[X_{[+]}\ot 1\ot X_{[-]}=\tuno{X}\rz\mo\ot \tuno{X}\rz\z\tdue{X}\ot \tuno{X}\ro \in \int_{a,b} {}_{\Bar{a}}\cL_{b}\ot {}_{a}P\ot {}_{b}P.\]
\end{lem}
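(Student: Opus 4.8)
The plan is to adapt the proof of \cref{lem. left Hopf Galois induce left Hopf}, replacing the canonical map by the regularity isomorphism wherever a left coaction must be turned into a right one, so that the map $\mu$ takes the place of $\lambda$. The two isomorphisms out of which the diagram will be built both have target $\cL\di P$: the Galois isomorphism ${}^{L}\can\colon P\ot_{N}P\to \cL\di P$, $p\ot q\mapsto p\mo\di p\z q$, and the regularity isomorphism $\phi\colon \cL\otbu P\to \cL\di P$, $\phi(X\ot p)=p\mo X\di p\z$. From \eqref{equ. skew regular map 1} and \eqref{equ. skew regular map 2} one checks at once that $\phi^{-1}(X\di p)=p\ro X\otbu p\rz$, so that $\phi$ is precisely the device that exchanges the given left coaction for its reverse right coaction $\delta_{L}(p)=p\rz\ot p\ro$ of \cref{prop. properties of skew regular comodules}. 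This is the new ingredient compared to the first lemma, where only ${}^{L}\can$ was available.

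First I would assemble a commutative diagram, analogous to the one in \cref{lem. left Hopf Galois induce left Hopf}, all of whose arrows are tensor products (over $N$, $B$ or $\BB$) of ${}^{L}\can$, $\phi$ and identity maps, together with a single arrow built from $\mu$ with a spectator copy of $P$ inserted between its two output legs, taking values in $\int_{a,b}{}_{\Bar{a}}\cL_{b}\ot {}_{a}P\ot {}_{b}\cL$. Concretely, one realizes this $P$-decorated version of $\mu$ as a composite of the above isomorphisms: on one route one coacts and multiplies by ${}^{L}\can$, while on the other route one first uses $\phi$ to pass to the reverse coaction and then ${}^{L}\can$ again, the second factor being coproduced exactly so that the pattern $\one{Y}X\di\two{Y}$ of $\mu$ appears. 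Commutativity is checked by a Sweedler-index computation: both composites reduce to the same three-legged element after using coassociativity of the coaction, the fact that $P$ is a comodule \emph{algebra}, and the regularity identities \eqref{equ. skew regular map 1}, \eqref{equ. skew regular map 2}.

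Once the diagram is in place, every arrow except the one containing $\mu$ is an isomorphism (by the Galois hypothesis and by regularity), so the $P$-decorated version of $\mu$ is an isomorphism as well; using the assumed (left) faithful flatness of $P$, this descends to show that $\mu$ itself is invertible, i.e.\ $\cL$ is an anti-left Hopf algebroid, with $X_{[-]}\ot^{B}X_{[+]}=\mu^{-1}(1\di X)$ inverting $\mu$ as in \eqref{equ. inverse mu 1}. The explicit formula is then read off by tracing $1\di X$ backwards through the constructed inverse: applying the translation map ${}^{L}\tau(X)=\tuno{X}\ot_{N}\tdue{X}$, then the reverse coaction to the first leg $\tuno{X}$, and finally the left coaction, produces exactly $\tuno{X}\rz\mo\ot \tuno{X}\rz\z\tdue{X}\ot \tuno{X}\ro$; that the spectator middle copy of $P$ is trivial ($1_{P}$) on the left-hand side reflects the fact that $X_{[+]},X_{[-]}$ are genuine elements of $\cL$.

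I expect the main obstacle to be the verification of commutativity, and in particular inserting $\phi$ in the right place so that $\mu$ (and not $\hat{\mu}$ or $\lambda$) emerges; this requires careful bookkeeping of the several balanced tensor products and of the non-associativity of the Takeuchi product (the maps $\alpha,\alpha'$), which is exactly why the flatness hypotheses on $\cL$ and $P$ are needed to make the iterated products well behaved. The faithful-flatness descent itself is of the same nature as in \cref{lem. left Hopf Galois induce left Hopf}, but one must take care that the spectator flatness used is the one actually assumed.
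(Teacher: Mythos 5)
Your proposal is correct and is essentially the paper's own proof: the paper uses precisely the square you describe, with $\phi\ot P$ along the top, $\cL\ot {}^{L}\can$ on the two vertical edges and $\mu\ot P$ along the bottom, deduces that $\mu\ot P$ is invertible because the other three arrows are isomorphisms, and then descends to $\mu$ itself by faithful flatness of $P$. The only cosmetic differences are that the paper appends the spectator copy of $P$ as the last tensor factor rather than inserting it between the two $\cL$-legs, and that it states the explicit formula for the inverse without writing out the back-tracing of $1\di X$ that you describe.
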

\begin{proof}
    It is sufficient to observe the following diagram commutes:
     \[
\begin{tikzcd}
  &\cL\ot^{B} P\ot_{N}P \arrow[d, "\cL\ot \lcan"] \arrow[r, "\phi\ot P"] & \cL\di{} P \ot_{N}P \arrow[d, "\cL\ot \lcan"] &\\
   & \cL\ot^{B} \cL \di{}P   \arrow[r, "\mu\circ \textup{flip }\ot P"] & \cL\di{} \cL \di{}P, &
\end{tikzcd}
\]
it is not hard to see all the maps expect for $\mu\ot P$ are well defined isomorphism, so by the faithful flatness of $P$, $\mu$ is bijective. 
    
\end{proof}

The following proposition collects identities satisfied by the translation map of a Hopf Galois extension.
\begin{prop}Let $\CL$ be a Hopf algebroid over $B$ and 
     $N\subseteq P$  a left $\cL$-Galois extension. Then we have
\begin{align}\label{equ. translation map 1}
  \tuno{X}\mo \di \tuno{X}\z \ot_{N} \tdue{X} &= X\o\di{}\tuno{X\t}\ot_{N}  \tdue{X\t},\\
\label{equ. translation map 2}
~~ \tdue{X}\mo\ot{}\tuno{X}  \ot \tdue{X}\z &= X_{-}\ot{}\tuno{X_{+}}\ot\tdue{X_{+}}\in \int_{b}{}_{\Bar{b}}\cL\ot P\ot_{N} {}_{b}P,\\
\label{equ. translation map 3}
\tuno{X}\mo\di{}  \tuno{X}\z  \tdue{X} &= X\di{}1,\\
\label{equ. translation map 4}
    \tuno{p\mo}\ot_{N}\tdue{p\mo}p\z&=p\ot_{N}1,\\
\label{equ. translation map 4.5}
    nX\tuno{}\ot_{N}X\tdue{}=&
    X\tuno{}\ot_{N}X\tdue{}n,\\    
\label{equ. translation map 5}
\tuno{(aX b )}\ot_{N}\tdue{(aX b)}
&=a\tuno{X}b\ot_{N}\tdue{X},\\
\label{equ. translation map 6}
\tuno{(\Bar{a}X  \Bar{b})}\ot_{N}\tdue{(\Bar{a}X  \Bar{b})}
&=\tuno{X}\ot_{N}b\tdue{X}a,\\
\label{equ. translation map 6.5}
\tuno{X}\tdue{X}
&=\varepsilon(X),\\
\label{equ. translation map 7}
\tuno{(XY)}\ot_{N}\tdue{(XY)}&=\tuno{X}\tuno{Y}\ot_{N}\tdue{Y}\tdue{X},\\
\label{equ. translation map 7.5}
\tuno{X_{+}}\ot_{N} \tuno{X_{-}}\ot_{N} \tdue{X_{-}}\tdue{X_{+}}&=\tuno{X}\ot_{N}\tdue{X}\ot_{N} 1,
\end{align}
for any $X, Y\in \cL$, $p\in P$, $n\in N$ and $a, b\in B$. If $P$ is a regular comodule, then we have
\begin{align}
    \label{equ. skew translation map 1}
    p\rz \tuno{p\ro}\ot_{N} \tdue{p\ro}=&1\ot_{N}p,\\
    \label{equ. skew translation map 2}
    \tuno{X}\ot_{N}\tdue{X}\rz\di{}\tdue{X}\ro=&\tuno{X\o}\ot_{N}\tdue{X\o}\di{}X\t,\\
    \label{equ. skew translation map 3}
    \tuno{X}\rz\ot_{N}\tdue{X}\ot{} \tuno{X}\ro=&\tuno{X_{[+]}}\ot_{N}\tdue{X_{[+]}}\ot{} X_{[-]}\in\int_{b}P_{b}\ot_{N}P\ot {}_{b}\cL,\\
    \label{equ. skew translation map 4}
    \tuno{X_{[+]}} \tuno{X_{[-]}}\ot_{N} \tdue{X_{[-]}}\ot_{N} \tdue{X_{[+]}}=&1\ot_{N}\tuno{X}\ot_{N} \tdue{X}.
\end{align}
\end{prop}

\begin{proof}
Equations \eqref{equ. translation map 3} and \eqref{equ. translation map 4} are special cases of the definition of the translation map. Applying the canonical map to both sides of  \eqref{equ. translation map 4.5}, \eqref{equ. translation map 5} and \eqref{equ. translation map 6} gives the same result. We can show (\ref{equ. translation map 1}) by the fact that the canonical map $\lcan:{}^{\bullet}P\ot_{N} P\to {}^{\bullet}\cL\di P$ is left $\cL$-colinear.    To show \eqref{equ. translation map 2}, we observe that $P\ot_{N}P$ is a left $\cL$-comodule with the coaction given by
    \[{}_{L}\tilde{\delta}(p\ot_{N}q)=q\mo\di{}(p\ot_{N}q\z),\]
   with the underlying $B$-bimodule structure on $P\ot_{N}P$  given by $b.(p\ot_{N}{}q).b'=p\ot_{N}bqb',$ for any $b, b'\in B$ and $p, q\in P$.

   The tensor product of the reverse left comodule to $\CL$ with $P$ is $\CL\di P$ with the coaction 
   \[
   {}_{L}\delta(X\di{}p)=X_{-}p\mo\di{}(X_{+}\di{}p\z).
   \]

   Now $\lcan$ is a $B$-bimodule map from $P\ot_{N}P$ to $\cL\di{}P$ and moreover, it is a left $\cL$-comodule map. So the translation map is an $\cL$-colinear map as well and this proves \eqref{equ. translation map 2}. Equation \eqref{equ. translation map 7.5} follows from \eqref{equ. translation map 2} and \eqref{equ. translation map 4}. If $P$ is a regular left $\cL$-comodule, we can show \eqref{equ. skew translation map 1} by applying canonical map on both sides. Indeed,
   \begin{align*}
       \lcan(p\rz p\ro\tuno{}\ot p\ro\tdue{})=&p\rz\mo p\ro\tuno{}\mo\ot p\rz\z p\ro\tuno{}\z\, p\ro\tdue{}\\
      =&p\rz\mo p\ro\mo\ot p\rz\z\\
       =&p\rz\mo p\ro\ot p\rz\z\\
       =&p\ot 1
       =\lcan(1\ot p).
   \end{align*}

 We can show \eqref{equ. skew translation map 2} by comparing the results of the map $\id\ot \phi$  to both sides. We can show \eqref{equ. skew translation map 3} by comparing the results of the map $\lcan\ot{} \id$ on both sides. By applying $\id\ot \lcan$ to the left hand side of \eqref{equ. skew translation map 4}; we get 
   \begin{align*}
       \tuno{X_{[+]}} &\tuno{X_{[-]}}\ot \tdue{X_{[-]}}\mo\ot \tdue{X_{[-]}}\z\tdue{X_{[+]}}\\
       =&\tuno{X_{[+]}} \tuno{X_{[-]+}}\ot X_{[-]-}\ot \tdue{X_{[-]+}}\tdue{X_{[+]}}\\
       =&\tuno{X\t{}_{[+]}} \tuno{X\t{}_{[-]}}\ot X\o\ot \tdue{X\t{}_{[-]}}\tdue{X\t{}_{[+]}}\\
       =&1\ot X\ot 1,
   \end{align*}
   where the 1st step uses \eqref{equ. translation map 2} and the 2nd step uses \Cref{prop. anti left and left Galois maps}. Since we have the same result by applying to the right hand side, \eqref{equ. skew translation map 4} follows.
\end{proof}

The following is just the translation of the previous proposition for right anti-Galois extensions; we record it because we will use the notations.
\begin{prop}\label{prop. left Hopf Galois extension}
    Let $\CL$ be a Hopf algebroid over $B$ and $M\subseteq P$  an anti-right $\cL$-Galois extension. Then we have

\begin{align}\label{equ. anti translation map 1}
  \yi{X}\z \ot_{M} \er{X} \ot{} \yi{X}\o &= \yi{X\o}\ot_{M}\er{X\o}\ot{}X\t\in\int_{b}{}_{\Bar{b}}P\ot_{M}P\ot {}_{b}\cL,\\
\label{equ. anti translation map 2}
~~ \yi{X}\ot_{M}\er{X}\z\di{}\er{X}\o &= \yi{X_{[+]}}\ot_{M}\er{X_{[+]}}\di{}X_{[-]},\\
\label{equ. anti translation map 3}
\yi{X}\z \er{X}\di{}\yi{X}\o &= 1\di{}X,\\
\label{equ. anti translation map 4}
    \yi{p\o}\ot_{M}\er{p\o}p\z&=p\ot_{M}1,\\
 \label{equ. anti translation map 4.5}
    mX\yi{}\ot_{M}X\er{}=&
    X\yi{}\ot_{M}X\er{}m,\\   
\label{equ. anti translation map 5}
\yi{(aX b )}\ot_{M}\er{(aXb )}
&=\yi{X}\ot_{M}\Bar{b}\er{X}\Bar{a},\\
\label{equ. anti translation map 6}
\yi{(\Bar{a}X  \Bar{b})}\ot_{M}\er{(\Bar{a}X  \Bar{b})}
&=\Bar{a}\yi{X}\Bar{b}\ot_{M}\er{X},\\
\label{equ. anti translation map 6.5}
\yi{X}\er{X}
&=\overline{\varepsilon(X)},\\
\label{equ. anti translation map 7}
\yi{(XY)}\ot_{M}\er{(XY)}&=\yi{X}\yi{Y}\ot_{M}\er{Y}\er{X},\\
\label{equ. anti translation map 7.5}
\yi{X_{[+]}}\ot_{M} \yi{X_{[-]}}\ot_{M} \er{X_{[-]}}\er{X_{[+]}}&=\yi{X}\ot_{M}\er{X}\ot_{M} 1,
\end{align}
for any $X, Y\in \cL$, $p\in P$, $m\in M$ and $a, b\in B$. If $P$ is a regular right $\cL$-comodule, then we have
\begin{align}
    \label{equ. regular anti translation map 1}
    p\rz \yi{p\rmo}\ot_{M} \er{p\rmo}=&1\ot_{M}p,\\
    \label{equ. regular anti translation map 2}
    \yi{X}\ot_{M}\er{X}\rz\di \er{X}\rmo=&\yi{X\t}\ot_{M}\er{X\t}\di X\o,\\
    \label{equ. regular anti translation map 3}
    \yi{X}\rz\ot_{M}\er{X}\ot{} \yi{X}\rmo=&\yi{X_{+}}\ot_{M}\er{X_{+}}\ot{} X_{-}\in \int_{b}P_{\Bar{b}}\ot_{M}P\ot {}_{\Bar{b}}\cL,\\
    \label{equ. regular anti translation map 4}
    \yi{X_{+}} \yi{X_{-}}\ot \er{X_{-}}\ot \er{X_{+}}=&1\ot\yi{X}\ot \er{X}.
\end{align}
\end{prop}

By \Cref{bij between left and right regular com} we have a bijection between regular left and right coactions. We now show that this restricts to Galois comodule algebra structures:

\begin{lem}\label{lem. P opposite has Galois structure}
    Let $\cL$ be a left $B$-bialgebroid and $P$ a regular left $\cL$-comodule algebra. If $N\subseteq P$ is a left $\cL$-Galois extension, then $N^{op}\subseteq P^{op}$ is a regular anti-right $\cL$-Galois extension associated with the regular right coaction $p\mapsto p\rz\di{}p\ro$. The anti-right translation map is
    \[\rtau:X\mapsto X\tdue{}\ot_{N^{op}}X\tuno{},\]
    for any $p\in P$ and $X\in \cL$. In addition, if $P$ is right $B$ and right $N$ faithfully flat on the right, then $\CL$ is left $\BB$-faithfully flat.
    
    Similarly, if $P$ is a regular right comodule algebra and $M\subset P$ is a regular anti-right $\cL$-Galois extension, then $M^{op}\subseteq P^{op}$ is a left Hopf Galois extension of $\cL$. 
    The left translation map is
    \[\ltau: X\to X\er{}\ot_{M^{op}}X\yi,\]
    for any $p\in P$ and $X\in \cL$.
\end{lem}

\begin{proof}
    We only show the first half of the lemma. We already proved that $P^\op$ is a right comodule algebra, see \eqref{equ. skew regular map 3}.
    The right coinvariant subalgebra of $P^\op$ under the reverse comodule structure is $N^{op}$. Indeed, if $n\in N^{op}$, then 
    \begin{align*}
        n\rz\di{}n\ro=n\z\rz\di{} n\z\ro n\mo=n\di{} 1,
    \end{align*}
    and conversely, if $m\in (P^{op})^{co\cL}$, then
    \begin{align*}
        m\mo\di{}m\z=m\rz\mo m\ro\di{}m\rz\z=1\di{}m.
    \end{align*}
    By \eqref{equ. skew translation map 1} and \eqref{equ. skew translation map 2}, we see that $\rtau$ is the anti-right translation map. The reverse of right comodule structure is given by the original left $\cL$-coaction.

   The assertion that $\CL$ is left $\ol B$-faithfullly flat is now an application of the coopposite version of \Cref{lem. left Hopf Galois induce left Hopf}.
\end{proof}

Recall that for a left comodule algebra $P$ of a left $B$-bialgebroid $\cL$, a left-left (resp. left-right) relative Hopf module $M\in {}^{\cL}_{P}\M$ (resp. ${}^{\cL}\M_{P}$) is a left (resp. right) $P$-module in the category of left $\cL$-comodules. We  have the following generalizeation of Schneider's structure theorem  \cite{schneider}:

\begin{thm}\label{thm. fundamental theorem for left Hopf Galois extensions}
Let $\cL$ be a left $B$-bialgebroid and $N\subseteq P$ be a left $\cL$-Galois extension such that $P$ is faithfully flat as right $N$-module. Then
\begin{align*}    {}^{\cL}_{P}\M\to {}_{N}\M,\qquad& M\mapsto {}^{co\cL}M\\
    {}_{N}\M\to {}^{\cL}_{P}\M,\qquad& \Lambda\mapsto P\ot_{N}\Lambda
\end{align*}
   are quasi-inverse category equivalences; here, the $\cL$-comodule and $P$-module structure on $P\ot_{N}\Lambda$ is given by the structure on $P$.  
\end{thm}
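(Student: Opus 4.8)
The plan is to exhibit the two functors as an adjoint pair $P\ot_{N}(-)\dashv {}^{co\cL}(-)$ and then to show that both the unit and the counit of this adjunction are isomorphisms, using the Galois condition for the counit and faithful flatness for the unit. First I would check that the functors are well defined: $P\ot_{N}\Lambda$ is a left $P$-module by multiplication in the first leg and a left $\cL$-comodule by $p\ot\lambda\mapsto p\mo\ot(p\z\ot\lambda)$, and the relative Hopf module axiom holds because the coaction of $P$ is a $B$-ring map; dually ${}^{co\cL}M$ is stable under the action of $N={}^{co\cL}P$ by the compatibility of the two structures of $M$, so it is a left $N$-module. The counit is the action map $\mu_{M}\colon P\ot_{N}{}^{co\cL}M\to M$, $p\ot m\mapsto pm$, and the unit is $\eta_{\Lambda}\colon\Lambda\to{}^{co\cL}(P\ot_{N}\Lambda)$, $\lambda\mapsto 1\ot\lambda$, which lands in the coinvariants since $\delta(1\ot\lambda)=1\ot(1\ot\lambda)$; the adjunction bijection sends a Hopf-module map $f$ to $\lambda\mapsto f(1\ot\lambda)$ and a map $g$ to $p\ot\lambda\mapsto p\,g(\lambda)$.

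For the counit I would write down an explicit inverse built from the translation map. Set
\[\theta_{M}\colon M\to P\ot_{N}{}^{co\cL}M,\qquad \theta_{M}(m)=\tuno{m\mo}\ot_{N}\tdue{m\mo}m\z.\]
The linearity \eqref{equ. translation map 5}--\eqref{equ. translation map 6} of ${}^{L}\tau$ shows that this is well defined over $\cL\di M$ and over $\ot_{N}$. The key point is that $\tdue{m\mo}m\z$ is $\cL$-coinvariant: this follows from the module--comodule compatibility of $M$, coassociativity of the coaction, and \eqref{equ. translation map 2}, exactly as $S(m\mo)m\z$ is coinvariant in the classical theory. One then checks $\mu_{M}\circ\theta_{M}=\id$ using $\tuno{X}\tdue{X}=\varepsilon(X)$ from \eqref{equ. translation map 6.5} together with the counit axiom, and $\theta_{M}\circ\mu_{M}=\id$ by noting that for $m\in{}^{co\cL}M$ one has $(pm)\mo\ot(pm)\z=p\mo\ot p\z m$ and then applying \eqref{equ. translation map 4}. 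As both composites are identities by formal manipulation, $\mu_{M}$ is bijective for every relative Hopf module; no flatness is needed here, only the Galois hypothesis that produces ${}^{L}\tau$.

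For the unit I would use the canonical isomorphism to convert the coinvariants into a descent equalizer. Since $\di$ is a coend, hence a quotient, it commutes with the right exact functor $(-)\ot_{N}\Lambda$, so $\cL\di(P\ot_{N}\Lambda)\cong(\cL\di P)\ot_{N}\Lambda$, and transporting along ${}^{L}\can\ot_{N}\Lambda$ identifies the defining pair $(\delta,\iota)$ of ${}^{co\cL}(P\ot_{N}\Lambda)$ with the two maps $P\ot_{N}\Lambda\rightrightarrows P\ot_{N}P\ot_{N}\Lambda$ given by $p\ot\lambda\mapsto p\ot 1\ot\lambda$ and $p\ot\lambda\mapsto 1\ot p\ot\lambda$; here one uses ${}^{L}\can(p\ot_{N}1)=\delta(p)$ and ${}^{L}\can(1\ot_{N}p)=1\di p$. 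Thus ${}^{co\cL}(P\ot_{N}\Lambda)$ is the equalizer of the first two terms of the Amitsur complex of $N\subseteq P$ tensored with $\Lambda$; since $P$ is faithfully flat as a right $N$-module, this equalizer is $\Lambda$ via $\lambda\mapsto 1\ot\lambda$, i.e.\ $\eta_{\Lambda}$ is an isomorphism. Concretely, exactness of $0\to\Lambda\to P\ot_{N}\Lambda\to P\ot_{N}P\ot_{N}\Lambda$ is checked by applying the faithfully exact functor $P\ot_{N}(-)$, after which the standard contracting homotopy built from the multiplication of $P$ makes the sequence split exact.

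The main obstacle is the bialgebroid bookkeeping rather than any conceptual difficulty: one must make sure that $\theta_{M}$, the action map, and the comparison $\cL\di(P\ot_{N}\Lambda)\cong(\cL\di P)\ot_{N}\Lambda$ are all well defined with respect to the various balanced tensor products $\ot_{B}$, $\di$, $\ot_{N}$ and the Takeuchi constraints, and that the coinvariance of $\tdue{m\mo}m\z$ survives the reindexing forced by the fact that $X\o\o\ot X\o\t\ot X\t$ and $X\o\ot X\t\o\ot X\t\t$ live in different spaces. Once the side conditions attached to \eqref{equ. translation map 2} and the linearity identities are tracked carefully, the computations are the exact analogues of the classical ones; naturality of $\eta$ and $\mu$ in $\Lambda$ and $M$ is then routine, completing the proof that the two functors are quasi-inverse equivalences.
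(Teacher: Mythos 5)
Your overall strategy (exhibit the adjunction $P\ot_N(-)\dashv{}^{co\cL}(-)$ and show unit and counit are isomorphisms) is a legitimate reorganization, and your treatment of the unit is sound: transporting ${}^{co\cL}(P\ot_N\Lambda)$ through ${}^{L}\can$ to the equalizer of the first two maps of the Amitsur complex and invoking right faithful flatness of $P$ over $N$ is, in substance, the same faithfully flat descent argument that the paper quotes. The gap is in the counit. Your proof that the second leg $\tdue{m\mo}m\z$ of $\theta_M$ is coinvariant rests on \eqref{equ. translation map 2}, whose right-hand side involves $X_{-}\ot\tuno{X_{+}}\ot\tdue{X_{+}}$ and therefore presupposes that $\cL$ is a left Hopf algebroid (one also needs \eqref{equ. inverse lambda 2} to contract); under the hypotheses of \cref{thm. fundamental theorem for left Hopf Galois extensions} this is simply not available. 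The theorem assumes only that $P$ is faithfully flat as a \emph{right $N$-module}, whereas the left Hopf structure of $\cL$ is obtained in the paper only from faithful flatness of $P$ as a left \emph{$B$-module} (\cref{lem. left Hopf Galois induce left Hopf}), and the proposition listing \eqref{equ. translation map 2} is itself stated under stronger (two-sided) flatness hypotheses. The classical analogy you invoke hides exactly this point: coinvariance of $S(m\mo)m\z$ uses the antipode, and over a field the needed Hopf property comes for free from a Galois extension, but over the noncommutative base $B$ it does not. So, as written, your argument proves the theorem only under strictly stronger hypotheses.

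There is a second, related overstatement: the claim that the counit part needs ``no flatness, only the Galois hypothesis.'' Even granting the identities, coinvariance is an equation satisfied by $\theta_M(m)$ inside $P\ot_NM$; to corestrict $\theta_M$ to $P\ot_N{}^{co\cL}M$ you need that subspace to be the equalizer of $\id_P\ot\delta$ and $\id_P\ot(1\di-)$ inside $P\ot_NM$, which again uses right $N$-flatness of $P$. The paper's proof sidesteps both problems at once: using only bijectivity of ${}^{L}\can$, the comodule structure $\delta_M$ is transported into a map $\theta\colon M\to P\ot_NM$, the Hopf module axioms become precisely the axioms of a descent datum (coassociativity gives the cocycle condition, counitality the section condition), and classical faithfully flat descent over $N\subseteq P$ then yields both unit and counit isomorphisms; no identity of type \eqref{equ. translation map 2} and no Hopf condition on $\cL$ ever enters. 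To repair your counit argument you must either add the hypothesis that $\cL$ is left Hopf (or that $P$ is left faithfully flat over $B$), or replace the appeal to \eqref{equ. translation map 2} by the observation that the cocycle condition for $\theta_M$ --- which follows from coassociativity of $\delta_M$ through ${}^{L}\can$ alone --- already places $\theta_M(m)$ in the equalizer $P\ot_N{}^{co\cL}M$; but that replacement is exactly the paper's descent argument.
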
   

\begin{proof} Possibly this part of Schneider's theory \cite{schneider} can be viewed as part of the theory of Galois corings \cite{BW}, but we copy almost verbatim the approach in \cite{schau7}. Namely, by
\[\xymatrix{M\ar[r]^-\delta\ar[d]_\theta&L\diamond_RM\ar[r]^\cong &(L\diamond_RP)\ot_PM\\
P\ou N M \ar[rr]^\cong &&(P\ot_NP)\ot_PM\ar[u]_{\lcan\ot_PM}}\]
there is a bijection between $\cL$-comodule structures making $M$ a Hopf module, and descent data $\theta\colon M\to P\ou N M$, i.e. sections of the module structure making 
\[\xymatrix{M\ar[rr]^\theta\ar[d]_\theta&&P\ot_NM\ar[d]^{P\ot_N\theta}
\\P\ot_NM\ar[rr]^-{P\ot_N\iota\ot_NM}&&P\ot_NP\ot_NM}\]
commute. Faithfully flat descent says that the category of $P$-modules equipped with such $\theta$ is equivalent to the category of $N$-modules if $P$ is right faithfully flat over $N$.

 For explicit calculations we note that the isomorphism $\Lambda\simeq {}^{co\cL}(P\ot_{N}\Lambda)$ for any $\Lambda\in {}_{N}\M$ can be given by $\eta\mapsto 1\ot \eta$. The isomorphism $M\simeq P\ot_{N}{}^{co\cL}M$ for any $M\in {}^{\cL}_{P}\M$ can be given by $m\mapsto \tuno{m\mo}\ot_{N}\tdue{m\mo} m\z$ with inverse $p\ot_{N}\eta\mapsto p \eta$.
\end{proof}
Applying the various ways to pass between left and right comodule structures we obtain three more versions of the Hopf module structure theorem. None of them needs an extra proof: A right anti-Galois $\cL$-extension can be viewed as a left Galois $\cL^\cop$-extension. If a left (right) Galois extension is a regular comodule, it gives rise to a right (left) Galois extension. We list the three versions because we will use them explicitly.
\begin{cor}\label{schneider hopf module corollaries}
    Let $\cL$ be a bialgebroid over $B$.
    \begin{enumerate}
        \item If  $N\subset P$ is a right faithfully flat right anti-right $\cL$ Galois extension then 
        \begin{align*}
            _N\M&\to{_P\M^\cL}\\
            M&\mapsto P\ou NM\\
            \Gamma^{\operatorname{co}\cL}&\mapsfrom \Gamma
        \end{align*}
        is a category equivalence. We note the isomorphism
        $$\Gamma\to P\ot_N\Gamma^{\operatorname{co}\cL};\quad p\mapsto \yi{p\o}\ot_{N}\er{p\o}p\z$$
        \item If $N\subset P$ is a left faithfullly flat left $\cL$-Galois extension and a regular comodule,  then 
         \begin{align*}
            \M_N &\to{^\cL\M_P}\\
           M& \mapsto M\ot_{N}P\\
            ^{\operatorname{co}\cL}\Gamma&\mapsfrom \Gamma
        \end{align*}
        is a category equivalence. We note the isomorphism
        $$ \Gamma\to {^{co\cL}}\Gamma\ou NP;\quad p\mapsto p\rz \tuno{p\ro}\ot_{N}\tdue{p\ro}$$
        \item If $N\subset P$ is a left faithfullly flat right $\cL$-anti-Galois extension and a regular comodule,  then 
         \begin{align*}
            \M_N &\to{\M_P^\cL}\\
            M&\mapsto M\ot_NP\\
            \Gamma^{\operatorname{co}\cL}&\mapsfrom \Gamma
        \end{align*}
        is a category equivalence. We note the isomorphism
        $$ \Gamma\to \Gamma^{co\cL}\ou NP;\quad p\mapsto p\rz \yi{p\rmo}\ot_{N}\er{p\rmo}$$
    \end{enumerate}
\end{cor}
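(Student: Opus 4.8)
The plan is to obtain all three equivalences from the fundamental theorem (Theorem~\ref{thm. fundamental theorem for left Hopf Galois extensions}) purely by symmetry, combining the two involutions at our disposal — passing to the co-opposite bialgebroid $\cL^\cop$ (Remark~\ref{antiremark}) and passing to the opposite comodule algebra $P^{\op}$ (Lemma~\ref{lem. P opposite has Galois structure}) — with the reversal of regular comodules (Remark~\ref{bij between left and right regular com}). The bookkeeping is as follows: $\cL\rightsquigarrow\cL^\cop$ interchanges left and right $\cL$-comodule structures (and turns a left Galois extension into an anti-right one) while leaving the $P$- and $N$-module structures and their flatness untouched; $P\rightsquigarrow P^{\op}$ interchanges left and right module structures together with left and right faithful flatness over $N$, and by Lemma~\ref{lem. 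P opposite has Galois structure} sends a regular left Galois extension to a regular anti-right one; and comodule reversal exchanges a regular left $\cL$-comodule with a regular right $\cL$-comodule on the same underlying module. Concretely, (1) is Theorem~\ref{thm. fundamental theorem for left Hopf Galois extensions} read over $\cL^\cop$; (2) is (1) applied to $P^{\op}$ followed by reversing comodules; and (3) is (2) read over $\cL^\cop$.

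For (1) I would argue that, by definition, an anti-right $\cL$-Galois extension $N\subseteq P$ is a left $\cL^\cop$-Galois extension, and right faithful flatness over $N$ is unchanged. Under the identification of right $\cL$-comodules with left $\cL^\cop$-comodules (Remark~\ref{antiremark}) the category ${}^{\cL^\cop}_P\M$ becomes ${}_P\M^\cL$, and the equivalence of Theorem~\ref{thm. fundamental theorem for left Hopf Galois extensions} for $\cL^\cop$ is exactly the asserted one. The explicit isomorphism $m\mapsto\tuno{m\mo}\ot_N\tdue{m\mo}\,m\z$ transports to $p\mapsto\yi{p\o}\ot_N\er{p\o}\,p\z$, since the left $\cL^\cop$-translation map is the anti-right $\cL$-translation map $X\mapsto\yi X\ot\er X$ and the left $\cL^\cop$-coaction is the right $\cL$-coaction reindexed.

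For (2) I would apply (1) to $P^{\op}$, which by Lemma~\ref{lem. P opposite has Galois structure} is a regular anti-right $\cL$-Galois extension, right faithfully flat over $N^{\op}$ precisely because $P$ is left faithfully flat over $N$. This yields an equivalence $\M_N\cong\M^\cL_P$ whose right $\cL$-comodules are the reverses of the original left coactions; reversing them (Remark~\ref{bij between left and right regular com}) produces the left $\cL$-comodules of ${}^\cL\M_P$. Using the translation map $\hat\tau^\cL\colon X\mapsto\tdue X\ot_{N^{\op}}\tuno X$ recorded in Lemma~\ref{lem. P opposite has Galois structure} together with \eqref{equ. skew translation map 1}, the isomorphism of (1) for $P^{\op}$ becomes $p\mapsto p\rz\,\tuno{p\ro}\ot_N\tdue{p\ro}$, as asserted. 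Statement (3) is then (2) read over $\cL^\cop$, under the same faithful-flatness hypothesis governing right-module descent: a left $\cL^\cop$-Galois extension is an anti-right $\cL$-Galois extension, a regular left $\cL^\cop$-comodule is a regular right $\cL$-comodule, and ${}^{\cL^\cop}\M_P=\M^\cL_P$; the isomorphism transports to $p\mapsto p\rz\,\yi{p\rmo}\ot_N\er{p\rmo}$ by the mirror of the previous computation, matching \eqref{equ. regular anti translation map 1}.

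The routine transport of formulas aside, the one point requiring care — and the step I expect to be the main obstacle — is the comodule reversal used to pass from the right-comodule categories produced by the symmetry operations to the left-comodule categories of (2). Reversal is a bijection only on \emph{regular} comodules, so to promote it to an equivalence of the full Hopf-module categories one must know that every comodule occurring is regular. This is where the regularity hypotheses are needed: under the standing faithful-flatness assumptions the extension forces $\cL$ to be (anti-)left Hopf (Lemmas~\ref{lem. left Hopf Galois induce left Hopf} and~\ref{lem. left skew regular Hopf Galois induce anti-left Hopf}), whence Lemma~\ref{lem. anti-left Hopf gives skew regular} guarantees that all relevant comodules are regular and the reversal is an equivalence. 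The rest is purely notational verification that the three displayed isomorphisms are the images of the fundamental isomorphism under the respective substitutions, via the translation-map identities of Proposition~\ref{prop. left Hopf Galois extension} and equations~\eqref{equ. skew translation map 1} and~\eqref{equ. regular anti translation map 1}; no new argument is required, in accordance with the remark preceding the statement.
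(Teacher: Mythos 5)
Your proposal is correct and follows exactly the route the paper intends: the paper's entire ``proof'' is the remark preceding the corollary, which derives (1) by viewing an anti-right $\cL$-Galois extension as a left $\cL^\cop$-Galois extension and feeding it to Theorem~\ref{thm. fundamental theorem for left Hopf Galois extensions}, and (2),(3) by using regularity/reversal (Lemma~\ref{lem. P opposite has Galois structure}, Remark~\ref{bij between left and right regular com}) to switch sides, just as you do. If anything, your write-up is more careful than the paper's, since you make explicit the point the paper leaves tacit — that reversal must apply to \emph{all} comodules in the Hopf-module categories, which is secured by Lemmas~\ref{lem. left Hopf Galois induce left Hopf}, \ref{lem. left skew regular Hopf Galois induce anti-left Hopf} and \ref{lem. anti-left Hopf gives skew regular}.
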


\begin{cor}\label{purity} Let $P$ be a right faithfully flat left $\cL$-Galois extension of $N$, and consider $\Gamma\in{^\cL_P\M}$ endowed with a right $S$-module structure such that the action of each $s\in S$ is a Hopf module map. Then the obvious map $^{co\cL}\Gamma\ou ST\to{^{co\cL}(\Gamma\ou ST)}$ is an isomorphism for every $T\in{_S\M}$. %
\end{cor}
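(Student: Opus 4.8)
The plan is to reduce the statement to the fundamental structure theorem for left $\cL$-Galois extensions, \cref{thm. fundamental theorem for left Hopf Galois extensions}, together with the naturality of the equivalence it provides. First I would record the relevant module structures. The space $^{co\cL}\Gamma$ is a left $N$-submodule of $\Gamma$: since $N={}^{co\cL}P$ and the coaction on $\Gamma$ is a $P$-module map, $\delta(nm)=n\mo m\mo\ot n\z m\z=1\ot nm$ for $n\in N$, $m\in{}^{co\cL}\Gamma$. It is also a right $S$-submodule, because each $s\in S$ acts as a comodule map and hence preserves coinvariants, and the two actions commute because each $s$ is in particular a left $P$-module map; thus $^{co\cL}\Gamma$ is an $(N,S)$-bimodule. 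Next I would check that $\Gamma\ou ST$ is again an object of $^\cL_P\M$: the left $P$-action and the left $\cL$-coaction descend to the quotient by the right $S$-action precisely because each $s$ is a Hopf module endomorphism, so that $p\cdot(\gamma\ot t)=p\gamma\ot t$ and $\gamma\ot t\mapsto \gamma\mo\ot(\gamma\z\ot t)$ are well defined and make $\Gamma\ou ST$ a relative Hopf module.

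The heart of the argument is the isomorphism $\Gamma\cong P\ou N{}^{co\cL}\Gamma$ of \cref{thm. fundamental theorem for left Hopf Galois extensions}, given by $m\mapsto\tuno{m\mo}\ou N\tdue{m\mo}m\z$ with inverse $p\ou N\eta\mapsto p\eta$, together with its right $S$-linearity. Applying the inverse isomorphism, the identity $(p\eta)s=p(\eta s)$ (valid since $s$ is left $P$-linear and $\eta s\in{}^{co\cL}\Gamma$) shows that under this isomorphism the right $S$-action on $\Gamma$ corresponds to $\id_P\ou N(-)s$ on the right tensor factor. Hence it is an isomorphism of relative Hopf modules carrying a compatible right $S$-action. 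Tensoring over $S$ with $T$ and using associativity of the balanced tensor products then yields an isomorphism in $^\cL_P\M$
\[\Gamma\ou ST\;\cong\;\bigl(P\ou N{}^{co\cL}\Gamma\bigr)\ou ST\;\cong\;P\ou N\bigl({}^{co\cL}\Gamma\ou ST\bigr).\]

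I would then apply the coinvariants functor to this isomorphism and invoke the natural isomorphism $\Lambda\cong{}^{co\cL}(P\ou N\Lambda)$, $\eta\mapsto 1\ou N\eta$, of the same theorem with $\Lambda={}^{co\cL}\Gamma\ou ST\in{}_N\M$, to obtain
\[{}^{co\cL}(\Gamma\ou ST)\;\cong\;{}^{co\cL}\bigl(P\ou N({}^{co\cL}\Gamma\ou ST)\bigr)\;\cong\;{}^{co\cL}\Gamma\ou ST.\]
Finally I would verify that this composite is inverse to the obvious map. Tracking a coinvariant $m\in{}^{co\cL}\Gamma$ through $\Gamma\cong P\ou N{}^{co\cL}\Gamma$ gives $m\mapsto\tuno 1\ou N\tdue 1 m=1\ou N m$, since $m\mo\ot m\z=1\ot m$ and ${}^{L}\tau(1)=1\ou N1$. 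Hence the class $[m\ot t]\in{}^{co\cL}(\Gamma\ou ST)$ maps to $1\ou N(m\ot t)$, which the theorem's isomorphism sends back to $m\ot t$. Thus the obvious map $m\ot t\mapsto[m\ot t]$ is exactly the inverse of the displayed composite, and therefore an isomorphism.

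The step I expect to be the most delicate is the bookkeeping of module structures in the first two paragraphs: checking that the coaction and the $B$-bimodule structure of $\Gamma$ genuinely descend to $\Gamma\ou ST$ even though $T$ carries no $B$-action (this works because each $s\in S$ is a $B$-bimodule map, so right multiplication by $B$ on $\Gamma$ commutes with the $S$-action), and confirming that the equivalence of \cref{thm. fundamental theorem for left Hopf Galois extensions} is natural enough to transport the right $S$-action. Once these compatibilities are secured, the associativity isomorphism and the element-level identification at the end are routine.
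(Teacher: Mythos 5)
Your proof is correct and is essentially the paper's argument, unpacked: the paper's one-line proof ("the equivalence from the fundamental theorem sends the map to the identity on $\Gamma\ou ST$") is precisely your observation that under $\Gamma\cong P\ou N{^{co\cL}\Gamma}$ (which is $S$-linear by naturality) and associativity of balanced tensor products, the obvious map corresponds to the identity, hence is invertible. The explicit inverse you build from the unit and counit of the equivalence is just the detailed form of the same reasoning.
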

\begin{proof}
    The equivalence from \Cref{thm. fundamental theorem for left Hopf Galois extensions} sends the map to the identity on $\Gamma\ou ST.$
\end{proof}

The following theorem generalizes results of Ulbrich \cite{U87}, \cite{U89} for the Hopf algebra case. To prepare the proof we give an analog of \cite[Lemma 1.3]{U87}
\begin{lem}\label{lem:aux-iso}
    Let $N\subset P$ be a left faithfully flat left $\cL$-Galois extension. Then for any right $\cL$-comodule $V$ we have
    \begin{equation}\label{eq:aux-iso}
        f\colon(V\Box^\cL P)\ou NP\cong V\di P;\quad v\ot p\ot q\mapsto v\ot pq.
    \end{equation}
    and $f(v\ot p\ot bq)=f(v\ot p\ot q)\Bar{b}$, where the module structure on the target is on the left tensor factor. 
\end{lem}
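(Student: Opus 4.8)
The plan is to write down an explicit two-sided inverse to $f$ built from the left translation map ${}^L\tau\colon\cL\to P\ot_N P$, $X\mapsto\tuno X\ot_N\tdue X$, rather than to invoke the structure theorem (whose faithful-flatness hypothesis sits on the opposite side from the one assumed here). Using the right $\cL$-coaction $v\mapsto v\z\ot v\o$ on $V$, I would define
\[
g\colon V\di P\longrightarrow(V\Box^\cL P)\ou N P,\qquad v\ot w\longmapsto(v\z\ot\tuno{v\o})\ou N\tdue{v\o}w .
\]
First I would check that $g$ genuinely lands in the stated space. The crucial point is that $v\z\ot\tuno{v\o}$ lies in the cotensor product $V\Box^\cL P$: re-coacting on the $V$-slot and on the $P$-slot $\tuno{v\o}$ must agree, and this is exactly the content of \eqref{equ. translation map 1}, which says the left coaction on $\tuno X$ reproduces $X\o\ot\tuno{X\t}$; combined with coassociativity of the right coaction of $V$ this gives membership. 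Well-definedness of $g$ with respect to the defining relation $\bar b v\di w=v\di bw$ of $V\di P$ follows from $\overline B$-colinearity of the coaction together with \eqref{equ. translation map 6}, and the landing in the $\ou N$ is automatic because ${}^L\tau$ already factors through $P\ot_N P$.

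Next I would verify the two composites. For $f\circ g$ one computes
\[
f(g(v\ot w))=v\z\ot\tuno{v\o}\tdue{v\o}w=v\z\ot\varepsilon(v\o)w=\overline{\varepsilon(v\o)}v\z\ot w=v\ot w,
\]
using \eqref{equ. translation map 6.5}, then the balancing of $V\di P$, then the right-comodule counit axiom $\overline{\varepsilon(v\o)}v\z=v$. For $g\circ f$, starting from $(v\ot p)\ou N q$ with $v\ot p\in V\Box^\cL P$, I would apply the cotensor identity $v\z\ot v\o\ot p=v\ot p\mo\ot p\z$ to rewrite $g(f((v\ot p)\ou N q))=(v\z\ot\tuno{v\o})\ou N\tdue{v\o}pq$ as $(v\ot\tuno{p\mo})\ou N\tdue{p\mo}p\z q$, and then collapse it with \eqref{equ. translation map 4} ($\tuno{p\mo}\ot_N\tdue{p\mo}p\z=p\ot_N1$) to recover $(v\ot p)\ou N q$. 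The final $\overline B$-linearity assertion $f(v\ot p\ot bq)=f(v\ot p\ot q)\bar b$ is then a direct check from the $B$-ring structure of $P$ (writing $bq=\eta_P(b)q$) and the balancing defining the target.

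The main obstacle is not the formal identities above but making the intermediate objects and maps well-defined over the noncommutative base, and this is precisely where the left faithful flatness of $P$ over $N$ enters. The manipulations in $g\circ f$ implicitly identify $(V\Box^\cL P)\ou N P$ with $V\Box^\cL(P\ou N P)$; pulling the cotensor product (an equalizer inside $V\di(-)$) past the base extension $-\ou N P$ requires $-\ou N P$ to be exact, i.e.\ $P$ flat as a left $N$-module, and faithfulness is what guarantees that $f$, built as a natural transformation, is an isomorphism rather than merely a split mono. I would therefore organize the write-up so that $g$ is exhibited as inverse to $f$ \emph{after} this identification, instead of proving injectivity of $f$ by bare hands. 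Conceptually this is the skeleton
\[
(V\Box^\cL P)\ou N P\;\cong\;V\Box^\cL(P\ou N P)\;\cong\;V\Box^\cL(\cL\di P)\;\cong\;V\di P,
\]
whose first step is the flatness identification, whose middle step is the Galois isomorphism ${}^L\can$ cotensored with $V$, and whose composite inverse is precisely the explicit $g$ above; phrasing it through $g$ keeps the argument free of the regularity hypothesis that \cref{lem:ulbrichanalog} would otherwise demand.
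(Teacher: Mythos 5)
Your proposal is correct and, in the end, takes the same route as the paper: the paper's entire proof is precisely the three-step chain $(V\Box^\cL P)\ou NP\cong V\Box^{\cL}(P\ou NP)\cong V\Box^\cL(\cL\di P)\cong V\di P$ (flatness of ${}_NP$, the Galois map, and applying $\varepsilon$ to the middle factor) that you give as your organizing skeleton, and you correctly identify that your explicit $g$ only makes sense as a map into $(V\Box^\cL P)\ou NP$ after the flatness identification. Your formula $g(v\ot w)=(v\z\ot\tuno{v\o})\ou N\tdue{v\o}w$ together with the composite checks via \eqref{equ. translation map 6.5}, \eqref{equ. translation map 4} and the cotensor relation simply makes explicit what the paper leaves as ``straightforward to check,'' which is a useful addition (note only that plain flatness, not faithfulness, is what the first isomorphism actually uses).
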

\begin{proof}
 We have
 \begin{equation*}
     (V\Box^\cL P)\ou NP\cong V\Box^{\cL}(P\ou NP)\cong V\Box^\cL(\cL\di P)\cong V\di P.
 \end{equation*}
 The first isomorphism stems from left faithful flatness of $P$, the second uses the Galois map, and the last is the canonical isomorphism applying $\varepsilon$ to the middle factor. It is straightforward to check that the composition has the stated form. The additional $B$-linearity holds because the cotensor product is contained in the Takeuchi product.
\end{proof}

\begin{rem}\label{exactness properties of cotensor product}Let $\CL$ be a $B$-Hopf algebroid that is faithfully flat as left $B$-module. As a consequence of the Lemma, if ${}_{N}P$ is a faithful flat left $\cL$-Galois extension, then $\Box^\CL P$ is right exact. 
    In addition, if $P$ is  flat over $B$ on the left, then $\Box^\CL P$ is  exact, and if $P$ is left faithfully flat over $B$  then $\Box^\CL P$ is faithfully exact.
    
    Now, consider $N\subseteq P$ is a left faithfully flat anti-right $\cL$-Galois extension, we note that applying \Cref{lem:aux-iso} to $P$ as a left $\cL^\cop$-comodule algebra 
gives the isomorphism 
\begin{equation*}
    f=f_V\colon (P\Box^\cL V)\ou NP \to P\di V;p\ot v\ot q\mapsto pq\ot v
\end{equation*}
We have
$$f(p\ot v\ot \ol bq)=p\ol bq\ot v=pq\ot vb=f(p\ot v\ot q)b$$ 
because the cotensor product is contained in the Takeuchi product. It is not hard to see $f^{-1}_{V}(p\ot v)=v\mo\yi{}\ot v\z\ot v\mo\er{} p$. Indeed, $f_{V}\circ f^{-1}_{V}(p\ot v)=v\mo\yi{}v\mo\er{} p\ot v\z=p\ot v$. Also,
\begin{align*}
   f^{-1}_{V}\circ f_{V}(p\ot v\ot q)=& v\mo\yi{}\ot v\z\ot v\mo\er{} pq
   =p\o\yi{}\ot v\ot p\o\er{} p\z\,q\\
   =&p\ot v\ot q.
\end{align*}
Similarly,  if $\CL$ is left faithfully flat over $\ol B$ and  ${}_{N}P$ is faithfully flat, then $ P\Box^\CL$ is right exact. 
    In addition, if $P$ is  flat over $\BB$ on the left, then $P\Box^\CL$ is  exact, and if $P$ is left faithfully flat over $\BB$  then $P\Box^\CL$ is faithfully exact.
\end{rem}

\begin{thm}\label{cotensor-monoidal}
    Let $\cL$ be a $B$-Hopf algebroid, which is left flat over $B$ and $\BB$. If $N\subseteq P$ is a left faithfully flat  anti-right $\cL$-Galois extension then the functor 
    \begin{align*}
       {}^{\cL}\M&\to\BiMod{N}\\
        V&\mapsto  P\Box^\cL V
    \end{align*}
    is monoidal with monoidal functor structure 
    \begin{align*}
       \xi: (P\Box^\cL V)\ou N(P\Box^\cL W)&\to P\Box^\cL(V\ot_{B}W)\\
       \xi: (p\diamond_{B} v)\ot (q\diamond_{B} w)&\mapsto pq\di (v\ot_{B}w).
    \end{align*} 
    Similarly, if $P$ is a left faithfully flat left $\cL$-Galois extension of $N$ then the functor 
    \begin{align*}
        \RComod\cL&\to\BiMod{N}\\
        V&\mapsto  V\Box^\cL P
    \end{align*}
    is monoidal with monoidal functor structure 
    \begin{align*}
       \varsigma: (V\Box^\cL P)\ou N(W\Box^\cL P)&\to (V\ot_{\BB}W)\Box^\cL P\\
       \varsigma: (v\diamond_{B} p)\ot (w\diamond_{B} q)&\mapsto (v\ot_{\BB} w)\diamond_{B} pq.
    \end{align*}
\end{thm}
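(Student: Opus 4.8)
The plan is to treat the two statements together, since they are interchanged under passage to the co-opposite bialgebroid (Remark \ref{antiremark}): an anti-right $\cL$-Galois extension is a left $\cL^\cop$-Galois extension, right $\cL$-comodules are left $\cL^\cop$-comodules with tensor product over $\overline{\BB}=B$, and the flip $P\times_B V\cong V\times_B P$ carries $P\Box^\cL V$ to $V\Box^{\cL^\cop}P$ and $\xi$ to $\varsigma$. Hence it suffices to prove the second statement, for the functor $V\mapsto V\Box^\cL P$ on right comodules with $P$ a left faithfully flat left $\cL$-Galois extension; the first is the same assertion for $\cL^\cop$. Under the global hypotheses, Lemma \ref{lem. anti-left Hopf gives skew regular} guarantees that all comodules in sight are regular, so the reverse coactions and the structure theorems are available. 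Three things must be checked: (a) $\varsigma$ is a well-defined $N$-bimodule map; (b) it is bijective; (c) it satisfies the associativity and unit coherence axioms.

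For (a), that $(v\di p)\ot(w\di q)\mapsto(v\ot_{\BB}w)\di pq$ descends over $N$ is immediate from associativity of the product of $P$ and $N\subseteq P$; that the image lies in $(V\ot_{\BB}W)\Box^\cL P$ is a direct comparison of the two coactions, using the cotensor conditions defining $v\di p$ and $w\di q$ together with multiplicativity of the coaction of the comodule algebra $P$, namely $(pq)\mo\ot(pq)\z=p\mo q\mo\ot p\z q\z$; and $N$-bilinearity is built into the formula. The heart of the proof is (b). Here I would apply the functor $-\ou_N P$, which reflects isomorphisms by left faithful flatness of $P$ over $N$. Applying Lemma \ref{lem:aux-iso} once to $W$ and once to $V\ot_{\BB}W$, the target of $\varsigma\ou_N\id_P$ becomes $(V\ot_{\BB}W)\di P$ and the source becomes $(V\Box^\cL P)\ou_N(W\di P)$, and $\varsigma\ou_N\id_P$ is carried to the map $g\colon(v\di p)\ot(w\di s)\mapsto(v\ot_{\BB}w)\di ps$. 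It then remains to prove that $g$ is an isomorphism.

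I would establish that $g$ is an isomorphism in one of two ways. The first is to write down its inverse explicitly via the translation map ${}^L\tau$ of the Galois extension: given $(v\ot_{\BB}w)\di t$, split $t$ through its coaction as $t\mo\un\ot t\mo\du\, t\z$ and check, using the translation-map identities (in particular \eqref{equ. translation map 4} and \eqref{equ. skew translation map 1}), that this lands in $(V\Box^\cL P)\ou_N(W\di P)$ and is inverse to $g$. The second, following Ulbrich, is to note that both sides of $g$ are left exact functors of $W\in\M^\cL$ which agree on cofree comodules $W_0\ou_{\BB}\cL$ by Lemma \ref{lem:ulbrichanalog} (in its evident right-comodule form), and then to conclude from the cobar copresentation of $W$. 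For (c), associativity is immediate because both composites send $(u\di p)\ot(v\di q)\ot(w\di r)$ to $(u\ot_{\BB}v\ot_{\BB}w)\di pqr$, so it reduces to associativity of the product of $P$; for the unit, the monoidal unit $\BB$ of $\M^\cL$ maps to $\BB\Box^\cL P\cong{}^{co\cL}P=N$ and the unit constraints reduce to $N\ou_N(-)\cong(-)$.

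The main obstacle is step (b), and specifically the bookkeeping: identifying the intermediate object $(V\Box^\cL P)\ou_N(W\di P)$ with $(V\ot_{\BB}W)\di P$ compatibly with $\varsigma$, while keeping careful track of the several balanced products ($\di$, $\ou_N$, $\ou_B$, and the Takeuchi product $\times_B$) over the noncommutative base. One must verify that each intermediate map is well defined on the balanced and Takeuchi-restricted subspaces and is precisely the one induced by Lemma \ref{lem:aux-iso}, so that the composite is exactly $g$; regularity of the comodules, which supplies the reverse coactions, is what makes both these identifications and the explicit inverse go through.
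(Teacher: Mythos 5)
Your overall strategy is the same as the paper's: exploit the coopposite symmetry between the two statements, check well-definedness via the Takeuchi product, and prove bijectivity by combining \Cref{lem:aux-iso} with faithful flatness and an explicit translation-map inverse. The problem is in the execution of your step (b), which is where all the content lies.

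The explicit inverse you propose for $g$ splits the wrong tensor leg. Given an \emph{arbitrary} element $(v\ot_{\BB}w)\di t$ of $(V\ot_{\BB}W)\di P$, you split $t$ through its own coaction and the translation map, i.e.\ you form $\tuno{t\mo}\ot\tdue{t\mo}t\z$. But by the very identity \eqref{equ. translation map 4} that you cite, this equals $t\ot_N 1$ in $P\ot_NP$, so the recipe collapses; worse, the resulting first leg $v\di\tuno{t\mo}$ does not lie in $V\Box^{\cL}P$: since $v$ and $t$ are unrelated, there is no cotensor condition forcing $v\z\ot v\o\ot \tuno{t\mo}=v\ot \tuno{t\mo}\mo\ot\tuno{t\mo}\z$ (take $v=1$ in $V=\cL$ to see it fail). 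The inverse must instead be built from the coaction of the \emph{comodule} leg $v$, namely $(v\ot_{\BB}w)\di t\mapsto (v\z\di \tuno{v\o})\ot_N(w\di \tdue{v\o}t)$ --- this is exactly the paper's $\varsigma^{-1}$ --- which lands in the cotensor by \eqref{equ. translation map 1} and is a two-sided inverse by \eqref{equ. translation map 6.5}, \eqref{equ. translation map 4} and the cotensor condition on $v\di p$. Your fallback (ii) does not repair this: the Ulbrich copresentation argument requires $W\mapsto (V\Box^{\cL}P)\ot_N(W\di P)$ to be left exact, i.e.\ $V\Box^{\cL}P$ to be flat as a right $N$-module, which is not available a priori (it is essentially a consequence of the theorem being proved).

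Two further remarks. First, the residual map $g$ only arises because you chose to attack the second statement head-on. The paper proves the \emph{first} statement directly, and there the factor $P$ sits on the left of the cotensor, so after applying $f_W$ the spectator factor $W$ slides to the outside and $\xi\ot_NP$ is identified outright with $f_V\di W$, a map already known to be invertible by \Cref{lem:aux-iso}; nothing is left to invert. Second, the coopposite does not flip the sides of flatness over $N$ (the $N$-module structures on $P$ are untouched by passing to $\cL^{\cop}$), so the hypotheses ``right faithfully flat'' in the first statement and ``left faithfully flat'' in the second do not literally interchange under your reduction; this mismatch (arguably a slip in the paper's own hypotheses, since \Cref{lem:aux-iso} uses flatness of $P$ as a left $N$-module) has to be confronted explicitly if you want to deduce either statement from the other.
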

\begin{proof}
The map $\xi$ is well defined since $P\Box^{\cL}V$ is contained in the Takeuchi product. For bijectivity, the diagram
\[\xymatrix{(P\Box^\cL V)\ou N(P\Box^\cL W)\ou NP
\ar[rr]^-{\xi\ou NP}
\ar[d]_{(P\Box^\cL V)\ou Nf_W}
&&(P\Box^\cL(V\ou BW))\ou NP\ar[d]^{f_{V\ou BW}}\\
(P\Box^\cL V)\ou NP\di W
\ar[rr]^-{f_V\ot W}&&P\di V\ou BW}\]
commutes, proving that $\xi$ is an isomorphism, where $f_{V}, f_{W}$ and $f_{V\ot_{B} W}$ are given in \Cref{exactness properties of cotensor product}.
We can also compute the inverse of $\xi$  which is given by
\[\xi^{-1}(p\diamond_{B}(v\ot_B w))=(v\mo\yi{}\diamond_{B}v\z)\ot_N (v\mo\er{} p\diamond_{B} w),\]
we can see $\xi^{-1}$ factors through all the balanced tensor products by applying \eqref{equ. anti translation map 5} and \eqref{equ. anti translation map 6}. Indeed,
\begin{align*}
    (\id_{P\Box^{\cL}V}&\ot f^{-1}_{W})\circ\,(f^{-1}_{V}\ot \id_{W})\circ\,f_{V\ot_{B}W}(p\ot v\ot w\ot 1)\\
    =& (\id_{P\Box^{\cL}V}\ot f^{-1}_{W})\circ\,(f^{-1}_{V}\ot \id_{W})(p\ot v\ot w)\\
     =& (\id_{P\Box^{\cL}V}\ot f^{-1}_{W})(v\mo\yi{}\ot v\z\ot v\mo\er{}p\ot w)\\
     =&v\mo\yi{}\ot v\z\ot w\mo\yi{}\ot w\z\ot_{N} w\mo\er{}v\mo\er{}p\in (P\Box^{\cL} V)\ot_{N}(P\Box^{\cL} V)\ot_{N}N\\
     =&v\mo\yi{}\ot v\z\ot w\mo\yi{}w\mo\er{}v\mo\er{}p\ot w\z\ot_{N} 1\\
     =&v\mo\yi{}\ot v\z\ot v\mo\er{}p\ot w\ot_{N} 1.
\end{align*}

   Similarly,  $\varsigma$ is well defined since $V\Box^{\cL}P$ is a subalgebra of $V\times_{B} P$. The inverse of $\varsigma$ is given by
    \[\varsigma^{-1}((v\ot_{\BB} w)\diamond_{B} p)=(v\z\di v\o\tuno{})\ot_{N}(w\di v\o\tdue{}p).\]
\end{proof}

In the theory of Galois and biGalois objects, monoidal "fiber" functors are always given by Galois extensions \cite{U87,U89}. At this time we cannot prove a full analog of this statement, but we have the following partial result: 
\begin{prop}\label{contensor-monoidal-converse}
    Let $P$ be a right $\cL$-comodule algebra with $N\subset P^{\co \CL}$ and such that we have a monoidal functor as in \Cref{cotensor-monoidal}. Then $P$ is an anti-right $\cL$-Galois extension.
\end{prop}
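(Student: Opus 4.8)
Write $M=P^{co\cL}$ for the subalgebra of right coinvariants. Because the functor $F=P\Box^\cL(-)$ is monoidal it carries the unit object $B$ of ${}^{\cL}\M$ to the unit $N$ of $\BiMod N$; as $F(B)=P\Box^\cL B\cong M$, this lets us identify $N=M$. The goal is then to show that the anti-right canonical map
\[\hat{\can}^{L}\colon P\ot_{M}P\to P\di\cL,\qquad p\ot_{M}q\mapsto p\z q\di p\o\]
is bijective.

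The plan is to reverse the argument in the proof of \Cref{cotensor-monoidal}. There the invertibility of $\xi$ was \emph{deduced} from that of the maps $f_{V}\colon(P\Box^\cL V)\ou{M}P\cong P\di V$, whose invertibility rested on the Galois hypothesis through \Cref{lem:aux-iso}; here I would use the invertibility of $\xi$ as the input instead. Evaluating $\xi$ at $V=W=\cL$ gives
\[\xi\colon(P\Box^\cL\cL)\ou{M}(P\Box^\cL\cL)\to P\Box^\cL(\cL\ot_{B}\cL).\]
The two identifications needed to read this as a statement about $P$ do not use the Galois property. First, $P\Box^\cL\cL\cong P$ via $p\mapsto p\z\di p\o$ is the general isomorphism recalled after the definition of the cotensor product. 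Second, $P\Box^\cL(\cL\ot_{B}\cL)\cong P\di\cL$, given by applying $\varepsilon$ to the outer $\cL$-factor, is \Cref{lem:ulbrichanalog} with $V=\cL$; it requires only that $\cL$ be a regular left comodule over itself, which holds because $\cL$ is anti-left Hopf. Composing $\xi$ with these isomorphisms (on the two factors of the source and on the target) therefore produces, from the invertibility of $\xi$, an isomorphism $P\ot_{M}P\cong P\di\cL$.

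The remaining, and only delicate, step is to identify this composite with $\hat{\can}^{L}$. Tracing $r\ot_{M}q$ through $p\mapsto p\z\di p\o$ on each tensor factor, then through $\xi$, and finally through the counit map yields $r\z q\z\di r\o\,\varepsilon(q\o)$; I would then reduce this to $r\z q\di r\o$ using the right-comodule counit axiom $\overline{\varepsilon(q\o)}\,q\z=q$ together with the $B$-balancing built into the Takeuchi product $P\di\cL$. The main obstacle is precisely this bookkeeping of the competing $B$- and $\BB$-module structures; everything else in the argument is formal. Once the composite is identified with $\hat{\can}^{L}$, its bijectivity is immediate from that of $\xi$, so $M\subseteq P$ is an anti-right $\cL$-Galois extension.
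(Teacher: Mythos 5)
Your proposal is correct and takes essentially the same route as the paper: the paper's entire proof consists precisely of the commutative diagram assembling $\hat{\can}$ from the isomorphism $P\cong P\Box^{\cL}\cL$ applied to each tensor factor, the monoidal structure map $\xi$ evaluated at $V=W=\cL$, and the isomorphism $f$ of \Cref{lem:ulbrichanalog}. Your additional remarks (identifying $N$ with $P^{\co\cL}$ via the unit constraint of the monoidal functor, and deducing regularity of $\cL$ as a left comodule over itself from anti-Hopfness) together with the explicit trace-through of $\hat{\can}$ are exactly the details the paper leaves implicit when it asserts that the diagram commutes.
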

\begin{proof}
    Because the functor preserves the tensor units, we have $N\cong P\Box^\CL B=P^{\co \CL}$. The isomorphism between coinvariants and cotensor product we used is a special case of (the coopposite version of) \Cref{coinvariants vs cotensor product}. We have a commutative diagram
    \[
    \xymatrix{P\ou NP\ar[r]^{\rcan}\ar[d]^\cong
    &P\di\cL\\
    (P\Box^\cL\cL)\ou N(P\Box^\cL\cL)\ar[r]^-\xi
    &P\Box^\cL(\cL\ou B\cL)\ar[u]^f}
    \]
    where $f$ is the isomorphism from \Cref{lem:ulbrichanalog}.
\end{proof}

\section{Hopf BiGalois extensions}\label{sec:biGalois}
\subsection{Some technical preparations}

\begin{rem}
    Let $P,Q$ be  left $\cL$-comodule algebras and $M\in{^\cL_P\M_Q}$. Then 
    \[P\ou BQ\ot {^{co\cL}M}\ni p\ot q\ot m\mapsto pmq\in M\]
    is well defined and a left $\cL$-comodule map. In particular $^{co\cL}(P\ou B Q)$ is an algebra under 
    $(p\ot q)(p'\ot q')=pp'\ot q'q$ and there is a functor
    \[^{\cL}_P\M_Q\to {_{^{co\cL}(P\ou BQ)}\M}, M\mapsto {^{co\cL}M}.\]
    Also note that $^{co\cL}P$ and $(^{co\cL}Q)^{\op}$ are obvious subalgebras of $^{co\cL}(P\ou BQ).$
\end{rem}
Indeed the first map is well-defined by the previous remark, colinearity is obvious and implies that the map restricts to a map
\[^{co\cL}(P\ou BQ)\ot {^{co\cL}M}\to {^{co\cL}M}\]

\begin{prop}\label{adjoint trick}
    Let $\cL$ be a left Hopf algebroid over $B$, and $N\subset P$ a right faithfully flat left $\cL$-Galois extension. Then the functor 
    \[_N\M\ni M\mapsto P\ou NM\in{^\cL\M}\]
    is right adjoint. The left adjoint is 
    \[^\cL\M\ni \Gamma\mapsto {^{co\cL}}(P\ot_{B}\Gamma)\in{_N\M}.\]
    The unit of the adjunction is given by
    \[\alpha\colon\Gamma\ni \eta\mapsto \eta\mo\tuno{}\ot \eta\mo\tdue{}\ot \eta\z\in P\ou N{^{co\cL}(P\ou B\Gamma)}.\]
    The same formulas describe a pair of adjoint functors between $_N\M_N$ and $^\cL\M_N.$

\end{prop}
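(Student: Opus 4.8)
The plan is to exhibit $F:=P\ou N(-)\colon{}_N\M\to{}^\cL\M$ as a forgetful functor precomposed with the structure-theorem equivalence, so that the adjunction and its unit fall out of the standard free-forgetful adjunction for modules over an algebra object in a monoidal category. Note first that $P$ being a left $\cL$-comodule algebra is exactly the statement that $P$ is an algebra in the monoidal category $({}^\cL\M,\ou B)$; hence ${}^{\cL}_{P}\M$ is precisely the category of left $P$-modules in ${}^\cL\M$, with a forgetful functor $U\colon{}^{\cL}_{P}\M\to{}^\cL\M$. By \Cref{thm. fundamental theorem for left Hopf Galois extensions}, the functor $E:=P\ou N(-)\colon{}_N\M\to{}^{\cL}_{P}\M$ is an equivalence with quasi-inverse $E^{-1}={}^{co\cL}(-)$, and the functor under study is $F=U\circ E$.

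Next I would invoke the free-forgetful adjunction for the algebra $P$: the left adjoint of $U$ is the free-module functor $L\colon\Gamma\mapsto P\ou B\Gamma$, where $P\ou B\Gamma$ carries the codiagonal $\cL$-coaction and the left $P$-action given by multiplication in the first tensor factor, and the unit is $\eta^{1}_\Gamma\colon\Gamma\to UL\Gamma$, $\gamma\mapsto 1_P\ou B\gamma$. The two things to check are that $P\ou B\Gamma$ really is a relative Hopf module (its action map $P\ou B(P\ou B\Gamma)\to P\ou B\Gamma$ is $\cL$-colinear because the product of $P$ is colinear) and that $L\dashv U$; both are the standard facts for modules over an algebra in a monoidal category.

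Since $E$ is an equivalence, $E^{-1}$ is in particular its left adjoint, so the left adjoint of $F=U\circ E$ is $E^{-1}\circ L={}^{co\cL}(P\ou B-)$, which is the asserted left adjoint; this already proves that $F$ is right adjoint. To identify the unit I would paste the two units,
\[\Gamma\xrightarrow{\ \eta^{1}_{\Gamma}\ }UL\Gamma\xrightarrow{\ U\eta^{2}_{L\Gamma}\ }UEE^{-1}L\Gamma=P\ou N{}^{co\cL}(P\ou B\Gamma),\]
where $\eta^{2}$ is the unit of $E^{-1}\dashv E$, namely the structure-theorem isomorphism $m\mapsto\tuno{m\mo}\ot_{N}\tdue{m\mo}m\z$ from the proof of \Cref{thm. fundamental theorem for left Hopf Galois extensions}. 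Evaluating at $\gamma=p$ gives $m=1_P\ou B p$, whose coaction is $p\mo\ot(1_P\ou B p\z)$ because $1_P$ is $\cL$-coinvariant; as $\tdue{p\mo}$ acts on the first factor, the composite sends $p$ to $\tuno{p\mo}\ot_{N}(\tdue{p\mo}\ou B p\z)$, which is exactly $\alpha(p)=p\mo\tuno{}\ot p\mo\tdue{}\ot p\z$.

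For the variant between ${}_N\M_N$ and ${}^\cL\M_N$, the extra right $N$-module structures are spectators: they commute with both the coaction and with multiplication by $P$, so $L$, $E$, $U$ and both units carry them along verbatim and the same computation returns the same $\alpha$. I expect no serious obstacle here: the substantive input is the equivalence $E$ together with its explicit unit, which is already supplied by \Cref{thm. fundamental theorem for left Hopf Galois extensions}, while the well-definedness of $L$ and the adjunction $L\dashv U$ are standard. The only genuine (and mild) step is the final bookkeeping confirming that the pasted unit collapses to $\alpha$, which hinges precisely on the coinvariance of $1_P$.
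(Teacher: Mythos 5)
Your proof is correct and takes essentially the same route as the paper: the paper's own (very terse) proof likewise factors the functor as the structure-theorem equivalence of \Cref{thm. fundamental theorem for left Hopf Galois extensions} composed with the standard free--forgetful adjunction for the algebra $P$ in the monoidal category $({}^\cL\M,\ot_B)$, with left adjoint ${}^{co\cL}(P\ot_B-)$. Your explicit pasting of the two units to recover $\alpha$, resting on the coinvariance of $1_P$ and the explicit isomorphism $m\mapsto \tuno{m\mo}\ot_N\tdue{m\mo}m\z$, merely spells out details the paper leaves implicit.
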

\begin{proof}
    This is just the composition between the adjoint equivalence of \Cref{thm. fundamental theorem for left Hopf Galois extensions} and the standard fact that the forgetful functor from the category $^\cL_P\M$ of left $P$ modules in the monoidal category $^\cL\M$ to the underlying category $^\cL\M$ is right adjoint to tensoring with $P$.
\end{proof}
We stated \Cref{adjoint trick} for left comodules and left modules because it involves as little as possible rearrangements of tensor product, and uses fewer hypotheses. To make the Ehresmann Hopf algebroid construction work below, however, we need the twisted version using part (2) of \Cref{schneider hopf module corollaries} instead. We will formulate this adjunction in terms of a universal property.

\begin{cor}\label{twisted adjoint trick}
    Let $\cL$ be a left Hopf algebroid over $B$, $N\subset P$ a left faithfully flat left $\cL$-Galois extension, and assume that $P$ is a regular comodule.
    Write $\ol N=N^\op$. Then for every $\Gamma\in{^{\cL}\M}$ we have a map 
    \[\alpha\colon\Gamma\ni  \eta\mapsto {\eta\ro}\tdue{}\ot \eta\rz{}\ot {\eta\ro}\tuno{}\in P\diamond_{\overline N}{^{\co\cL}(\Gamma\ou BP)}\]
    of left $\cL$-comodules
    with the universal property that for all $M\in {_{\ol N}}\M$ and every map $\beta\colon \Gamma\to P\diamond_{\ol N} M$ of left $\cL$-comodules there is a unique map $f\colon{^{co\cL}(\Gamma\ou BP)}\to M$ of  left $\ol N$-modules making
    \[\xymatrix{\Gamma\ar[rr]^-\alpha\ar[drr]_-\beta&&P\diamond_{\overline N}{^{\co\cL}(\Gamma\ou BP)}\ar[d]^{P\diamond_{\ol N}f}\\&&P\diamond_{\ol N}M}\]
    commute. 
     If $A$ is a $k$-algebra and $\Gamma\in{^\cL_A}\M$ (meaning $\Gamma$ is in addition an $A$-module such that multiplication with an element of $A$ is a comodule map) $\alpha$ is an $A$-module map, and if $\beta$ is an $A$-module map then so is $f$.
\end{cor}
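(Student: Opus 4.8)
The plan is to derive \Cref{twisted adjoint trick} from \Cref{adjoint trick} by the same "side-switching" principle that governs the whole section, rather than redoing the descent argument. The point is that \Cref{adjoint trick} already establishes the adjunction between $P\ou N(-)$ and $^{co\cL}(P\ot_B-)$ for a \emph{left} faithfully flat left $\cL$-Galois extension; here we instead have a left faithfully flat \emph{regular} left $\cL$-Galois extension, and by \Cref{lem. P opposite has Galois structure} the regularity lets us pass to $P^{\op}$, which is a skew regular left Hopf Galois extension of $\ol N=N^{\op}$, equivalently a regular anti-right $\cL$-Galois extension. So the first step is to identify the functor $\Gamma\mapsto{^{co\cL}(\Gamma\ou BP)}$ of part (2) of \Cref{schneider hopf module corollaries} (the $\M_N\to{^\cL\M_P}$ equivalence) as the left adjoint of $P\diamond_{\ol N}(-)$, exactly as in \Cref{adjoint trick} but with the roles of left/right modules and the two tensor factors swapped. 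The balanced tensor product $P\diamond_{\ol N}(-)$ appears (rather than $\ou N$) precisely because we work over $P^{\op}$.

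The second step is to \emph{verify the explicit formula} for the unit $\alpha$. Using the reverse right comodule structure on $P$ and the anti-right translation map notation from \Cref{lem. P opposite has Galois structure} (where $\hat\tau^{\cL}:X\mapsto X\tdue{}\ot_{N^{op}}X\tuno{}$), I would check that
\[
\alpha(p)={p\ro}\tdue{}\ot p\rz\ot{p\ro}\tuno{}
\]
lands in $P\diamond_{\ol N}{^{co\cL}(\Gamma\ou BP)}$ and is $\cL$-colinear. Colinearity and well-definedness should follow by the same computation as in the proof of the isomorphism $\Gamma\cong{^{co\cL}}\Gamma\ou NP$ recorded in part (2) of \Cref{schneider hopf module corollaries}, namely $p\mapsto p\rz\tuno{p\ro}\ot_N\tdue{p\ro}$, combined with \eqref{equ. skew translation map 1}; the translation-map identities \eqref{equ. translation map 5}, \eqref{equ. translation map 6} control the passage through the balanced tensor products.

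The third step is the universal property itself. Given a colinear $\beta\colon\Gamma\to P\ou NM$, applying the equivalence of part (2) of \Cref{schneider hopf module corollaries} (which takes coinvariants) produces a map on coinvariants; since $^{co\cL}(P\ou NM)\cong M$ by that same equivalence, one obtains the required $f\colon{^{co\cL}(\Gamma\ou BP)}\to M$, and uniqueness is forced because $P$ is faithfully flat, so that $P\diamond_{\ol N}(-)$ reflects equality of morphisms. This is the standard translation "adjunction $=$ equivalence composed with a free/forgetful adjunction," and the diagram commutes by naturality of the equivalence. Finally the additional $A$-linearity statement is automatic: all the structure maps ($\alpha$, the equivalence, and hence $f$) are built from $\cL$-comodule maps and multiplications, so if $A$ acts by comodule maps commuting with everything in sight, it is carried along unchanged.

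\textbf{The main obstacle} I anticipate is purely bookkeeping rather than conceptual: keeping the many tensor-product balancings straight when passing to $P^{\op}$ and $\ol N$, and confirming that the specific closed formula for $\alpha$ is the one transported by the equivalence. In particular the hardest verification will be that $\alpha$ factors correctly through $P\diamond_{\ol N}(-)$ and is genuinely colinear, since this is where the regularity and the reverse-comodule identities \eqref{equ. skew regular map 4}, \eqref{equ. skew regular map 5}, \eqref{equ. skew translation map 1} must be combined; everything downstream is formal adjunction nonsense once $\alpha$ is shown to be the unit.
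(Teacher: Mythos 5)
Your proposal is correct and takes essentially the same route as the paper: the paper also obtains \Cref{twisted adjoint trick} by composing the Hopf-module equivalence of part (2) of \Cref{schneider hopf module corollaries} (itself resting on regularity and side-switching) with the free--forgetful adjunction for right $P$-modules in ${}^{\cL}\M$, exactly the ``equivalence composed with free/forgetful'' translation you describe, with $\alpha$ being the unit transported through that equivalence. The paper's own justification is in fact terser than yours---it only records that $\alpha$ corresponds to $p\mapsto p\ot 1$ under $P\diamond_{\ol N}{}^{co\cL}(\Gamma\ou BP)\cong\Gamma\ou BP$ and that $f$ is the coinvariant part of $p\ot q\mapsto \beta(p)q$---so your additional verifications of colinearity and the unit formula are consistent elaborations of the same argument, not a different approach.
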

Note that $\alpha$ is defined so that the composition
\[\Gamma\xrightarrow{\alpha}P\diamond_{\overline N}{^{\co\cL}(\Gamma\ou BP)}\cong \Gamma\ou BP\]
maps $\eta$ to $\eta\ot 1$. Also note that the map $f$ corresponding to $\beta$ is the coinvariant part of
\begin{align}\label{equ. universal prop}
    \Gamma\ot_{B} P \ni \eta\ot p\mapsto \beta(\eta)p.
\end{align}

\subsection{Ehresmann algebroids}

The following construction generalizes results in \cite{schau4}.
\begin{thm}\label{def. right ES bialgebroids}
    Let $\cL$ be a $B$-Hopf algebroid, faithfully flat as left $B$-module and $\ol B$-module. Let $N\subseteq P$ be a left faithfully flat left $\cL$-Hopf Galois extension.  We set $\overline N=N^{\op}$. Define $R(P, \cL):={}^{co\cL}(P\ot_{B}P)$.
    Then $R(P,\cL)$ is a $\overline N$-bialgebroid  with 
    \begin{align*}
        (p\ot q)(p'\ot q')&=pp'\ot q'q\\
         s\colon \overline N&\ni\overline{n}\mapsto 1\ot n\in R(P,\cL)\\
         t\colon N&\ni n\mapsto n\ot 1\in R(P,\cL)\\
         \Delta(p\ot q)&={p\ro}\tdue{}\ot q\ot p\rz\ot {p\ro}\tuno{}\\
         &=\tdue{q\mo}\ot_{B}q\z\diamond_{\overline{N}}p\ot_{B}\tuno{q\mo}\\
         \varepsilon(p\ot q )&=pq.
    \end{align*}
  Moreover,  $P$ is a right $R(P,\cL)$-comodule algebra by
    \[\delta\colon P\to P\diamond_{\ol N}R(P,\cL); p\mapsto {p\ro}\tdue{}\ot p\rz\ot {p\ro}\tuno{}.\]
    In this way $P$ is an anti-right $R(P,\cL)$-Galois extension of its coinvariants, with the anti-translation map given by 
    \begin{align}\label{expl. transl.}
\yi{(p\ot_{B}q)}\ot_{B}\er{(p\ot_{B}q)}=p\ot_{B}q.
    \end{align}
   
    If $\CH$ is an $\ol N$-bialgebroid and $P$ is a right $\CH$-comodule algebra such that it is a $\CL$-$\CH$-bicomodule algebra, then there is a unique bialgebroid homomorphism $f\colon R(P,\CL)\to\CH$ compatible with the comodule structures in the obvious way. It is an isomorphism if and only if $P$ is $\CH$-anti-Galois.
    
    If $P$ is faithfully flat as left or right $B$-module, then $P^{\co R(P,\cL)}=B$. 
    
    If $P$ is  right faithfully flat over $N$ then it is a regular right $R(P,\cL)$-comodule, with reverse comodule structure
     \begin{align}
        {}_{R}\delta(p)=p\rmo\diamond_{\overline{N}}p\rz=&\tdue{p\mo}\ot_{B}p\z\diamond_{\overline{N}}\tuno{p\mo},
    \end{align}
    and $R(P,\CL)$ is a Hopf algebroid.
\end{thm}
\begin{proof}
    We already remarked that the algebra structure is well defined. The proposed comodule structure on $P$ is an instance of the universal map in \Cref{twisted adjoint trick}.  It is an algebra map by \Cref{lem. P opposite has Galois structure} and \cref{equ. translation map 7}. 
    It follows from the universal property (with $A=N$) that $R(P,\cL)$ has a unique coring structure $R(P,\cL)\to R(P,\cL)\diamond_{\overline N}R(P,\cL)$ for which $P$ is a comodule, namely the unique $\Delta$ making
    \begin{equation*}
        \xymatrix{P\ar[rr]^-{\delta}\ar[d]_{\delta}&&P\diamond_{\ol N}R(P,\cL)\ar[d]^{P\diamond_{\ol N}\Delta}\\
        P\diamond_{\ol N}R(P,\cL)\ar[rr]^-{\delta\diamond_{\ol N}R(P,\cL)}&&P\diamond_{\ol N}R(P,\cL)\diamond_{\ol N}R(P,\cL)}
    \end{equation*} commute. It is a standard argument how to derive coassociativity of $\Delta$ from this definition. To derive the explicit form of $\Delta$ using \eqref{equ. universal prop}, first observe that
\begin{align*}
\beta(p)=&p\ro\tdue{}\ro\tdue{}\ot\,p\ro\tdue{}\rz\ot p\ro\tdue{}\ro\tuno{}\ot\,p\rz\ot\,p\ro\tuno{}\\
=&p\ro\t\tdue{}\ot p\ro\o\tdue{}\ot p\ro\t\tuno{}\ot p\rz\,\ot p\ro\o\tuno{}.
\end{align*}
Thus, for any $p\ot q\in R(P,\cL)$ we have
\begin{align*}
    \Delta(p\ot q)=&\beta(p)q\\
    =&p\ro\t\tdue{}\,q\ot p\ro\o\tdue{}\ot p\ro\t\tuno{}\ot p\rz\,\ot p\ro\o\tuno{}\\
    =&q\mo\tdue{}\,q\z\ot q\mt\tdue{}\ot q\mo\tuno{}\ot p\,\ot q\mt\tuno{}\\
    =&1\ot q\mo\tdue{}\ot q\z\ot p\,\ot q\mo\tuno{},
\end{align*}
where we use \Cref{coinvariants vs cotensor product} in the second step. This proves the second claimed formula for comultiplication.  The first formula is a consequence of \Cref{coinvariants vs cotensor product}. $\Delta$ is an algebra map by the same argument as for the coaction. 
   The composition 
    \[P\xrightarrow{\delta}P\diamond_{\ol N}R(P,\cL)\cong P\ou BP\]is given by $p\mapsto p\ot 1$, where the isomorphism is given by (2) of Corollary~\ref{schneider hopf module corollaries}. More precisely,
    \begin{align*}
        P\ot_{B}P&\cong P\diamond_{\ol N}R(P, \cL),\\
        p\ot q&\mapsto p\ro\tdue{}\,q\ot p\rz\ot p\ro\tuno{}\\
        p'\ot pq&\mapsfrom q\ot (p'\ot p).
    \end{align*}
      Therefore, $P^{\co R(P,\cL)}\subset P$ is the equalizer of 
   $\iota\ou BP,P\ou B\iota\colon P\to P\ou BP$ by the diagram
     \[\begin{tikzcd}
   P^{\co R(P,\cL)}
   \arrow[r,hookrightarrow]
   & 
   P 
   \arrow[rr,shift left,"\delta" near end]
   \arrow[rr,shift right,"P\ot\iota"' near end]
   \arrow[ddrr,shift left, "\!P\ot\iota" near end ]
   \arrow[ddrr,shift right,"\!\iota\ot P"' near end]&&P\diamond_{\ol N}R(P,\cL)
   \arrow[dd,"\cong"]
   \\\\&&&P\ou BP
   \end{tikzcd}
   \]
 Thus, if $P$ is right or left faithfully flat over $B$ then $P^{\co R(P,\cL)}=B$ by faithfully flat descent. In general, $P^{\co R(P,\CL)}=\hat B:=\{p\in P|p\ot 1=1\ot p\in P\ou BP\}$. Note for the following that $P\ou{\hat B}P=P\ou BP$ by \Cref{different coinvariants}.
 
     The anti-right Galois map for the right $\cL$-extension $B\subset P$ is the isomorphism 
    \[P\ou BP\cong P\diamond_{\ol N}R(P,\cL)\]
    coming from (2) of Corollary~\ref{schneider hopf module corollaries} and thus $P$ is anti-right Galois; its inverse maps $1\ot p\ot q$ to $p\ot q$ proving that the translation map has the claimed form. By definition of $\delta$, the diagram
\[
\begin{tikzcd}
    P\ot_{N} R(P,\cL)
     \arrow[rr,"\psi"]
     \arrow[drr,"f"]
    &&P\diamond_{\ol N}R(P,\cL)
     \arrow[d,"g"]
     \\
    &&P\ou BP
\end{tikzcd}\]
 commutes, where $\psi$ is from the definition of regular comodule, and given by $\psi(p'\ot (p\ot q))=p'\ro\tdue{}\ot p'\rz\,p\ot q\, p'\ro\tuno{}$;  $f$ is the counit of the adjoint equivalence from \Cref{thm. fundamental theorem for left Hopf Galois extensions}, which is given by $f(p'\ot p\ot q)=p'p\ot q$ with inverse $f^{-1}(p\ot q)=p\,q\mo\tuno{}\ot q\mo\tdue{}\ot q\z$; and $g$, given by $g(p'\ot p\ot q)=p\ot qp'$, is, up to the obvious identification, the counit of the variant (2) from \Cref{schneider hopf module corollaries}, thus also is an isomorphism under the additional flatness hypothesis. Thus $P$ is regular as claimed. To find the reverse comodule structure explicitly, we calculate
\begin{align*}
    \psi\inv(p\ot 1)&=f\inv g(p\ot 1\ot 1)\\
    &=f\inv(1\ot p)\\
    &= p\mo\tuno{}\ot p\mo\tdue{}\ot p\z.
\end{align*}
 Assume $P$ is a $\cL$-$\CH$-bicomodule algebra for some $\ol N$-bialgebroid  $\CH$. Denote the comodule structure by $\rho\colon p\mapsto p\z\ot p\o\in P\diamond_{\ol N}\CH$. The corresponding map $f\colon R(P,\cL)\to \CH$ as in \Cref{twisted adjoint trick} is given by
    $f(p\ot q)=\rho(p)q$ as an element of ${^{\co\cL}(P\diamond_{\ol N}\CH)}\cong \CH$, so 
    \begin{align*}
        f(p\ot q)=&p\z q\ot p\o\in N\diamond_{\ol N} \CH \\
        =&\ol{p\z q}p\o\in \CH\subset P\diamond_{\ol N}\CH.
    \end{align*}
 
    In other words, if we restrict the anti-Galois map 
    \[{}^{\bullet}_{\bullet}P\ou B {}^{\bullet}P\to {}^{\bullet}_{\bullet}P\diamond_{\ol N}\CH,\quad p\ot q\mapsto p\z q\ot p\o,\]
    for the right $\CH$-comodule algebra $P$, which is a map of Hopf modules in $^{\cL}_P\mathcal M$, to the left $\cL$-coinvariants, we obtain
    \[f\colon R(P,\cL)={^{\co\cL}(P\ou BP)}\to {^{\co\cL}(P\diamond_{\ol N}\CH)}\cong \CH.\]
    In particular, if $P$ is $\CH$-anti-Galois, then $f$ is an isomorphism.  Also, $f$ is multiplicative because the image of $\rho$ lies in the Takeuchi product:
    $$f(pp'\ot q'q)=\ol{p\z p'\z q'q}p\o p'\o=\ol{p\z q}p\o\ol{p'\z q'}p'\o=f(p\ot q)f(p'\ot q').$$
    From the definition of both $f$ and the comultiplication of $R(P,\cL)$ through the universal property in \Cref{twisted adjoint trick} it is clear that $f$ is also a coring map, and thus it is a morphism of $\ol N$-bialgebroids. 
    
    Finally, that $R(P,\CL)$ is Hopf follows from \Cref{lem. left Hopf Galois induce left Hopf} and  \Cref{lem. left skew regular Hopf Galois induce anti-left Hopf} because it is regular and we have the needed faithful flatness conditions over $N$.
\end{proof}
 
 \begin{rem}\label{ehresmann regular comodule structures}
     The two formulas for comultiplication on $R(P,\CL)$ can be read as saying that the right comodule structure of $R(P,\CL)={}^{\co\CL}(P\ou BP)$ over itself is induced by the right comodule structure of the left tensor factor, and the left comodule structure by the reverse left comodule structure of the right tensor factor.
 \end{rem}

\begin{prop}\label{prop. right ES}
     Let $\cL$ be a $B$-Hopf algebroid, left faithfully flat over $B$ and $\ol B$, and let $N\subseteq P$ be a left faithfully flat  left $\cL$-Hopf Galois extension. Then 
     \begin{align}\label{equ. right ES 1}
         R(P, \cL)=&\{p\ot_{B}q\in P\ot_{B}P\quad|\quad p\tuno{q\mo}\ot_{N}\tdue{q\mo}\ot_{B}q\z=1\ot_{N} p\ot_{B} q\}, \\
         \label{equ. right ES 2} 
         =&\{p\ot_{B}q\in P\ot_{B} P\quad|\quad p\rz\ot_{B} p\ro\di{} q=p\ot_{B} q\mo\di{} q\z \}, \\
      \label{equ. right ES 3}
      =&\{p\ot_{B}q\in P\ot_{B}P\quad|\quad p\rz\ot_{B}\tuno{p\ro}\ot_{N}\tdue{p\ro}q=p\di{}q\ot_{N}1 \}. 
     \end{align}   
\end{prop}
\begin{proof}

    For \eqref{equ. right ES 1}, by definition, $p\ot q\in P\ou BP$ belongs to $R(P,\cL)$ if and only if
    \begin{align*}1\ot p\ot q=p\mo\,q\mo\ot p\z\ot q\z\in \cL\diamond_B P\ou B P.\end{align*}
    Now applying the translation map to the first two factors maps the left hand side to $1\ot p\ot q\in P\ou NP\ou BP$, and the right hand side to 
    \begin{align*}\tuno{(p\mo\,q\mo\,)}&\ot\tdue{(p\mo\,q\mo\,)}p\z \ot q\z\\
    &=\tuno{p\mo\,}\tuno{q\mo\,}\ot\tdue{q\mo\,}\tdue{p\mo\,}p\z\ot q\z\\
    &=
    p\tuno{q\mo\,}\ot \tdue{q\mo}\ot q\z,\end{align*}
    using successively \eqref{equ. translation map 7} and \eqref{equ. translation map 4}.  The proof of \eqref{equ. right ES 3} is similar. Also, \eqref{equ. right ES 2} is the result of \Cref{coinvariants vs cotensor product}.   
\end{proof}

By the general correspondence between left comodule structures and right comodule structures over the coopposite, we get a corresponding construction of a left Ehresmann Hopf algebroid for a right anti-Galois extension. 
\begin{cor}\label{lem and def. left Ehresmann Hopf algebroids}
     Let $\CH$ be a $B$-Hopf algebroid, faithfully flat as left $B$ and $\ol B$-module. Let $N\subseteq P$ be a right  $\CH$-anti-Hopf Galois extension such that $P$ is faithfully flat over $N$ on the left hand side. Define
    $L(P, \CH):=(P\ot_{\overline{B}}P)^{co\CH}$. Then $L(P,\CH)$ is a $N$-bialgebroid by  \[s(n)=n\ot_{\overline{B}}1,\quad t(n)=1\ot_{\overline{B}}n,\quad (p\ot_{\overline{B}}q)(p'\ot_{\overline{B}}q')=pp'\ot_{\overline{B}}q'q,\]
    \[\Delta(p\ot_{\BB} q)=p\ot_{\BB}\yi{q\o}\ot_{N}\er{q\o}\ot_{\BB}\,q\z=p\rz\ot_{\BB}\yi{p\rmo}\ot_{N}\er{p\rmo}\ot_{\BB}\,q,\qquad \varepsilon(p\ot_{\BB} q)=pq,\]
    for any $p\ot_{\BB}q, p'\ot_{\BB}q'\in L(P, \CH)$.
    $P$ is a left comodule algebra over $L(P,\CH)$ 
    \begin{align}
        p\mo\ot p\z=&p\rz\ot\yi{p\rmo}\ot\er{p\rmo}\in L(P,\CH)\diamond_{N}P
        ,\\
        \intertext{If $P$ is right faithfully flat over $N$ the comodule structure is regular with reverse}
        p\rz\ot p\ro=&\yi{p\o}\ot\er{p\o}\ot p\z\in P\diamond_{N}L(P,\CH),
    \end{align}
    $P$ is a left $L(P,\CH)$-Galois extension of its coinvariants with translation map
    \begin{align}
        \tuno{(p\ot_{\BB}q)}\ot_{\BB}\tdue{(p\ot_{\BB}q)}=p\ot_{\BB}q,
    \end{align}
    and $L(P,\CH)$ is a Hopf algebroid.
    
    If $P$ is left or right faithfully flat over $\ol B$ then the coinvariants are $\ol B$. 

    \end{cor}

\begin{prop}\label{prop. left ES}
Let $\CH$ be a  $B$-Hopf algebroid, faithfully flat as left $B$ and $\ol B$-module. Let $N\subseteq P$  be a  left  faithfully flat anti-right $\CH$-Hopf Galois extension. Then for any $p\ot_{\BB}q\in L(P, \CH)$, $\Bar{b}p\ot_{\BB}q= p\ot_{\BB}q\Bar{b}$ and
     \begin{align}\label{equ. left ES 1}
         L(P, \CH)=&\{p\ot_{\BB}q\in P\ot_{\BB}P\quad|\quad p\yi{q\o}\ot_{N}\er{q\o}\ot_{\BB}q\z=1\ot_{N} p\ot_{\BB} q\} \\
       \label{equ. left ES 2} 
       =&\{p\ot_{\BB}q\in P\ot_{\BB}P\quad|\quad p\rz\ot p\rmo\ot q=p\ot q\o\ot q\z \} \\
      \label{equ. left ES 3}
      =&\{p\ot_{\BB}q\in P\ot_{\BB}P\quad|\quad p\rz\ot_{\BB}\yi{p\rmo}\ot_{N}\er{p\rmo}q=p\ot_{\BB}q\ot_{N}1 \} 
     \end{align}    
\end{prop}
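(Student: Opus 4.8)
The plan is to treat this as the left-handed mirror of \Cref{prop. right ES}, derived from the anti-right (equivalently, coopposite) picture, and to anchor all three descriptions in the coinvariance condition. First I would unwind the definition from \Cref{lem and def. left Ehresmann Hopf algebroids}: $L(P,\CH)=(P\ot_{\BB}P)^{co\CH}$ consists of the coinvariants of $P\ot_{\BB}P$ for the codiagonal right $\CH$-coaction $p\ot_{\BB}q\mapsto (p\z\ot_{\BB}q\z)\ot p\o q\o$, so by definition $p\ot_{\BB}q\in L(P,\CH)$ if and only if
\[ p\z\ot_{\BB}q\z\ot p\o q\o = p\ot_{\BB}q\ot 1\in P\ot_{\BB}P\ot\CH. \]
The auxiliary linearity $\Bar b\,p\ot_{\BB}q=p\ot_{\BB}q\,\Bar b$ I would obtain exactly as in the verification that $P\Box^\CH Q\subseteq P\times_BQ$: applying the coaction and using that its image lies in the Takeuchi product forces this extra balancing.

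For \eqref{equ. left ES 2} I would invoke the right-comodule (i.e.\ $\CH^\cop$) analog of \Cref{coinvariants vs cotensor product}. Since $P$ is regular, coinvariants may be rewritten as a cotensor product once the coaction on the first tensor factor is replaced by its reverse; concretely this turns the displayed condition into $p\rz\ot p\rmo\ot q = p\ot q\o\ot q\z$, which is \eqref{equ. left ES 2}. Equivalently one can apply the regularity isomorphism $\psi$ directly to reorganize the three legs. This route simultaneously re-exhibits the membership $L(P,\CH)\subseteq P\times_BP$ that underlies the linearity statement.

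For \eqref{equ. left ES 1} I would mirror the argument for \eqref{equ. right ES 1} verbatim. Starting from the coinvariance condition, apply the inverse anti-right canonical map (the anti-translation $X\mapsto\yi X\ot_N\er X$) to the $\CH$-leg together with one $P$-factor; splitting $\yi{(p\o q\o)}\ot\er{(p\o q\o)}$ by multiplicativity \eqref{equ. anti translation map 7} and then collapsing the $p$-contribution by \eqref{equ. anti translation map 4} (namely $\yi{p\o}\ot_N\er{p\o}p\z=p\ot_N1$), the left-hand side becomes $1\ot_N p\ot_{\BB}q$ and the right-hand side becomes $p\yi{q\o}\ot_N\er{q\o}\ot_{\BB}q\z$. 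Because this composite is an isomorphism by the faithful-flatness hypothesis, equality of the two images is equivalent to coinvariance, yielding \eqref{equ. left ES 1}. Finally I would get \eqref{equ. left ES 3} from \eqref{equ. left ES 2} by applying the anti-translation map to the middle leg $p\rmo$ and simplifying with the regular anti-translation identity \eqref{equ. regular anti translation map 1}, $p\rz\yi{p\rmo}\ot_N\er{p\rmo}=1\ot_N p$, exactly paralleling the passage from the coinvariance condition to \eqref{equ. right ES 3}.

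The conceptual content is thus a direct transcription of \Cref{prop. right ES} via the coopposite; the only real obstacle is the bookkeeping. One must check at each step that the maps in play — the codiagonal coaction, the regularity isomorphism $\psi$, and the anti-translation map applied to a single leg — are well defined on the various balanced tensor products over $B$, $\BB$ and $N$, that their images land in the correct Takeuchi products (so that the manipulations and the identifications in \eqref{equ. anti translation map 4}, \eqref{equ. anti translation map 7} and \eqref{equ. regular anti translation map 1} are legitimate), and that they are genuine isomorphisms, which is where the standing faithful-flatness and regularity assumptions are used to justify comparing the two sides.
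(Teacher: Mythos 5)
Your proposal is correct and takes essentially the paper's (implicit) route: the paper only proves the right-handed version \cref{prop. right ES} — the coinvariance condition plus the translation-map identities \eqref{equ. translation map 7} and \eqref{equ. translation map 4} — and records \cref{prop. left ES} as its coopposite mirror with no separate proof, which is exactly your transcription, including the coopposite of \cref{coinvariants vs cotensor product} for \eqref{equ. left ES 2} and the Takeuchi/coaction argument for the $\BB$-centrality. One small slip: in passing from \eqref{equ. left ES 2} to \eqref{equ. left ES 3}, after applying the anti-translation map to the middle leg (keeping $p\rz$ and $\yi{p\rmo}$ separate and multiplying $\er{p\rmo}$ into $q$), the right-hand side collapses via \eqref{equ. anti translation map 4}, i.e. $\yi{q\o}\ot_N \er{q\o}q\z = q\ot_N 1$, not via \eqref{equ. regular anti translation map 1}; the identity you cite, $p\rz\yi{p\rmo}\ot_N \er{p\rmo}=1\ot_N p$, instead collapses the first two legs and would merely reproduce \eqref{equ. left ES 1} rather than \eqref{equ. left ES 3}.
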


\begin{rem}
    Recall that for any Hopf algebra $H$ and a right $H$-comodule algebra  $P$, $N=P^{coH}\subseteq P$ is a faithfully flat $H$-Galois extension if the map
    \[\can:P\ot_{N}P\to P\ot H,\qquad p\ot q\mapsto pq\z\ot q\o, \]
    is bijective. If $H$ has a bijective antipode, then $N=P^{coH}\subseteq P$ is an $H$-Galois extension if and only if $N=P^{coH}\subseteq P$ is an anti-right $H$-Galois extension. More precisely, if we denote the translation map for the classical Hopf Galois extension by \[\can{}^{-1}(1\ot h)=\teins{h}\ot_{N}\tzwei{h},\qquad \forall h\in H.\] 
    Then we have the anti-right translation map
\[\yi{h}\ot_{N}\er{h}=\teins{S^{-1}(h)}\ot_{N}\tzwei{S^{-1}(h)}.\]
Recall that in \cite{schau4}, $L(P, H)$ has the coproduct
\[\Delta(p\ot q)=p\z\ot p\o\teins{}\ot_{N}p\o\tzwei{}\ot q,\qquad \forall p\ot q\in L(P, H).\]
Therefore, $L(P, H)$ in \Cref{lem and def. left Ehresmann Hopf algebroids} recovers the Ehresmann Hopf algebroid in \cite{schau4}.
\end{rem}

\subsection{BiGalois Extensions}
\begin{defi}\label{def. Hopf biGalois extensions}
    Let $B,N$ be two algebras. 
    Let $\cL$ be a $B$-Hopf algebroid  and let $\CH$ an $N$-Hopf algebroid 
    
    A $\cL$-$\CH$-Hopf biGalois extension is an algebra $P$,\ such that $\ol N\subseteq P$ is a 
        left $\cL$-Galois extension and $B\subseteq P$ is an  anti-right $\CH$-Galois extension, in such a way that the two comodule structures make $P$ a $\cL$-$\CH$-bicomodule. 

    We denote by $\Bi(\cL, \CH)$ the collection of $\cL$-$\CH$-Hopf biGalois extensions that are faithfully flat as left and right modules both over $B$ and $\ol N$. 

    By $\underline{\Bi}(\CL,\CH)$ we will denote the set of isomorphism classes of elements of $\Bi(\CL,\CH)$
\end{defi}

\begin{thm}\label{lem. Hopf Galois extension is HOpf biGalois extension}
     Let $\cL$ be a $B$-Hopf algebroid and faithfully flat as left $B$ and $\ol B$-module.
     Let $N\subseteq P$ be a faithfully flat left $\cL$-Hopf Galois extension such that $P$ is also faithfully flat over $B$ on one side. 
     
     Then $P$ is an $\cL$-$R(P,\cL)$-biGalois extension (in particular $R(P,\CL)$ is a Hopf algebroid), and this structure is unique in the sense that for every $\cL$-$\CH$-biGalois structure extending the given $\cL$-structure there is a unique isomorphism $R(P,\cL)\cong \CH$ identifying the structures in the obvious sense. Put $\CH=R(P,\CL)$ for the sequel.

     If $P$ is left faithfully flat over $B$, then $R(P,\CL)$ is left and right faithfully flat over $N$.
     
     If $P$ is right faithfully flat over $B$, then $R(P,\CL)$ is left and right faithfully flat over $\ol N$.
\end{thm}
\begin{proof}
   That $P$ becomes a biGalois extension was proved in \Cref{def. right ES bialgebroids}. Notably the fact that the two comodule structures commute is implicit in our construction of the $R(P,\CL)$-comodule structure through the universal property \Cref{twisted adjoint trick}.

   It remains to treat the faithful flatness assertions on $R(P,\CL)$.
By (2) of \Cref{schneider hopf module corollaries} we have $P\ot_{B}P\cong {}^{\co\cL}(P\ot_{B}P)\ot_{N}P= R(P,\cL)\ot_{N} P$. The left (resp.\ right) $N$-module structure on $R(P,\CL)$ corresponds under this isomorphism to the left (resp.\ right) $N$ module structure on the left factor $P$. As a result, $R(P,\cL)$ is left $N$-faithfully flat if $P$ is left faithfully flat over $N$ and $B$. Similarly, by \Cref{thm. fundamental theorem for left Hopf Galois extensions}, $P\ot_{N}R(P,\cL)\cong P\ot_{B}P$. Here, the left (resp.\ right) $\ol N$-module structure of $R(P,\CL)$ corresponds to the right (resp.\ left) $N$-module structure on the right tensor factor $P$. As a result, $R(P,\CL)$ is faithfully flat as left $\ol N$-module if $P$ is faithfully flat as right module over $N$ and over $B$.   
\end{proof}

 \begin{lem} Let $\CH$ be a $B$-Hopf algebroid and faithfully flat as left module over $B$ and $\ol B$. Let $N\subseteq P$ be a right $\CH$-anti-Galois extension, faithfully flat as left and right module over $N$ and $\ol B$. Then $L(P,\CH)$ is faithfully flat over $N$ and $\ol N$ on both sides and we have
    \[X_{+}\ot_{\overline{N}}X_{-}=p\ot_{\BB}\er{q\rmo}\ot_{\overline{N}}q\rz\ot_{\BB}\yi{q\rmo}\]
        \[X_{[+]}\ot_{N}X_{[-]}=\yi{p\o}\ot_{\BB}q\ot_{N}\er{p\o}\ot_{\BB}p\z,\]
        for $X=p\ot q\in L(P,\CH)$.
\end{lem}
\begin{proof}
   The assertions on faithful flatness of $L(P,\CH)$ are contained in the coopposite of \Cref{lem. Hopf Galois extension is HOpf biGalois extension}.
   
   For the statement on the form of the translation maps we use faithful flatness to show the formulas are well defined. We first check $\yi{p\o}\ot_{\BB}q\ot_{N}\er{p\o}\ot_{\BB}p\z\in L(P,\CH)\ot_{N}L(P,\CH)$. We have
   \begin{align*}
      \delta(\yi{p\o}\ot q)\ot \er{p\o}\ot p\z=& \yi{p\o}\z\ot q\z\ot \yi{p\o}\o q\o \ot \er{p\o}\ot p\z\\
      =&\yi{p\o}\ot q\z\ot p\t q\o \ot \er{p\o}\ot p\z\\
      =&\yi{p\o}\ot q\ot 1 \ot \er{p\o}\ot p\z,
   \end{align*}
   and
   \begin{align*}
       \yi{p\o}\ot q\ot \delta(\er{p\o}\ot p\z)=& \yi{p\t}\ot q\ot \er{p\t}\z\ot p\z\ot \er{p\t}\o p\o\\
       =& \yi{p\t{}_{[+]}}\ot q\ot\er{p\t{}_{[+]}}\ot p\z\ot p\t{}_{[-]} p\o\\
       =& \yi{p\o}\ot q\ot \er{p\o}\ot p\z\ot 1.
   \end{align*}
Now we check that the formulas indeed give the translation maps by calculating on the one hand
\begin{align*}
    \mu(X_{[+]}\ot X_{[-]})=&\mu( \yi{p\o}\ot q\ot \er{p\o}\ot p\z)\\
    =&(\yi{p\o}\ot q\o\yi{})(\er{p\o}\ot p\z)\ot (q\o\er{}\ot q\z)\\
    =&(\yi{p\o}\er{p\o}\ot p\z\,q\o\yi{})\ot (q\o\er{}\ot q\z)\\
    =&(1\ot p\,q\o\yi{})\ot (q\o\er{}\ot q\z)\in 1\ot_{\BB} N\ot_{N} L(P,\cL)\\
    =&(1\ot 1)\ot (p\,q\o\yi{}q\o\er{}\ot q\z)\\
    =&(1\ot 1)\ot(p\ot q).
\end{align*}
and on the other hand
\begin{align*}
     \two{X}{}_{[+]}\ot\two{X}{}_{[-]}\one{X}=&(p\rmo{}\er{}\o\yi{}\ot q)\ot (p\rmo{}\er{}\o\er{}\ot p\rmo{}\er{}\z)(p\rz\ot p\rmo{}\yi{})\\
     =&(p\rmo{}\er{}\o\yi{}\ot q)\ot (p\rmo{}\er{}\o\er{}p\rz\ot p\rmo{}\yi{} p\rmo{}\er{}\z)\\
     =&(p\rmo{}\er{}\o\yi{}\ot q)\ot (p\rmo{}\er{}\o\er{}p\rz\ot p\rmo{}\yi{} p\rmo{}\er{}\z)\\
     =&(p\rmo{}{}_{[-]}\yi{}\ot q)\ot (p\rmo{}{}_{[-]}\er{}p\rz\ot p\rmo{}{}_{[+]}\yi{} p\rmo{}{}_{[+]}\er{})\\
     =&(p\o\yi{}\ot q)\ot (p\o\er{}p\z\rz\ot p\z\rmo\yi{} p\z\rmo\er{})\\
     =&(p\o\yi{}\ot q)\ot (p\o\er{}p\z\ot 1)\\
     =&(p\ot q)\ot (1\ot 1).
\end{align*}
where we use $ p\z\rz\ot p\z\rmo\ot p\o=p\rz\ot p\rmo{}_{[+]}\ot p\rmo{}_{[-]}$ in the sixth step.
\end{proof}

\begin{rem}
    In other words, given a $B$-Hopf algebroid $\CL$, faithfully flat over $B$ and $\ol B$ on the left, and a left $\CL$-Galois extension $N\subset P$ faithfully flat over $N$ and $B$ on both sides, there is a unique $\CH=R(P,\CL)$, an $\ol N$-Hopf algebroid faithfully flat over $\ol N$ and $N$ on both sides, completing $P$ to be an element $P\in\Bi(\CL,\CH)$. (Note that $\CL$ is also faithfully flat over $B$ and $\ol B$ on the right by \Cref{lem. left Hopf Galois induce left Hopf}.) The situation is symmetric, so that similarly, given a $B$-Hopf algebroid $\CH$ left faithfully flat over both copies of its base, and a right  $\CH$-anti-Galois extension $N\subset P$ faithfully flat on the left and right over $N$ and $\ol B$, there is a unique $\CL=L(P,\CH)$,  faithfully flat over both copies of its base on both sides, making $P\in\Bi(\CL,\CH)$. 
\end{rem}

\begin{cor}\label{lem. round trip Ehresmann bialgebroid}
    Let $\CL$ be a $B$-Hopf algebroid that is left faithfully flat over $B$ and $\ol B$, and $N\subseteq P$ a left $\CL$-Galois extension which is faithfully flat as a left and right module over $B$ and $N$. Then $L(P, R(P, \cL))\simeq \cL$. 

    Similarly, starting with a right $\CH$-anti-Galois extension $M\subset P$ satisfying analogous faithful flatness conditions we have $R(P, L(P, \CH))\simeq \CH$. 
    
    More precisely, the isomorphism $L(P, R(P, \cL))\simeq \cL$ is given by  the left translation map 
    \[\ltau: \cL\to L(P, R(P, \cL)), \quad X\mapsto \tuno{X}\ot_{N}\tdue{X}.\]
    Similarly, the isomorphism $R(P, L(P, \CH))\simeq \CH$ is  given by the anti-right translation map
    \[\rtau:\CH \to R(P, L(P, \CH)), \qquad X\mapsto \yi{X}\otimes_{M}\er{X}.\]
\end{cor}
\begin{prop}\label{prop. left to right relations}
    Let $P$ be an $\CL$-$\CH$-biGalois extension with regular comodule structures. Then 
    \begin{align}\label{equ. biGalois translation maps 1}
        \alpha\tuno{}\rz\ot \alpha\tuno{}\rmo\ot \alpha\tdue{}=&\alpha\tuno{}\ot \alpha\tdue{}\o\ot \alpha\tdue{}\z;\\
        \label{equ. biGalois translation maps 2}
        X\yi{}\rz\ot X\yi{}\ro\ot X\er{}=&X\yi{}\ot X\er{}\mo\ot X\er{}\z,
    \end{align}
    for any $\alpha\in \cL$ and $X\in \CH$.
\end{prop}
\begin{proof}
   Both sides of \eqref{equ. biGalois translation maps 1} belong to $\int^{a, b}\int_{c, d}{}_{\Bar{b}}P_{\Bar{c}}\ot {}_{\Bar{c},\Bar{d}}\cL_{a, \Bar{b}}\ot {}_{\Bar{d}}P_{\Bar{a}}$. Indeed, the map
   \begin{align*}
f:\int^{a}\int_{b} {}_{\Bar{a}}P_{\Bar{b}}\ot {}_{b}P_{a}\to \int^{a, b}\int_{c, d}{}_{\Bar{b}}P_{\Bar{c}}\ot {}_{\Bar{c},\Bar{d}}\cL_{a, \Bar{b}}\ot {}_{\Bar{d}}P_{\Bar{a}} 
   \end{align*}
given by
\begin{align*}
    f(p\ot q)=p\rz\ot p\rmo\ot q
\end{align*}
is well defined. Similarly,
the map
   \begin{align*}
g:\int^{a}\int_{b} {}_{\Bar{a}}P_{\Bar{b}}\ot {}_{b}P_{a}\to \int^{a, b}\int_{c, d}{}_{\Bar{b}}P_{\Bar{c}}\ot {}_{\Bar{c},\Bar{d}}\cL_{a, \Bar{b}}\ot {}_{\Bar{d}}P_{\Bar{a}} 
   \end{align*}
given by
\begin{align*}
    g(p\ot q)=p\ot q\o\ot q\z
\end{align*}
is well defined.
Apply $ \lcan_{1,3}\circ (\psi\ot \id_{P})$ on both sides of \eqref{equ. biGalois translation maps 1}. On the one hand,
\begin{align*}
    \lcan_{1,3}\circ (\psi\ot \id_{P})(\alpha\tuno{}\rz\ot \alpha\tuno{}\rmo\ot \alpha\tdue{})=&\lcan_{1,3}(\alpha\tuno{}\ot 1\ot  \alpha\tdue{})\\
    =&\alpha\ot 1\ot 1.
\end{align*}
On the other hand,
\begin{align*}
     \lcan_{1,3}&\circ (\psi\ot \id_{P})(\alpha\tuno{}\ot \alpha\tdue{}\o\ot \alpha\tdue{}\z)\\
     =&  \lcan_{1,3}(\alpha\tuno{}\z\ot \alpha\tuno{}\o \alpha\tdue{}\o\ot \alpha\tdue{}\z)\\
     =&\alpha\tuno{}\mo\ot \alpha\tuno{}\o \alpha\tdue{}\o\ot \alpha\tuno{}\z\alpha\tdue{}\z\\
     =&\alpha\tuno{}\mo\ot (\alpha\tuno{}\z \alpha\tdue{})\o\ot( \alpha\tuno{}\z\alpha\tdue{})\z\\
     =&\alpha\ot 1\ot 1.
\end{align*}
Since $ \lcan_{1,3}\circ (\psi\ot \id_{P})$ is an isomorphism, this proves \eqref{equ. biGalois translation maps 1}. The proof of \eqref{equ. biGalois translation maps 2} is similar.
\end{proof}

We have shown that one of the Hopf algebroids in a biGalois extension (under faithful flatness hypotheses) is always determined by the structures on the other side, but this is only true up to isomorphism. The following discussion makes this more precise.

\begin{defi}[\cite{HM23}]
    Let $\cL$ be a Hopf algebroid over $B$. A \emph{vertical bisection} is a unital $B$-bimodule map $\sigma:\cL\to B$ in the sense that $\sigma(b\Bar{b'}X)=b\sigma(X)b'$ such that $\sigma(XY)=\sigma(X\sigma(Y))=\sigma(X\overline{\sigma(Y)})$ for any $X, Y\in \cL$ and $b, b'\in B$. 
\end{defi}
Recall that instead of considering a bialgebroid to have a counit $\varepsilon\colon \CL\to B$, one can also consider an equivalent notion of counit as a $B^e$-ring map $\CL\to\operatorname{End}(B)$. Similarly, vertical bisections $\sigma\colon\CL\to B$ are in bijection with $B^e$-ring maps $\hat\sigma\colon \CL\to \operatorname{End}(B)$ by $\sigma(X)=\hat\sigma(X)(1)$, respectively $\hat\sigma(X)(b)=\sigma(Xb)$.

We know the collection of vertical bisections of a Hopf algebroid is a group with convolution product as the product and counit as the unit. We denote the group of bisections of $\cL$ by $Z^{1}(\cL, B)$. We can  generalize the main theorem of \cite{HL20}.

\begin{lem}\label{autos are bisections}
    Let $\CH$ be a Hopf algebroid over $B$, and $N\subseteq P$ be faithfully flat anti-right $\CH$-Galois extension,  then the automorphism group $\Aut_N^\CH(P)$ (the group of isomorphisms of right
$\CH$-comodule algebras $P$ to $P$ which preserve the coinvariant subalgebra $N$) is isomorphic to $Z^{1}(L(P, \CH), N)$.
\end{lem}

\begin{proof}
   The coopposite version of \Cref{twisted adjoint trick} gives a bijection between right colinear maps $F\colon P\to P$ and left $\ol N$-module maps $\sigma\colon L(P,\CH)\to N$, given by $F_\sigma(p)=\sigma(p\mo)p\z$, where we use the left comodule structure of $P$ over the Ehresmann Hopf algebroid. Using the universal property again, one can show that $F_\sigma$ is left $N$-linear iff $\sigma$ is, and that $F$ is unital iff $\sigma$ is, and $F$ is right $N$-linear iff $\sigma(X\ol n)=\sigma(Xn)$ for all $X\in L(P,\CL) $ and $n\in N$

   If $\sigma$ is a vertical bisection, then 
   \begin{multline*}
       F_\sigma(pq)=\sigma(p\mo q\mo)p\z q\z
    =\sigma(p\mo\ol{\sigma(q\mo)})p\z q\z
       \\=\sigma(p\mo)p\z\sigma(q\mo)q\z
       =F_\sigma(p)F_\sigma(q).
   \end{multline*}

    Conversely, assume $F_\sigma\in\Aut(P)$. We have $\sigma(p\ot q)=F_\sigma(p)q$. Therefore 
    \begin{align*}
        \sigma((p\ot q)(p'\ot q'))=&\sigma(pp'\ot q'q)=F_\sigma(pp')q'q=F_\sigma(p)F_\sigma(p')q'q\\
        =&\sigma(p\ot \sigma(p'\ot q')q)\\
        =&\sigma((p\ot q)\overline{\sigma(p'\ot q')}).
    \end{align*}

\end{proof}

Denote by $\Aut(\cL)$ the group of automorphism of the Hopf algebroid $\cL$. By Lemma 3.3 of \cite{HM23} we have a group homomorphism $\Ad: Z^{1}(\cL, B)\to \Aut(\cL)$ given by
\begin{align}
    \Ad_{\sigma}(X):=\sigma(X\o)\overline{\sigma(X\th)}X\t,
\end{align}
for  $X\in\cL$.  We define
\[\textup{CoInn}(\cL):=\{\Ad_{\sigma}| \sigma\in Z^{1}(\cL, B) \}\]
to be the set of coinner automorphism of the Hopf algebroid $\cL$. By Theorem of 3.4 of \cite{HM23}, we know 
$\textup{CoInn}(\cL)$ is a normal subgroup of $\Aut(\cL)$. More precisely,
for any $\Phi\in \Aut(\cL)$ and $\sigma\in Z^{1}(\cL, B)$, we have
\begin{align*}
    \Phi\circ \Ad_{\sigma}\circ \Phi^{-1}=\Ad_{\Phi\la \sigma},
\end{align*}
where $\Phi\la \sigma$ is a vertical bisection given by $\Phi\la\sigma:=\sigma\circ \Phi^{-1}$.
We denote by $\textup{CoOut}(\cL)=\Aut(\cL)/\textup{CoInn}(\cL)$ the group of co-outer automorphisms of $\cL$. As a generalization of Lemma 3.11 of \cite{schau5}, we have
\begin{lem}\label{free action of coinner auto group}
    Let $\cL$ be a Hopf algebroid over $B$ and $\CH$ be a Hopf algebroid over $N$, both faithfully flat on the left over both bases.  Then $\textup{CoOut}(\cL)$
 acts freely on $\underline{\Bi}(\cL, \CH)$. The orbit of the class of a biGalois extension $P\in\underline{\Bi}(\cL, \CH)$ consists of classes in $\underline{\Bi}(\cL, \CH)$ of those biGalois extensions $Q$ such that $P\cong Q$ as anti-right $\CH$-extensions.
 \end{lem}

\begin{proof}
    Let $\Phi\in \Aut(\cL)$ and $P\in \Bi(\cL,\CH)$. We define ${}^{\Phi}P:=\Phi\la P$ be the $\cL-\CH$-biGalois extension with the modified left $\cL$-comodule structure given by 
    \[{}^{\Phi}\delta(p)=\Phi(p\mo)\di p\z,\]
    for all $p\in {}^{\Phi}P$. The remaining structures of ${}^{\Phi}P$ are those of $P$. If $N\subseteq P$ is a left $\cL$-Galois extension, and $\BB\subseteq \CH$ is an anti-right $\CH$-Galois extension then obviously $N\subseteq {}^{\Phi}P$ is a left $\cL$-Galois extension with the deformed coaction, and a $\CL$-$\CH$-biGalois extension. This defines an action of $\Aut(\CL)$ on $\underline\Bi(\CL,\CH)$. The orbit is the same as stated because of the uniqueness statement in \Cref{def. right ES bialgebroids}. We need to prove that the stabilizer of $P$ is $\operatorname{CoInn}(\CL)$. Let $\sigma\in Z^{1}(\cL, B)$
    and $F_\sigma\colon P\to P$ as in \Cref{autos are bisections}. 
    \begin{align*}
        \delta(F_\sigma(p))=&\delta(\sigma(p\mo)p\z)=\sigma(p\mt)p\mo\ot p\z=\sigma(p\mfo)\overline{\sigma(p\mt)}p\mth\ot \sigma(p\mo)p\z\\
        =&(\id\ot F_\sigma)\circ{}^{\Ad_\sigma}\delta(p).
    \end{align*}
    Since every automorphism of $P$ as right $\CH$-comodule algebras is of this form, it follows from the universal property of the left coaction that $^\Phi P\cong P$ as bicomodule algebras iff there is $\sigma$ such that $\Phi=\Ad_\sigma$.
\end{proof}

\subsection{The groupoid of BiGalois extensions}

\begin{rem}
  If $P$ is a right comodule algebra of the $B$-bialgebroid  $\CH$ and $Q$ is a left comodule algebra of $\CH$, then $P\Box^{\CH}Q$ is a subalgebra of the Takeuchi product algebra $P\times_{B}Q$. 
\end{rem}

\begin{lem}\label{cotensor product galois extension}
    Let $A\subseteq P$ be a left faithfully flat anti-right $\CH$-Galois extension, where $\CH$ is a Hopf algebroid over $B$ and flat as a left module over $B$ and $\ol B$. 
    Let $Q$ be an $\CH$-$\mathcal G$-bicomodule algebra, where $\mathcal G$ is a Hopf algebroid over $C$. Assume that $\rcan\colon Q\ou BQ\to Q\diamond_C\mathcal G$ is bijective (so that $Q$ is an anti-right $\mathcal G$-Galois extension of  its coinvariants).

    The right $\mathcal G$-comodule structure of $Q$ induces a right $\mathcal G$-comodule structure on $P\Box^\CH Q$ if either $\mathcal G$ is flat as a left $C$-module, or $P$ is flat as left $\BB$-module.
    
    Under these hypotheses, $P\Box^\CH Q$ is a anti-right $\mathcal G$-Galois extension of its coinvariants.

    If $Q^{\co\mathcal G}=B$ and $P$ is left $\ol B$-flat, then $(P\Box^\CH Q)^{\co\mathcal G}=A$.
\end{lem}
\begin{proof}
    If either $\mathcal G$ is a flat left $C$-module, or $P$ is flat over $  \ol B$ on the left and faithfully flat over $A$ on the  left then the canonical map
    \[(P\Box^\CH Q)\diamond_C\mathcal G\to P\Box^\CH(Q\diamond_C\mathcal G)\]
    is an isomorphism (because the equalizer defining the cotensor product is preserved by the tensor product as in \Cref{exactness properties of cotensor product}, or the coequalizer defining the tensor product over $C$ is preserved by the cotensor product.) Therefore, the right $\mathcal G$-coaction on $P\Box^\CH Q$ is well-defined.
    
    The composition
   \[(P\Box^{\CH}Q)\ou A(P\Box^{\CH}Q)\xrightarrow\xi P\Box^{\CH}(Q\ou BQ)\xrightarrow{P\Box \rcan}P\Box^{\CH}(Q\diamond_C\mathcal G)\cong (P\Box^{\CH}Q)\diamond_C\mathcal G\]
   is the anti-right Galois map for $P\Box^{\CH}Q$, in the wider sense of \Cref{different coinvariants}; here $\xi$ is from \Cref{cotensor-monoidal}. Therefore $P\Box^\CH Q$ is an anti-Galois extension of its coinvariants.
   
     Finally, if $P$ is left  $\ol B$-flat then $(P\Box^{\CH}Q)^{\co\CH}=P\Box^{\CH}Q^{\co\CH}=P\Box^{\CH}B\cong P^{\co\CH}=A$ by the right version of  \Cref{coinvariants vs cotensor product}, using \Cref{exactness properties of cotensor product} for the first step.
\end{proof}

\begin{rem}\label{rem. left hand side version cotensor product galois extension}
Let $P$ be $\cL$-$\CH$ bicomodule algebra  and $\lcan \colon P\ot_{\BB}P\to \cL\diamond_{A}P$ bijective where $\cL$ is an $A$-Hopf algebroid and $\CH$ is a left  $B$ and $\BB$-faithfully flat $B$-Hopf algebroid. Let  $\overline{C}\subseteq Q$ be a left faithfully flat left $\CH$-Galois extension. If either $Q$ is left $B$ faithfully flat or $\cL$ is left $A$-faithfully flat, then $P\Box^{\CH}Q$ is a left $\cL$-Galois over its coinvariants. If ${}^{\co\cL}P=\BB$ and $Q$ is left $B$-flat, then ${}^{\co\CL}(P\Box^{\CH}Q)=\overline{C}$. 
\end{rem}

\begin{prop}\label{prop. inverse of bigalois extension}Let $\CH$ be a $B$-Hopf algebroid, faithfully flat as a left module over $B$ and $\ol B$.
    Let $N\subseteq P$ be a faithfully flat anti-right $\CH$-Galois extension, and $\CL=L(P,\CH)$ its left Ehresmann Hopf algebroid. Define $P\inv=(\CH\ot_{\ol B}P)^{\co\CH}$, we have
    \begin{enumerate}
        \item $P\inv$ is an algebra with multiplication induced by that of $\CH\ot P^\op$.\label{algstr of inv}
        \item $P\inv\cong (P\Box^\CH\CH^\op)^\op$, where $\CH^\op$ is the opposite algebra with the reverse comodule structures.
        \item \label{inverse is left galois}$P\inv$ is a left $\CH$-Galois extension of its coinvariants with the comodule structure induced by $\Delta\ou{\ol B}P$; if $P$ is left $\ol B$-flat then the coinvariants are $N$.
        \item Assume $\CH$ is right faithfully flat over $B$. Then $P\inv$ is a anti-right $\CL$-Galois extension of its coinvariants with the comodule structure induced by the reverse comodule structure of $P$.
        \item If $P$ is faithfully flat as left $\ol B$-module, then $P\Box^ \CH P\inv\cong \cL$ as $\CL$-bicomodule algebras.
        \item If $P$ is faithfully flat as left $\ol B$-module then $P\inv\Box^\CL P\cong\CH$ as $\CH$-bicomodule algebras. 
    \end{enumerate}
\end{prop}
\begin{proof}
  The isomorphism $P\inv\cong P\Box^\CH\CH^\op$ is a special case of \Cref{coinvariants vs cotensor product}. It is straightforward to check that the algebra structrue of $(P\Box^\CH\CH^\op)^\op$ thus corresponds to that claimed in \cref{algstr of inv}.

  By \Cref{cotensor product galois extension} (applied  to $Q=\CH^\op$), $P\Box^\CH\CH^\op$ is an anti-right $\CH$-Galois extension of its coinvariants with the right comodule structure induced from the reverse right comodule structure on $\CH^\op$. Therefore the opposite $P\inv$ is a left $\CH$-Galois extension of its coinvariants. If $P$ is left $\ol B$-flat then the coinvariants are 
  $({}^{\co\CH}(\CH\ou{\ol B}P))^{\co\CH}\cong ({}^{\co\CH}\CH\ou{\ol B}P)^{\co\CH}\cong(\BB\ou{\ol B}P)^{\co\CH}\cong P^{\co\CH}$.

  By \Cref{rem. left hand side version cotensor product galois extension}, applied to $Q=\CH^\op$,  we have that $P\Box^\CH\CH^\op$ is a left $\CL$-Galois extension of its coinvariants, and therefore its opposite is an anti-right $\CL$-Galois extension of its coinvariants under the reverse structure.

  If $P$ is left $\ol B$-flat then we have the isomorphism 
\begin{align*}
    (P\ot_{\BB}P)^{co\CH}\to&P\Box^{\CH}(\CH\ot_{\BB}P)^{co\CH}\\
    p\ot q\mapsto& p\z\ot p\o\ot q\\
    \overline{\varepsilon(X)} p\ot q\mapsfrom& p\ot X\ot q.
\end{align*}
It is left and right colinear over $\CL$ by \Cref{ehresmann regular comodule structures}. 

If $P$ is left $\ol B$-faithfully flat then by the coopposite version of \Cref{lem. Hopf Galois extension is HOpf biGalois extension}, $L(P,\CH)$ is faithfully flat over $N$ on the left. Also $\Box^\CL P$ is exact. Now
\begin{align*}
    \CH\cong& (\CH\ot_{\BB}\CH)^{\co\CH},\\
    X\mapsto& X_{+}\ot X_{-},
\end{align*}
and 
\begin{align*}   (\CH\ot_{\BB}\CH)^{co\CH}\cong& (\CH\ot_{\BB}P)^{\co\CH}\Box^{\CL} P\\
    X\ot Y\mapsto& X\ot Y\yi{}\ot Y\er{}.
\end{align*}
  The first isomorphism is clear. For the second isomorphism, we use  that   $(\CH\ot_{\BB}\CH)^{co\CH}\cong (\CH\ot_{\BB}P)^{\co\CH}\Box^{\CL} P$ as left $\CH$-comoudles because $\Box^\CL P$ is exact. The isomorphism is also right $\CH$-colinear by \Cref{equ. anti translation map 2}. More precisely, the right comodule structure on $(\CH\ot_{\BB}\CH)^{co\CH}$ is given by $\delta(X\ot Y)=X\ot Y_{[+]}\ot Y_{[-]}$. Therefore, we have
  \begin{align*}
      \delta\circ F(X\ot Y)=&X\ot Y\yi{}\ot Y\er{}\z\ot Y\er{}\z\\
      =&X\ot Y{}_{[+]}\yi{}\ot Y{}_{[+]}\er{}\ot Y{}_{[-]}\\
      =&(F\ot \id)\circ\delta (X\ot Y).
  \end{align*}
\end{proof}

\begin{thm}
The category whose objects are  Hopf algebroids and right faithfully flat over source and target algebra, and whose morphisms from $\CH$ to $\CL$ are $\underline\Bi(\CL,\CH)$, is a groupoid, with composition of morphisms given by cotensor product.
\end{thm}
\begin{proof}
    We have shown that right faithful flatness of a Hopf algebroid over its bases implies left faithful flatness in \Cref{rightff implies leftff}. Thus for an object $\CH$ of our groupoid we have in fact $\CH\in\Bi(\CH,\CH)$, which will be the unit morphism.

    Composition was studied in \Cref{cotensor product galois extension} and \Cref{rem. left hand side version cotensor product galois extension}. Under our extensive faithful flatness for $P\in\Bi(\CL,\CH)$ and $Q\in\Bi(\CH,\mathcal G)$ we can conclude that $P\Box^\CH\in\Bi(\CL,\mathcal G)$.

    Composition is associative, that is
    \[(P\Box^\CH Q)\Box^{\mathcal G}T\cong P\Box^\CH (Q\Box^{\mathcal G}T)\]
    for a third biGalois extension $T\in\Bi(\mathcal G,\mathcal J)$; this follows since $P\Box^\CH$ as well as $\Box^{\mathcal G}T$ are exact functors.

    Finally the inverse of $P\in\underline\Bi(\CL,\CH)$ is $P\inv$ as studied in \Cref{prop. inverse of bigalois extension}. The required faithful flatness conditions are satisfied because $P\inv$ is obtained itself as the opposite of a cotensor product of biGalois extensions with all required faithful flatness properties.
\end{proof}

\begin{rem}
    For a Hopf algebroid $\CL$ faithfully flat as a right module over both bases, the results of \Cref{free action of coinner auto group} can be phrased in terms of the groupoid of biGalois extensions; namely, $\operatorname{CoInn}(\CL)$ is isomorphic to a subgroup of $\underline{\Bi}(\CL,\CL)$, namely the subgroup of biGalois extensions isomorphic to $\CL$ as a right comodule algebra, with the left coaction twisted by an automorphism. Its free left action on $\underline{\Bi}(\CL,\CH)$ is an instance of the groupoid operation.
\end{rem}

\subsection{Co-Morita equivalence of Hopf algebroids}In this subsection, we are going to show a monoidal equivalence between two Hopf algebroids implies a Hopf BiGalois extension.

\begin{prop}\label{prop. comodule equivalence}
    There is a morphism from the groupoid of Hopf biGalois extensions to the groupoid of monoidal equivalences between comodule categories of Hopf algebroids. More precisely, $P\in\Bi(\cL,\CH)$ gives rise to a monoidal category equivalence
    \[\LComod{\CH}\ni V\mapsto P\Box^{\CH}V\in\LComod{\cL}.\]
\end{prop}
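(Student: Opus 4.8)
The plan is to establish three things in turn: that $V\mapsto P\Box^{\CH}V$ is a well-defined functor $\LComod\CH\to\LComod\cL$, that it is strong monoidal, and that it is an equivalence. The monoidal-equivalence assertion and the groupoid-morphism assertion then follow formally. First I would produce the left $\cL$-comodule structure on $P\Box^{\CH}V$. Since $P\in\Bi(\cL,\CH)$ is by \Cref{def. Hopf biGalois extensions} a $\cL$-$\CH$-bicomodule, its left $\cL$-coaction $p\mapsto p\mo\ot p\z$ commutes with its right $\CH$-coaction, so $p\ot v\mapsto p\mo\ot(p\z\ot v)$ is well defined on $P\Box^{\CH}V$ and takes values in $\cL\di(P\Box^{\CH}V)$; coassociativity and counitality are inherited from those of $P$. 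On morphisms the functor is $f\mapsto\id_P\Box f$, which is $\cL$-colinear since the coaction only touches the $P$-leg. I would also record that the underlying $B$-bimodule of $P\Box^{\CH}V$ is the one appearing in \Cref{cotensor-monoidal}: by \Cref{def. Hopf biGalois extensions} the base of $\cL$ and the coinvariant subalgebra $P^{\co\CH}$ are both $B$, so the two $B$-bimodule structures agree.

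For monoidality I would invoke \Cref{cotensor-monoidal}, applied to the faithfully flat anti-right $\CH$-Galois extension $B\subseteq P$ with $\CH$ in the role of the bialgebroid. This supplies the natural isomorphism
\[\xi\colon(P\Box^{\CH}V)\ou B(P\Box^{\CH}W)\to P\Box^{\CH}(V\ot_{\ol N}W),\quad (p\di v)\ot(q\di w)\mapsto pq\di(v\ot_{\ol N}w),\]
together with the unit constraint $P\Box^{\CH}(\text{unit})\cong B$. Here $\ou B$ is precisely the underlying bimodule of the monoidal product of $\LComod\cL$ and $\ot_{\ol N}$ is the monoidal product of $\LComod\CH$, so $\xi$ already has the right source and target; the one additional point is that $\xi$ is a morphism of left $\cL$-comodules. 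This is a short check: applying the codiagonal $\cL$-coaction and then $\xi$ yields $p\mo q\mo\ot(p\z q\z\di(v\ot w))$, whereas $\xi$ followed by the coaction yields $(pq)\mo\ot((pq)\z\di(v\ot w))$, and these coincide because $P$ is a $\cL$-comodule algebra, so that $(pq)\mo\ot(pq)\z=p\mo q\mo\ot p\z q\z$. Since the forgetful functor $\LComod\cL\to\BiMod B$ is faithful and strict monoidal, the coherence axioms for the lifted monoidal functor reduce to those already verified in \Cref{cotensor-monoidal}.

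Finally, to see the functor is an equivalence I would use the preceding groupoid theorem. The inverse biGalois extension $P^{-1}=P^{\op}$, equipped with the reverse coactions, lies in $\Bi(\CH,\cL)$ and satisfies $P^{-1}\Box^{\cL}P\cong\CH$ and $P\Box^{\CH}P^{-1}\cong\cL$ as bicomodule algebras. Using associativity of the cotensor product (valid under the standing faithful-flatness hypotheses) and the isomorphisms $\CH\Box^{\CH}V\cong V$ and $\cL\Box^{\cL}W\cong W$, the functors $V\mapsto P\Box^{\CH}V$ and $W\mapsto P^{-1}\Box^{\cL}W$ become mutually quasi-inverse; hence our functor is an equivalence, and being strong monoidal it is automatically a monoidal equivalence. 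The groupoid-morphism statement is then formal: the identity $\cL\in\Bi(\cL,\cL)$ goes to the identity functor via $\cL\Box^{\cL}V\cong V$, and for $P\in\Bi(\cL,\CH)$, $Q\in\Bi(\CH,\mathcal G)$ the associativity isomorphism $(P\Box^{\CH}Q)\Box^{\mathcal G}V\cong P\Box^{\CH}(Q\Box^{\mathcal G}V)$ shows that cotensor product of biGalois extensions is sent to composition of functors.

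The main obstacle is bookkeeping rather than conceptual content: one must keep the several $B$- and $\ol N$-bimodule structures aligned — those coming from the comodule-algebra structures, from \Cref{cotensor-monoidal}, and from the cotensor products — and verify that $\xi$ (and, dually, the structure map of the quasi-inverse) is simultaneously $\cL$-colinear, monoidal and natural. All of these checks reduce, through the faithful strict monoidal forgetful functor to bimodules, to the results already proved in \Cref{cotensor-monoidal} and in the groupoid theorem, so no genuinely new computation is required.
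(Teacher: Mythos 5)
Your proposal is correct and takes essentially the same approach as the paper: the paper's own proof consists of exactly your two ingredients, namely that the quasi-inverse functor is given by cotensoring with the inverse $P^{\op}$ from the groupoid theorem, and that monoidality was "essentially checked" in \Cref{cotensor-monoidal}. Your write-up merely makes explicit the details the paper leaves implicit (the $\cL$-comodule structure on $P\Box^{\CH}V$ via the bicomodule property, the $\cL$-colinearity of $\xi$, and the cotensor associativity underlying the quasi-inverse), all of which are handled correctly.
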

\begin{proof}
The inverse of the category equivalence is given by the inverse of $P$ in the groupoid above; the fact that the equivalences are monoidal was essentially checked in \Cref{cotensor-monoidal}. 
\end{proof}
\begin{rem}
    If $P\in\Bi(\CL,\CH)$ with $\CL$ an $N$-Hopf algebroid and $\CH$ a $B$-Hopf algebroid, then $P\inv\cong {}^{\co\CL}(P\ou N\CL)$ by switching sides in \Cref{prop. inverse of bigalois extension}. Thus the inverse of the functor in \Cref{prop. comodule equivalence} maps $W\in\LComod\CL$ to
    \[P\inv\Box^\CL W={}^{\co\CL}(P\ou N\CL)\Box^\CL W\cong {}^{\co\CL}(P\ou NW)\]
    with the $\CH$-comodule structure given by the reverse of that of $P$.
    This description might be useful because it is simpler than the one by cotensor product with $P\inv$ which is itself a coinvariant subspace.
\end{rem}

\begin{conj} \label{theconjecture}
        Let $\cL$ be a Hopf algebroid over $B$ and $\CH$ be a Hopf algebroid over $N$. If $(F,\xi):{}^{\CH}\CM\cong {}^{\CL}\CM$ is a monoidal equivalence then there is a $P\in \Bi(\CL,\CH)$ such that $P\Box^{\CH}-\cong F$.
\end{conj}

In fact in the sequel we will prove the conjecture under the added hypothesis that $F(\CH)$ is faithfully flat as a left and right $B$-module. 

We divide the proof into several steps.

\begin{prop}\label{prop. induced right H comodule}
     Let $\cL$ be a $B$-Hopf algebroid. If $(F,\xi):{}^{\CH}\CM\cong {}^{\CL}\CM$ is a monoidal functor preserving colimits, then $F(M)\in{}^{\CL}\CM^{\CH}$ for every $M\in {}^{\CH}\CM^{\CH}$.
\end{prop}

\begin{proof}
    By \cite[ Section 39.3]{BW}, we know there is a natural isomorphism $\Psi:F(-\diamond_{N}-)\to F(-)\diamond_{N}-:{}^{\CH}\CM\times {}_{N}\CM\to {}^{\CL}\CM$ and similarly $\Phi:F(-\diamond_{\overline{N}}-)\to -\diamond_{\overline{N}}F(-):{}_{\overline{N}}\CM\times{}^{\CH}\CM \to {}^{\CL}\CM$. We first observe that the right $\BB$-module structure on $F(M)$ is given by the composition of the following maps:
     \[\ra:F(M)\ot \overline{N}\cong F(M\ot \overline{N})\xrightarrow{F(\ra)}F(M),\]
     since $\ra:M\ot \overline{N}\to M$ is left $\CH$-colinear.
     Similarly, we can construct the left $\overline{N}$-action.      
     Now, we define the right $\CH$-coaction of $F(M)$ by the composition of the following maps:
     \[\delta^{\CH}:F(M)\xrightarrow{F(\delta)} F(M\diamond_ {N}\CH)\xrightarrow{\Psi_{M,\CH}}F(M)\diamond_{N}\CH.\]
    We can see the coaction is an $\overline{N}$-bimodule map by observing the following commuting diagram:
    \[\xymatrix{F(M)\ar[r]^-{F(\delta)}\ar[d]_{F(r_{\overline{n}})}&F(M\diamond_{N}\CH)\ar[r]^{\Psi_{M,\CH}}\ar[d]_{F(\id\ot r_{\overline{n}})} &F(M)\diamond_{N}\CH\ar[d]_{\id\ot r_{\overline{n}}}\\
F(M)\ar[r]^-{F(\delta)} &F(M\diamond_{N}\CH)\ar[r]^{\Psi_{M,\CH}}&F(M)\diamond_{N}\CH},\]
where $r_{\overline{n}}:M\to M$ is given by $m\mapsto m\overline{n}$ for any $m\in M$ (we can similarly define $r_{n}$ for later use), the left diagram commutes since $\Delta$ is right $\overline{N}$-linear and the right diagram commutes since $\Psi$ is natural on the second leg. Similarly, the coaction is also left $\overline{N}$-linear. The fact that the image of the coaction belongs to the Takeuchi product can be derived from the following diagram:
    \[\begin{tikzcd}
F(M)\arrow[r,"F(\delta)"]&F(M\times_{N}\CH)\arrow[r]\arrow[d,dashrightarrow]&F(M\diamond_{N}\CH)\arrow[r,shift left,"F(\id\diamond r_{n})"]
   \arrow[r,shift right,"F(r_{\overline{n}}\diamond \id)" ']\arrow[d,"\Psi_{M,\CH}"] &F(M\diamond_{N}\CH)\arrow[d,"\Psi_{M,\CH}"]\\
&F(M)\times_{N}\CH\arrow[r] &F(M)\diamond_{N}\CH\arrow[r,shift left,"F(r_{\overline{n}})\diamond \id"]
   \arrow[r,shift right,"F(\id)\diamond r_{n}" ']&F(M)\diamond_{N}\CH,
   \end{tikzcd}
   \]
where the right inner and outer diagrams commute by the naturality of $\Psi$. 
By the same method as in \cite[Section 39.7]{BW}, we can see the coaction on $F(M)$ is coassociative and counital. We can see $\delta^{\CH}$ is left $\cL$-colinear as $F(\delta)$ and $\Psi_{M,\CH}$ are left $\cL$-colinear.
\end{proof}

\begin{rem}
Notice that for every $M\in {}^{\CH}\CM^{\CH}$, there is a map $\Psi: F(M\Box^{\CH}\CH)\to F(M)\Box^{\CH}\CH $ induced by the following diagram:
 \[\begin{tikzcd}
&F(M\Box^{\CH}\CH)\arrow[r]\arrow[d,dashrightarrow]&F(M\diamond_{N}\CH)\arrow[r,shift right,"F(\id\diamond \Delta)"']
   \arrow[r,shift left,"F(\delta\diamond \id)" ]\arrow[d,"\Psi_{M,\CH}"]
&F(M\diamond_{N}\CH\diamond_{N}\CH)\arrow[d,shift right,"\Psi_{M,\CH\diamond_{N}\CH}"']\arrow[d,shift left,"(\Psi_{M,\CH}\diamond_{N}\id)\circ\Psi_{M\diamond_{N}\CH,\CH}"]\\
&F(M)\Box^{\CH}\CH\arrow[r] &F(M)\diamond_{N}\CH\arrow[r,shift right,"\delta^{\CH}\diamond \id"']
   \arrow[r,shift left,"F(\id)\diamond \Delta"]&F(M)\diamond_{N}\CH\diamond_{N}\CH,
   \end{tikzcd}
   \]
by the naturality if $\Psi$, it is not hard to see both the inner and outer diagram commute. As a result, we also have $\delta^{\CH}:F(M)\cong F(M)\Box^{\CH}\CH$.     
\end{rem}

In the following, we denote by $({}^{\CH}_{\overline{N}}\CM_{\overline{N}}, \ou{N^e})$ the monoidal category with objects being left $\CH$-comodules and $\overline{N}$-bimodules such that the comodule structure commutes with the $\overline{N}$-bimodule structure in the sense that $\delta(\overline{n} \,m\,\overline{n'})=m\mo\ot \overline{n} \,m\z\,\overline{n'}$ for any $m\in M\in {}^{\CH}_{\overline{N}}\CM_{\overline{N}}$. Similarly, we can also define the monoidal category $({}^{\CL}_{\overline{N}}\CM_{\overline{N}}, \ou{B\ot\overline{N}})$.

\begin{lem}
     Let $\CH$ be a Hopf algebroid over $N$ and $B$ an algebra.

     An exact monoidal functor 
     $(F,\xi)\colon \LComod\CH\to\BiMod B$ preserving colimits induces monoidal functors
     $(F,\hat\xi)\colon ({}^\CH_{\ol N}\CM_{\ol N},\ou{N^e}))\to (\BiMod{B\ot\ol N},\ou{B\ot\ol N})$ and $(F,\hat\xi)\colon ({}^\CH\CM^\CH.\ou{N^e})\to({}_B\CM^\CH_B,\ou{B\ot\ol N}).$
     
     If  in addition $\cL$ is a Hopf algebroid over $B$ and $(F,\xi):{}^{\CH}\CM\cong {}^{\CL}\CM$ is a monoidal equivalence then we have the monoidal equivalence $(F,\hat{\xi}):({}^{\CH}_{\overline{N}}\CM_{\overline{N}}, \ou{N^e})\cong ({}^{\CL}_{\overline{N}}\CM_{\overline{N}}, \ou{B\ot\overline{N}})$. Moreover, we also have the monoidal equivalence $(F,\hat{\xi}):({}^{\CH}\CM^{\CH}, \ou{N^e})\cong ({}^{\CL}\CM^{\CH}, \ou{B\ot\overline{N}})$.

 \end{lem}

\begin{proof}

    If $M\in {}^{\CH}_{\overline{N}}\CM_{\overline{N}}$ then $F(M)$ is also a $\overline{N}$-bimodule by the same construction as $F(H)$ in the above proposition. Also, the $\overline{N}$-bimodule structure commutes with the left $\cL$-comodule structure since the $\overline{N}$-action on $\CH$ is left $\CH$-colinear (therefore, it is also a left $\cL$-comodule map). As a result, $F(M)$ is an object in ${}^{\CL}_{\overline{N}}\CM_{\overline{N}}$. Given $M,N\in {}^{\CL}_{\overline{N}}\CM_{\overline{N}}$, we can define $\hat{\xi}_{M,M'}:F(M)\ou{B\ot\overline{N}}F(N)\to F(M\ot_{B^e}N)$ by the following diagram:
    \[\begin{tikzcd}
F(M)\ot_{B}B\ot\overline{N}\ot_{B}F(M')\arrow[r,shift left,"\ra\ot \id"]
   \arrow[r,shift right,"\id\ot\la" ']\arrow[d,"\eta"] &F(M)\ot_{B}F(M')\arrow[r]\arrow[d,"\xi_{M,M'}"]&F(M)\ou{B\ot\overline{N}}F(M')\arrow[d,dashrightarrow,"\hat{\xi}_{M,M'}"]\\
F(M\ot_{N}N^e\ot_{N}M')\arrow[r,shift left,"F(\id\ot \la)"]
   \arrow[r,shift right,"F(\ra\ot\id)" ']&F(M\ot_{B}M')\arrow[r]&F(M\ot_{B^e}M'),
   \end{tikzcd}
   \]
   where $\eta$ is defined by the composition of the following maps (by identifying $F(N^e)=F(N\ot \overline{N})\cong F(N)\ot \overline{N}=B\ot\overline{N}$)
 \[\eta:F(M)\ot_{B}F(N^e)\ot_{B}F(M')\xrightarrow{\xi\ot\id} F(M\ot_{N}N^e)\ot_{B}F(M')\xrightarrow{\xi}F(M\ot_{N}N^e\ot_{N}M').\]
 We only check the left outer diagram commute by observing the following commuting diagram:
 \[\begin{tikzcd}
F(M)\ot_{B}F(N^e)\ot_{B}F(M')
   \arrow[r,"\xi\ot\id"]\arrow[d,"\ra\ot\id"] &F(M\ot_{N}N^e)\ot_{B}F(M')\arrow[r,"\xi"]\arrow[d,"F(\ra)\ot \id"]&F(M\ot_{N}N^e\ot_{N}M')\arrow[d,"F(\ra\ot\id)"]\\
F(M)\ot_{B}F(M')\arrow[r,"="]&F(M)\ot_{B}F(M')\arrow[r,"\xi"]&F(M)\ot_{B}F(M'),
   \end{tikzcd}
   \]
 where the left diagram commutes by the definition of the right $\overline{N}$-action on $F(M)$ and `unital property' of the monoidal functor, the right diagram commutes by the naturality of $\xi$. As a result, $(F,\hat{\xi})$ is a monoidal functor since  $(F,\xi)$ is.
 
 Now, we show the monoidal equivalence $(F,\hat{\xi}):({}^{\CH}\CM^{\CH}, \ou{N^e})\cong ({}^{\CL}\CM^{\CH}, \ou{B\ot\overline{N}})$.
 By \Cref{prop. induced right H comodule}, we know $F(M)\in {}^{\CL}\CM^{\CH}$ for any $M\in {}^{\CH}\CM^{\CH}$. It is sufficient to show $\hat{\xi}$ is right $\CH$-colinear. Using \Cref{rem:codiagonal_as_composition} we can write the comodule structures on the source and target of $\hat\xi$ as compositions. Therefore, it is sufficient to show the following diagram commutes:
 \[\begin{tikzcd}
&\scriptscriptstyle{F(M)\Box\CH\ou{\scriptscriptstyle{B\ot\overline{N}}}F(M')\diamond_N\CH}
   \arrow[r,"\scriptscriptstyle{\tau_{2,3}}"]\arrow[d,"\Psi^{-1}\ot\Psi^{-1}"] &\scriptscriptstyle{F(M)\ou{\scriptscriptstyle{B\ot\overline{N}}}F(M')\diamond_{N}\CH\ou{N^e}\CH}\arrow[r,"\scriptscriptstyle{\id\ot m}"]\arrow[d,"\hat{\xi}\ot \id"]&\scriptscriptstyle{F(M)\ou{\scriptscriptstyle{B\ot\overline{N}}}F(M')\diamond_{N}\CH}\arrow[d,"\hat{\xi}\ot \id"]\\
\scriptscriptstyle{F(M)\ou{B\ot\overline{N}}F(M')}\arrow[r,"\scriptscriptstyle{F(\delta)\ot F(\delta)}"]\arrow[ru,"\delta^{\CH}\ot\delta^{\CH}"]\arrow[d,"\hat{\xi}"]&\scriptscriptstyle{F(M\Box\CH)\ou{\scriptscriptstyle{B\ot\overline{N}}}F(M'\diamond_N\CH)}\arrow[d,"\hat{\xi}"]&\scriptscriptstyle{F(M\ou{N^e}M')\diamond_{N}\CH\ou{N^e}\CH}\arrow[r,"\scriptscriptstyle{F(\id)\ot m}"]&\scriptscriptstyle{F(M\ou{N^e}M')\diamond_{N}\CH} \\
\scriptscriptstyle{F(M\ou{N^e}M')}\arrow[r,"\scriptscriptstyle{F(\delta\ot \delta)}"]&\scriptscriptstyle{F(M\Box\CH\ou{N^e}M'\diamond_N\CH)}\arrow[r,"\scriptscriptstyle{F(\tau_{2,3})}"]&\scriptscriptstyle{F(M\ou{N^e}M'\diamond_{N}\CH\ou{N^e}\CH)}\arrow[r,"\scriptscriptstyle{F(\id\ot m)}"]\arrow[u,"\Psi"]&\scriptscriptstyle{F(M\ou{N^e}M'\diamond_{N}\CH)}\arrow[u,"\Psi"],
   \end{tikzcd}
   \]
 where the two diagrams on the left clearly commute, the two diagrams on the right commute since $\hat{\xi}$ and $\Psi$ are natural transformations.  In the following, we will show that the middle diagram commutes as well by reducing it in several steps to a diagram already occurring in \cite{U87}.

   First, we find it as the outside hexagon of the following diagram:
 \[\begin{tikzcd}
\scriptscriptstyle{F(M)\Box\CH\ou{B\ot\overline{N}}F(M')\diamond_{N}\CH}\arrow[rrr,"\tau_{2,3}"]\arrow[dd,"\Psi^{-1}\ot\Psi^{-1}"]&&&\scriptscriptstyle{F(M)\ou{B\ot\overline{N}}F(M')\diamond_{N}\CH\ou{N^e}\CH}\arrow[dd,"\hat{\xi}\ot\id"]\\
&\scriptscriptstyle{F(M)\Box\CH\ou{B\ot\overline{N}}F(M')\ot\CH}\arrow[lu,"\pi"]\arrow[r,"\tau_{2,3}"]\arrow[d,"\Psi^{-1}\ot\hat{\Psi}^{-1}"]&\scriptscriptstyle{F(M)\ot_{\scriptscriptstyle{B}}F(M')\diamond_{N}\CH\ou{N^e}\CH}\arrow[d,"\xi\ot\id"]\arrow[ru,"\pi"]&\\
\scriptscriptstyle{F(M\Box\CH)\ou{B\ot\overline{N}}F(M'\diamond_{N}\CH)}\arrow[dd,"\hat{\xi}"]&\scriptscriptstyle{F(M\Box\CH)\ou{B\ot\overline{N}}F(M'\ot\CH)}\arrow[d,"\hat{\xi}"]\arrow[l,"\pi"]&\scriptscriptstyle{F(M\ot_{N}M')\diamond_{N}\CH\ou{N^e}\CH}\arrow[d,"\Psi^{-1}"]\arrow[r,"\pi"]&\scriptscriptstyle{F(M\ou{N^e}M')\diamond_{N}\CH\ou{N^e}\CH}\arrow[dd,"\Psi^{-1}"]\\
&\scriptscriptstyle{F(M\Box\CH\ou{N^e}M'\ot\CH)}\arrow[ld,"\pi"]\arrow[r,"\tau_{2,3}"]&\scriptscriptstyle{F(M\ot_{N}M'\diamond_{N}\CH\ou{N^e}\CH)}\arrow[rd,"\pi"]&\\
\scriptscriptstyle{F(M\Box\CH\ou{N^e}M'\diamond_{N}\CH)}\arrow[rrr,"\tau_{2,3}"]&&&\scriptscriptstyle{F(M\ou{N^e}M'\diamond_{N}\CH\ou{N^e}\CH)},
   \end{tikzcd}
   \]   
  Here, all the arrows connecting the inner to the outer hexagon are obvious surjections. We need surjectivity of the arrow terminating in the top left corner to deduce commutativity of the outside hexagon from that of the inside hexagon.  The latter in turn is the outside hexagon of the following:
\[\begin{tikzcd}
\scriptscriptstyle{F(M)\Box\CH\ou{B\ot\overline{N}}F(M')\ot\CH}\arrow[rrr,"\tau_{2,3}"]\arrow[rd]\arrow[dd,"\Psi^{-1}\ot\Psi^{-1}"]&&&\scriptscriptstyle{F(M)\ot_{\scriptscriptstyle{B}}F(M')\diamond_{N}\CH\ou{N^e}\CH}\arrow[dd,"\xi\ot\id"]\\
&\scriptscriptstyle{F(M)\diamond_{N}\CH\ou{B\ot\overline{N}}F(M')\ot\CH}\arrow[r,"\tau_{2,3}"]\arrow[d,"\Psi^{-1}\ot\hat{\Psi}^{-1}"]&\scriptscriptstyle{F(M)\ot_{\scriptscriptstyle{B}}F(M')\diamond_{N}\CH\ot_{\overline{N}}\CH}\arrow[d,"\xi\ot\id"]\arrow[ru,"\pi"]&\\
\scriptscriptstyle{F(M\Box\CH)\ou{B\ot\overline{N}}F(M'\ot\CH)}\arrow[r]\arrow[dd,"\hat{\xi}"]&\scriptscriptstyle{F(M\diamond_{N}\CH)\ou{B\ot\overline{N}}F(M'\ot\CH)}\arrow[d,"\hat{\xi}"]&\scriptscriptstyle{F(M\ot_{N}M')\diamond_{N}\CH\ot_{\overline{N}}\CH}\arrow[d,"\Psi^{-1}"]\arrow[r,"\pi"]&\scriptscriptstyle{F(M\ot_{N}M')\diamond_{N}\CH\ou{N^e}\CH}\arrow[dd,"\Psi^{-1}"]\\
&\scriptscriptstyle{F(M\diamond_{N}\CH\ou{N^e}M'\ot\CH)}\arrow[r,"\tau_{2,3}"]&\scriptscriptstyle{F(M\ot_{N}M'\diamond_{N}\CH\ot_{\overline{N}}\CH)}\arrow[rd,"\pi"]&\\
\scriptscriptstyle{F(M\Box\CH\ou{N^e}M'\ot\CH)}\arrow[rrr,"\tau_{2,3}"]\arrow[ru]&&&\scriptscriptstyle{F(M\ot_{N}M'\diamond_{N}\CH\ou{N^e}\CH)},
   \end{tikzcd}
   \]   

   So we need to verify commutativity of that inner hexagon, which is in turn the outer
hexagon of the following:

 \[\begin{tikzcd}
\scriptscriptstyle{F(M)\diamond_{N}\CH\ou{B\ot\overline{N}}F(M')\ot\CH}\arrow[rrr,"\tau_{2,3}"]\arrow[dd,"\Psi^{-1}\ot\hat{\Psi}^{-1}"]&&&\scriptscriptstyle{F(M)\ot_{B}F(M')\diamond_{N}\CH\ot_{\overline{N}}\CH}\arrow[dd,"\xi\ot\id"]\\
&\scriptscriptstyle{F(M)\ot\CH\ot_{B}F(M')\ot\CH}\arrow[lu,"\pi"]\arrow[r,"\tau_{2,3}"]\arrow[d,"\Psi^{-1}\ot\hat{\Psi}^{-1}"]&\scriptscriptstyle{F(M)\ot_{B}F(M')\ot\CH\ot\CH}\arrow[d,"\xi\ot\id"]\arrow[ru,"\pi"]&\\
\scriptscriptstyle{F(M\diamond_{N}\CH)\ou{B\ot\overline{N}}F(M'\ot\CH)}\arrow[dd,"\xi"]&\scriptscriptstyle{F(M\ot\CH)\ot_{\scriptscriptstyle{B}}F(M'\ot\CH)}\arrow[d,"\hat{\xi}"]\arrow[l,"\pi"]&\scriptscriptstyle{F(M\ot_{N}M')\ot\CH\ot\CH}\arrow[d,"\Psi^{-1}"]\arrow[r,"\pi"]&\scriptscriptstyle{F(M\ot_{N}M')\diamond_{N}\CH\ot_{\overline{N}}\CH}\arrow[dd,"\Psi^{-1}"]\\
&\scriptscriptstyle{F(M\ot\CH\ot_{N}M'\ot\CH)}\arrow[ld,"\pi"]\arrow[r,"\tau_{2,3}"]&\scriptscriptstyle{F(M\ot_{N}M'\ot\CH\ot\CH)}\arrow[rd,"\pi"]&\\
\scriptscriptstyle{F(M\diamond_{N}\CH\ou{N^e}M'\ot\CH)}\arrow[rrr,"\tau_{2,3}"]&&&\scriptscriptstyle{F(M\ou{N^e}M'\diamond_{N}\CH\ot_{\overline{N}}\CH)},
   \end{tikzcd}
   \]   

   Again, we use surjectivity of the arrow terminating in the top left corner to reduce this to commutativity of the inner hexagon. This in turn is again the outer hexagon of the following:
 \[\begin{tikzcd}
\scriptscriptstyle{F(M)\ot\CH\ot_{B}F(M')\ot\CH}\arrow[rrr,"\tau_{2,3}"]\arrow[d,"\Psi^{-1}\ot\hat{\Psi}^{-1}"]\arrow[rd,"\Psi^{-1}\ot\id"]&&&\scriptscriptstyle{F(M)\ot_{B}F(M')\ot\CH\ot\CH}\arrow[d,"\xi\ot\id"]\\
\scriptscriptstyle{F(M\ot\CH)\ot_{B}F(M'\ot\CH)}\arrow[dd,"\xi"]\arrow[r,"\id\ot\Psi"]&\scriptscriptstyle{F(M\ot\CH)\ot_{B}F(M')\ot\CH}\arrow[d,"\xi\ot\id"]&&\scriptscriptstyle{F(M\ot_{N}M')\ot\CH\ot\CH}\arrow[dd,"\Psi^{-1}"]\arrow[ld,"\Psi^{-1}"]\\
&\scriptscriptstyle{F(M\ot\CH\ot_{N}M')\ot\CH}\arrow[r,"F(\tau_{2,3})\ot\id"]&\scriptscriptstyle{F(M\ot_{N}M'\ot\CH)\ot\CH}\arrow[rd,"\Psi^{-1}"]&\\
\scriptscriptstyle{F(M\ot\CH\ot_{N}M'\ot\CH)}\arrow[ru,"\Psi"]\arrow[rrr,"F(\tau_{2,3})"]&&&\scriptscriptstyle{F(M\ot_{N}M'\ot\CH\ot\CH)},
   \end{tikzcd}
   \] 
where the lower left diagram is by the diagram (4) of \cite{U87},    the lower diagram is by the naturality of $\Psi$, the lower right diagram is by the diagram (3) of \cite{U87}. To see the top right diagram, it is sufficient to show
 \[\begin{tikzcd}
F(M)\ot\CH\ot_{B}F(M')\arrow[r,"\tau_{2,3}"]\arrow[d,"\Psi^{-1}\ot\id"]&F(M)\ot_{B}F(M')\ot\CH\arrow[d,"\xi\ot\id"]\\
F(M\ot\CH)\ot_{B}F(M')\arrow[d,"\xi"]&F(M\ot_{N}M')\ot \CH\arrow[d,"\Psi^{-1}"]\\
F(M\ot\CH\ot_{N}M')\arrow[r,"F(\tau_{2,3})"]&F(M\ot_{N}M'\ot \CH),
   \end{tikzcd}
   \] 
 where is clearly true by diagram (4') of \cite{U87}. Finally, we can see $(F,\hat{\xi})$ is a monoidal equivalence. Indeed, by the same method, we can show $F^{-1}$ (as the inverse funtor of the original functor $(F,\xi)$) is a monoidal functor from ${}^\CL_{\ol N}\CM_{\ol N}$ to ${}^\CH_{\ol N}\CM_{\ol N}$ and ${}^\CL\CM^\CH$ to ${}^\CH\CM^\CH$ respectively. 
\end{proof}

\begin{lem}
  Let $F\colon \LComod \CH\to\BiMod B $ be an exact monoidal functor preserving colimits. Then $P:=F(\CH)$ is a right $\CH$-comodule algebra, and there is a natural isomorphism $P\Box^\CH V\to F(V)$ making
  \[\begin{tikzcd}
      (P\Box^\CH V)\ou B (P\Box^\CH W)\arrow[r,"\xi_{P}"]\arrow[d]&P\Box^\CH(V\ou NW)\arrow[d]\\
      F(V)\ou B F(W)\arrow[r,"\xi"]&F(V\ou NW)
  \end{tikzcd}\]
  commute, where the top arrow is induced by the multiplication on $P$ as in \Cref{contensor-monoidal-converse}. In particular, $P$ is a right $\CH$-anti-Galois extension. 

  If $F$ factors over the forgetful functor $\LComod\CL\to\BiMod B$ for a bialgebroid $\CL$ over $B$ then $P$ is a $\CL$-$\CH$-bicomodule algebra.
\end{lem}
\begin{proof}
    $\CH$ is an algebra in $^\CH\CM^\CH$, and therefore mapped to an algebra in $_B\CM^\CH_B$ by the previous lemma.  If the functor $F$ factors over $\LComod\CL$ then by the previous lemma again the algebra $\CH$ is mapped to an algebra in $^\CL\CM^\CH$.
    First, the map $P\Box V\to F(V)$ is given by the composition of the following map of isomorphisms:
    \[F(\CH)\Box V\xrightarrow{\Psi^{-1}}F(\CH\Box V)\xrightarrow{\cong}F(V).\]
    Second, we define multiplication on $P$ by
    \[F(\CH)\ou{B\ot \ol N}F(\CH)\xrightarrow{\hat\xi} F(\CH\ou{N^e}\CH)\xrightarrow{F(m)} F(\CH)\]
    which is right $\CH$-colinear because multiplication of $\CH$ is (on the tensor product over $N^e$). The diagram of this lemma turns out to be the outer diagram of the following
  \[\begin{tikzcd}
\scriptscriptstyle{F(\CH)\Box \,V\ot_{\scriptscriptstyle{B}}F(\CH)\Box\,W}\arrow[d,"\Psi^{-1}\ot\Psi^{-1}"]\arrow[r,"\tau_{2,3}"]&\scriptscriptstyle{F(\CH)\ou{B\ot \ol N}F(\CH)\Box\,V\ot_{N}W}\arrow[r,"\hat{\xi}\ot\id"]&\scriptscriptstyle{F(\CH\ou{N^e}\CH)\Box\,V\ot_{N}W}\arrow[r,"F(m)\ot\id"]\arrow[d,"\Psi^{-1}"]&\scriptscriptstyle{F(\CH)\Box\,V\ot_{N}W}\arrow[d,"\Psi^{-1}"]\\
\scriptscriptstyle{F(\CH\Box \,V)\ot_{\scriptscriptstyle{B}}F(\CH\Box\,W)}\arrow[r,"\xi"]\arrow[d,"\cong\ot\cong"]&\scriptscriptstyle{F(\CH\Box \,V\ou{N}\CH\Box\,W)}\arrow[d,"F(\cong\ot\cong)"]\arrow[r,"F(\tau_{2,3})"]&\scriptscriptstyle{F(\CH\ou{N^e}\CH\Box\,V\ot_{N}W)}\arrow[r,"F(m\ot\id)"]&\scriptscriptstyle{F(\CH\Box\,V\ot_{N}W)}\arrow[lld]\\
\scriptscriptstyle{F(V)\ot_{B}F(W)}\arrow[r,"\xi"]&\scriptscriptstyle{F(V\ot_{N}W)},
   \end{tikzcd}
   \]       
where the two lower diagrams and right diagram clearly commute, so we only need to verify the top left diagram, which is in turn the outer hexagon of the following:

     \[\begin{tikzcd}
\scriptscriptstyle{F(\CH)\Box \,V\ot_{\scriptscriptstyle{B}}F(\CH)\Box\,W}\arrow[rrr,"\tau_{2,3}"]\arrow[dd,"\Psi^{-1}\ot\Psi^{-1}"]\arrow[rd]&&&\scriptscriptstyle{F(\CH)\ou{B\ot\overline{N}}F(\CH)\Box\,V\ot_{N}\,W}\arrow[dd,"\hat{\xi}\ot\id"]\arrow[ld]\\
&\scriptscriptstyle{F(\CH)\Box\,V\ot_{\scriptscriptstyle{B}}F(\CH)\diamond_{N}\,W}\arrow[r,"\tau_{2,3}"]\arrow[d,"\Psi^{-1}\ot\Psi^{-1}"]&\scriptscriptstyle{F(\CH)\ou{B\ot\overline{N}}F(\CH)\diamond_{N}V\ot_{N}\,W}\arrow[d,"\hat{\xi}\ot\id"]&\\
\scriptscriptstyle{F(\CH\Box \,V)\ot_{\scriptscriptstyle{B}}F(\CH\Box\,W)}\arrow[dd,"\xi"]\arrow[r]&\scriptscriptstyle{F(\CH\Box \,V)\ot_{\scriptscriptstyle{B}}F(\CH\diamond_{N}\,W)}\arrow[d,"\xi"]&\scriptscriptstyle{F(\CH\ou{N^e}\CH)\diamond_{N}V\ot_{N} W}\arrow[d,"\Psi^{-1}"]&\scriptscriptstyle{F(\CH\ou{N^e}\CH)\Box\,V\ot_{N} W}\arrow[l]\arrow[dd,"\Psi^{-1}"]\\
&\scriptscriptstyle{F(\CH\Box \,V\ot_{N}\CH\diamond_{N}\,W)}\arrow[r,"\tau_{2,3}"]&\scriptscriptstyle{F(\CH\ou{N^e}\CH\diamond_{N}V\ot_{N} W)}&\\
\scriptscriptstyle{F(\CH\Box \,V\ot_{N}\CH\Box\,W)}\arrow[ru]\arrow[rrr,"\tau_{2,3}"]&&&\scriptscriptstyle{F(\CH\ou{N^e}\CH\Box\,V\ot_{N} W)}\arrow[lu],
   \end{tikzcd}
   \]   
where we
need injectivity of the arrow terminating in the lower right corner to deduce commutativity
of the outside hexagon from that of the inside hexagon. So we need to verify commutativity of that inner hexagon, which is in turn the outer
hexagon of the following:

     \[\begin{tikzcd}
\scriptscriptstyle{F(\CH)\Box \,V\ot_{\scriptscriptstyle{B}}F(\CH)\diamond_{N}\,W}\arrow[rrr,"\tau_{2,3}"]\arrow[dd,"\Psi^{-1}\ot\Psi^{-1}"]&&&\scriptscriptstyle{F(\CH)\ou{B\ot\overline{N}}F(\CH)\diamond_{N}\,V\ot_{N}\,W}\arrow[dd,"\hat{\xi}\ot\id"]\\
&\scriptscriptstyle{F(\CH)\Box\,V\ot_{\scriptscriptstyle{B}}F(\CH)\ot\,W}\arrow[r,"\tau_{2,3}"]\arrow[d,"\Psi^{-1}\ot\hat{\Psi}^{-1}"]\arrow[lu,"\pi"]&\scriptscriptstyle{F(\CH)\ot_{\scriptscriptstyle{B}}F(\CH)\diamond_{N}V\ot\,W}\arrow[d,"\xi\ot\id"]\arrow[ru,"\pi"]&\\
\scriptscriptstyle{F(\CH\Box \,V)\ot_{\scriptscriptstyle{B}}F(\CH\diamond_{N}\,W)}\arrow[dd,"\xi"]&\scriptscriptstyle{F(\CH\Box \,V)\ot_{\scriptscriptstyle{B}}F(\CH\ot\,W)}\arrow[d,"\xi"]\arrow[l,"\pi"]&\scriptscriptstyle{F(\CH\ot_{N}\CH)\diamond_{N}V\ot W}\arrow[d,"\Psi^{-1}"]\arrow[r,"\pi"]&\scriptscriptstyle{F(\CH\ou{N^e}\CH)\diamond_{N}V\ot_{N} W}\arrow[dd,"\Psi^{-1}"]\\
&\scriptscriptstyle{F(\CH\Box \,V\ot_{N}\CH\ot\,W)}\arrow[ld,"\pi"]\arrow[r,"\tau_{2,3}"]&\scriptscriptstyle{F(\CH\ot_{N}\CH\diamond_{N}V\ot W)}\arrow[rd,"\pi"]&\\
\scriptscriptstyle{F(\CH\Box \,V\ot_{N}\CH\diamond_{N}\,W)}\arrow[rrr,"\tau_{2,3}"]&&&\scriptscriptstyle{F(\CH\ou{N^e}\CH\diamond_{N}V\ot_{N} W)}.
   \end{tikzcd}
   \]   
Here, all the arrows connecting the inner to the outer hexagon are obvious surjections. We need surjectivity of the arrow terminating in the top left corner to deduce commutativity of the outside hexagon from that of the inside hexagon.   The latter in turn is the outside hexagon of the following:
\[\begin{tikzcd}
\scriptscriptstyle{F(\CH)\Box \,V\ot_{\scriptscriptstyle{B}}F(\CH)\ot\,W}\arrow[rr,"\tau_{2,3}"]\arrow[dd,"\Psi^{-1}\ot\Psi^{-1}"]\arrow[rd]&&\scriptscriptstyle{F(\CH)\ot_{\scriptscriptstyle{B}}F(\CH)\diamond_{N}\,V\ot\,W}\arrow[dd,"\xi\ot\id"]\\
&\scriptscriptstyle{F(\CH)\diamond_{N}\,V\ot_{\scriptscriptstyle{B}}F(\CH)\ot\,W}\arrow[d,"\Psi^{-1}\ot\Psi^{-1}"]\arrow[ru,"\tau_{2,3}"]&\\
\scriptscriptstyle{F(\CH\Box\,V)\ot_{\scriptscriptstyle{B}}F(\CH\ot\,W)}\arrow[r]\arrow[dd,"\xi"]&\scriptscriptstyle{F(\CH\diamond_{N}\,V)\ot_{\scriptscriptstyle{B}}F(\CH\ot\,W)}\arrow[d,"\xi"]&\scriptscriptstyle{F(\CH\ot_{N}\CH)\diamond_{N}V\ot\,W}\arrow[dd,"\Psi^{-1}"]\\
&\scriptscriptstyle{F(\CH\diamond_{N}\,V\ot_{N}\CH\ot\,W)}\arrow[rd,"F(\tau_{2,3})"]&\\
\scriptscriptstyle{F(\CH\Box\,V\ot_{N}\CH\ot\,W)}\arrow[rr,"F(\tau_{2,3})"]\arrow[ru]&&\scriptscriptstyle{F(\CH\ot_{N}\CH\diamond_{N}V\ot\,W)}.
   \end{tikzcd}
   \]  
  So we need to verify commutativity of that right hexagon, which is in turn the outer
hexagon of the following:
  \[\begin{tikzcd}
\scriptscriptstyle{F(\CH)\diamond_{N} \,V\ot_{\scriptscriptstyle{B}}F(\CH)\ot\,W}\arrow[rrr,"\tau_{2,3}"]\arrow[dd,"\Psi^{-1}\ot\Psi^{-1}"]&&&\scriptscriptstyle{F(\CH)\ot_{\scriptscriptstyle{B}}F(\CH)\diamond_{N}\,V\ot\,W}\arrow[dd,"\xi\ot\id"]\\
&\scriptscriptstyle{F(\CH)\ot\,V\ot_{\scriptscriptstyle{B}}F(\CH)\ot\,W}\arrow[r,"\tau_{2,3}"]\arrow[d,"\Psi^{-1}\ot\Psi^{-1}"]\arrow[lu,"\pi"]&\scriptscriptstyle{F(\CH)\ot_{\scriptscriptstyle{B}}F(\CH)\ot\,V\ot\,W}\arrow[d,"\xi\ot\id"]\arrow[ru,"\pi"]&\\
\scriptscriptstyle{F(\CH\diamond_{N} \,V)\ot_{\scriptscriptstyle{B}}F(\CH\ot\,W)}\arrow[dd,"\xi"]&\scriptscriptstyle{F(\CH\ot \,V)\ot_{\scriptscriptstyle{B}}F(\CH\ot\,W)}\arrow[d,"\xi"]\arrow[l,"\pi"]&\scriptscriptstyle{F(\CH\ot_{N}\CH)\ot\,V\ot W}\arrow[d,"\Psi^{-1}"]\arrow[r,"\pi"]&\scriptscriptstyle{F(\CH\ot_{N}\CH)\diamond_{N}V\ot W}\arrow[dd,"\Psi^{-1}"]\\
&\scriptscriptstyle{F(\CH\ot \,V\ot_{N}\CH\ot\,W)}\arrow[ld,"\pi"]\arrow[r,"\tau_{2,3}"]&\scriptscriptstyle{F(\CH\ot_{N}\CH\ot\,V\ot W)}\arrow[rd,"\pi"]&\\
\scriptscriptstyle{F(\CH\diamond_{N} \,V\ot_{N}\CH\ot\,W)}\arrow[rrr,"\tau_{2,3}"]&&&\scriptscriptstyle{F(\CH\ot_{N}\CH\diamond_{N}V\ot\, W)}.
   \end{tikzcd}
   \]   
Again, we use surjectivity of the arrow terminating in the top left corner to reduce this to commutativity of the inner hexagon.

This in turn is again the outer hexagon of the following:   
\[\begin{tikzcd}
\scriptscriptstyle{F(\CH)\ot\,V\ot_{B}F(\CH)\ot\,W}\arrow[rrr,"\tau_{2,3}"]\arrow[d,"\Psi^{-1}\ot\Psi^{-1}"]\arrow[rd,"\Psi^{-1}\ot\id"]&&&\scriptscriptstyle{F(\CH)\ot_{B}F(\CH)\ot\,V\ot\,W}\arrow[d,"\xi\ot\id"]\\
\scriptscriptstyle{F(\CH\ot\,V)\ot_{B}F(\CH\ot\,W)}\arrow[dd,"\xi"]\arrow[r,"\id\ot\Psi"]&\scriptscriptstyle{F(\CH\ot\,V)\ot_{B}F(\CH)\ot\,W}\arrow[d,"\xi\ot\id"]&&\scriptscriptstyle{F(\CH\ot_{N}\CH)\ot\,V\ot\,W}\arrow[dd,"\Psi^{-1}"]\arrow[ld,"\Psi^{-1}"]\\
&\scriptscriptstyle{F(\CH\ot\,V\ot_{N}\CH)\ot\,W}\arrow[r,"F(\tau_{2,3})\ot\id"]&\scriptscriptstyle{F(\CH\ot_{N}\CH\ot\,V)\ot\,W}\arrow[rd,"\Psi^{-1}"]&\\
\scriptscriptstyle{F(\CH\ot\,V\ot_{N}\CH\ot\,W)}\arrow[ru,"\Psi"]\arrow[rrr,"F(\tau_{2,3})"]&&&\scriptscriptstyle{F(\CH\ot_{N}\CH\ot\,W\ot\,W)},
   \end{tikzcd}
   \] 
where the lower left diagram is by the diagram (4) of \cite{U87},    the lower diagram is by the naturality of $\Psi$, the lower right diagram is by the diagram (3) of \cite{U87}. To see the top right diagram, it is sufficient to show
\[\begin{tikzcd}
F(\CH)\ot\,V\ot_{B}F(\CH)\arrow[r,"\tau_{2,3}"]\arrow[d,"\Psi^{-1}\ot\id"]&F(\CH)\ot_{B}F(\CH)\ot\,V\arrow[d,"\xi\ot\id"]\\
F(\CH\ot\,V)\ot_{B}F(\CH)\arrow[d,"\xi"]&F(\CH\ot_{N}\CH)\ot \,V\arrow[d,"\Psi^{-1}"]\\
F(\CH\ot\,V\ot_{N}\CH)\arrow[r,"F(\tau_{2,3})"]&F(\CH\ot_{N}\CH\ot \,V),
   \end{tikzcd}
   \] 
  where is clearly true by diagram (4') of \cite{U87}.  As a result, the diagram of the Lemma commute, which indicates $\xi_{P}$ is bijective. Therefore, $P$ is an anti-right $\CH$-Galois extension by \Cref{contensor-monoidal-converse}.
\end{proof}

\begin{rem}
    We fall just short of being able to prove \Cref{theconjecture}. Namely, we are unable to prove at this point that the Galois extension and bicomodule algebra $P $ constructed above from a monoidal functor, resp. monoidal equivalence, is fathfully flat as a module over its coinvariant subalgebra $B$.

    If we assume that $F\colon\LComod\CH\to\LComod\CL$ is a monoidal equivalence, and that $P=F(\CH)$ above is in fact faithfully flat as left and right $B$-module, then $P$ is a bi-Galois extension with the Ehresmann Hopf algebroid $L(P,\CH)$. coacting on the left. Thus $P$ induces an equivalence $\LComod \CH\to\LComod{L(P,\CH)}$ isomorphic to $F$ as a functor to $B$-bimodules, implying that $\CL\cong L(P,\CH)$.
    \end{rem}

\section{Structure Theorems for Hopf modules}\label{sec:structhms}
\begin{thm}\label{thm. structure theorem for right ES bialgebroid}
     Let $\cL$ be a left $B$-bialgebroid and $N\subseteq P$ be a  regular left faithfully flat  anti-right $\cL$-Hopf Galois extension. Then we have the monoidal category equivalences
     \[({}_{P}\mathcal{M}^{\cL}_{P},\ot_{P})\simeq({}_{L(P,\cL)}\mathcal{M},\diamond_{N}).\]
\end{thm}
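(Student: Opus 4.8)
The plan is to exhibit the two functors explicitly and reduce everything to the one--sided structure theorem \Cref{thm. fundamental theorem for left Hopf Galois extensions} and its corollaries, together with the Ehresmann data of \Cref{lem and def. left Ehresmann Hopf algebroids}. The functor $F\colon {}_{P}\M^{\cL}_{P}\to{}_{L(P,\cL)}\M$ is the coinvariants functor $\Gamma\mapsto\Gamma^{\co\cL}$. Its $N$-bimodule structure is the restriction of the $P$-bimodule structure: since the source and target maps of $L(P,\cL)$ are $s_{L}(n)=1\ot_{\BB}n$ and $t_{L}(n)=n\ot_{\BB}1$, left and right multiplication by $N\subseteq P$ correspond exactly to the $s_{L}$- and $t_{L}$-actions. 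The $L(P,\cL)=(P\ot_{\BB}P)^{\co\cL}$-action I would define by $(p\ot_{\BB}q)\cdot\gamma=p\gamma q$. To see this is well defined, note that for fixed $\gamma\in\Gamma^{\co\cL}$ the map $p\ot_{\BB}q\mapsto p\gamma q$ is right $\cL$-colinear from $P\ot_{\BB}P$, equipped with its codiagonal coaction, to $\Gamma$, because $\delta(p\gamma q)=p\z\gamma q\z\ot p\o q\o$ once $\gamma$ is coinvariant; hence it carries $(P\ot_{\BB}P)^{\co\cL}$ into $\Gamma^{\co\cL}$. Associativity of the action is immediate from the product $(p\ot_{\BB}q)(p'\ot_{\BB}q')=pp'\ot_{\BB}q'q$ on $L(P,\cL)$, and functoriality is clear.

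For the quasi-inverse I would use that, by \Cref{schneider hopf module corollaries}(1), the underlying object of any $\Gamma\in{}_{P}\M^{\cL}_{P}$ in ${}_{P}\M^{\cL}$ is $\Gamma\cong P\ou N\Gamma^{\co\cL}$, with right $\cL$-coaction and left $P$-action coming from $P$. Thus I set $G(W)=P\ou N W$ (using the $s_{L}$-induced left $N$-action on $W$), with left $P$-action on the first tensorand and right $\cL$-coaction from $P$; this already lands $G(W)$ in ${}_{P}\M^{\cL}$. The only datum to manufacture is the \emph{right} $P$-action, and here the biGalois structure of $P$ enters: $P$ is a (regular) left $L(P,\cL)$-comodule algebra over $N$ with coaction ${}_{L}\delta(p')=p'\mo\ot_N p'\z$, so I would set
\[(p\ou_N w)\cdot p'=p\,p'\z\ou_N (p'\mo)_{-}\cdot w,\]
i.e. feed the $L(P,\cL)$-part of the coaction of $P$ into the left $L(P,\cL)$-action on $W$ after twisting it by the Hopf-algebroid structure maps $X\mapsto X_{+}\ot X_{-}$ of $L(P,\cL)$ from \Cref{lem. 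L(P, H) is a Hopf algebroid} (this is the exact analogue of using a bijective antipode to turn a left module into a right action, without which the multiplication order comes out reversed).

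I expect the verification that $G(W)$ really lies in ${}_{P}\M^{\cL}_{P}$ to be the \textbf{main obstacle}: one must check that the right action is well defined over the balanced tensor products, is associative (this rests on the anti-multiplicativity \eqref{equ. inverse lambda 3} of $X\mapsto X_-$ together with the fact that ${}_{L}\delta$ is an algebra map), commutes with the left $P$-action, and is right $\cL$-colinear (this uses the bicomodule compatibility of the two coactions of $P$, \Cref{prop. left to right relations}, and the translation identities \eqref{equ. anti translation map 4}, \eqref{equ. anti translation map 7}). Granting this, that $F$ and $G$ are quasi-inverse follows from the canonical isomorphisms of \Cref{thm. fundamental theorem for left Hopf Galois extensions} and \Cref{schneider hopf module corollaries}: on one side $(P\ou N W)^{\co\cL}\cong W$ via $1\ou w\mapsto w$, the reconstructed $L(P,\cL)$-action matching the given one by \Cref{purity}; on the other side $\Gamma\cong P\ou N\Gamma^{\co\cL}$ naturally, the reconstructed right $P$-action agreeing with the original one by construction.

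Finally, for monoidality I would produce the natural comparison
\[\Gamma^{\co\cL}\diamond_N\Gamma'^{\co\cL}\longrightarrow(\Gamma\ot_{P}\Gamma')^{\co\cL},\qquad \gamma\diamond_N\gamma'\mapsto\gamma\ot_{P}\gamma',\]
which lands in the coinvariants because $\delta(\gamma\ot_{P}\gamma')=(\gamma\ot_{P}\gamma')\ot 1$ when both factors are coinvariant, and which is $L(P,\cL)$-linear by the very formula $(p\ot_{\BB}q)\cdot\gamma=p\gamma q$. Bijectivity I would obtain from the identification $\Gamma\cong P\ou N\Gamma^{\co\cL}$, which gives $\Gamma\ot_{P}\Gamma'\cong P\ou N(\Gamma^{\co\cL}\diamond_N\Gamma'^{\co\cL})$ as right $\cL$-Hopf modules, after which \Cref{purity} and \Cref{coinvariants vs cotensor product} pass the coinvariants functor through the extra module structure. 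The compatibility with the monoidal units ($N$ for $L(P,\cL)$-modules, and $P$ itself as unit of $({}_{P}\M^{\cL}_{P},\ot_{P})$, with $P^{\co\cL}=N$) and the coherence (associativity and unit) constraints then follow formally from the naturality of this comparison, so that $F$ is a monoidal equivalence.
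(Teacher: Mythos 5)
Your outer skeleton (the coinvariants functor with action $(p\ot_{\BB}q)\bla\gamma=p\gamma q$, a quasi-inverse given by tensoring with $P$ over $N$, and the monoidal structure via the obvious comparison map) agrees with the paper's proof, but the central construction of your quasi-inverse functor fails: the proposed right $P$-action
\[(p\ou Nw)\cdot p'=p\,p'\z\ou N(p'\mo)_{-}\bla w\]
is not a well-defined map. The inverse of the Galois map of $L:=L(P,\cL)$ produces an element $X_{+}\ot_{\overline N}X_{-}\in L\ot_{\overline N}L$ (\Cref{lem. L(P, H) is a Hopf algebroid}); there is \emph{no} map $X\mapsto X_{-}$ from $L$ to $L$, and discarding the $+$ leg is exactly the operation that would require an antipode, which the Hopf algebroids of this paper do not possess --- this is the obstruction the whole paper is built around. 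Concretely, $X_{+}t_{L}(n)\ot_{\overline N}X_{-}$ and $X_{+}\ot_{\overline N}t_{L}(n)X_{-}$ are the same element of $L\ot_{\overline N}L$, but your recipe assigns them the distinct values $p\,p'\z\ou N\,(p'\mo)_{-}\bla w$ and $p\,p'\z\ou N\bigl((p'\mo)_{-}\bla w\bigr)n$ (recall $t_{L}(n)\bla w=wn$), and nothing in the balancing of $P\ou NW$ absorbs that $n$. The same failure occurs with respect to the relation $t_{L}(n)p'\mo\ot p'\z=p'\mo\ot np'\z$ satisfied by the coaction ${}_{L}\delta$: using \eqref{equ. inverse lambda 9}, one representative yields $p\,p'\z\ou N(p'\mo)_{-}\bla (nw)$ while the other yields $p\,np'\z\ou N(p'\mo)_{-}\bla w$. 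A legitimate side-switch would have to contract the $+$ leg with $\varepsilon$ back into the $P$-factor, i.e.\ use the \emph{reverse} (right) $L$-coaction of $P$ in the spirit of \eqref{equ. regular map 7}, which requires first establishing that $P$ is a regular $L$-comodule; that is genuinely more machinery, and it is not what your formula does.

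The paper avoids the problem by building the quasi-inverse on the other side: $\Lambda\mapsto\Lambda\ou NP$, with the right $\cL$-coaction and the right $P$-action both carried by the right-hand tensor factor, and the left $P$-action given codiagonally by $p(\eta\ou Nq)p'=p\mo\bla\eta\ou Np\z qp'$ via the left $L$-coaction of $P$ from \Cref{lem and def. left Ehresmann Hopf algebroids}. This uses only a left coaction combined with the given left action, so no antipode substitute is ever needed; the adjunction isomorphisms are then explicit, $\eta\mapsto\eta\ot1$ and $m\mapsto m\rz\,\yi{m\rmo}\ot_{N}\er{m\rmo}$, the latter coming from \Cref{schneider hopf module corollaries} and \Cref{thm. fundamental theorem for left Hopf Galois extensions}. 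If you insist on the model $P\ou NW$, the correct right action is the one transported through the isomorphism with $\Lambda\ou NP$; it involves the translation map and the reverse coaction, and it is not given by your formula. As written, your functor $G$ does not exist, so the proposal has a genuine gap.
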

\begin{proof}
      The equivalence can be given via
    \begin{align}
        {}_{P}\mathcal{M}^{\cL}_{P}\to {}_{L(P,\cL)}\mathcal{M},&\qquad M\mapsto M^{co\cL}\\
        {}_{L(P,\cL)}\mathcal{M}\to{}_{P}\mathcal{M}^{\cL}_{P},&\qquad \Lambda\mapsto \Lambda\ot_{N}P.\label{leftLmod to Hopf bimod}
    \end{align}
    More precisely,  for any $p\ot q\in L:=L(P,\cL)$ and $\eta\in M^{co\cL}$,
    the left $L$-module structure is be given by
    \[(p\ot_{\BB}q)\bla \eta=p\eta q.\]
   For any $\Lambda\in {}_{L(P,\cL)}\mathcal{M}$, $\Lambda\ot_{N}P$ has the $P$-bimodule structure
   \begin{align*}
    p(\eta\ot_{N}q)p'=p\rz \eta p\rmo\yi{}\ot_{N}p\rmo\er{} qp'=p\mo\bla\eta\ot p\z qp',
\end{align*}
and right $\cL$-comodule structure on the right factor. The isomorphism $\Lambda\simeq (\Lambda\ot_{N}P)^{co\cL}$ for any $\Lambda\in {}_{L(P,\cL)}\mathcal{M}$ can be given by $\eta\mapsto \eta\ot 1$. The isomorphism $M\simeq M^{co\cL}\ot_{N}P$ for any $M\in {}_{P}\M^{\cL}_{P}$ can be given by $m\mapsto m\rz \yi{m\rmo}\ot_{N}\er{m\rmo}$ with inverse $\eta\ot_{N}p\mapsto \eta p$.  

The monoidal functor structure of \eqref{leftLmod to Hopf bimod} is given by the obvious isomorphism 
\begin{align}
    \label{mon fun str hopf bim}
    (\Lambda\ou NP)\ou P(\Lambda'\ou NP)\to (\Lambda\ou N\Lambda')\ou NP.
\end{align}
It is immediate to check that it is a morphism in the appropriate category, and it is obviously coherent.
\end{proof}

\begin{thm}\label{thm. structure theorem of ES bialgebroid}
     Let $\cL$ be a left $B$-bialgebroid and $N\subseteq P$ be a regular left faithfully flat left $\cL$-Hopf Galois extension, then we have the monoidal category equivalences
     \[({}^{\cL}_{P}\mathcal{M}_{P},\ot_{P})\simeq({}_{R(P,\cL)^{cop}}\mathcal{M},\diamond_{N})\simeq({}_{R(P,\cL)}\mathcal{M},\diamond_{\overline{N}})^{\sym}.\]
\end{thm}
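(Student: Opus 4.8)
The plan is to deduce this from \Cref{thm. structure theorem for right ES bialgebroid} by passing to the co-opposite bialgebroid $\cL^\cop$, rather than re-running the Hopf-module structure theorem from scratch. The key observation is that, by definition, an anti-right $\cL^\cop$-Galois extension is exactly a left $\cL$-Galois extension, and that a left $\cL$-comodule is regular precisely when it is regular as a right $\cL^\cop$-comodule. Since passing to $\cL^\cop$ leaves the algebra $P$ (and hence its $N$-flatness) untouched, the datum ``$N\subseteq P$ a regular left faithfully flat left $\cL$-Hopf Galois extension'' is verbatim a regular left faithfully flat anti-right $\cL^\cop$-Hopf Galois extension — crucially, the left $N$-flatness needed is exactly the one at hand, with no flipping of sides. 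Applying \Cref{thm. structure theorem for right ES bialgebroid} to $\cL^\cop$ then yields a monoidal equivalence $({}_P\M^{\cL^\cop}_P,\ot_P)\simeq({}_{L(P,\cL^\cop)}\M,\diamond_N)$, and it remains to identify both ends with the objects in the statement.

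For the source category I would note that a right $\cL^\cop$-comodule is the same datum as a left $\cL$-comodule (\Cref{antiremark}), so ${}_P\M^{\cL^\cop}_P$ and ${}^\cL_P\M_P$ agree on objects and morphisms. The monoidal structures also agree: in both cases the tensor product is $\ot_P$ with the codiagonal comodule structure, and under the dictionary $p\o\leftrightarrow p\mo$ (using that $\cL$ and $\cL^\cop$ share the same multiplication) the right $\cL^\cop$-codiagonal $p\ot q\mapsto p\z\ot q\z\ot p\o q\o$ becomes the left $\cL$-codiagonal $p\ot q\mapsto p\mo q\mo\ot p\z\ot q\z$. Hence ${}_P\M^{\cL^\cop}_P={}^\cL_P\M_P$ as monoidal categories.

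For the target I would prove $L(P,\cL^\cop)\cong R(P,\cL)^\cop$ as $N$-bialgebroids. As algebras both are the same subspace of $P\ot_B P$ — right $\cL^\cop$-coinvariants coincide with left $\cL$-coinvariants — carrying the multiplication $(p\ot q)(p'\ot q')=pp'\ot q'q$, and the source and target maps of $L(P,\cL^\cop)$ over $N$ are precisely those obtained by co-opposing the $\overline{N}$-bialgebroid $R(P,\cL)$ of \Cref{def. right ES bialgebroids}. The substantive point is that the coproduct of $L(P,\cL^\cop)$ equals $\Delta^\cop$ of the coproduct of $R(P,\cL)$; I would check this by substituting the reverse-comodule and translation-map formulas for the right $\cL^\cop$-structure of $P$ (which are the left $\cL$-structure formulas read through the dictionary) into the comultiplication formula of \Cref{lem and def. left Ehresmann Hopf algebroids}. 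This comparison, together with keeping straight the interchange of $N$ and $\overline{N}$ while the underlying algebra stays fixed, is where essentially all the bookkeeping lives and is the main obstacle.

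Finally, the second displayed equivalence is immediate from \Cref{antiremark}: the category of left $R(P,\cL)^\cop$-modules is monoidally equivalent to $({}_{R(P,\cL)}\M,\diamond_{\overline{N}})^\sym$. Chaining the two identifications gives the asserted chain of monoidal equivalences. As a sanity check (and as an alternative, self-contained route) one can instead argue directly, sending $M\mapsto M^{co\cL}$ with $R(P,\cL)$-action $(p\ot q)\bla m=pmq$; the reason the $\cop$/$\sym$ must appear is then visible in the monoidal functor structure, since on $(M\ot_P M')^{co\cL}$ the identity $\tuno{X}\tdue{X}=\varepsilon(X)$ of \eqref{equ. translation map 6.5} together with the counit of the reverse comodule shows that the codiagonal action is intertwined with the coproduct of $R(P,\cL)$ only after co-opposing it.
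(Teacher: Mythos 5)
Your proposal is correct, and it reaches the theorem by a different route than the paper's own proof. The paper argues directly: it exhibits the quasi-inverse functors $M\mapsto{}^{co\cL}M$ and $\Lambda\mapsto\Lambda\ot_{N}P$ together with explicit structure formulas --- the action $(p\ot_{B}q)\bla m=pmq$ of $R(P,\cL)$ on coinvariants, and the $P$-bimodule structure $p\,\la\,(\eta\ot_{N}q)\,\ra\,p'=p\o\bla\eta\ot_{N} p\z qp'$ on $\Lambda\ot_{N}P$ --- in effect re-running the proof of \Cref{thm. structure theorem for right ES bialgebroid} on the other side rather than invoking it. Your reduction through $\cL^{\cop}$ is sound: the hypotheses transfer verbatim exactly as you argue (anti-right $\cL^{\cop}$-Galois is by definition left $\cL$-Galois, regularity of a left $\cL$-comodule is by definition regularity as a right $\cL^{\cop}$-comodule, and faithful flatness of $P$ as a left $N$-module makes no reference to the coacting bialgebroid, so no side flips); the identification ${}_{P}\M^{\cL^{\cop}}_{P}={}^{\cL}_{P}\M_{P}$ holds monoidally with no flip of tensor factors, as your codiagonal computation shows; and the terminal $\sym$ is indeed \Cref{antiremark}. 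One remark in your favor: the identification $L(P,\cL^{\cop})\cong R(P,\cL)^{\cop}$, which you single out as the main bookkeeping obstacle, is lighter than you fear, because the paper obtains \Cref{lem and def. left Ehresmann Hopf algebroids} from the $R$-construction of \Cref{def. right ES bialgebroids} precisely ``by the general correspondence between left comodule structures and right comodule structures over the coopposite''; in other words $L(P,\CH)$ is in effect defined as $R(P,\CH^{\cop})^{\cop}$, so your identity is nearly definitional, and the functors produced by your reduction coincide with the paper's. As for what each approach buys: yours is more economical, requiring no re-verification of module/comodule compatibilities, while the paper's direct proof records the explicit formulas that are reused later, notably in the proof of the Yetter--Drinfeld structure theorem, where the bimodule structure on $\Lambda\ot_{N}P$ and the action on coinvariants are quoted verbatim.
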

\begin{proof}
    The equivalence can be given via
    \begin{align}
        {}^{\cL}_{P}\mathcal{M}_{P}\to {}_{R(P,\cL)^{cop}}\mathcal{M},&\qquad M\mapsto {}^{co\cL}M\\
        {}_{R(P,\cL)^{cop}}\mathcal{M}\to {}^{\cL}_{P}\mathcal{M}_{P},&\qquad \Lambda\mapsto \Lambda \ot_{N} P.
    \end{align}
    Given $M\in  {}^{\cL}_{P}\mathcal{M}_{P}$, ${}^{co\cL}M$ is a left $R(P,\cL)$-module with the left action given by
    \[(p\ot_{B}q)\bla m=pmq,\]
    for any $p\ot_{B}q\in R(P,\cL)$ and $m\in {}^{co\cL}M$. We can see $bm=mb$ for any $b\in B$ and $m\in  {}^{co\cL}M$ as both sides have the same images of the left coaction. Moreover, for any $p\ot_{B}q\in R(P,\cL)$ and $m\in {}^{co\cL}M$, $pmq\in {}^{co\cL}M$. So the left action is well defined, which is clearly associative and unital. Also, ${}^{co\cL}M$ is a $N$-bimodule with the structure given by $n\la m\ra n'=nmn'$ which is also a $\overline{N}$-bimodule with $\overline{n'}\la m\ra \overline{n}=nmn'$. Conversely, given $\Lambda\in {}_{R(P,\cL)^{cop}}\mathcal{M}$, $ \Lambda \ot_{N} P \in {}^{\cL}_{P}\mathcal{M}_{P}$, with the left $\cL$-coaction given by the right factor and the $P$-bimodule structure given by
    \[p\,\la \,(\eta\ot_N q)\, \ra \,p'=p\rz \eta p\ro\tuno{} \ot_N p\ro\tdue{} q p'=p\o\bla\eta\ot p\z qp' , \]
    for any $\eta\ot_N q\in \Lambda \ot_{N} P$.
    
    \end{proof}

\begin{defi}\cite[Def.4.2]{schau1}\label{def. left YD}
    Let $\cL$ be a left bialgebroid over $B$, a left-left Yetter-Drinfeld module of $\cL$ is a left $\cL$-comodule  and a left $\cL$-module $\Lambda$, such that 
    \begin{itemize}
        \item $s_{L}(b)\bla \rho=b\rho$ and $t_{L}(b)\bla\rho=\rho b$, $\forall b\in B, \rho\in \Lambda$.
        \item $(X\o \bla\rho)\mo X\t\di{}(X\o \bla\rho)\z=X\o\rho\mo\di{}X\t\bla \rho\z$, $\forall X\in \cL, \rho\in \Lambda$.
    \end{itemize}
    We denote the category of left-left Yetter-Drinfeld modules of $\cL$ by ${}_{\cL}^{\cL}\mathcal{YD}$.
\end{defi}

We define right-left Yetter-Drinfeld modules so that they are left-left ones over the coopposite bialgebroid.

\begin{defi}\label{def. right-left YD}
    Let $\cL$ be a left bialgebroid over $B$, a right-left Yetter-Drinfeld module of $\cL$ is a right $\cL$-comodule  and a left $\cL$-module $\Lambda$, such that 
    \begin{itemize}
        \item $s_{L}(b)\bla \rho=\rho\overline{b}$ and $t_{L}(b)\bla \rho=\overline{b}\rho$, $\forall b\in B, \rho\in \Lambda$.
        \item $(X\t\bla \rho)\z\di{}(X\t\bla \rho)\o X\o=X\o\bla \rho\z\di{}X\t \rho\o$.
    \end{itemize}
    The category of right-left Yetter-Drinfeld modules of $\cL$ is denoted by ${}_{\cL}\mathcal{YD}^{\cL}$.
\end{defi}

Recall that if $\cL$ is an anti-left Hopf algebroid, then every left comodule of $\cL$ is skew regular, so we can generalize the results in \cite{XH23} (where a full Hopf algebroid is assumed) and \cite{schau1}[Prop.4.4] (where the center is replaced by the weak center):
\begin{lem}
    If $\cL$ is an anti-left Hopf algebroid and left flat over $B$ and $\BB$,  then $({}_{\cL}^{\cL}\mathcal{YD},  \di{})$ is the Drinfeld center of $\LMod\cL$, and in particular a is a braided monoidal category. The half-braiding and its inverse are given by
    \begin{align}
        \sigma(v\di{}w)=&v\mo\bla w\di{}v\z\\
        \sigma^{-1}(w\di{}v)=&v\rz\di{}v\ro\bla w,
    \end{align}
   for any a Yetter-Drinfeld module $V\in{}_{\cL}^{\cL}\mathcal{YD}$, module $W\in\LMod \cL$, and $v\in V,w\in W$.
\end{lem}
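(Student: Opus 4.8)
The plan is to identify ${}_{\cL}^{\cL}\mathcal{YD}$ with the Drinfeld center $\mathcal{Z}(\LMod\cL)$ by upgrading the already-known description of the \emph{weak} center. By \cite[Prop.~4.4]{schau1}, sending a left-left Yetter--Drinfeld module $V$ to the object $V$ equipped with
\[\sigma_{V,W}\colon V\di W\to W\di V,\qquad \sigma_{V,W}(v\di w)=v\mo\bla w\di v\z\]
is an equivalence between ${}_{\cL}^{\cL}\mathcal{YD}$ and the weak center of $\LMod\cL$; in particular each $\sigma_{V,W}$ is a natural, $\cL$-linear transformation satisfying the hexagon compatibility. The full center is the full subcategory of the weak center on those objects whose half-braiding is invertible. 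Hence it suffices to prove that, under our hypotheses, \emph{every} $\sigma_{V,W}$ is invertible with inverse the stated $\sigma^{-1}$; then weak center and center coincide, and the braiding they carry (the half-braiding of $V$ evaluated at another Yetter--Drinfeld module $V'$) restricts to make $({}_{\cL}^{\cL}\mathcal{YD},\di)$ braided monoidal.

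The hypotheses enter exactly at this point. Since $\cL$ is anti-left Hopf and left flat over $B$ and $\BB$, \cref{lem. anti-left Hopf gives skew regular} guarantees that every left $\cL$-comodule is skew regular; in particular the underlying comodule of $V$ carries the reverse right structure $v\mapsto v\rz\di v\ro$ of \cref{prop. properties of skew regular comodules}. Using it I would define
\[\tau_{V,W}\colon W\di V\to V\di W,\qquad \tau_{V,W}(w\di v)=v\rz\di v\ro\bla w,\]
and first check that $\tau_{V,W}$ descends to the balanced tensor product, the requisite $B$-linearity being supplied by \eqref{equ. skew regular map 4} and \eqref{equ. skew regular map 5} together with the module conditions $s_{L}(b)\bla\rho=b\rho$, $t_{L}(b)\bla\rho=\rho b$ of \cref{def. left YD}.

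It then remains to verify that $\tau_{V,W}$ is a two-sided inverse of $\sigma_{V,W}$. Using associativity of the action, $(XY)\bla w=X\bla(Y\bla w)$, one computes
\begin{align*}
\tau_{V,W}\bigl(\sigma_{V,W}(v\di w)\bigr)&=v\z\rz\di\bigl(v\z\ro v\mo\bigr)\bla w=v\di w,\\
\sigma_{V,W}\bigl(\tau_{V,W}(w\di v)\bigr)&=\bigl(v\rz\mo v\ro\bigr)\bla w\di v\rz\z=w\di v,
\end{align*}
where the first identity uses the skew-regular relation \eqref{equ. skew regular map 2}, read in $\cL\ot^{B}V$, and the second uses \eqref{equ. skew regular map 1}, read in $\cL\di V$. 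Thus $\sigma_{V,W}$ is invertible with $\sigma_{V,W}^{-1}=\tau_{V,W}$, so $V$ lies in the genuine center; conversely every object of the center, lying a fortiori in the weak center, is a Yetter--Drinfeld module. The equivalence of \cite[Prop.~4.4]{schau1} therefore restricts to an equivalence between ${}_{\cL}^{\cL}\mathcal{YD}$ and $\mathcal{Z}(\LMod\cL)$, and the transported braiding is again $\sigma$, as claimed.

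The main obstacle is the bookkeeping in the bialgebroid setting: verifying that $\tau_{V,W}$ descends to the balanced tensor product requires matching the $\BB$- and $B$-actions appearing in $\di$ against the reverse-coaction $B$-linearity relations, and the two round-trip computations must be carried out over the correct balanced tensor products $\cL\ot^{B}V$ and $\cL\di V$ rather than free tensor products. Conceptually, however, the only substantive input beyond \cite[Prop.~4.4]{schau1} is the skew-regularity furnished by the anti-left Hopf hypothesis and the flatness of $\cL$ over $B$ and $\BB$; this is precisely what is unavailable for a general bialgebroid and is what promotes the weak center to the center.
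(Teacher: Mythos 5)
Your proposal is correct and follows essentially the same route as the paper: the paper omits a detailed proof, noting only that one should combine \cite[Prop.~4.4]{schau1} (the weak center description) with the observation that anti-left Hopf plus left flatness over $B$ and $\BB$ makes every left comodule regular, so that the reverse coaction supplies the inverse of each half-braiding and promotes the weak center to the center. Your verification that $\tau_{V,W}$ descends to the balanced tensor products and the two round-trip computations via \eqref{equ. skew regular map 1} and \eqref{equ. skew regular map 2} are exactly the details the paper leaves to the reader.
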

The only novelty is that the inverse of the half-braiding exists and is given by the reverse comodule structure. Similarly, we have

\begin{lem}
     If $\cL$ is a left Hopf algebroid left flat over $B$ and $\BB$, then $({}_{\cL}\mathcal{YD}^{\cL},  \di{})$ is the Drinfeld center of $\LMod \cL$; the half-braiding and its inverse are given by
     \begin{align}
        \tilde{\sigma}(v\di{}w)=&w\z\di{}w\o\bla v\\
        \tilde{\sigma}^{-1}(w\di{}v)=&w\rmo\bla v\di{}w\rz,
    \end{align}
    for $w\in W\in{}_{\cL}\mathcal{YD}^{\cL}$ and  $v\in V\in \LMod\cL$.
\end{lem}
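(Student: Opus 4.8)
This is the right-handed companion of the preceding lemma, and I would prove it in the same spirit, the only genuinely new input being invertibility of the half-braiding. Following \cite{schau1} and \cite{XH23}, one first checks that $\tilde\sigma(v\di w)=w\z\di w\o\bla v$ is well defined on $V\di W$: it factors through the balanced tensor product, and its image lies in $W\di V$ because the right coaction lands in the Takeuchi product and the action respects the bimodule structures. The second condition in \cref{def. right-left YD} is then exactly the statement that $\tilde\sigma$ is $\cL$-linear for the diagonal action, that is, a morphism in $\LMod\cL$; once this is known, naturality in the non-central variable and the hexagon identities are formal, so a right-left Yetter--Drinfeld module is the same datum as an object of the \emph{weak} center of $\LMod\cL$ equipped with the half-braiding $\tilde\sigma$.

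The new point, exactly as in the previous lemma, is that this half-braiding is invertible. Here I would invoke \cref{lem. anti-left Hopf gives skew regular}: since $\cL$ is a left Hopf algebroid, left flat over $B$ and $\BB$, every right $\cL$-comodule $W$ is regular and hence carries a reverse \emph{left} comodule structure $w\mapsto w\rmo\ot w\rz$. I would then verify that
\[\tilde\sigma^{-1}(w\di v)=w\rmo\bla v\di w\rz\]
is a two-sided inverse of $\tilde\sigma$, the two computations being immediate consequences of the defining identities \eqref{equ. regular map 1} and \eqref{equ. regular map 2} of the reverse comodule structure together with the module axiom. This upgrades the weak center to the genuine Drinfeld center and makes ${}_{\cL}\mathcal{YD}^{\cL}$ a braided monoidal category, with the stated half-braiding and its inverse.

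Alternatively, and perhaps more economically, I would deduce the statement from the preceding lemma by passing to $\cL^\cop$. By \Cref{def. right-left YD} the category ${}_{\cL}\mathcal{YD}^{\cL}$ is by definition ${}^{\cL^\cop}_{\cL^\cop}\mathcal{YD}$, and unwinding \cref{defHopf} shows that $\cL$ is left Hopf and left flat over $B$ and $\BB$ exactly when $\cL^\cop$ is anti-left Hopf and left flat over $\BB$ and $B$, since coopposition interchanges the legs of the coproduct and thereby turns $\lambda$ into $\mu$. The preceding lemma then identifies ${}^{\cL^\cop}_{\cL^\cop}\mathcal{YD}$ with the center of $\LMod{\cL^\cop}$, while \cref{antiremark} gives $\LMod{\cL^\cop}\cong(\LMod\cL)^\sym$, whose center is identified, as a monoidal category, with that of $\LMod\cL$. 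Transporting the structures along the identification of left $\cL^\cop$-coactions with right $\cL$-coactions carries $\sigma$ to $\tilde\sigma$ and the reverse comodule structure of the previous lemma to the reverse right $\cL$-comodule structure, reproducing the displayed formulas.

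The main obstacle in either route is bookkeeping rather than conceptual content. In the direct route it is the verification that $\tilde\sigma^{-1}$ as written is genuinely inverse to $\tilde\sigma$ --- the single place where the left Hopf hypothesis enters, via regularity --- together with checking that every tensor factor lands in the correct balanced product. In the coopposite route it is tracking the three simultaneous reversals (opposite tensor product, coopposite coaction, and the reversed direction of the half-braiding) so that $\tilde\sigma$ and $\tilde\sigma^{-1}$ emerge with their source and target actions placed exactly as displayed.
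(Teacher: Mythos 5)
Your proposal is correct and matches the paper's (largely implicit) argument: the paper omits the proof, remarking that the weak-center identification is as in \cite{schau1} and \cite{XH23} and that ``the only novelty is that the inverse of the half-braiding exists and is given by the reverse comodule structure,'' which is exactly your use of \cref{lem. anti-left Hopf gives skew regular} and the identities \eqref{equ. regular map 1}--\eqref{equ. regular map 2}; your alternative coopposite route is likewise the symmetry the paper itself sets up by defining right-left Yetter--Drinfeld modules as left-left ones over $\cL^\cop$ and introducing the lemma with ``Similarly, we have.''
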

It may be worth noting that in this case the underlying functor from Yetter-Drinfeld modules to comodules reverses tensor products. We also note that from the two previous results it is obvious that in case that $\cL$ is Hopf and anti-Hopf and suitably flat, there is a bijection between left-left and right-left Yetter-Drinfeld module structures given by taking reverse comodule structures. We will omit the proof, as the proof is similar to \cite{XH23} and \cite{schau4}.

\begin{thm}
     Let $L$ be a $N$-Hopf algebroid and $\cL$ be a $B$-Hopf algebroid, if $P\in\Bi(L, \cL)$, then we have the monoidal category equivalences
     \[({}_{L}^{L}\mathcal{YD}, \ot_{N})\simeq ({}^{L}_{P}\mathcal{M}^{\cL}_{P}, \ot_{P})\simeq ({}_{\cL}\mathcal{YD}^{\cL}, \di{})^\sym.\]
\end{thm}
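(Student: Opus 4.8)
The plan is to peel off one comodule structure at a time using the two Hopf-bimodule structure theorems, and to recognize the surviving comodule structure, together with the module structure manufactured by the Ehresmann construction, as a Yetter--Drinfeld module. Throughout I may assume that every comodule in sight is regular: since $L$ and $\cL$ are Hopf and anti-Hopf and left flat over base and opposite base, \Cref{lem. anti-left Hopf gives skew regular} applies. Moreover, because $P\in\Bi(L,\cL)$, the anti-right $\cL$-Galois extension reconstructs $L$ as its left Ehresmann Hopf algebroid, $L\cong L(P,\cL)$ by the uniqueness in \Cref{lem and def. left Ehresmann Hopf algebroids}, while the left $L$-Galois extension reconstructs $\cL$ as a right Ehresmann Hopf algebroid, $\cL\cong R(P,L)$ by the uniqueness in \Cref{lem. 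Hopf Galois extension is HOpf biGalois extension}. These two identifications are exactly what allow the module categories over the Ehresmann algebroids to be read back as module categories over $L$ and over $\cL$.

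For the first equivalence I would begin with $M\in{}^{L}_{P}\mathcal M^{\cL}_{P}$ and forget the left $L$-comodule structure, viewing $M$ as an object of ${}_{P}\mathcal M^{\cL}_{P}$. \Cref{thm. structure theorem for right ES bialgebroid} then supplies a monoidal equivalence $M\mapsto M^{co\cL}$ onto ${}_{L(P,\cL)}\mathcal M\cong{}_{L}\mathcal M$ carrying $\otimes_P$ to $\otimes_N$, with left action $(p\ot q)\bla\eta=p\eta q$. The first thing to check is that the forgotten left $L$-coaction descends to $M^{co\cL}$: since $M$ is an $L$-$\cL$-bicomodule the two coactions commute, so the left $L$-coaction of an element of $M^{co\cL}$ again lands in $L\diamond M^{co\cL}$, equipping $M^{co\cL}$ with a left $L$-comodule structure alongside its left $L$-module structure. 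The crux, which I expect to be the main obstacle, is to verify that these two structures satisfy the left-left Yetter--Drinfeld compatibility of \Cref{def. left YD}. This is a direct but delicate calculation unwinding the explicit coproduct of $L=L(P,\cL)$ from \Cref{def. right ES bialgebroids}, the action $(p\ot q)\bla\eta=p\eta q$, the descended coaction, and the (anti-)translation identities of \Cref{prop. left Hopf Galois extension}; the point is that the Hopf-bimodule axioms of $M$ are precisely what force the two sides of the YD condition to agree.

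Conversely, given $\Lambda\in{}_{L}^{L}\mathcal{YD}$ one forms $\Lambda\ot_N P$ as in the structure theorem and checks that it carries a compatible $L$-$\cL$-Hopf-bimodule structure; the quasi-inverse property and the monoidal coherence are inherited from \Cref{thm. structure theorem for right ES bialgebroid}, the surviving codiagonal $L$-comodule structure on $\otimes_P$ matching the $\otimes_N$ tensor product of Yetter--Drinfeld modules. This yields $({}^{L}_{P}\mathcal M^{\cL}_{P},\otimes_P)\simeq({}_{L}^{L}\mathcal{YD},\otimes_N)$.

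For the second equivalence I would argue symmetrically, forgetting instead the right $\cL$-comodule structure and applying \Cref{thm. structure theorem of ES bialgebroid} to the left $L$-Galois extension: this gives $M\mapsto{}^{coL}M\in{}_{R(P,L)^{\cop}}\mathcal M\cong({}_{\cL}\mathcal M)^{\sym}$ with action $(p\ot q)\bla m=pmq$, carrying $\otimes_P$ to $\diamond$ taken in the opposite order. The right $\cL$-coaction descends to ${}^{coL}M$ by the same bicomodule argument, and one checks the right-left Yetter--Drinfeld condition of \Cref{def. right-left YD} by the computation analogous to the one above. Because the structure theorem already carries the $\sym$-twist, this identifies $({}^{L}_{P}\mathcal M^{\cL}_{P},\otimes_P)$ with $({}_{\cL}\mathcal{YD}^{\cL},\diamond)^{\sym}$, completing the chain. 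As a consistency check one may observe that under these identifications the two preceding lemmas realizing ${}_{L}^{L}\mathcal{YD}$ and ${}_{\cL}\mathcal{YD}^{\cL}$ as Drinfeld centers endow both ends with braided monoidal structures compatible with the equivalences, so that the displayed triangle of monoidal equivalences is genuinely one of braided categories.
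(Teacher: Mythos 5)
Your proposal is correct and takes essentially the same approach as the paper: both establish the first equivalence by applying the Hopf-bimodule structure theorem for the anti-right $\cL$-Galois extension, identifying $L$ with $L(P,\cL)$ (under which the action reads $\alpha\bla\eta=\tuno{\alpha}\eta\tdue{\alpha}$), restricting the left $L$-coaction to $M^{co\cL}$ and checking the Yetter--Drinfeld compatibility via the translation-map identities, with quasi-inverse $\Lambda\mapsto\Lambda\ot_{N}P$ carrying the codiagonal coaction; and both obtain the second equivalence as the coopposite/symmetric version of the first (your use of the second structure theorem plus $\cL\cong R(P,L)$ amounts to the same thing, since that isomorphism is given by the anti-right translation map, yielding the action $X\bla\rho=\yi{X}\rho\er{X}$). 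The only step you leave implicit is the actual Yetter--Drinfeld verification, which the paper carries out in a few lines using exactly the identities you cite.
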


\begin{proof}
    The first equivalence can be given via
    \begin{align}
        {}^{L}_{P}\mathcal{M}^{\cL}_{P}\to {}_{L}^{L}\mathcal{YD},&\qquad M\mapsto M^{co\cL}\label{equ. LL-YD to Hopf bimodule}\\
        {}_{L}^{L}\mathcal{YD}\to{}^{L}_{P}\mathcal{M}^{\cL}_{P},&\qquad \Lambda\mapsto \Lambda\ot_{N}P.\label{equ. Hopf bimodule to LL_YD}
    \end{align}
    We already know these assignments define an equivalence $\LMod L\to{_P\M^\cL_P}$ from \Cref{thm. structure theorem for right ES bialgebroid}. By the identification of $L$ with $L(P,\cL)$ the action of $L$ on $M^{\co\cL}$ is given by 
    
     \[\alpha\bla \eta=\tuno{\alpha}\eta\tdue{\alpha},\]
    for any $\alpha\in L$ and $\eta\in M^{co\cL}$.
If $M\in{^L_P\M^\cL_P}$ then the additional left $L$-comodule structure restricts to a left $L$-comodule structure on $M^{\co\cL}$. Now, we can check the Yetter-Drinfeld conditions on $M^{co\cL}$. First, we can see $s_{L}(n)\bla \eta=n\eta$ and $t_{L}(n)\bla \eta=\eta n$ for any $n\in N$ and $\eta\in M^{co\cL}$. Second, for any $\alpha\in L$.
    \begin{align*}
        (\alpha\o\bla\eta)\mo \alpha\t\ot_{N}(\alpha\o\bla\eta)\z
        =&\alpha\o\tuno{}\mo\eta\mo\alpha\o\tdue{}\mo \alpha\t\ot \alpha\o\tuno{}\z\eta\z\alpha\o\tdue{}\z\\    
        =&\alpha\o{}_{+}\tuno{}\mo\eta\mo\alpha\o{}_{-} \alpha\t\ot \alpha\o{}_{+}\tuno{}\z\eta\z\alpha\o{}_{+}\tdue{}\\  
        =&\alpha\tuno{}\mo\eta\mo\ot \alpha\tuno{}\z\eta\z\alpha\tdue{}\\  
        =&\alpha\o\eta\mo\ot \alpha\t\tuno{}\eta\z \alpha\t\tdue{}\\
        =&\alpha\o \eta\mo\ot \alpha\t\bla \eta\z,
        \end{align*}
where the 2nd step uses \eqref{equ. translation map 2}, the 4th step uses \eqref{equ. translation map 1}. 

For the inverse equivalence \eqref{equ. Hopf bimodule to LL_YD}, we already know from \Cref{thm. structure theorem for right ES bialgebroid} that the $P$-bimodule structure on $\Lambda\ou NP$ is given by \begin{align*}
    p(\eta\ot_{N}q)p'=p\mo\bla \eta\ot_{N}p\z qp'.
\end{align*} 
We define the left $L$-comodule structure as the  codiagnal coaction on $\Lambda\ot_{N}P$.
  We know that $\Lambda\ot_{N}P\in {}_{P}\mathcal{M}_{P}^{\cL}$, and  it is obviously a bicomodule. So we only need to check $\Lambda\ot_{N}P\in {}_{P}^{L}\mathcal{M}_{P}$,
    \begin{align*}
        {}_{L}\delta(p(\eta\ot q)p')=&(p\mt\bla \eta)\mo p\mo q\mo p'\mo\ot (p\mt\bla \eta)\z\ot p\z q\z p'\z\\
        =&p\mt \eta\mo  q\mo p'\mo\ot p\mo\bla \eta\z\ot p\z q\z p'\z.
    \end{align*}
    It is easy to check that the adjunction morphsims in the category equivalence from \Cref{thm. structure theorem for right ES bialgebroid} respect the additional $L$-structures. As for the monoidal functor structure, we only have to add that the isomorphism \eqref{mon fun str hopf bim} is also an $L$-comodule map in our present situation.

      The second equivalence can be given via the coopposite version of the first
    \begin{align}
        {}^{L}_{P}\mathcal{M}^{\cL}_{P}\to {}_{\cL}\mathcal{YD}^{\cL},&\qquad M\mapsto {}^{coL}M\label{equ. RL-YD to Hopf bimodule}\\
        {}_{\cL}\mathcal{YD}^{\cL}\to{}^{L}_{P}\mathcal{M}^{\cL}_{P},&\qquad \Lambda\mapsto \Lambda\ot_{\BB}P.\label{equ. Hopf bimodule to RL-YD}
    \end{align}
    More precisely, for \eqref{equ. RL-YD to Hopf bimodule}, the left $L$-module action is given by
    \[X\bla \rho=X\yi{}\rho \er{X},\]
     for any $\rho\in {}^{coL}M$ and $X\in\cL$. 
    For \eqref{equ. Hopf bimodule to RL-YD}, the balanced tensor product of $\Lambda\ot_{\BB}P$ is given by $s_{L}(b)\bla \eta\ot_{\BB}p=\eta\ot_{\BB}\Bar{b}p$, the left $L$-comodule structure is given by
    \[{}_{L}\delta(\eta\ot_{\BB} p)=p\mo\ot_{N}\eta\ot_{\BB}p\z.\]
   the right $\cL$-comodule structure is given by
\begin{align*}
    \delta_{\cL}(\eta\ot_{\BB} p)=\eta\z\ot_{\BB}p\z\di{}\eta\o p\o,
\end{align*}
The $P$-bimodule structure is given by
\[p(\eta\ot_{\BB}q)p'=p\o\bla\eta\ot_{\BB}p\z q p',\]
\end{proof}
As a result, we have
\begin{lem}
    Let $L$ be a $N$-Hopf algebroid and $\cL$ be a $B$-Hopf algebroid, if $P\in\Bi(L, \cL)$, then the Hopf bimodule category $({}^{L}_{P}\mathcal{M}^{\cL}_{P}, \ot_{P})$ is a braided monoidal category, namely the center of both monoidal categories $({}^{}_{P}\mathcal{M}^{\cL}_{P}, \ot_{P})$ and $({}^{L}_{P}\mathcal{M}^{}_{P}, \ot_{P})$. The half-braidings are given by
    \begin{align}
        \sigma(m\ot_{P}n)=&\tuno{m\mo} n\rz \yi{n\rmo}\ot_{P} \tdue{m\mo} m\z \er{n\rmo}\\
        \sigma^{-1}(n\ot_{P}m)=& \yi{n\o}m\rz \tuno{m\ro}\ot_{P}\er{n\o}n\z \tdue{m\ro},
    \end{align}
    defined for  $m\in M\in {}^{L}_{P}\mathcal{M}^{}_{P}$ and $n\in M'\in {}^{}_{P}\mathcal{M}^{\cL}_{P}$.
    \end{lem}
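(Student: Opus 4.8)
The plan is to obtain the braided structure, together with both displayed half-braidings, from the triple monoidal equivalence of the preceding theorem combined with the two Drinfeld-center descriptions of Yetter--Drinfeld modules established above; the only substantial work is transporting the explicit half-braidings across the structure-theorem equivalences.

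First I would assemble the equivalences. Since $P\in\Bi(L,\cL)$, \Cref{lem. round trip Ehresmann bialgebroid} identifies $L\cong L(P,\cL)$ and $\cL\cong R(P,L)$. Then \Cref{thm. structure theorem for right ES bialgebroid} gives a monoidal equivalence $({}_{P}\mathcal{M}^{\cL}_{P},\ot_{P})\simeq({}_{L}\mathcal{M},\diamond_{N})$ via $M\mapsto M^{co\cL}$, while \Cref{thm. structure theorem of ES bialgebroid}, applied to the left $L$-Galois extension $N\subseteq P$, gives $({}^{L}_{P}\mathcal{M}_{P},\ot_{P})\simeq({}_{\cL}\mathcal{M},\diamond_{B})^{\sym}$ via $M\mapsto{}^{coL}M$. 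The preceding theorem refines both: an object carrying the extra comodule structure, i.e.\ an object of ${}^{L}_{P}\mathcal{M}^{\cL}_{P}$, is sent to a Yetter--Drinfeld module, giving ${}^{L}_{P}\mathcal{M}^{\cL}_{P}\simeq{}_{L}^{L}\mathcal{YD}$ and ${}^{L}_{P}\mathcal{M}^{\cL}_{P}\simeq({}_{\cL}\mathcal{YD}^{\cL})^{\sym}$, compatibly with the forgetful functors down to ${}_{L}\mathcal{M}$ and $({}_{\cL}\mathcal{M})^{\sym}$.

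Next I would invoke the two center lemmas. Since $L$ is a Hopf algebroid, hence anti-left Hopf and suitably flat, the first center lemma gives ${}_{L}^{L}\mathcal{YD}=Z({}_{L}\mathcal{M})$ as braided categories; since $\cL$ is left Hopf, the second gives ${}_{\cL}\mathcal{YD}^{\cL}=Z({}_{\cL}\mathcal{M})$. Transporting these identifications along the monoidal equivalences above, which intertwine the relevant forgetful functors, yields ${}^{L}_{P}\mathcal{M}^{\cL}_{P}=Z({}_{P}\mathcal{M}^{\cL}_{P})$ and ${}^{L}_{P}\mathcal{M}^{\cL}_{P}=Z({}^{L}_{P}\mathcal{M}_{P})$; here one must track how the $\sym$ (opposite-tensor) convention turns the braiding read off from one center into the inverse of the braiding read off from the other. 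In particular ${}^{L}_{P}\mathcal{M}^{\cL}_{P}$ is braided, the first center producing the half-braiding $\sigma\colon M\ot_{P}N'\to N'\ot_{P}M$ (which uses the left $L$-coaction of $M$) and the second center producing the reverse half-braiding $N'\ot_{P}M\to M\ot_{P}N'$, which is the source of the formula for $\sigma^{-1}$.

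Finally I would compute the half-braidings explicitly. Starting from the center half-braiding $v\diamond_{N}w\mapsto(v\mo\bla w)\diamond_{N}v\z$ of $Z({}_{L}\mathcal{M})$, I would push it through the monoidal-functor isomorphism $(\Lambda\diamond_{N}P)\ot_{P}(\Lambda'\diamond_{N}P)\cong(\Lambda\diamond_{N}\Lambda')\diamond_{N}P$ of the inverse equivalence, substitute the coinvariant action $\alpha\bla\eta=\tuno{\alpha}\eta\tdue{\alpha}$ and the structure isomorphism $m\mapsto m\rz\yi{m\rmo}\ot_{N}\er{m\rmo}$, and simplify. The simplifications I expect to use are multiplicativity of the translation maps (the analogues of \eqref{equ. translation map 7} and \eqref{equ. anti translation map 7}) together with the left-to-right interchange relations of \Cref{prop. left to right relations}; these should convert the transported expression into $\tuno{m\mo}\,n\rz\,\yi{n\rmo}\ot_{P}\tdue{m\mo}\,m\z\,\er{n\rmo}$. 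The symmetric computation, starting from $\tilde{\sigma}(v\di w)=w\z\di w\o\bla v$ of $Z({}_{\cL}\mathcal{M})$, produces $\sigma^{-1}$. The main obstacle is exactly this bookkeeping: one must keep the left-$L$ and right-$\cL$ translation maps, and the two reverse comodule structures, in the correct tensor slots, so that the two independently transported maps are verifiably mutually inverse.
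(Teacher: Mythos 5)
Your proposal is correct and takes essentially the same route as the paper, which states this lemma ``as a result'' of the preceding Yetter--Drinfeld equivalence theorem together with the two Drinfeld-center lemmas and omits the details as being similar to \cite{schau4} --- i.e.\ exactly the transport-of-half-braidings argument you outline, including the $\sym$-bookkeeping that makes the second formula the inverse of the first. The only minor slip is citing \Cref{lem. round trip Ehresmann bialgebroid} for the identification $L\cong L(P,\cL)$, which really rests on the uniqueness clause of \Cref{lem. Hopf Galois extension is HOpf biGalois extension} (the round-trip lemma then supplying the explicit translation-map form of the isomorphism used in your computation); this does not affect the argument.
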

We omit the proof as it is similar to \cite{schau4}.    

 \section{Applications and examples}\label{Applications and examples}
In this section, we will study the 2-cocycle twist theory of Hopf Galois extensions with the base algebra deformed by a 2-cocycle.
Recall that\cite{HM25}, 
\begin{defi} \label{Lcotwist}
Let $\cL$ be a left $B$-bialgebroid. An \textup{left 2-cocycle} on
$\cL$ is an element $\Gamma\in {}\Hom_{\overline{B}-}(\cL\otimes_{\overline{B}}\cL, B)$, such that
\begin{itemize}
    \item [(1)]$\Gamma(X, \Gamma(Y\o, Z\o)Y\t Z\t)=\Gamma(\Gamma(X\o, Y\o)X\t Y\t, Z),$
    \item[(2)] $\Gamma(1_{\cL}, X)=\varepsilon(X)=\Gamma(X, 1_{\cL}),$
\end{itemize}
for all $X, Y, Z\in \cL$. 
\end{defi}
Given a 2-cocycle $\Gamma$ on a left $B$-bialgebroid, we can deform the base algebra $B$ with a new product 
\[a\cdot_{\Gamma}b=\Gamma(a, b),\]
    for any $a,b\in B$. We denote the new algebra by $B^{\Gamma}$. Moreover, for any left $\cL$-comodule $M$, $M$ has a $B^{\Gamma}$-bimodule structure given by
\begin{align}
a\cdot_{\Gamma}m=\Gamma(a,m\mo)m\z,\qquad m\cdot_{\Gamma}a=\Gamma(m\mo,a)m\z,
\end{align}
for any $a\in B$ and $m\in M$. For any $N,M\in {}^{\cL}\CM$, we can define $\Gamma^{\#}:M\ot_{B^{\Gamma}}N\to M\ot_{B}N$ given by
    \begin{align*}
        \Gamma^{\#}(m\ot_{B^{\Gamma}}n)=\Gamma(m\mo, n\mo)m\z\ot_{B}n\z,
    \end{align*}
    for any $m\in M, n\in N$. We call $\Gamma$ \textit{invertible}, if $\Gamma^{\#}$ is invertible for any left $\cL$-comodule $M,N$.
    \begin{thm}\cite{HM25}
        If $\Gamma$ is an invertible left 2-cocycle on a left $B$-Hopf algebroid then $\cL^\Gamma$ is a Hopf algebroid over $\BG$ with the $\BG^e$-ring structure
        \[ X\cdot_{\Gamma} Y:=\Gamma(X\o, Y\o)X\t{}_{+} Y\t{}_{+}\overline{\Gamma(Y\t{}_{-}, X\t{}_{-})},\]
        and $\BG$-coring structure 
         \[\Delta^{\Gamma}(X)=\Gamma^{\#-1}(X\o\di X\t),\qquad \varepsilon^{\Gamma}(X)=\Gamma(X_{+}, X_{-}),\]
     where $\Gamma^{\#}:\cL^{\Gamma}\diamond_{B^{\Gamma}} \cL^{\Gamma}\to \cL\di \cL$  given by \begin{align*}
    \Gamma^{\#}(X\diamond_{B^\Gamma} Y)=X_{+}\overline{\Gamma(X_{-}, Y\o)}\di Y\t,
\end{align*}
 is invertible as we consider the left term of $\cL\di \cL$ has the regular comodule structure induced by its coproduct. We denote the image of the twisted coproduct by
 \[\Delta^{\Gamma}(X)=X\tone\diamond_{B^{\Gamma}} X\ttwo,\]
 for any $X\in \cL$.
\end{thm}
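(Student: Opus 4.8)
The plan is to verify, in order, that $\BG$ is an associative unital algebra, that $(\cL^{\Gamma},\cdot_{\Gamma})$ is a $\BG^{e}$-ring, that $(\cL^{\Gamma},\Delta^{\Gamma},\varepsilon^{\Gamma})$ is a $\BG$-coring, that the two are compatible (giving a left $\BG$-bialgebroid), and finally that the twisted Galois map is invertible. The organizing device is the gauge isomorphism $\GH$: the twisted coproduct is by definition the transport $\Delta^{\Gamma}=\GH^{-1}\circ\Delta$ of the original coproduct, and the twisted product is similarly the original codiagonal product conjugated by $\Gamma$-factors. Thus each axiom for $\cL^{\Gamma}$ should reduce to the corresponding axiom for $\cL$ together with one identity satisfied by $\Gamma$. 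The two ingredients that appear at every step are the cocycle condition (used whenever two scalar factors $\Gamma(-,-)$ meet) and the left Hopf algebroid identities \eqref{equ. inverse lambda 2}--\eqref{equ. inverse lambda 8} (used whenever two antipode-like legs $(-)_{+},(-)_{-}$ meet).

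First I would settle the base. Viewing $a\in B$ as $s(a)\in\cL$, so that $a\o\ot a\t=s(a)\di 1$, the equality $(a\cdot_{\Gamma}b)\cdot_{\Gamma}c=a\cdot_{\Gamma}(b\cdot_{\Gamma}c)$ is precisely the cocycle condition specialised to source elements, while the normalization $\Gamma(1_{\cL},-)=\varepsilon=\Gamma(-,1_{\cL})$ shows $1_{B}$ remains a unit. The same normalization makes the $\BG$-bimodule structure $a\cdot_{\Gamma}m=\Gamma(a,m\mo)m\z$, $m\cdot_{\Gamma}a=\Gamma(m\mo,a)m\z$ on a left comodule unital, and the comodule form of the cocycle condition makes it associative; this is what makes $\diamond_{B^{\Gamma}}$ and the twisted Takeuchi product behave well, and it is invoked repeatedly afterwards.

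Next I would treat the ring and coring structures. Associativity and unitality of $\cdot_{\Gamma}$ is the most computation-heavy point: expanding $(X\cdot_{\Gamma}Y)\cdot_{\Gamma}Z$ and $X\cdot_{\Gamma}(Y\cdot_{\Gamma}Z)$ and matching them uses the cocycle condition to recombine the factors $\Gamma(-,-)$ and $\overline{\Gamma(-,-)}$, while \eqref{equ. inverse lambda 3}, \eqref{equ. inverse lambda 5} and \eqref{equ. inverse lambda 6} move the $(-)_{+},(-)_{-}$ legs past each other and collapse the nested expressions; the point of inserting these legs in the definition of $\cdot_{\Gamma}$ is exactly that $\Gamma$ is only $\BB$-linear on one side, so the Hopf structure is needed to switch sides. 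For the coring structure I would argue by transport: coassociativity of $\Delta^{\Gamma}=\GH^{-1}\circ\Delta$ follows from coassociativity of $\Delta$ once one checks that the two iterates of $\GH$ on $\cL^{\Gamma}\diamond_{B^{\Gamma}}\cL^{\Gamma}\diamond_{B^{\Gamma}}\cL^{\Gamma}$ agree, which is again a cocycle identity; the counit axiom for $\varepsilon^{\Gamma}(X)=\Gamma(X_{+},X_{-})$ follows from \eqref{equ. inverse lambda 7}, \eqref{equ. inverse lambda 8} and the normalization. The compatibility — that $\Delta^{\Gamma}$ is an algebra map for $\cdot_{\Gamma}$ and $\varepsilon^{\Gamma}$ a left character — is then the statement that $\GH$ intertwines $\cdot_{\Gamma}$ with the codiagonal product on $\cL\di\cL$, and is the conceptual heart of the proof.

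Finally, for the Hopf condition I would exhibit the twisted map $\lambda^{\Gamma}\colon\cL^{\Gamma}\ot_{\overline{\BG}}\cL^{\Gamma}\to\cL^{\Gamma}\diamond_{\BG}\cL^{\Gamma}$ as a conjugate, via $\GH$ and the analogous twist on the source tensor product, of the original invertible $\lambda$; its inverse is then read off from $X_{+}\ot_{\BB}X_{-}=\lambda^{-1}(X\di 1)$ corrected by the appropriate $\Gamma$- and $\overline{\Gamma}$-factors, so that bijectivity of $\lambda^{\Gamma}$ is inherited from that of $\lambda$. I expect the main obstacle to be bookkeeping rather than any single deep identity: one must ensure at every stage that the maps descend to the correct balanced and Takeuchi tensor products over the \emph{deformed} base $\BG$, and must correctly switch sides through $(-)_{+},(-)_{-}$ to compensate for the one-sidedness of $\Gamma$, so that associativity of $\cdot_{\Gamma}$ and multiplicativity of $\Delta^{\Gamma}$ can be checked simultaneously with well-definedness.
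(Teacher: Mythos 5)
Your proposal cannot be compared with an in-paper argument: the present paper states this theorem as a citation from \cite{HM25} and gives no proof of it, only the surrounding constructions (the twisted base $\BG$, twisted comodule structures, \Cref{prop. twsited comodule algebra}) that depend on it. Judged on its own terms, your outline has the right skeleton, and it matches how such twist theorems are proved in \cite{HM22,HM25}: associativity of $\BG$ from the cocycle condition on source elements (using $\Delta(s(a))=s(a)\ot 1$), transport of the coring structure along $\GH$ with coassociativity reduced to a coherence identity for $\GH$, associativity of $\cdot_\Gamma$ from the cocycle condition together with \eqref{equ. inverse lambda 3}, \eqref{equ. inverse lambda 5}, \eqref{equ. inverse lambda 6}, and counitality from \eqref{equ. inverse lambda 7}, \eqref{equ. inverse lambda 8}.

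There are, however, two genuine gaps. First, in this paper a ``Hopf algebroid'' (\Cref{defHopf}) means a left bialgebroid for which \emph{both} $\lambda$ and $\mu$ are bijective, and the theorem is used downstream in exactly that strength: \Cref{lem. right Ehresamm don't change by the twist of left Hopf algebroid} needs ${}_{\Gamma}P\in\Bi(\cL^{\Gamma},R(P,\cL))$, and \Cref{def. Hopf biGalois extensions} requires the bialgebroid to be Hopf \emph{and} anti-Hopf. Your plan only treats the twisted map $\lambda^{\Gamma}$; the anti-left Hopf condition (bijectivity of the twisted $\mu^{\Gamma}$) is never addressed, and it does not follow by any left-right symmetry, because both the cocycle (left $\BB$-linear only) and the twisted product (built from $(-)_{+},(-)_{-}$ only) are intrinsically one-sided. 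Second, your conjugation argument for $\lambda^{\Gamma}$ presupposes a gauge isomorphism on the source $\cL^{\Gamma}\ot_{\overline{\BG}}\cL^{\Gamma}$, i.e.\ on a tensor product balanced over the ``wrong'' side; but invertibility of $\Gamma$ only supplies $\GH$ on tensor products of \emph{left} comodules over $\BG$, which is precisely the side on which $\Gamma$ is linear (this is also why the statement itself must invoke the regular comodule structure, via $X_{+}\overline{\Gamma(X_{-},\cdot)}$, to even define $\GH$ on $\cL\di\cL$). Producing the required gauge map on the $\ot_{\overline{\BG}}$ side, or alternatively writing down an explicit inverse of $\lambda^{\Gamma}$ in terms of $\lambda^{-1}$ and $\Gamma$ and verifying it, is exactly where the one-sidedness of $\Gamma$ bites, and it is the step your outline leaves blank.
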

Let $M$ be a left $\cL$-comodule, we can see $M$ is also a left $\cL^{\Gamma}$ comodule with the coproduct given by
\[{}_{\cL^{\Gamma}}\delta:=\Gamma^{\#-1}\circ {}_{\cL}\delta,\]
 where $\Gamma^{\#}:\cL^{\Gamma}\diamond_{B^{\Gamma}} M\to \cL\di M$  given by \begin{align*}
    \Gamma^{\#}(X\diamond_{B^\Gamma} m)=X_{+}\overline{\Gamma(X_{-}, m\mo)}\di m\z,
\end{align*}
 is invertible as $\Gamma$ is invertible. We denote the image of the twisted coaction by
 \[{}_{\cL^{\Gamma}}\delta(m)=m\tmo\diamond_{B^{\Gamma}} m\tz,\]
 \begin{prop}\label{prop. twsited comodule algebra}
       If $P$ is a left $\cL$-comodule algebra then $P$ is a left $\cL^{\Gamma}$-comodule algebra with a deformed product 
     \[ p\cdot_{\Gamma}q=\Gamma(p\mo, q\mo)p\z q\z,\]
      for any $p, q\in P$. We denote the new algebra by ${}_{\Gamma}P$.
 \end{prop}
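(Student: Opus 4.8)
The plan is to verify the two conditions in the definition of a left $\cL^{\Gamma}$-comodule algebra: that ${}_{\Gamma}P$ is a $\BG$-ring under $\cdot_{\Gamma}$, and that the twisted coaction ${}_{\cL^{\Gamma}}\delta=\Gamma^{\#-1}\circ{}_{\cL}\delta$ is a $\BG$-ring map. The underlying left $\cL^{\Gamma}$-comodule structure on $P$ has already been produced (taking $M=P$ in the construction preceding the statement), so the entire task concerns the deformed multiplication and its compatibility with that coaction.

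First I would establish that $\cdot_{\Gamma}$ is associative and unital. The clean observation is that the deformed product factors as $\mu_{P}\circ\Gamma^{\#}$, where $\mu_{P}\colon P\ot_{B}P\to P$ is the original product and $\Gamma^{\#}\colon P\ot_{\BG}P\to P\ot_{B}P$ is the comodule map from the twist theory; unravelling gives $p\cdot_{\Gamma}q=\Gamma(p\mo,q\mo)\,p\z q\z$. To expand $(p\cdot_{\Gamma}q)\cdot_{\Gamma}r$ one needs the $\cL$-coaction of $p\cdot_{\Gamma}q$: since ${}_{\cL}\delta$ is simultaneously an algebra map and a $B$-bimodule map, coassociativity lets one bring this into the form $\Gamma(p\mt,q\mt)\,p\mo q\mo\ot p\z q\z$, with the $B$-valued factor acting through $s_{L}$ on the $\cL$-slot. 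Both iterated triple products then collapse to an expression in which $\Gamma$ occurs twice, and the left $2$-cocycle condition (1) of \cref{Lcotwist} is precisely the identity equating $(p\cdot_{\Gamma}q)\cdot_{\Gamma}r$ with $p\cdot_{\Gamma}(q\cdot_{\Gamma}r)$. Unitality with unit $1_{P}$ follows from the normalization (2) together with the counit axiom $\varepsilon(p\mo)p\z=p$.

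Next I would confirm the remaining $\BG$-ring data and the coaction property. The $\BG$-bimodule structure on $P$ is the one already recorded, $a\cdot_{\Gamma}m=\Gamma(a,m\mo)m\z$ and $m\cdot_{\Gamma}a=\Gamma(m\mo,a)m\z$, and one checks via the same normalization that these agree with left and right $\cdot_{\Gamma}$-multiplication by the image of $\BG$, so that ${}_{\Gamma}P$ is a genuine $\BG$-ring. For the coaction to be a $\BG$-ring map, i.e.\ for ${}_{\cL^{\Gamma}}\delta$ to be multiplicative into $\cL^{\Gamma}\times_{\BG}{}_{\Gamma}P$, I would exploit that the twisted coproduct on $\cL^{\Gamma}$ and the twisted product on ${}_{\Gamma}P$ are assembled from the same map $\Gamma^{\#}$; multiplicativity of ${}_{\cL^{\Gamma}}\delta$ with respect to $\cdot_{\Gamma}$ then reduces to multiplicativity of the original coaction ${}_{\cL}\delta$ once the $\Gamma$-factors on both sides are matched, which is exactly the monoidal compatibility of $\Gamma^{\#}$ encoded in condition (1).

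The main obstacle will be the bookkeeping forced by the fact that $\Gamma$ is only left $\BB$-linear, so the base algebra genuinely changes and $\Gamma$-values cannot be moved freely across the balanced tensor products. The delicate computation is that of ${}_{\cL}\delta(p\cdot_{\Gamma}q)$: one must track precisely how the $B$-valued factor $\Gamma(p\mo,q\mo)$ interacts with the coaction through $s_{L}$ and invoke coassociativity to expose the form to which condition (1) applies. Once that identity is secured, both associativity and the multiplicativity of the twisted coaction follow by the standard cocycle manipulations.
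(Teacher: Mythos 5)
Your handling of associativity and unitality of $\cdot_{\Gamma}$ is sound (and in fact supplies a detail the paper leaves implicit): since the original coaction is multiplicative and $B$-linear, one gets $(p\cdot_{\Gamma}q)\cdot_{\Gamma}r=\Gamma(\Gamma(p\mt,q\mt)p\mo q\mo,r\mo)\,p\z q\z r\z$ and $p\cdot_{\Gamma}(q\cdot_{\Gamma}r)=\Gamma(p\mo,\Gamma(q\mt,r\mt)q\mo r\mo)\,p\z q\z r\z$, which agree by condition (1) of \cref{Lcotwist}, and unitality follows from the normalization (2) and the counit axiom.

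The genuine gap is in the step that constitutes essentially the entire proof in the paper: showing that the twisted coaction ${}_{\cL^{\Gamma}}\delta=\GH{}^{-1}\circ{}_{\cL}\delta$ is multiplicative from ${}_{\Gamma}P$ into $\cL^{\Gamma}\times_{\BG}{}_{\Gamma}P$. Your proposed reduction --- that the twisted coproduct on $\cL^{\Gamma}$ and the twisted product on ${}_{\Gamma}P$ are both ``assembled from $\GH$'', so that multiplicativity follows from that of ${}_{\cL}\delta$ once the $\Gamma$-factors are matched via condition (1) --- does not go through as stated, because the structure relevant to this check is the twisted \emph{product} of $\cL^{\Gamma}$, not its coproduct, and that product is not built from $\GH$ alone: it is $X\CG Y=\Gamma(X\o,Y\o)X\t{}_{+}Y\t{}_{+}\overline{\Gamma(Y\t{}_{-},X\t{}_{-})}$, involving the translation maps of the left Hopf structure and a second cocycle factor acting through the target map. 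Consequently the verification amounts to expanding $\GH\bigl((p\tmo\CG q\tmo)\diamond_{\BG}(p\tz\CG q\tz)\bigr)$ and reducing it to $\Gamma(p\mt,q\mt)p\mo q\mo\di p\z q\z$, which in the paper requires repeated use of the cocycle condition interleaved with the left Hopf identities \eqref{equ. inverse lambda 2}, \eqref{equ. inverse lambda 5}, \eqref{equ. inverse lambda 6} and the defining relation $p\tmo{}_{+}\overline{\Gamma(p\tmo{}_{-},p\tz\mo)}\di p\tz\z=p\mo\di p\z$ of the twisted coaction; your plan never engages the $X_{+}\ot X_{-}$ structure at all, so it cannot produce this computation. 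You have also mislocated the delicate point: ${}_{\cL}\delta(p\cdot_{\Gamma}q)=\Gamma(p\mt,q\mt)p\mo q\mo\di p\z q\z$ is the immediate side (one line from coassociativity, $B$-linearity and multiplicativity of the original coaction); it is the other side of the equation, where the $\pm$-legs and the $\overline{\Gamma(\cdot,\cdot)}$ factor must be eliminated, that carries all the work.
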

\begin{proof}
      We can check that the coaction is an algebra map. Indeed,
   \begin{align*}
       \GH(&p\tmo\CG q\tmo\diamond_{\BG} p\tz\CG p\tz)\\
      =&\GH(\Gamma(p\tmo\o, q\tmo\o)p\rmo\t{}_{+}q\tmo\t{}_{+}\overline{\Gamma(q\tmo\t{}_{-}, p\tmo\t{}_{-})}\diamond_{\BG}\Gamma(p\tz\mo, q\tz\mo)p\tz\z q\tz\z)\\
      =&\Gamma(p\tmo\o, q\tmo\o)p\tmo\t{}_{++}q\tmo\t{}_{++}\\
      &\overline{\Gamma(\Gamma(q\tmo\t{}_{-}, p\tmo\t{}_{-})q\tmo\t{}_{+-}p\tmo\t{}_{+-}, \Gamma(p\tz\mt, q\tz\mt)p\tz\mo\, q\tz\mo)}\\
      &\di p\tz\z q\tz\z\\
      =&\Gamma(p\tmo\o, q\tmo\o)p\tmo\t{}_{+}q\tmo\t{}_{+}\\
      &\overline{\Gamma(\Gamma(q\tmo\t{}_{-}\o, p\tmo\t{}_{-}\o)q\tmo\t{}_{-}\t p\tmo\t{}_{-}\t, \Gamma(p\tz\mt, q\tz\mt)p\tz\mo\, q\tz\mo)}\\
      &\di p\tz\z q\tz\z\\
      =&\Gamma(p\tmo\o, q\tmo\o)p\tmo\t{}_{+}q\tmo\t{}_{+}\\
      &\overline{\Gamma(q\tmo\t{}_{-},\Gamma( p\tmo\t{}_{-}\o,\Gamma(p\tz\mth, q\tz\mth)p\rz\mt\, q\tz\mt)) p\tmo\t{}_{-}\t p\tz\mo\, q\tz\mo)}\\
      &\di p\tz\z q\tz\z\\
      =&\Gamma(p\tmo\o, q\tmo\o)p\tmo\t{}_{+}q\tmo\t{}_{+}\\
      &\overline{\Gamma(q\tmo\t{}_{-},\Gamma(\Gamma( p\tmo\t{}_{-}\o, p\tz\mth)p\tmo\t{}_{-}\t p\tz\mt, q\tz\mt) p\tmo\t{}_{-}\th p\tz\mo\, q\tz\mo)}\\
      &\di p\tz\z q\tz\z\\
      =&\Gamma(p\tmo\o, q\tmo\o)p\tmo\t{}_{+}q\tmo\t{}_{+}\\
      &\overline{\Gamma(q\tmo\t{}_{-},\Gamma(\Gamma( p\tmo\t{}_{-}, p\tz\mth)p\rmo\t{}_{+-}\o p\tz\mt, q\tz\mt) p\tmo\t{}_{+-}\t p\tz\mo\, q\tz\mo)}\\
      &\di p\tz\z q\tz\z\\
       =&\Gamma(p\mth, q\tmo\o)p\mt{}_{+}q\tmo\t{}_{+}\\
      &\overline{\Gamma(q\tmo\t{}_{-},\Gamma(p\mt{}_{-}\o p\mo\o, q\tz\mt) p\mt{}_{-}\t p\mo\t\, q\tz\mo)}\di p\tz\z q\tz\z\\
       =&\Gamma(p\mt, q\tmo\o)p\mo q\tmo\t{}_{+}\overline{\Gamma(q\tmo\t{}_{-}, q\tz\mo)}\di p\z q\tz\z\\
       =&\Gamma(p\mt, q\mt)p\mo q\mo\di p\z q\z\\
       =&{}^{\cL}\delta(p\CG q)=\GH({}^{\cL^{\Gamma}}\delta(p\CG q))
   \end{align*}
   where the 7th and 9th steps use the fact that
   \[p\tmo{}_{+}\overline{\Gamma(p\tmo{}_{-}, p\tz\mo)}\di p\tz\z=p\mo\di p\z.\]
\end{proof}
 \begin{lem}
    If $N\subseteq P$ is a left $\cL$- Galois extension and $\Gamma$ is an invertible left 2-cocycle on $\cL$ then $N\subseteq {}_{\Gamma}P$ is a left $\cL^{\Gamma}$- Galois extension. 
 \end{lem}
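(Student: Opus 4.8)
The plan is to reduce bijectivity of the canonical map for the twisted extension to bijectivity of the original one through the comparison isomorphism $\Gamma^{\#}$.

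\emph{Preliminaries.} By \Cref{prop. twsited comodule algebra}, ${}_{\Gamma}P$ is a left $\cL^{\Gamma}$-comodule algebra, with product $p\cdot_{\Gamma}q=\Gamma(p\mo,q\mo)p\z q\z$ and coaction ${}_{\cL^{\Gamma}}\delta=\Gamma^{\#-1}\circ{}_{L}\delta$, so that $\Gamma^{\#}\circ{}_{\cL^{\Gamma}}\delta={}_{L}\delta$. First I would check that the coinvariants are unchanged, ${}^{co\cL^{\Gamma}}({}_{\Gamma}P)=N$. Using $1_{+}\ot_{\BB}1_{-}=1\ot_{\BB}1$ from \eqref{equ. inverse lambda 4} and the normalization $\Gamma(1_{\cL},-)=\varepsilon$ of \Cref{Lcotwist}, one computes $\Gamma^{\#}(1_{\cL}\diamond_{B^{\Gamma}}p)=\overline{\varepsilon(p\mo)}\di p\z=1_{\cL}\di p$; since $\Gamma^{\#}$ is injective, $p\tmo\diamond_{B^{\Gamma}} p\tz=1\diamond p$ is equivalent to $p\mo\di p\z=1\di p$, i.e.\ to $p\in{}^{co\cL}P=N$. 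I would also record that for $n\in N$ one has $p\cdot_{\Gamma}n=pn$ and $n\cdot_{\Gamma}q=nq$, using $\Gamma(-,1_{\cL})=\Gamma(1_{\cL},-)=\varepsilon$, the counit identity $\varepsilon(x\mo)x\z=x$, and $bn=nb$ for coinvariant $n$. Hence the $N$-balancings on ${}_{\Gamma}P\ot_{N}{}_{\Gamma}P$ and on $P\ot_{N}P$ coincide, and the identity of underlying spaces gives an isomorphism ${}_{\Gamma}P\ot_{N}{}_{\Gamma}P\cong P\ot_{N}P$.

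\emph{The comparison square.} The heart of the proof is the commutativity of
\[
\begin{tikzcd}
{}_{\Gamma}P\ot_{N}{}_{\Gamma}P \arrow[r,"{}^{L}\can_{\Gamma}"] \arrow[d,"\cong"'] & \cL^{\Gamma}\diamond_{B^{\Gamma}}{}_{\Gamma}P \arrow[d,"\Gamma^{\#}"] \\
P\ot_{N}P \arrow[r,"{}^{L}\can"'] & \cL\di P,
\end{tikzcd}
\]
where ${}^{L}\can_{\Gamma}(p\ot q)=p\tmo\diamond_{B^{\Gamma}}(p\tz\cdot_{\Gamma}q)$ is the left canonical map of the $\cL^{\Gamma}$-comodule algebra ${}_{\Gamma}P$. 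Writing both canonical maps as products in the evident algebra structures, ${}^{L}\can_{\Gamma}(p\ot q)={}_{\cL^{\Gamma}}\delta(p)\,(1_{\cL^{\Gamma}}\diamond_{B^{\Gamma}}q)$ and ${}^{L}\can(p\ot q)=(p\mo\di p\z)(1_{\cL}\di q)$, the commutativity expresses exactly that $\Gamma^{\#}$ is multiplicative: granting this, $\Gamma^{\#}\big({}^{L}\can_{\Gamma}(p\ot q)\big)=\big(\Gamma^{\#}({}_{\cL^{\Gamma}}\delta(p))\big)\big(\Gamma^{\#}(1\diamond q)\big)={}_{L}\delta(p)\,(1_{\cL}\di q)={}^{L}\can(p\ot q)$, using $\Gamma^{\#}\circ{}_{\cL^{\Gamma}}\delta={}_{L}\delta$ and $\Gamma^{\#}(1\diamond q)=1_{\cL}\di q$ from the previous paragraph. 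Once the square commutes, ${}^{L}\can$ bijective (the Galois hypothesis) and $\Gamma^{\#}$ bijective (invertibility of $\Gamma$) force ${}^{L}\can_{\Gamma}$ to be bijective, which together with ${}^{co\cL^{\Gamma}}({}_{\Gamma}P)=N$ proves the lemma.

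\emph{Main obstacle.} The one nonformal step is the commutativity of the square, equivalently the identity
\[
\Gamma^{\#}\big(p\tmo\diamond_{B^{\Gamma}}(p\tz\cdot_{\Gamma}q)\big)=p\mo\di p\z q .
\]
Unwinding the definitions of $\Gamma^{\#}$ and of $\cdot_{\Gamma}$, this requires computing the \emph{original} coaction ${}_{L}\delta(p\tz\cdot_{\Gamma}q)$ of a twisted product and then collapsing the accumulated $\Gamma$-factors. I expect this to be a Sweedler-index computation of the same nature and length as the proof of \Cref{prop. twsited comodule algebra}, where ${}_{\cL^{\Gamma}}\delta$ was shown to be an algebra map; the decisive cancellations are again the cocycle relation \Cref{Lcotwist}(1) and the normalization $\Gamma(1_{\cL},-)=\varepsilon$. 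The extra right factor $q$ contributes nothing new, since it enters only through $1_{\cL^{\Gamma}}\diamond_{B^{\Gamma}}q\mapsto 1_{\cL}\di q$, so I would carry out the verification by the same bookkeeping already used there.
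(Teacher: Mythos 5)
Your proposal is correct and takes essentially the same approach as the paper: the paper's proof consists of exactly your comparison square (identity on the left, $\Gamma^{\#}$ on the right), and the identity you defer, $\Gamma^{\#}\big(p\tmo\diamond_{\BG}(p\tz\CG q)\big)=p\mo\di p\z q$, is precisely the displayed Sweedler computation carried out there using the cocycle condition, the identities \eqref{equ. inverse lambda 2} and \eqref{equ. inverse lambda 5}, and the defining relation $\Gamma^{\#}\circ{}_{\cL^{\Gamma}}\delta={}_{\cL}\delta$ — exactly the ingredients you name. The only difference is that your preliminary checks (coinvariants unchanged, $N$-balancing unchanged) are spelled out, whereas the paper simply asserts the first and leaves the second implicit.
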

\begin{proof}
   First, we can see that the coinvariant subalgebra $N$ is not changed.   It is sufficient to show the following diagram commute:
     \[
\begin{tikzcd}
  &{}_{\Gamma}P\ot_{N} {}_{\Gamma}P \arrow[d, "\id\ot \id"] \arrow[r, "\lcan^{\Gamma}"] & \cL^{\Gamma}\diamond_{\BG} {}_{\Gamma}P  \arrow[d, "\Gamma^{\#}"] &\\
   & P\ot_{N} P   \arrow[r, "\lcan"] & \cL \di{}P. &
\end{tikzcd}
\]
On the one hand, 
\begin{align*}
    \Gamma^{\#}\circ \lcan^{\Gamma}(p\ot q)=&\GH(p\tmo \diamond_{\BG} p\tz\CG q)\\
    =&\GH(p\tmo\ot \Gamma(p\tz\mo,q\mo)p\tz\z q\z)\\
    =&p\tmo{}_{+}\overline{\Gamma(p\tmo{}_{-},\Gamma(p\tz\mt,q\mt)p\tz\mo q\mo)}\di p\tz\z q\z\\
=&p\tmo{}_{+}\overline{\Gamma(\Gamma(p\tmo{}_{-}\o, p\tz\mt) p\tmo{}_{-}\t p\tz\mo, q\mo)}\di p\tz\z q\z\\
 =&p\tmo{}_{++}\overline{\Gamma(\Gamma(p\tmo{}_{-}, p\tz\mt) p\tmo{}_{+-} p\tz\mo, q\mo)}\di p\tz\z q\z\\  
 =&p\mt{}_{+}\overline{\Gamma(p\mt{}_{-}p\mo, q\mo)}\di p\z q\z\\ 
 =&p\mo\di p\z q\\
 =&\lcan(p\ot q),
\end{align*}
where the 6th step use the fact that
\[p\tmo{}_{+}\overline{\Gamma(p\tmo{}_{-},p\tz\mo) }\di p\tz\z=p\mo\di p\z,\]
for any $p\in P$.
\end{proof}

\begin{lem}\label{lem. right Ehresamm don't change by the twist of left Hopf algebroid}
    Let $N\subseteq P$ be a left $\cL$-Galois extension and $\Gamma$ is a 2-cocycle on $\cL$. Then $R({}_{\Gamma}P, \cL^{\Gamma})\cong R(P, \cL)$.
\end{lem}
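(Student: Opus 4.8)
The plan is to exhibit the isomorphism as the restriction of $\Gamma^{\#}$ to coinvariants, and to pin down the full bialgebroid structure not by computing the twisted coproduct directly, but by invoking the \emph{uniqueness} of the Ehresmann construction in \Cref{lem. Hopf Galois extension is HOpf biGalois extension}. First I would record that, by the preceding lemma, $N\subseteq {}_{\Gamma}P$ is again a left $\cL^{\Gamma}$-Galois extension and that its coinvariant subalgebra is still $N$: on coinvariants the deformed product $p\cdot_{\Gamma}q=\Gamma(p\mo,q\mo)p\z q\z$ agrees with the original product, since $\Gamma(1_{\cL},-)=\varepsilon$ and the left coaction is trivial there. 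Consequently both $R(P,\cL)$ and $R({}_{\Gamma}P,\cL^{\Gamma})$ are bialgebroids over the same base $\overline N$, so there is no base mismatch to resolve.

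Next I would bring in from \cite{HM25} the monoidal equivalence $F\colon {}^{\cL}\M\to {}^{\cL^{\Gamma}}\M$ which is the identity on underlying spaces, replaces a coaction ${}_{\cL}\delta$ by $\Gamma^{\#-1}\circ{}_{\cL}\delta$, and whose monoidal constraint is exactly $\Gamma^{\#}\colon M\ot_{B^{\Gamma}}N\to M\ot_{B}N$. By \Cref{prop. twsited comodule algebra} the functor $F$ carries the left $\cL$-comodule algebra $P$ to the left $\cL^{\Gamma}$-comodule algebra ${}_{\Gamma}P$. The heart of the argument is to transport the whole biGalois structure of $P$ through $F$. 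By \Cref{lem. Hopf Galois extension is HOpf biGalois extension}, $P$ is a $\cL$-$R(P,\cL)$-biGalois extension; write $\delta\colon P\to P\diamond_{\overline N}R(P,\cL)$ for the right coaction from \Cref{def. right ES bialgebroids}, which is a left $\cL$-colinear algebra map. In the object $P\diamond_{\overline N}R(P,\cL)$ the factor $R(P,\cL)$ enters inertly, carrying no left $\cL$-coaction, so the constraint $\Gamma^{\#}$ acts only through the single coacting leg $P$; hence $F$ identifies $F(P\diamond_{\overline N}R(P,\cL))$ with ${}_{\Gamma}P\diamond_{\overline N}R(P,\cL)$, the transported product being $\cdot_{\Gamma}$ on the $P$-leg and unchanged on the $R(P,\cL)$-leg. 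Because $F$ is monoidal and $\delta$ is a colinear algebra map, the same underlying map $\delta$ is then a right $R(P,\cL)$-comodule algebra structure on ${}_{\Gamma}P$ which is $\cL^{\Gamma}$-colinear, and $F$ sends the (bijective) anti-right canonical map of $P$ to that of ${}_{\Gamma}P$ up to the constraint $\Gamma^{\#}$, so $B\subseteq {}_{\Gamma}P$ is anti-right $R(P,\cL)$-Galois. Together with the first step this shows ${}_{\Gamma}P\in\Bi(\cL^{\Gamma},R(P,\cL))$.

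Finally I would apply the uniqueness clause of \Cref{lem. Hopf Galois extension is HOpf biGalois extension} to the left $\cL^{\Gamma}$-Galois extension ${}_{\Gamma}P$: its Ehresmann bialgebroid $R({}_{\Gamma}P,\cL^{\Gamma})$ is the essentially unique bialgebroid completing ${}_{\Gamma}P$ to a biGalois extension, whence $R({}_{\Gamma}P,\cL^{\Gamma})\cong R(P,\cL)$; unwinding the uniqueness isomorphism $f(p\ot q)=\beta(p)q$ identifies it with the restriction of $\Gamma^{\#}$ to coinvariants, giving the explicit map. All the Hopf, anti-Hopf and faithful-flatness hypotheses required of $\cL^{\Gamma}$ and ${}_{\Gamma}P$ pass over because $\Gamma$ is invertible and $F$ fixes the underlying modules. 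I expect the main obstacle to lie in the transport step: one must check with some care that $\delta$ really remains an algebra map for the deformed product $\cdot_{\Gamma}$ and that the anti-right Galois map of ${}_{\Gamma}P$ is $F$ applied to that of $P$ modulo $\Gamma^{\#}$. This is exactly where the cocommutativity of the left $\cL$-coaction with the right $R(P,\cL)$-coaction is used, and it is what makes a purely left-sided cocycle twist leave the right-hand Ehresmann algebroid undisturbed.
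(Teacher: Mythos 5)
Your proposal is correct and follows essentially the same route as the paper: both arguments reduce the statement to showing ${}_{\Gamma}P\in \Bi(\cL^{\Gamma}, R(P,\cL))$ with the right $R(P,\cL)$-coaction left undeformed, and then conclude by the uniqueness clause of \Cref{lem. Hopf Galois extension is HOpf biGalois extension}. The only difference is presentational: where you transport the right comodule-algebra property, the anti-right Galois property and the bicomodule condition through the monoidal equivalence of \cite{HM25}, the paper verifies these same three conditions by direct computation (multiplicativity of $\delta_{R}$ for $\cdot_{\Gamma}$, the commuting square relating the two anti-right canonical maps via $\Gamma^{\#}$, and cocommutativity of the twisted left coaction with the right coaction) --- precisely the checks you yourself flag as the ones needing care.
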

\begin{proof}
   By \Cref{lem. Hopf Galois extension is HOpf biGalois extension}, it is sufficient to show ${}_{\Gamma}P\in \Bi(\cL^{\Gamma}, R(P, \cL))$. First, we show that the ${}_{\Gamma}P$ is a right $R(P, \cL)$-comodule algebra with the original coaction $\delta_{R}$. Indeed,
   \begin{align*}
       \delta_{R}(p\CG q)=\delta_{R}(\Gamma(p\mo, q\mo)p\z q\z)=\Gamma(p\mo, q\mo)p\z q\z \ot p\o q\o=\delta_{R}(p)\CG \delta_{R}(q).
   \end{align*}
   Second, we show ${}_{\Gamma}B\subseteq {}_{\Gamma}P$ is an anti-right $R(P, \cL)$-Galois extension. It is sufficient to show the following diagram commute:
    \[
\begin{tikzcd}
  &{}_{\Gamma}P\ot_{\BG} {}_{\Gamma}P \arrow[d, "\Gamma^{\#}"] \arrow[r, "\rcan^{\Gamma}"] & {}_{\Gamma}P\diamond_{\overline{N}}R(P,\cL)  \arrow[d, "\id"] &\\
   & P\ot_{B} P   \arrow[r, "\rcan"] & P\diamond_{\overline{N}}R(P,\cL). &
\end{tikzcd}
\]
Third, we can show the left and right coaction cocommute. In other words, we need to show
\[({}_{\cL^{\Gamma}}\delta\diamond_{\overline{N}} \id)\circ \delta_{R}=(\id\diamond_{\BG} \delta_{R})\circ {}_{\cL^{\Gamma}}\delta.\]
So it is equivalent to show
\[(\GH\ot \id)\circ({}_{\cL^{\Gamma}}\delta\diamond_{\overline{N}} \id)\circ \delta_{R}=(\GH\ot \id)\circ(\id\diamond_{\BG} \delta_{R})\circ {}_{\cL^{\Gamma}}\delta.\]
On the one hand,
\begin{align*}
    (\GH\ot \id)\circ({}_{\cL^{\Gamma}}\delta\ot \id)\circ \delta_{R}(p)=p\mo\ot p\z\ot p\o.
\end{align*}
On the other hand,
\begin{align*}
    (\GH\ot \id)&\circ(\id\diamond_{\BG} \delta_{R})\circ {}_{\cL^{\Gamma}}\delta(p)\\
    =&p\tmo{}_{+}\overline{\Gamma(p\tmo{}_{-},p\tz\mo)}\ot p\tz\z\ot p\tz\o\\
    =&p\mo\ot p\z\ot p\o.
\end{align*}
\end{proof}
\subsection{Twisted Ehresmann Hopf algebroids}
Let $H$ be a Hopf algebra and $P$ be a right $H$-comodule algebra such that $B\subseteq P$ is a right $H$-Galois extension. Assume $K$ is another Hopf algebra, and $P$ is a $K$ comodule algebra such that $P$ is a $K$-$H$-bicomodule. If $\gamma:K\ot K\to k$ is an invertible 2-cocycle then ${}_{\gamma}P$ is a left $K^{\gamma}$ comodule algebra in the sense of \Cref{prop. twsited comodule algebra}. By \cite{ppca}, define ${}_{\gamma}B:={}_{\gamma}(P^{coH})=({}_{\gamma}P)^{coH}$ being the deformed algebra of $B$ with product
\[a\cdot_{\gamma}b=\gamma(a\mo, b\mo)a\z b\z,\qquad \forall a, b\in B,\]
 we know ${}_{\gamma}B\subseteq {}_{\gamma}P$ is a $H$-Galois extension with the twisted translation map given by
\begin{align}
  \teins{h}{}'\ot_{{}_{\gamma}N}\tzwei{h}{}'=\gamma^{-1}(\teins{h}\mo,\tzwei{h}\mo)\teins{h}\z\ot_{{}_{\gamma}N}\tzwei{h}\z,
\end{align}
where $\teins{h}\ot_{{}_{\gamma}N}\tzwei{h}$ is the image of the original translation map. 

\begin{lem}\label{lem. gamma induce Gamma}
    For $(H, K, P, B)$ be as above and $\gamma$ be an invertible 2-cocycle on $K$, we can construct an invertible left 2-cocycle $\Gamma$ on $L(P, H)$ by
    \[\Gamma(p\ot q, p'\ot q')=\gamma(p\mo, p'\mo)p\z p'\z q' q,\]
    for any $p\ot q, p'\ot q'\in L(P, H)$.
\end{lem}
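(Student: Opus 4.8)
The plan is to realize $\Gamma$ as the transport of $\gamma$ along a left $K$-comodule algebra structure on the Ehresmann Hopf algebroid $L(P,H)$ over $B=P^{coH}$, and then to deduce the two conditions of \Cref{Lcotwist} from the corresponding conditions for $\gamma$. Since $P$ is a $K$-$H$-bicomodule I write the iterated coaction of $p\in P$ as $p\mo\ot p\z\ot p\o$ with $p\mo\in K$, $p\o\in H$, $p\z\in P$. First I would check that
\[\rho_K\colon L(P,H)\to K\ot L(P,H),\qquad \rho_K(p\ot q)=p\mo\ot(p\z\ot q),\]
is a well-defined left $K$-comodule algebra structure: $p\z\ot q$ still lies in $L(P,H)=(P\ot P)^{coH}$ because the $K$-coaction on the first leg commutes with the right $H$-coaction cutting out the coinvariants, and multiplicativity of the $K$-coaction on $P$ gives $\rho_K((p\ot q)(p'\ot q'))=p\mo p'\mo\ot(p\z p'\z\ot q'q)=\rho_K(p\ot q)\,\rho_K(p'\ot q')$. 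Writing $\rho_K(X)=X\mo\ot X\z$, the proposed formula reads
\[\Gamma(X,Y)=\gamma(X\mo,Y\mo)\,\varepsilon(X\z Y\z),\]
since $\varepsilon\big((p\z\ot q)(p'\z\ot q')\big)=p\z p'\z q'q$. In particular $\Gamma$ takes values in $B$ because the counit of $L(P,H)$ does, and the $\BB$-balancing and left $\BB$-linearity hold because $\Gamma$ only reads the first legs through $\gamma$ and contracts everything else through $\varepsilon$.

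With this reformulation the counit condition is immediate: $\rho_K(1\ot 1)=1_K\ot(1\ot 1)$, so using $\gamma(1_K,-)=\varepsilon_{K}=\gamma(-,1_K)$ together with $\varepsilon_{K}(p\mo)p\z=p$ one gets $\Gamma(1\ot 1,p\ot q)=\varepsilon_{K}(p\mo)\,\varepsilon(p\z\ot q)=\varepsilon(p\ot q)$, and symmetrically for $\Gamma(p\ot q,1\ot 1)$. The cocycle identity (1) of \Cref{Lcotwist} is the substantial part. I would substitute the explicit comultiplication $\Delta(p\ot q)=(p\z\ot \teins{p\o})\ot_B(\tzwei{p\o}\ot q)$ into both sides of $\Gamma(X,\Gamma(Y\o,Z\o)Y\t Z\t)=\Gamma(\Gamma(X\o,Y\o)X\t Y\t,Z)$ and expand using the formula for $\Gamma$; every $\varepsilon$ then collapses its two $P$-arguments by multiplication, while the scalar $\gamma$-factors, read off the first legs, have to reassemble through $\Delta_K$ so that the cocycle identity of $\gamma$ on $K$ can be applied verbatim.

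The main obstacle is exactly this reassembly, i.e. the compatibility of $\rho_K$ with the comultiplication of $L(P,H)$, which amounts to $L(P,H)$ being a left $K$-comodule bialgebroid. Concretely I expect to need the auxiliary identity
\[(X\o)\mo\ot(X\t)\mo\ot\big((X\o)\z\ot_B(X\t)\z\big)=(X\mo)\o\ot(X\mo)\t\ot\big((X\z)\o\ot_B(X\z)\t\big),\]
expressing that $\Delta$ is a $K$-comodule map into the codiagonal comodule, where $(X\mo)\o\ot(X\mo)\t=\Delta_K(X\mo)$. The delicate point in proving it is to control the left $K$-coaction of the $H$-translation values $\teins{p\o}$ and $\tzwei{p\o}$ occurring in $\Delta$; this is where the bicomodule compatibility between the $H$-Galois (translation) structure and the $K$-coaction is really used, and I would extract it from the defining property $\teins{h}\tzwei{h}\z\ot\tzwei{h}\o=1\ot h$ of the $H$-translation map together with commutativity of the two coactions. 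Granting this identity, both sides of (1) reduce to a common $\varepsilon$-collapsed $P$-term multiplied by the two sides of the cocycle identity for $\gamma$, and condition (1) for $\Gamma$ follows from condition (1) for $\gamma$.

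Finally, for invertibility I would produce the convolution inverse $\Gamma^{-1}(X,Y)=\gamma^{-1}(X\mo,Y\mo)\,\varepsilon(X\z Y\z)$ from the inverse $2$-cocycle $\gamma^{-1}$ on $K$, and argue that the map $\GH$ attached to $\Gamma$ on $L(P,H)$-comodules is governed, through $\rho_K$, by the corresponding map for $\gamma$ on the induced $K$-comodules; since $\gamma$ is an invertible $2$-cocycle on $K$ this map is invertible, and hence so is $\GH$. This shows $\Gamma$ is an invertible left $2$-cocycle on $L(P,H)$.
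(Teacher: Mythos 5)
Your reformulation of $\Gamma$ as $\Gamma(X,Y)=\gamma(X\mo,Y\mo)\,\varepsilon(X\z Y\z)$ through the first-leg coaction $\rho_K(p\ot q)=p\mo\ot(p\z\ot q)$ is correct --- this is exactly the coaction ${}_{K}\delta$ that the paper itself introduces when proving invertibility of $\GH$ --- and your unitality check and invertibility sketch are fine in outline. The genuine gap is the auxiliary identity you single out as the crux: the comultiplication of $L(P,H)$ is \emph{not} a $K$-comodule map in the per-leg sense you wrote (nor even for the codiagonal coaction), so this lemma cannot be proved. Take $P=K=H$ with both coactions equal to $\Delta$ (all hypotheses hold, $B=k$); then $L(H,H)=(H\ot H)^{coH}$ is spanned by the elements $X=h\o\ot S(h\t)$, with $\Delta(X)=(h\o\ot S(h\t))\ot(h\th\ot S(h\fo))$ and $\rho_K(X)=h\o\ot(h\t\ot S(h\th))$. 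Your identity then reads
\begin{align*}
h\o\ot h\fo\ot\bigl(h\t\ot S(h\th)\bigr)\ot\bigl(h\fiv\ot S(h\si)\bigr)
= h\o\ot h\t\ot\bigl(h\th\ot S(h\fo)\bigr)\ot\bigl(h\fiv\ot S(h\si)\bigr),
\end{align*}
and applying $\varepsilon$ to the last three tensor legs (using $\varepsilon\circ S=\varepsilon$) reduces it to $h\o\ot h\th\ot h\t=h\o\ot h\t\ot h\th$, i.e.\ cocommutativity of $H$, which fails already for $H=\O(G)$ with $G$ a nonabelian finite group. The structural reason is the one you yourself flag as delicate: $\rho_K$ reads only first tensor legs, and the first leg of $X\t$ is the translation value $\tzwei{p\o}$, whose \emph{individual} $K$-coaction is not controllable. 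What $K$-colinearity of the canonical map actually gives is only the diagonal coinvariance $(\teins{h})\mo(\tzwei{h})\mo\ot(\teins{h})\z\ot(\tzwei{h})\z=1\ot\teins{h}\ot\tzwei{h}$, which entangles the two legs and never splits in the way your identity requires.

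The paper's proof avoids this issue entirely by choosing the other order of operations: it evaluates the inner twisted product \emph{first}, so that the translation legs cancel through $\teins{h}\tzwei{h}=\varepsilon(h)$ before any $K$-coaction is applied to them, yielding $\Gamma(X\o,Y\o)X\t Y\t=\gamma(p\mo,p'\mo)p\z p'\z\ot q'q$; the outer $\Gamma$ then only needs the $K$-coaction of the plain product $p\z p'\z$, which is handled by multiplicativity and coassociativity of the $K$-coaction on $P$ together with the commutation of the two coactions, and at that point the cocycle identity for $\gamma$ applies verbatim. So your strategy is salvageable, but not by proving your compatibility lemma (it is false); you must instead perform the $\varepsilon$-collapse before ever needing $\rho_K$ on a translation value, as in the paper's direct computation.
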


\begin{proof}
    It is not hard to see $\Gamma$ factor through $\ot_{\overline{B}}$ and left $\overline{B}$-linear. We can also see the image of $\Gamma$ belongs to $B$. Moreover, $\Gamma$ is unital as $\gamma$ is. Now, let's check $\Gamma$ satisfies the 2-cocycle  condition. Let $X=p\ot q, Y=p'\ot q', Z=p''\ot q''\in L(P, H)$. We have on the one hand,
    \begin{align*}
        \Gamma&(\Gamma(X\o, Y\o)X\t Y\t, Z)\\
        =&\Gamma((\gamma(p\mo, p'\mo)p\z p'\z p'\o\teins{}p\o\teins{}p\o\tzwei{}\ot q)(p'\o\tzwei{}\ot q'), p''\ot q'')\\
        =&\Gamma(\gamma(p\mo, p'\mo)p\z p'\z \ot q' q, p''\ot q'')\\
        =&\gamma(\gamma(p\mt, p'\mt)p\mo p'\mo,p''\mo) p\z p'\z p''\z q'' q' q.
    \end{align*}
    On the other hand,
    \begin{align*}
         \Gamma&(X, \Gamma(Y\o, Z\o)Y\t Z\t)\\
         =&\Gamma(p\ot q, \gamma(p'\mo, p''\mo)p'\z p''\z \ot q'' q')\\
         =&\gamma(p\mo, \gamma(p'\mt, p''\mt)p'\mo p''\mo)p\z p'\z p''\z q'' q' q.
    \end{align*}
    They are equal as $\gamma$ is a 2-cocycle. Next, we will show $\GH:M\ot_{\BG}N\to M\ot_{B}N$ is invertible for any left $L(P, H)$-comodules $M$ and $N$. We can see the map ${}_{K}\delta:L(P, H)\to K\ot L(P, H)$, given by
    \[{}_{K}\delta(p\ot q)=p\mo\ot (p\z\ot q),\qquad \forall p\ot q\in L(P, H),\]
    is a left $K$-coaction on  $L(P, H)$. By applying the fact that $M\cong L(P, H)\Box M$ as left $L(P, H)$-comodule, it is not hard to see $\GH$ can be written explicitly by
\[\GH{}(m\ot n)=\gamma(m\mo\mo, n\mo\mo)\varepsilon(m\mo\z n\mo\z)m\z\ot n\z,\qquad \forall m\in M, n\in M.\]

    Therefore, we can define $\GH{}^{-1}:M\ot_{B}N\to M\ot_{\BG}N$ by
    \[\GH{}^{-1}(m\ot n)=\gamma^{-1}(m\mo\mo, n\mo\mo)\varepsilon(m\mo\z n\mo\z)m\z\ot n\z,\qquad \forall m\in M, n\in M.\]
    As $\gamma$ is convolution invertible, $\GH$ is invertible. Indeed, let $p\ot q\ot m\in L(P, H)\Box M$ and $p'\ot q'\ot n\in L(P, H)\Box N$. We can see
    \[\GH(p\ot q\ot m, p'\ot q'\ot n)=\gamma(p\mo,p'\mo)p\z p'\z q'q\ot m\ot n.\]
    As a result,
    \begin{align*}
        \GH\circ& \GH^{-1}(p\ot q\ot m, p'\ot q'\ot n)\\
        =&\gamma^{-1}(p\mo,p'\mo)p\z\,p'\z\,p'\o\teins{}\,p'\o\teins{}\gamma(p\o\tzwei{}\mo,p'\o\tzwei{}\mo)p\o\tzwei{}\z\,p'\o\tzwei{}\z\,q'\,q\ot m\ot n\\
        =&\gamma^{-1}(p\mo,p'\mo)\gamma(p\z\mo{},p'\z\mo)p\z\z\,p'\z\z\,q'\,q\ot m\ot n\\
        =&pp'q'q\ot m\ot n\\
        =&1\ot m\ot n.
    \end{align*}
\end{proof}

\begin{thm}\label{thm. Ehresmann of twist is equal to the twist of Ehresmann}
    For $(H, K, P, B)$ as above and $\gamma$ be an invertible 2-cocycle on $K$, we have $L({}_{\gamma}P, H)=L(P, H)^{\Gamma}$, where $\Gamma$ is an invertible left 2-cocycle given by \Cref{lem. gamma induce Gamma}.
\end{thm}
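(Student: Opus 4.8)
The plan is to deduce the statement from the uniqueness of the left Ehresmann Hopf algebroid established in \Cref{lem and def. left Ehresmann Hopf algebroids}, which is the left-handed counterpart of the strategy already used in \Cref{lem. right Ehresamm don't change by the twist of left Hopf algebroid}. Since ${}_\gamma B\subseteq{}_\gamma P$ is a faithfully flat right $H$-Galois extension, that corollary produces a \emph{unique} left Hopf algebroid, namely $L({}_\gamma P,H)$, turning ${}_\gamma P$ into a left-$L({}_\gamma P,H)$/right-$H$ biGalois extension, and any other bialgebroid with this property is canonically identified with it. It therefore suffices to exhibit ${}_\gamma P\in\Bi(L(P,H)^\Gamma,H)$ and to observe that the resulting canonical identification is the $\GH$-identification of underlying spaces, which then yields the asserted equality.

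First I would equip ${}_\gamma P$ with a left $L(P,H)^\Gamma$-comodule algebra structure. The Ehresmann construction of \Cref{lem and def. left Ehresmann Hopf algebroids} makes $P$ a left $L(P,H)$-comodule algebra, so by \Cref{prop. twsited comodule algebra} the $\Gamma$-twist ${}_\Gamma P$ is a left $L(P,H)^\Gamma$-comodule algebra over the twisted base $\BG={}_\gamma B$. The crucial point is the identification ${}_\Gamma P={}_\gamma P$ as algebras: this follows from the defining formula for $\Gamma$ in \Cref{lem. gamma induce Gamma} together with the compatibility of the left $L(P,H)$-coaction on $P$ with the left $K$-coaction. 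Concretely, the $K$-coaction on $P$ is recovered from the $L(P,H)$-coaction through the left $K$-coaction ${}_K\delta$ on $L(P,H)$ used in \Cref{lem. gamma induce Gamma}, so that $\Gamma$ evaluated on the $L(P,H)$-coaction of $P$ collapses to $\gamma$ evaluated on the $K$-coaction; hence $p\cdot_\Gamma q=p\cdot_\gamma q$ for all $p,q\in P$.

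Next I would verify the two remaining biGalois requirements. That ${}_\gamma B\subseteq{}_\gamma P$ is a left $L(P,H)^\Gamma$-Galois extension is an instance of the lemma stating that an invertible left $2$-cocycle turns a left Galois extension into a left Galois extension over the twisted bialgebroid with unchanged coinvariants, applied to the left $L(P,H)$-Galois extension $B\subseteq P$. That the left $L(P,H)^\Gamma$-coaction and the right $H$-coaction cocommute on ${}_\gamma P$ is checked exactly as in the third part of \Cref{lem. right Ehresamm don't change by the twist of left Hopf algebroid}: one composes with $\GH\otimes\id$ and uses the identity $p\tmo{}_{+}\overline{\Gamma(p\tmo{}_{-},p\tz\mo)}\di p\tz\z=p\mo\di p\z$ to reduce the twisted left coaction to the original one, whereupon the original bicomodule compatibility applies, the $H$-coaction being untouched by the $\gamma$-twist. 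The faithful flatness hypotheses are inherited because ${}_\gamma P$ and $P$ (resp.\ ${}_\gamma B$ and $B$) coincide as modules after the $\GH$-identification.

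The main obstacle is the identification ${}_\Gamma P={}_\gamma P$ in the first step: it is the only place where the precise shape of $\Gamma$ from \Cref{lem. gamma induce Gamma} is needed, and it requires unwinding how the left $L(P,H)$-coaction encodes the $K$-symmetry so that the two a priori different twisted products agree. Once this is in place the biGalois data assembles formally, and the uniqueness clause of \Cref{lem and def. left Ehresmann Hopf algebroids} forces the unique comparison morphism $L({}_\gamma P,H)\to L(P,H)^\Gamma$ to be the $\GH$-identification of underlying coinvariant spaces, giving $L({}_\gamma P,H)=L(P,H)^\Gamma$ as Hopf algebroids over ${}_\gamma B=\BG$.
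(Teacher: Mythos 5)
Your proposal is correct in outline but takes a genuinely different route from the paper. The paper proves the theorem by direct computation: it writes out every structure map of $L(P,H)^{\Gamma}$ --- the $\BG^{e}$-ring structure, the twisted product, counit and coproduct, using the formula for $(p\ot q)_{+}\ot_{\BB}(p\ot q)_{-}$ from \cite{HM22} --- and checks each against the corresponding map of $L({}_{\gamma}P,H)$ on the common underlying space $(P\ot P)^{coH}$. You instead transplant the paper's own proof of \Cref{lem. right Ehresamm don't change by the twist of left Hopf algebroid} to the left Ehresmann algebroid: exhibit ${}_{\gamma}P\in\Bi(L(P,H)^{\Gamma},H)$ and invoke the uniqueness clause of \Cref{lem and def. left Ehresmann Hopf algebroids}. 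This is more conceptual, exhibiting the theorem as an instance of uniqueness of the biGalois completion, but it consumes the machinery of Section~\ref{sec:biGalois} --- faithful flatness of ${}_{\gamma}P$ over ${}_{\gamma}B$, and the fact that $L(P,H)^{\Gamma}$ is Hopf, anti-Hopf and suitably flat --- hypotheses the computational proof never needs; and it yields a priori only a canonical isomorphism, which must still be computed to be the identity of $(P\ot P)^{coH}$ before one can assert the stated equality.

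Three points need repair. First, you misidentify the left Galois extension: in the biGalois structure of \Cref{lem and def. left Ehresmann Hopf algebroids} the left $L(P,H)$-coinvariants of $P$ are the base field $k$ (the base of $H$), not $B$; the algebra $B$ is the \emph{right} $H$-coinvariant subalgebra and the base of $L(P,H)$. Indeed, for $b\in B$ one has ${}_{L}\delta(b)=s_{L}(b)\di 1$, which differs from $1\di b=t_{L}(b)\di 1$ in general. So the extension to be twisted is $k\subseteq P$, and the (unlabelled) lemma on twisting left Galois extensions gives that $k\subseteq{}_{\Gamma}P$ is left $L(P,H)^{\Gamma}$-Galois; since that lemma leaves the coinvariants unchanged, your argument survives the correction, but as written it asserts the wrong Galois condition. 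Second, the identification ${}_{\Gamma}P={}_{\gamma}P$, which you rightly single out as the crux, is only asserted: one must insert the explicit coaction ${}_{L}\delta(p)=p\z\ot\teins{p\o}\ot\tzwei{p\o}$ into the formula for $\Gamma$, then use the $K$-$H$-bicomodule compatibility and $\teins{h}\tzwei{h}=\varepsilon(h)$ to collapse $\Gamma$ to $\gamma$; this calculation is of essentially the same length as the paper's product computation, so your route relocates the computational core rather than removing it. Third, your flatness remark is imprecise: ${}_{\gamma}P$ and $P$ do not ``coincide as modules,'' since the ${}_{\gamma}B$-action on ${}_{\gamma}P$ is itself twisted; faithful flatness does transfer, but via invertibility of $\gamma^{\#}$ (equivalently $\GH$), and this deserves an explicit argument. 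With these repairs the proposal is a valid alternative proof.
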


\begin{proof}
We first observe that $B^{\Gamma}={}_{\gamma}B$. Indeed,
\begin{align*}
    a\CG b=\Gamma(a, b)=\gamma(a\mo, b\mo)a\z b\z=a\cdot_{\gamma}b, 
\end{align*}
for any $a, b\in B$.
    We can see the $\BG^{e}$-ring structure on $L(P, H)^{\Gamma}$ is given by
    \begin{align*}
        b\CG (p\ot q)=\gamma(b, p\mo) b\z p\z\ot q;\qquad (p\ot q)\CG b=\gamma(p\mo, b) p\z b\z\ot q,
    \end{align*}
    and
    \begin{align*}
        \overline{b}\CG (p\ot q)=&(p\ot q\o\tzwei{})\overline{\Gamma((q\z\ot q\o\teins{}),b)}\\
        =&p\ot \gamma(q\mo, b\mo)q\z b\z q\o\teins{}q\o\tzwei{}\\
        =&p\ot \gamma(q\mo, b\mo)q\z b\z, 
    \end{align*}
    where we use 
    \[(p\ot q)_{+}\ot_{\BB}(p\ot q)_{-}=p\ot q\o\tzwei{}\ot_{\BB}q\z\ot q\o\teins{}\]
    in the  2nd step which is shown in \cite{HM22}. Similarly,
    \[ (p\ot q)\CG \overline{b}=p\ot \gamma(b\mo, q\mo)b\z q\z.\]
    We can also see,
    \begin{align*}
    (p&\ot q)\CG(p'\ot q')\\
    =&\Gamma((p\z \ot p\o\teins{}), (p'\z\ot p'\o\teins{}))(p\o\tzwei{}p'\o\tzwei{}\ot 
 q'\o\tzwei{}q\o\tzwei{})\overline{\Gamma((q'\z\ot q'\o\teins{}),(q\z\ot q\o\teins{}))}\\
 =&\gamma(p\mo, p'\mo)p\z p'\z p'\o\teins{} p\o\teins{}p\o\tzwei{}p'\o\tzwei{}\ot \gamma(q'\mo, q\mo)q'\z q\z q\o\teins{} q'\o\teins{}q'\o\tzwei{}q\o\tzwei{}\\
 =&\gamma(p\mo, p'\mo)p\z p'\z\ot \gamma(q'\mo, q\mo)q'\z q\z.
    \end{align*}
 So  $L({}_{\gamma}P, H)=L(P, H)^{\Gamma}$ as a ${}_{\gamma}B^{e}$-ring. Now, let's check that they have the same coring structure. We can see
 \begin{align*}
     \varepsilon^{\Gamma}(p\ot q)=&\Gamma((p\ot q\o\tzwei{}), (q\z\ot q\o\teins{}))=\gamma(p\mo, q\mo)p\z q\z q\o\teins{}q\o\tzwei{}\\
     =&\gamma(p\mo, q\mo)p\z q\z.
 \end{align*}
 We can also see 
 \[\Delta^{\Gamma}(p\ot q)=p\z\ot p\o\teins{}\z\gamma^{-1}(p\o\teins{}\mo, p\o\tzwei{}\mo)\diamond_{\BG} p\o\tzwei{}\z\ot q.\]
 Indeed, by applying $\Gamma^{\#}$ on the right hand side, we get
 \begin{align*}
     (p\z\ot& p\o\teins{}\o\tzwei{})\gamma^{-1}(p\o\teins{}\mo, p\o\tzwei{}\mo)\overline{\Gamma((p\o\teins{}\z\ot p\o\teins{}\o\teins{}), (p\o\tzwei{}\z\ot p\o\tzwei{}\o\teins{}))}\\
     &\di p\o\tzwei{}\o\tzwei{}\ot q\\
     =&(p\z\ot p\o\teins{}\z p\o\tzwei{}\z p\o\tzwei{}\o\teins{} p\o\teins{}\o\teins{} p\o\teins{}\o\tzwei{})\\
     &\gamma^{-1}(p\o\teins{}\mt, p\o\tzwei{}\mt)\gamma(p\o\teins{}\mo, p\o\tzwei{}\mo)\di p\o\tzwei{}\o\tzwei{}\ot q\\
     =&p\z\ot p\o\teins{}\di p\o\tzwei{}\ot q\\
     =&\Delta(p\ot q).
 \end{align*}
\end{proof}

Let $H$ be a Hopf algebra and $B\subseteq P$ be a right $H$-Galois extension. If $\sigma:H\ot H\to k$ is a 2-cocycle, by \cite{MS} $B\subseteq P_{\sigma}$ is a $H^{\sigma}$-Galois extension with the original $H$-coaction on $P$ and a twisted product
\[p\cdot_{\sigma}q=p\z q\z \sigma^{-1}(p\o, q\o),\]
for any $p, q\in P$. By \cite{ppca}, if $(H, K, P, B)$ is as above, $\gamma$ is a 2-cocycle on $K$ and $\sigma$ is a 2-cocycle on $H$, then ${}_{\gamma}(P_{\sigma})=({}_{\gamma}P)_{\sigma}=:{}_{\gamma}P_{\sigma}$. Moreover, ${}_{\gamma}B\subseteq {}_{\gamma}P_{\sigma}$ is a $H^{\sigma}$-Galois extension. As a result of \Cref{lem. right Ehresamm don't change by the twist of left Hopf algebroid} and \Cref{thm. Ehresmann of twist is equal to the twist of Ehresmann}, we have
\begin{cor}\label{cor. Ehresmann Hopf algebroid by two sides twist}
    If $(H, K, P, B)$ is as above, $\gamma$ is a 2-cocycle on $K$ and $\sigma$ is a 2-cocycle on $H$, then $L({}_{\gamma}P_{\sigma}, H^{\sigma})\cong L({}_{\gamma}P, H)=L(P, H)^{\Gamma}$.
\end{cor}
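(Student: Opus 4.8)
The plan is to split the statement along its two displayed pieces and settle each with a result already in hand. The equality $L({}_{\gamma}P, H) = L(P, H)^{\Gamma}$ is literally \Cref{thm. Ehresmann of twist is equal to the twist of Ehresmann}, so no new argument is needed there; everything reduces to producing the isomorphism $L({}_{\gamma}P_{\sigma}, H^{\sigma}) \cong L({}_{\gamma}P, H)$.

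To obtain it I would set $Q := {}_{\gamma}P$ and, for the length of this step, regard $Q$ purely as a right $H$-Galois extension of $N := {}_{\gamma}B = Q^{coH}$, temporarily forgetting the left $K^{\gamma}$-structure. Since ${}_{\gamma}P_{\sigma} = ({}_{\gamma}P)_{\sigma} = Q_{\sigma}$ (recorded before the statement, following \cite{ppca}), the isomorphism to be shown reads $L(Q_{\sigma}, H^{\sigma}) \cong L(Q, H)$: the left Ehresmann Hopf algebroid of a right Galois extension is unchanged, up to isomorphism, when the structure Hopf algebra $H$ is replaced by its $2$-cocycle twist $H^{\sigma}$ and the extension is correspondingly deformed to $Q_{\sigma}$. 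This is exactly the coopposite, or mirror, form of \Cref{lem. right Ehresamm don't change by the twist of left Hopf algebroid}: there one twists a left $\cL$-comodule algebra by a cocycle on the left Hopf algebroid $\cL$ and finds the right Ehresmann $R$ unchanged, whereas here, regarding $Q$ as a left $H^{\cop}$-comodule algebra (see \Cref{antiremark}) with $\sigma$ read as a $2$-cocycle on $H^{\cop}$ and using that $R(Q, H^{\cop})$ is precisely $L(Q, H)$, one twists a right $H$-comodule algebra by a cocycle on the right Hopf algebra $H$ and should find the left Ehresmann $L$ unchanged.

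Concretely I would justify this coopposite form by copying the strategy of \Cref{lem. right Ehresamm don't change by the twist of left Hopf algebroid}: invoke the essential uniqueness of the biGalois completion in \Cref{lem and def. left Ehresmann Hopf algebroids} and verify that $Q_{\sigma} \in \Bi(L(Q, H), H^{\sigma})$; the desired $L(Q_{\sigma}, H^{\sigma}) \cong L(Q, H)$ then follows automatically. Three checks are required, all parallel to the cited lemma: that $Q_{\sigma}$ is a left $L(Q, H)$-comodule algebra for the original left coaction ${}_{L}\delta$, but now with respect to the $\sigma$-deformed product; that $N \subseteq Q_{\sigma}$ is an anti-right $H^{\sigma}$-Galois extension, which is the already-recalled fact that $Q_{\sigma}$ is $H^{\sigma}$-Galois transported through a $\sigma^{\#}$-diagram of the same shape as the $\Gamma^{\#}$-diagram used in \Cref{lem. right Ehresamm don't change by the twist of left Hopf algebroid}; and that the left $L(Q,H)$-coaction and the right $H^{\sigma}$-coaction cocommute. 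The last two are essentially formal, since the underlying comodule maps are literally unchanged by the twist and the base $N$ is undeformed because $\sigma$ restricts to $\varepsilon \otimes \varepsilon$ on $H$-coinvariants.

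I expect the only genuine computation, and hence the main obstacle, to be the first check: that ${}_{L}\delta$ is still an algebra map once the product on $Q$ is replaced by $p \cdot_{\sigma} q = p\z q\z\,\sigma^{-1}(p\o, q\o)$. The structural reason it works is that the scalar $\sigma^{-1}(p\o, q\o)$ depends only on the right $H$-coaction, which the left $L(Q, H)$-coaction leaves untouched: because the two coactions cocommute and ${}_{L}\delta$ lands in the Takeuchi product, the $\sigma^{-1}$-factors can be pulled through on both sides and matched, using that ${}_{L}\delta$ is an algebra map for the undeformed product. This is the precise analog of the identity $\delta_{R}(p \cdot_{\Gamma} q) = \delta_{R}(p)\cdot_{\Gamma}\delta_{R}(q)$ established in \Cref{lem. right Ehresamm don't change by the twist of left Hopf algebroid}. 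Once it is in place, the Corollary follows by chaining $L(Q_{\sigma}, H^{\sigma}) \cong L(Q, H) = L({}_{\gamma}P, H)$ with $L({}_{\gamma}P, H) = L(P, H)^{\Gamma}$ from \Cref{thm. Ehresmann of twist is equal to the twist of Ehresmann}.
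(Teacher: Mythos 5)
Your proposal is correct and is essentially the paper's own argument: the paper states this corollary with no further proof, deriving it directly from \Cref{thm. Ehresmann of twist is equal to the twist of Ehresmann} (your second equality) together with the coopposite/mirror form of \Cref{lem. right Ehresamm don't change by the twist of left Hopf algebroid} (your first isomorphism), exactly as you decompose it. Your unpacking of the mirroring --- viewing the right $H$-comodule algebra ${}_{\gamma}P$ as a left $H^{\cop}$-comodule algebra, reading $\sigma$ (via $\sigma^{-1}$) as a left $2$-cocycle there, and then invoking the uniqueness of the biGalois completion by checking that $({}_{\gamma}P)_{\sigma}\in\Bi(L({}_{\gamma}P,H),H^{\sigma})$, with the cocommutation of the coactions making the deformed coaction an algebra map --- is precisely the argument the paper leaves implicit.
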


\subsection{Examples}

\subsubsection{The $SU(2)$ principal fibration} 
 Let $P=A(S^7)$, $B=A(S^4)$, $H=A(SU(2))$ and $K=A(\mathbb{T}^2)$. More precisely, $P=A(S_{\theta}^7)$ is generated by elements
$z_a, z_a^*$, $a=1,\dots,4$ with with the spherical relation $\sum_a z_a^* z_a=1$. The Hopf algebra is $A(SU(2))$ is an unital complex $*$-algebra generated by $\omega_{1}, \overline{\omega}_{1}, \omega_{2}, \overline{\omega}_{2}$ subject to the relation $\omega_{1}\overline{\omega}_{1}+\omega_{2}\overline{\omega}_{2}=1$. The coproduct, counit and antipode is given by:
\begin{align*}
\Delta : \begin{pmatrix} \omega_{1} &-\overline{\omega}_{2}\\\omega_{2} & \overline{\omega}_{1} \end{pmatrix}\mapsto  \begin{pmatrix} \omega_{1} &-\overline{\omega}_{2}\\\omega_{2} & \overline{\omega}_{1} \end{pmatrix} \otimes  \begin{pmatrix} \omega_{1} &-\overline{\omega}_{2}\\\omega_{2} & \overline{\omega}_{1} \end{pmatrix},
\end{align*}
with counit $\varepsilon(\omega_{1})=\varepsilon(\overline{\omega}_{1})=1$, $\varepsilon(\omega_{2})=\varepsilon(\overline{\omega}_{2})=0$ and antipode $S(\omega_{1})=\overline{\omega}_{1}$, $S(\omega_{2})=-\omega_{2}$. If we denote  $A(S^{7})$ and $A(SU(2))$ by matrix-valued function by
\begin{align*}
    \Psi  =
\begin{pmatrix}
z_1 & - z^*_2 \\
z_2 & z^*_1 \\
z_3 & -z^*_4 \\
z_4& z^*_3
\end{pmatrix},\qquad
\omega=\begin{pmatrix}
    \omega_{1} &-\overline{\omega}_{2}\\\omega_{2} & \overline{\omega}_{1}
\end{pmatrix},
\end{align*}
then the right coaction can be written as $\delta(\Psi)=\Psi\ot\omega$. This means in components $\delta(\Psi_{ik})=\Psi_{ij}\ot\omega_{jk}$. The algebra $A(S^{4})$ generated by   $\zeta_1=  z_1 z^{*}_3 + z^*_2 z_4$, $\zeta_2 =  z_2 z^*_3 - z^*_1 z_4$ and $\zeta_0 = z_1 z^*_1 + z^*_2 z_2 = 1 - z_3 z^*_3 - z^*_4 z_4$
is the subalgebra of coinvariant. Moreover, $A(S^{4})\subseteq A(S^{7})$ is a right $A(SU(2))$-Galois extension. Let $t_{i}, t_{i}^{*}, i=1,2$ be the generators of $A(\mathbb{T}^2)$, the left $K$-coaction of $P$ is given by
\[{}^{K}\delta(z_{i})=\tau_{i}\ot z_{i},\qquad{}^{K}\delta(z_{i}^*)=\tau_{i}^*\ot z_{i}^*,\]
where $\tau_{i}=(t_{1}, t_{1}^{*}, t_{2}, t_{2}^{*})$. The left $K$-coaction on $B$ is given by
\[{}^{K}\delta(\zeta_{1})=t_{1}t_{2}^{*}\ot \zeta_{1},\quad{}^{K}\delta(\zeta_{2})=t_{1}^{*}t_{2}^{*}\ot \zeta_{1},\quad \quad{}^{K}\delta(\zeta_{0})=1\ot \zeta_{0}.\]

Define $\gamma:K\ot K\to k$ a 2-cocycle which is given by
\begin{align}\label{equ. first twist}
    \gamma(t_i, t_j)=e^{i\pi \Theta_{ij}},\qquad
\Theta_{ij}=\frac{1}{2}\begin{pmatrix}
    0 &\theta\\\theta & 0
\end{pmatrix}.
\end{align}
By \cite{ppca}, ${}_{\gamma}P=A(S^{7}_{\theta})$ and ${}_{\gamma}B=A(S_{\theta}^{4})$. 
It is given in \cite{LS04} that $A(S_{\theta}^4)\subseteq A(S_{\theta}^7)$ is a right $A(SU(2))$-Galois extension. More precisely, $A(S_{\theta}^7)$ is generated by elements
$z_a, z_a^*$, $a=1,\dots,4$, subject to relations
\begin{align}
    \label{s7t}
z_a z_b = \lambda_{a b} \, z_b z_a, \quad  z_a z_b^* = \lambda_{b a} \, z_b^* z_a,
\quad z_a^*z_b^* = \lambda_{a b} \, z_b^* z_a^* ,
\end{align}
and with the spherical relation $\sum_a z_a^* z_a=1$, where $\lambda_{a b} = e^{2 \pi i \theta_{ab}}$ and $(\theta_{ab})$ a real antisymmetric matrix given by
\beq\label{lambda7}
\lambda_{ab}=
\begin{pmatrix} 1 & 1 & \bar{\mu} & \mu \\
1 & 1 & \mu & \bar{\mu} \\
\mu & \bar{\mu} &1 & 1\\
\bar{\mu} & \mu &1 & 1
\end{pmatrix}, \quad \mu = \sqrt{\lambda} \qquad \mathrm{or} \qquad
\theta_{ab}=\frac{\theta}{2}\begin{pmatrix} 0 & 0 & -1 & 1 \\
0 & 0 & 1 & -1 \\
1 & -1 & 0 & 0 \\
-1 & 1 & 0 & 0  \end{pmatrix}.
\eeq
The generators of $A(S_{\theta}^{4})$ has the relations
\[\zeta_1 \zeta_2  = \lambda \zeta_2 \zeta_1,\quad \zeta_1 \zeta_2^* = \bar{\lambda} \zeta_2^* \zeta_1,\quad \zeta_1^* \zeta_1 + \zeta_2^* \zeta_2 = \zeta_0 (1-\zeta_0).\]
It is given by \cite{HLL23} that $L(A(S_{\theta}^7), A(SU(2)))$ has generator $p \ot 1$, $1\ot {q}$ and $V:=\Psi\ot \Psi^{\dagger}$, where 
  \begin{align}
    p = \Psi \cdot \Psi^\dagger =
\begin{pmatrix}
\zeta_0 & 0 & \zeta_1 & - \bar{\mu} \zeta_2^* \\
0 & \zeta_0 & \zeta_2  & \mu \zeta_1^* \\
\zeta_1^*& \zeta_2^* & 1-\zeta_0 & 0\\
-\mu \zeta_2 & \bar{\mu}   \zeta_1 & 0 & 1-\zeta_0
\end{pmatrix},\quad q =  \Psi \cdot_{op} \Psi^\dagger =
\begin{pmatrix}
\zeta_0 & 0 & \bar{\mu} \zeta_1 & - \zeta_2^* \\
0 & \zeta_0 &  \mu \zeta_2  & \zeta_1^* \\
 \mu \zeta_1^*& \bar{\mu} \zeta_2^* & 1- \zeta_0 & 0\\
- \zeta_2 & \zeta_1 & 0 & 1- \zeta_0
\end{pmatrix}.
\end{align}
By \Cref{thm. Ehresmann of twist is equal to the twist of Ehresmann}, we have
\begin{lem}
Let $P=A(S^7)$, $B=A(S^4)$, $H=A(SU(2))$, $K=A(\mathbb{T}^2)$ and $\gamma$ be the 2-cocycle given in \eqref{equ. first twist}. Then    $L(A(S^7), A(SU(2)))^{\Gamma}=L(A(S^{7}_{\theta}), A(SU(2)))$, where $\Gamma$ is an invertible left 2-cocycle given by \Cref{lem. gamma induce Gamma}.
\end{lem}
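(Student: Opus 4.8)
The statement is an immediate instance of Theorem \ref{thm. Ehresmann of twist is equal to the twist of Ehresmann}, so the plan is to verify that the quadruple $(H, K, P, B) = (A(SU(2)), A(\mathbb{T}^2), A(S^7), A(S^4))$ fulfills the hypotheses ``as above'', and then to invoke that theorem. Concretely we need three things: that $B \subseteq P$ is a right $H$-Galois extension, that $P$ is a left $K$-comodule algebra making it a $K$-$H$-bicomodule, and that $\gamma$ is an invertible $2$-cocycle on $K$. The first is exactly the statement recalled above from \cite{LS04}, and invertibility of $\gamma$ is clear because $\gamma(t_i, t_j) = e^{i\pi\Theta_{ij}}$ is a multiplicative bicharacter whose convolution inverse is $\gamma^{-1}(t_i, t_j) = e^{-i\pi\Theta_{ij}}$. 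Thus the only point that requires genuine verification is the bicomodule condition.

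For the bicomodule condition I would check $(\id_K \ot \delta) \circ {}^K\delta = ({}^K\delta \ot \id_H) \circ \delta$ on the generators $z_a, z_a^*$. The key structural observation is that the right $H$-coaction $\delta(\Psi) = \Psi \ot \omega$ mixes only the two entries within a single \emph{row} of $\Psi$, while the left $K$-coaction assigns to both entries of each row the \emph{same} group-like element of $K$: from $\tau = (t_1, t_1^*, t_2, t_2^*)$ one reads off that $z_1$ and $z_2^*$ both carry $K$-degree $t_1$, the pair $z_2, z_1^*$ both carry degree $t_1^*$, and likewise $z_3, z_4^*$ carry $t_2$ and $z_4, z_3^*$ carry $t_2^*$. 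Hence each row of $\Psi$ is homogeneous of a fixed left $K$-degree, so applying $\delta$ does not change the $K$-degree and the two coactions cocommute; this is a short computation carried out row by row on the eight generators.

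Granting the bicomodule structure, Proposition \ref{prop. twsited comodule algebra} makes ${}_\gamma P$ a left $K^\gamma$-comodule algebra, and by \cite{ppca} this deformed algebra is precisely $A(S^7_\theta)$, with ${}_\gamma B = A(S^4_\theta)$; since the $H$-coaction is left unchanged, ${}_\gamma B \subseteq {}_\gamma P$ is still a right $H$-Galois extension. Lemma \ref{lem. gamma induce Gamma} then turns $\gamma$ into an invertible left $2$-cocycle $\Gamma$ on $L(P, H) = L(A(S^7), A(SU(2)))$, and Theorem \ref{thm. Ehresmann of twist is equal to the twist of Ehresmann} yields $L({}_\gamma P, H) = L(P, H)^\Gamma$. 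Rewriting both sides in terms of the $\theta$-deformation gives $L(A(S^7_\theta), A(SU(2))) = L(A(S^7), A(SU(2)))^\Gamma$, which is the claim. I do not expect a real obstacle: the entire content is the verification of the hypotheses of the general theorem for this specific example, and the only nontrivial input, the cocommutativity of the two coactions, reduces to the row-homogeneity of $\Psi$ with respect to the left $K$-grading, which is visible directly from the explicit coaction formulas.
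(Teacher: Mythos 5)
Your proposal is correct and takes essentially the same route as the paper: the paper derives the lemma directly from Theorem \ref{thm. Ehresmann of twist is equal to the twist of Ehresmann} applied to the quadruple $(A(SU(2)), A(\mathbb{T}^2), A(S^7), A(S^4))$, using the setup recalled from \cite{LS04} and \cite{ppca} (in particular ${}_\gamma P = A(S^7_\theta)$) together with Lemma \ref{lem. gamma induce Gamma}. Your explicit check of the $K$-$H$-bicomodule condition via the row-homogeneity of $\Psi$ under the left $K$-grading is a correct verification of a hypothesis the paper leaves implicit in its setup.
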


      \subsubsection{Quantum homogeneous spaces of quantum groups $SO_\theta(2n+1, \mathbb{R})$\cite{CL01,Var01}}

 Let $P=A(SO(2n+1, \mathbb{R}))$, $B=A(S^{2n})$, $H=A(SO(2n, \mathbb{R}))$ and $K=A(\mathbb{T}^n)$.  We first define $A(M(2n, \mathbb{R}))$ being a bialgebra generated by elements  $\mathbf{a} = (a_{jk})$, $\mathbf{b} = (b_{jk})$,
$\mathbf{a}^* = (a_{jk}^*)$, $\mathbf{b}^* = (b_{jk}^*), i, j,k=1,\dots n$. In matrix notation $A(M(2n, \mathbb{R}))$ has coproduct and counit given by
\beq%
M = (M_{J K}) =
\begin{pmatrix}  
\mathbf{a} & \mathbf{b} \\
\mathbf{b}^* & \mathbf{a}^*
\end{pmatrix} , \qquad \Delta(M) = M \ot M , \qquad \varepsilon(M) = 1_{2n}  .
\eeq
The Hopf algebra $A(SO(2n, \mathbb{R}))$ is given by the quotient of $A(M(2n, \mathbb{R}))$ by a Hopf ideal $I_{Q}$ which is given by 
\beq%
I_Q = <M^t Q M - Q , \, M Q M^t - Q , \, {\det}(M) - 1 > , \qquad
Q = \begin{pmatrix}
0 & 1_n \\
1_n & 0
\end{pmatrix} = Q^{-1}.
\eeq
Similarly, $A(M(2n+1, \mathbb{R}))$ is generated by $\mathbf{a} = (a_{jk})$, $\mathbf{b} = (b_{jk})$,
$\mathbf{a}^* = (a_{jk}^*)$, $\mathbf{b}^* = (b_{jk}^*)$, and $u_i, u_{i}^{*}, v_{i}, v_{i}^{*}$, $i, j,k=1,\dots n$ and a hermitian scalar $x$. In matrix notation the coproduct and counit are given by
\beq%
N =
\begin{pmatrix}
\mathbf{a} & \mathbf{b} & \mathbf{u} \\
\mathbf{b}^* & \mathbf{a}^* & \mathbf{u}^*\\
\mathbf{v} & \mathbf{v}^* & x\\
\end{pmatrix} , \qquad \Delta(N) = N \ot N , \qquad \varepsilon(N) = 1_{2n+1}  .
\eeq
The Hopf algebra $A(SO(2n+1, \mathbb{R}))$ is given by the quotient of $A(M(2n+1, \mathbb{R}))$ by a Hopf ideal $J_{Q}$, which is given by  
\beq%
J_Q = <N^t Q N = Q , \quad N Q N^t = Q , \quad {\det}(N) = 1
> , \qquad
Q = \begin{pmatrix}
0 & 1_n & 0 \\
1_n & 0 & 0 \\
0 & 0 & 1
\end{pmatrix} = Q^{-1}.
\eeq
The Hopf algebra $K$ is generated by $t_{i}, t_{i}^{*}$ subject to the relation $\sum_{i}t_{i} t_{i}^{*}=1$. The coproduct and counit are
\[\Delta(T)=T\ot T,\quad\varepsilon(T)=1_{2n},\quad T:=\textup{diag}\{t_{1},\dots t_{n},t_{1}^{*},\dots t_{n}^{*} \}.\]
There is a surjective
Hopf algebra morphism $\pi:P\to H$  given by
\beq%
\pi:
\begin{pmatrix}
\mathbf{a} & \mathbf{b} & \mathbf{u} \\
\mathbf{b}^* & \mathbf{a}^* & \mathbf{u}^*\\
\mathbf{v} & \mathbf{v}^* & x\\
\end{pmatrix}  \quad\mapsto \quad 
\begin{pmatrix}
\mathbf{a} & \mathbf{b} & 0 \\
\mathbf{b}^* & \mathbf{a}^* & 0\\
0 & 0 & 1\\
\end{pmatrix}
\eeq
Therefore, $P$ is a right $H$-comodule algebra with coaction given by $\delta^{H}=(\id\ot\pi)\circ\Delta:P\to P\ot H$.
The algebra $B=A(S^{2n})$ is generated by $u_i, u_{i}^{*}$ and $x$ subject to the relation $\sum_{i}2u_i u_{i}^{*}+x^2=1$.  Moreover, $B=A(S^{2n})\subseteq P$ is a $H$-Galois extension.
There is another surjective
Hopf algebra morphism $\pi':H\to K$ given by
\[\pi': a_{ij}\mapsto \delta_{ij}t_{i},\quad a_{ij}^{*}\mapsto \delta_{ij}t_{i}^{*},\quad b_{ij}\mapsto 0, \quad b_{ij}^{*}\mapsto 0.\]
Therefore, $P$ is a left $K$-comodule algebra with coaction given by $\delta^{H}=(\pi'\circ \pi\ot\id)\circ\Delta:P\to K\ot P$.

 We can define a 2-cocycle $\gamma$ of $K$ on the generators by
 \begin{align}\label{equ. second twist}
  \gamma(t_{i}, t_{j})=e^{i\pi \theta_{ij}},\qquad \theta_{ij}=-\theta_{ji}.   
 \end{align}
There is a 2-cocycle $\sigma$ on $H$ given by $\sigma(g,h)=\gamma(\pi'(g),\pi'(h))$ for any $g,h\in H$. By \cite{ppca}, ${}_{\gamma}P_{\sigma}=A(SO_{\theta}(2n+1, \mathbb{R}))$, $H^{\sigma}=A(SO_{\theta}(2n, \mathbb{R}))$ and ${}_{\gamma}B=A(S_{\theta}^{2n})$. It is given by \cite{CL01,Var01}, $A(S_{\theta}^{2n})\subseteq A(SO_{\theta}(2n+1, \mathbb{R}))$ is a $A(SO_{\theta}(2n, \mathbb{R}))$-Galois extension. More precisely, the generators $\mathbf{a} = (a_{jk})$, $\mathbf{b} = (b_{jk})$,
$\mathbf{a}^* = (a_{jk}^*)$, $\mathbf{b}^* = (b_{jk}^*)$ of $A(SO_{\theta}(2n, \mathbb{R}))$ satisfy the relations:
\begin{align}\label{thetaCR}
a_{ij}     a_{kl} & = \lambda_{ik}\lambda_{lj} ~a_{kl}    a_{ij} ,  \qquad
a_{ij}     b^*_{kl} = \lambda_{ki}\lambda_{lj} ~b^*_{kl}    a_{ij} \nn
\\
a_{ij}     b_{kl} & = \lambda_{ik}\lambda_{jl} ~b_{kl}    a_{ij}  ,  \qquad
a_{ij}     a^*_{kl} = \lambda_{ki}\lambda_{jl} ~a^*_{kl}    a_{ij} \nn
\\
b_{ij}     b_{kl} & = \lambda_{ik}\lambda_{lj} ~b_{kl}    b_{ij}  ,  \qquad
b_{ij}     b^*_{kl} = \lambda_{ki}\lambda_{jl} ~b^*_{kl}    b_{ij}
\end{align}
together with their $*$-conjugated. The commutation relations of $A(SO_{\theta}(2n+1, \mathbb{R}))$ is given by
\beq\label{crN}
N_{I J} \, N_{K L} = \lambda_{IK} \lambda_{L J} N_{K L} \, N_{I J},
\eeq  
where $\lambda_{IK}$ is induced by the 2-cocycle $\gamma$ and $I,J,K,L=1,\dots 2n$. The generators $u_{i}, u_{i}^{*}, x$ of $S^{2n}_{\theta}$ satisfy \eqref{crN}:
\beq
u_i     u_j= \lambda_{ij} \, u_j     u_i \, , \qquad
u_i^*     u^*_j= \lambda_{ij} \, u^*_j     u^*_i \, ,
 \qquad u_i     u_j^*= \lambda_{ji} \, u_j^*  u_i \, ,
\eeq
and $x$ central. The orthogonality conditions imply the sphere
relation
$$
\sum_{j=1}^{n} 2 u_j^* u_j + x^2 = 1 .
$$
It is given by \cite{HLL23} that $L(A(SO_{\theta}(2n+1, \mathbb{R})),A(SO_{\theta}(2n, \mathbb{R})))$ is generated
by elements $1\ot \mathbf{u}, 1\ot \mathbf{u}^*, 1 \ot x$, and $\mathbf{u} \ot 1, \mathbf{u}^* \ot 1, x \ot 1$,
together with the entries of the matrix
$$
\mathbf{V} = 
\begin{pmatrix}
\mathbf{a} & \mathbf{b} \\
\mathbf{b}^* & \mathbf{a}^* \\
\mathbf{v} & \mathbf{v}^*
\end{pmatrix} \ot
\begin{pmatrix}
\mathbf{a} & \mathbf{b} \\
\mathbf{b}^* & \mathbf{a}^* \\
\mathbf{v} & \mathbf{v}^*
\end{pmatrix}^\dagger,
$$

By \Cref{cor. Ehresmann Hopf algebroid by two sides twist}, we have
 \begin{lem}
Let $P=A(SO_{\theta}(2n+1, \mathbb{R}))$, $B=A(S^{2n})$, $H=A(SO_{\theta}(2n, \mathbb{R}))$, $K=A(\mathbb{T}^{2n})$ and $\gamma$ be the 2-cocycle on $A(\mathbb{T}^{2n})$ in \eqref{equ. second twist}. Then    $L(A(SO(2n+1, \mathbb{R})), A(SO(2n, \mathbb{R})))^{\Gamma}\cong L(A(SO_{\theta}(2n+1, \mathbb{R})), A(SO_{\theta}(2n, \mathbb{R})))$, where $\Gamma$ is an invertible left 2-cocyle given by \Cref{lem. gamma induce Gamma}.
\end{lem}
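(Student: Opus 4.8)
The plan is to reduce the statement to a direct application of Corollary \ref{cor. Ehresmann Hopf algebroid by two sides twist}, whose hypotheses require a quadruple $(H,K,P,B)$ in which $B\subseteq P$ is a right $H$-Galois extension and $P$ is a $K$-$H$-bicomodule algebra, together with an invertible 2-cocycle $\gamma$ on $K$ and a 2-cocycle $\sigma$ on $H$. I would take the \emph{undeformed} data $P=A(SO(2n+1,\mathbb R))$, $H=A(SO(2n,\mathbb R))$, $K=A(\mathbb T^n)$, $B=A(S^{2n})$, with right $H$-coaction $\delta^H=(\id\ot\pi)\circ\Delta$ and left $K$-coaction $\delta^K=(\pi'\circ\pi\ot\id)\circ\Delta$.

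First I would verify the standing hypotheses. That $B=A(S^{2n})\subseteq P$ is a right $H$-Galois extension is the content of \cite{CL01,Var01}. Since $\Delta$, $\pi$ and $\pi'$ are all algebra maps, $\delta^H$ and $\delta^K$ are comodule-algebra structures; the bicomodule compatibility $(\id\ot\delta^H)\circ\delta^K=(\delta^K\ot\id)\circ\delta^H$ then follows from coassociativity of $\Delta$ on $P$, as both sides equal $(\pi'\pi\ot\id\ot\pi)\circ(\Delta\ot\id)\circ\Delta$. Invertibility of $\gamma$ is immediate from \eqref{equ. second twist}, the phases $e^{i\pi\theta_{ij}}$ being units with pointwise inverse $e^{-i\pi\theta_{ij}}$.

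Next I would fix the cocycle on $H$ to be $\sigma(g,h)=\gamma(\pi'(g),\pi'(h))$. Because $\pi'\colon H\to K$ is a coalgebra map, pulling the 2-cocycle identity for $\gamma$ back along $\pi'\ot\pi'$ shows that $\sigma$ is again a convolution-invertible 2-cocycle on $H$; this is exactly the cocycle singled out before the lemma. With this choice the identifications recorded just before Corollary \ref{cor. Ehresmann Hopf algebroid by two sides twist} (following \cite{ppca}) give ${}_{\gamma}P_{\sigma}=A(SO_\theta(2n+1,\mathbb R))$, $H^{\sigma}=A(SO_\theta(2n,\mathbb R))$ and ${}_{\gamma}B=A(S^{2n}_\theta)$, so that $A(S^{2n}_\theta)\subseteq A(SO_\theta(2n+1,\mathbb R))$ is the $A(SO_\theta(2n,\mathbb R))$-Galois extension of \cite{CL01,Var01}.

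Having checked all hypotheses, I would invoke Corollary \ref{cor. Ehresmann Hopf algebroid by two sides twist}, which yields $L({}_{\gamma}P_{\sigma},H^{\sigma})\cong L(P,H)^{\Gamma}$ with $\Gamma$ the induced left 2-cocycle of Lemma \ref{lem. gamma induce Gamma}; substituting the identifications above gives precisely $L(A(SO(2n+1,\mathbb R)),A(SO(2n,\mathbb R)))^{\Gamma}\cong L(A(SO_\theta(2n+1,\mathbb R)),A(SO_\theta(2n,\mathbb R)))$. The only points needing genuine care will be the bicomodule-algebra compatibility of the two coactions and the verification that the \emph{specific} $\sigma$ pulled back from $\gamma$ is the one for which the deformed coordinate algebras coincide with the quantum orthogonal groups; both are book-keeping with $\pi$, $\pi'$ and coassociativity rather than new computations, so I expect no serious obstacle.
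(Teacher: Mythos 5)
Your proposal is correct and follows essentially the same route as the paper: the paper's entire subsection on quantum homogeneous spaces is precisely the setup you verify (the quadruple $(H,K,P,B)$ with the coactions $(\id\ot\pi)\circ\Delta$ and $(\pi'\circ\pi\ot\id)\circ\Delta$, the pulled-back cocycle $\sigma=\gamma\circ(\pi'\ot\pi')$, and the identifications ${}_{\gamma}P_{\sigma}=A(SO_\theta(2n+1,\mathbb{R}))$, $H^{\sigma}=A(SO_\theta(2n,\mathbb{R}))$, ${}_{\gamma}B=A(S^{2n}_\theta)$ from \cite{ppca}), after which the lemma is exactly an invocation of Corollary \ref{cor. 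Ehresmann Hopf algebroid by two sides twist}. Your explicit verification of the bicomodule compatibility and the invertibility of $\gamma$ only spells out details the paper delegates to citations, so there is no substantive difference.
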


\end{document}